\theoremstyle{plain}
\newtheorem*{theorem*}{Theorem}
\newtheorem{theorem}{Theorem}[section]
\newtheorem{claim}[theorem]{Claim}
\newtheorem{lemma}[theorem]{Lemma}
\newtheorem{proposition}[theorem]{Proposition}
\newtheorem{corollary}[theorem]{Corollary}
\newtheorem{observation}[theorem]{Observation}
\theoremstyle{definition}
\newtheorem{definition}[theorem]{Definition}
\newtheorem{problem}{Problem}[section]
\newtheorem{question}[theorem]{Question}
\theoremstyle{remark}
\newtheorem{remark}[theorem]{Remark}
\newtheorem{notation}[theorem]{Notation}
\newtheorem{fact}[theorem]{Fact}
\def\l{{\langle}}
\def\r{{\rangle}}
\newcommand{\MS}{\mathcal{MS}}  
\def\mathunderaccent#1#2 {\let\theaccent#1\skewfactor#2
\mathpalette\putaccentunder}
\def\putaccentunder#1#2{\oalign{$#1#2$\crcr\hidewidth
\vbox to.2ex{\hbox{$#1\skew\skewfactor\theaccent{}$}\vss}\hidewidth}}
\newcommand{\lusim}[1]{\smash{\underset{\raisebox{1.2pt}[0cm][0cm]{$\sim$}}
{{#1}}}}
\def\smallbox#1{\leavevmode\thinspace\hbox{\vrule\vtop{\vbox
   {\hrule\kern1pt\hbox{\vphantom{\tt/}\thinspace{\tt#1}\thinspace}}
   \kern1pt\hrule}\vrule}\thinspace}
\newcommand\cat[1]{{}^\curvearrowright\langle #1\rangle}
\DeclareMathOperator{\cof}{cof}
\newcommand{\cf}{{\rm cf}}
\def\Ult{{\rm Ult}}
\title[Transferring compactness]{Transferring compactness}
\date{\today}
\author{Tom Benhamou}
\thanks{The research of the first author was supported by the National Science Foundation under Grant
No. DMS-2346680}
\address[Benhamou]{Department of Mathematics, Statistics, and Computer Science, University of Illinois at Chicago, Chicago, IL 60607, USA}
\email{tomb@uic.edu}
\author{Jing Zhang}
\thanks{The research of the second author was supported by NSERC discovery grants}
\address[Zhang]{Department of Mathematics, University of Toronto, 40 St. George St., Toronto, ON, Canada}
\email{jingzhan@alumni.cmu.edu}
\subjclass[2010]{03E02, 03E35, 03E55}
\keywords{Radin forcing, $n$-stationary cardinal, stationary reflection}
\begin{document}

\begin{abstract}
    We demonstrate that the technology of Radin forcing can be used to transfer compactness properties at a weakly inaccessible but not strong limit cardinal to a strongly inaccessible cardinal.
    As an application, relative to the existence of large cardinals, we construct a model of set theory in which there is a strongly inaccessible cardinal $\kappa$ that is $n$-$d$-stationary for all $n\in \omega$ but not weakly compact. This is in sharp contrast to the situation in the constructible universe $L$, where $\kappa$ being $(n+1)$-$d$-stationary is equivalent to $\kappa$ being $\mathbf{\Pi}^1_n$-indescribable. We also show that it is consistent that there is a cardinal $\kappa\leq 2^\omega$ such that $P_\kappa(\lambda)$ is $n$-stationary for all $\lambda\geq \kappa$ and $n\in \omega$, answering a question of Sakai.
    \end{abstract}

\maketitle
\section{Introduction}
In general, \textit{compactness} refers to the phenomenon that if some property holds for all small substructures then it holds for the structure itself. For example, a compact topological space asserts that any collection of closed sets with the finite intersection property, has a non-empty intersection; The compactness theorem for first order Logic states that any first order theory such that all of its finite subsets are consistent must also be consistent; In cardinal arithmetic, Silver's theorem \cite{Silver} asserts that if $2^{\aleph_\alpha}=\aleph_{\alpha+1}$ for any $\alpha<\omega_1$, then necessarily $2^{\aleph_{\omega_1}}=\aleph_{\omega_1+1}$. This compactness phenomenon does not occur  at the level of $\aleph_\omega$, as Magidor proves \cite{MagAnnals} that it is consistent that $2^{\aleph_n}=\aleph_{n+1}$ for every $n<\omega$ while $2^{\aleph_\omega}>\aleph_{\omega+1}$. In Graph Theory, K\"{o}nig's Lemma asserts that if $G$ has an infinite, locally finite, and connected graph, then there is an infinite simple path. This lemma ensures for example that $\omega$ has the \textit{tree property} which is a paradigmatic compactness principle which says that any countably infinite tree, such that every level is finite must have a branch.

The dual notion of compactness is \textit{reflection} i.e. if some property holds at some mathematical structure, then there must be a small substructure for which it was true. So compactness of some property $\phi$ is equivalent to the reflection of $\neg\phi$. 
It turns out that many important instances of compactness such as free of abelian groups, Metrizable topological spaces \cite{MagidorShelah1994} and others \cite{Shelah2012OnIF,Todorcevic1983OnAC} boil down to a specific reflection principle known as \textit{stationary reflection}. Recall that a subset $C\subseteq \kappa$ is a \textit{closed unbounded} (club) if $C$ is closed in the order topology of $\kappa$ and in unbounded below $\kappa$. A subset $S\subseteq \kappa$ is \textit{stationary} if it intersects any club.
\begin{definition}
    We say that a cardinal $\kappa$ satisfies sationary reflection if for any stationary set $S\subseteq \kappa$, there is $\alpha<\kappa$ of uncountable cofinality such that $S\cap\alpha$ is stationary at $\alpha$.
\end{definition}
Usually, reflection principles require assumptions beyond $ZFC$ i.e. \textit{large cardinals}. In fact, some large cardinal notions are tailored to satisfy reflection and compactness properties e.g. \textit{weakly/strongly/super-compact cardinals}. One specific hierarchy of large cardinals which this paper considers is the $\Pi^1_n$-indescribable cardinals (see definition \ref{Def: indescribable}) which was discovered by Hanf and Scott \cite{HanfScott}. These large cardinals turned out to form a yardstick hierarchy in the landscape of large cardinals and provide a nice characterization of other large cardinal notions in terms of their ability to reflect formulas of higher complexity. 
%{\color{blue} Some ideas for the introduction: Start with the a general discussion how reflection principles are important in general and in particular in set theory, elaborate on the "topology applications" of Bagaria and introduce the indescribable cardinals, then say that usually reflection requires some large cardinals and to measure how large the cardinal needs to be is perhaps the most important line of research in modern set theory. The main tool for that is forcing with large cardinals, and then turning to the discussion about the lack of technologies below}
Due to lack of technologies, a few implications among certain compactness principles around the region of ``moderate large cardinals'' are not well understood.

For example, it is open whether $\kappa>\omega$ being weakly compact is implied by any of the following: 
\begin{enumerate}
    \item any two $\kappa$-c.c posets $P, Q$, $P\times Q$ is also $\kappa$-c.c,
    \item $\kappa\to [\kappa]^2_\omega$,
    \item $\kappa$ is strongly inaccessible and there does not exist a $\kappa$-Suslin tree,
     \item $\kappa$ is strongly inaccessible and $\kappa\to [\kappa]^2_\kappa$,
     \item $\kappa$ is strongly inaccessible J\'{o}nsson, namely, $\kappa\to [\kappa]^{<\omega}_\kappa$.
\end{enumerate}
The first 4 items are consequences of $\kappa$ being weakly compact while the last item is not.

It is important that we insist $\kappa$ is a strongly inaccessible cardinal in the last 3 items since these properties are consistent with $\kappa$ being weakly inaccessible but not strong limit. Since if $\kappa$ is weakly compact, then it is necessarily a strong limit cardinal, we can cheat and declare these principles are separated. However, if we insist that $\kappa$ is strongly inaccessible, then these problems become much harder. In fact, they are open.

In this paper, we explore the possibility of ``fixing the cheat'' by transferring compactness principles at a weakly inaccessible cardinal to a strongly inaccessible cardinal. The technology we employ is Radin forcing \cite{Radin}, denoted $R_{\bar{U}}$, which is defined using a measure sequence $\bar{U}$ on a cardinal $\kappa$. Radin forcing has already turned out useful in order to tune the large cardinal properties and compactness principles holding at $\kappa$ in the model $V^{R_{\bar{U}}}$. For example, \begin{enumerate}
    \item If $cf(lh(\bar{U}))=\rho<\kappa$ then $V^{R_{\bar{U}}}\models cf(\kappa)=cf(\rho)$ (\cite[Section 5.1]{Gitik2010}).
    \item If $cf(lh(\bar{U}))\geq\kappa^+$ then $V^{R_{\bar{U}}}\models \kappa$ is strongly inaccessible (\cite{MITCHELLHowWeak}).
    \item If $\bar{U}$ has a \textit{repeat point} then $V^{R_{\bar{U}}}\models \kappa$ is measurable (\cite{MITCHELLHowWeak}).
    \item If $\kappa^+\leq cf(lh(\bar{U}))\leq lh(\bar{U})<2^\kappa$, then $V^{R_{\bar{U}}}\models \neg \diamondsuit_{\kappa}$ (Woodin, see \cite{Omer1}).
    \item If $\bar{U}$ has a \textit{weak repeat point} then $V^{R_{\bar{U}}}\models \kappa$ is weakly compact (\cite{Omer1}).
    \item If $cf(lh(\bar{U}))\geq\kappa^{++}$ then $V^{R_{\bar{U}}}\models$ $\kappa$ satisfy stationary reflection (\cite{Omer1}).
    \item If $\bar{U}$ satisfy the \textit{local repeat point} then $V^{R_{\bar{U}}}\models$ $\kappa$ is almost inaffable (\cite{OmerJing}).
\end{enumerate}    
The common idea in those results is that we isolate some property of the length  $lh(\bar{U})$ of the measure sequence $\bar{U}$ which guarantees that $\kappa$ has some large cardinal property in $V^{R_{\bar{U}}}$. Let us just mention that most of the implications above are reversible. In \cite{OmerJing}, Ben-Neria and the second author tighten the connection between compactness principles in the Radin extension and properties.
In this paper, the length of the sequence is usually $lh(\bar{U})<(2^\kappa)^+$. The very rough idea is that if we force using a measure sequence such that the length of the measure sequence satisfies suitable compactness principles, then the Radin forcing transfers these compactness principles to actually hold at $\kappa$, which is strongly inaccessible in the generic extension. 

As an application, relative to the existence of large cardinals, we construct a model where higher order stationary reflections hold at a strongly inaccessible cardinal which is not weakly compact. To properly state the theorem, we need the following definitions. Bagaria \cite[Definition 4.1]{Bagaria} used generalized logic to extend the indescribable cardinal hierarchy of Hanf and Scott to $\mathbf{\Pi}^1_{\xi}$-formulas for $\xi\geq \omega$.

\begin{definition}[Hanf-Scott for $\xi\in \omega$, Bagaria \cite{Bagaria} for $\xi\geq \omega$]\label{Def: indescribable}
Let $\xi$ be an ordinal.
A set $S\subseteq \kappa$ is $\mathbf{\Pi}^1_{\xi}$-indescribable if for
all $R \subseteq V_\kappa$ and all $\mathbf{\Pi}^1_\xi$-sentence $\phi(X)$, if $(V_\kappa , \in, R)\models \phi(R)$ then there is an $\alpha\in S$ such that
$(V_\alpha, \in, R \cap V_\alpha) \models \phi(R\cap V_\alpha)$.
\end{definition}

\begin{definition}[Bagaria \cite{Bagaria}]\label{definition: higherorderstat}
Recursively define that a set $A$ is:
\begin{enumerate}
    \item $0\text{-stationary in }\alpha\text{ if }\sup(A)=\alpha$,
    \item $\xi\text{-stationary in }\alpha\text{ where $\xi\leq\alpha$ if }$ $$\forall \eta<\xi\forall S\text{ which is }\eta\text{-stationary in }\alpha,\exists\beta\in A, S\cap\beta\text{ is  }\eta\text{-stationary}$$
    \item Given $A\subset \kappa$, let $Tr_{\xi}(A)$ denote the set $$\{\alpha\in \kappa: A\cap \alpha \text{ is }\xi\text{-stationary}\}.$$
\end{enumerate}

We say that $\alpha$ is $\xi$-stationary if $\alpha$ is $\xi$-stationary as a subset of $\alpha$.  
\end{definition}
Bagaria's motivation for the notions comes from a result in \cite{Bagaria}, where these higher-order stationary reflection properties characterize the non-isolated points in the ordinal topology interpretation of
generalized provability logics (see \cite{GLP} or \cite{provability} for more information regarding this motivation).%{\color{blue} We need to mention here the fact that the new hierarchy is good enough for Bagaria's result}. 
Note that:
\begin{enumerate}
    \item $A$ is $1$-stationary iff $A$ is stationary,
    \item $\alpha$ is $1$-stationary iff $\alpha$ has uncountable cofinality,
    \item $\alpha$ is $2$-stationary iff every stationary subset of $\alpha$ reflects.
\end{enumerate}

Let us define the two variations of Bagaria's higher order stationarity central to this paper. Loosely speaking, one is obtained by varying the degree of simultaneous reflection and the other one is the diagonal version. 

\begin{definition}\label{definition: (n,chi)-s-stationary}
    Let $2\leq \chi<\kappa$ be any regular cardinal and $\kappa$ be a limit ordinal.
    \begin{enumerate}
        \item  $A\subseteq \kappa$ is called $(0,\chi)$-$s$-stationary iff $A$ is unbounded in $\kappa$ and $\cf(A)=\cf(\kappa)\geq \chi$.
        \item   $A\subseteq \kappa$ is called $(\xi,\chi)$-$s$-stationary if for any $\chi'<\chi$ and any $\l T_i\mid i<\chi'\r$ such that each $T_i$ is $(\eta_i, \chi)$-stationary for some $\eta_i<\xi$, there is $\alpha\in A$ such that $\forall i<\chi'$, $T_i\cap\alpha$ is $(\eta_i,\chi)$-stationary.
        \item Given $A\subset \kappa$, let $Tr^\chi_{\xi}(A)$ denote the set $$\{\alpha\in \kappa: A\cap \alpha \text{ is }(\xi,\chi)\text{-$s$-stationary}\}.$$
      \end{enumerate}

\end{definition}
\begin{remark} 
    \begin{itemize}
        \item $\kappa$ is $(1,\chi)$-s-stationary iff $\cf(\kappa)>\chi$.
        \item $S\subseteq \kappa$ is $(1,\chi)$-$s$-stationary iff $S \cap \cof(\geq\chi)\cap \kappa$ is a stationary subset of $\kappa$.
        \item  $\kappa$ is $(2,\chi)$-s-stationary iff every less than $\chi$-many stationary subsets of $\cof(\geq\chi)\cap \kappa$ reflect simultaneously. 
    \end{itemize}
\end{remark}

\begin{remark}
The case when $\chi=3$ appeared in \cite[Definition 2.8]{Bagaria}. We will follow the existing literature and let ``$n$-$s$-stationary'' denote ``$(n,3)$-$s$-stationary'' as in Defintion \ref{definition: (n,chi)-s-stationary}.
\end{remark}

\begin{definition}\label{definition: n-d-stationary}
    Let $\kappa$ be a ordinal.
    \begin{enumerate}
        \item  $A\subseteq \kappa$ is called $0$-$d$-stationary iff $A$ is unbounded in $\kappa$.
        \item  $A$ is called $\xi$-$d$-stationary if for every $\l T_i\mid i<\kappa\r$ such that each $T_i$ is $\eta_i$-stationary for some $\eta_i<\xi$, there is some $\alpha\in A$ such that $\forall i<\alpha$, $T_i\cap\alpha$ is $\eta_i$-$d$-stationary.
        \item Given $A\subset \kappa$, let $Tr^d_{\xi}(A)$ denote the set $$\{\alpha\in \kappa: A\cap \alpha \text{ is }\xi\text{-$d$-stationary}\}.$$
       
    \end{enumerate}

\begin{remark} 
    \begin{itemize}
        \item $\kappa$ is $1$-$d$-stationary iff $\kappa$ is a regular cardinal.
        \item $S\subseteq \kappa$ is $1$-$d$-stationary iff $S\cap \kappa$ is stationary.
        \item  $\kappa$ is $2$-$d$-stationary iff for any $\langle T_i: i<\kappa\rangle$ where each $T_i$ is a stationary subset of $\kappa$, there exists a regular $\alpha<\kappa$ such that for all $i<\alpha$, $T_i\cap \alpha$ is stationary in $\alpha$. 
    \end{itemize}
\end{remark}

\begin{theorem}[Jensen \cite{Jensen1972} for $\xi=1$, Bagaria-Magidor-Sakai \cite{BagariaMagidorSakai} for $\xi\in (1,\omega)$, Bagaria \cite{Bagaria} for $\xi\geq \omega$]
  In $L$, the following are equivalent: 
  \begin{enumerate}
      \item  $\alpha$ is $\xi+1$-stationary,
      \item $\alpha$ is $\xi+1$-$s$-stationary,
      \item  $\alpha$ is $\mathbf{\Pi}^1_\xi$-indescribable.
  \end{enumerate}
\end{theorem}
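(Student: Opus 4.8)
The plan is to establish the three conditions equivalent by running the cycle $(3)\Rightarrow(2)\Rightarrow(1)\Rightarrow(3)$, all by a single transfinite induction on $\xi$, with Jensen's theorem (in $L$, weakly compact $=\mathbf{\Pi}^1_1$-indescribable is equivalent to every stationary set reflecting) as the base case $\xi=1$, and with the induction carried on the set-predicates ``$S$ is $\eta$-stationary'' rather than only the diagonal case $S=\alpha$. The engine for the two forward implications is a \emph{complexity lemma}: for each $\eta\le\xi$ the predicate ``$S$ is $\eta$-stationary in the ambient ordinal'' (equivalently, within the induction, ``$S$ is $(\eta,3)$-$s$-stationary'') is expressed by a $\mathbf{\Pi}^1_\eta$ formula $\psi_\eta(S)$ over $(V_\alpha,\in,S)$, uniformly in the ordinal. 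I would prove this by induction on $\eta$: the base case $\eta=1$ is the familiar fact that stationarity is $\mathbf{\Pi}^1_1$ (a universal second-order quantifier over candidate clubs with a first-order matrix), and the inductive step reads the complexity off Definition \ref{definition: higherorderstat}, where the outermost second-order universal quantifier over candidate $\eta'$-stationary sets ($\eta'<\eta$) raises the projective level by exactly one. For infinite $\xi$ this accounting must be done inside Bagaria's generalized logic, which is where it becomes delicate.

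Granting the complexity lemma, $(3)\Rightarrow(2)$ is short. Suppose $\alpha$ is $\mathbf{\Pi}^1_\xi$-indescribable, fix $\chi'<3$ and $\langle T_i\mid i<\chi'\rangle$ with each $T_i$ being $(\eta_i,3)$-$s$-stationary, $\eta_i\le\xi$. Coding the finitely many $T_i$ into one predicate $R\subseteq V_\alpha$, the conjunction $\bigwedge_{i<\chi'}\psi_{\eta_i}(T_i)$ is still $\mathbf{\Pi}^1_\xi$ and holds in $(V_\alpha,\in,R)$, so by Definition \ref{Def: indescribable} it reflects to a single $\beta<\alpha$, giving that every $T_i\cap\beta$ is simultaneously $(\eta_i,3)$-$s$-stationary; this is exactly $(\xi+1)$-$s$-stationarity. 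The only care needed is to pass between reflection to $V_\beta$ and reflection to $\beta$, for which one restricts to the set of $\beta$ with $V_\beta\cap R=R\cap\beta$, $\beta$ regular, and ordinal height $\beta$ (stationary by the first-order part of indescribability), and uses the absoluteness of $\psi_\eta$. The implication $(2)\Rightarrow(1)$ is folded into the same induction: taking $\chi'=1$ yields single reflection, and the inductive hypothesis at indices $<\xi$ identifies ``$(\eta,3)$-$s$-stationary'' with ``$\eta$-stationary'' for all $\eta\le\xi$, so $s$-stationarity collapses to Bagaria's plain $(\xi+1)$-stationarity.

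The substance is the hard direction $(1)\Rightarrow(3)$, where $V=L$ is essential; I would argue by contraposition. Assume $\alpha$ is not $\mathbf{\Pi}^1_\xi$-indescribable, witnessed by a $\mathbf{\Pi}^1_\xi$ sentence $\phi$ and $R\subseteq V_\alpha$ with $(V_\alpha,\in,R)\models\phi$ but $(V_\beta,\in,R\cap V_\beta)\not\models\phi$ for all $\beta<\alpha$. The goal is to manufacture, for some $\eta\le\xi$ (in fact $\eta=\xi$), an $\eta$-stationary set $S$ that fails to reflect, so that $\alpha$ is not $(\xi+1)$-stationary. The tool is the fine structure of $L$: via condensation one shows that the $\beta<\alpha$ at which the relevant level of the $L$-hierarchy stabilizes carry a $\square$-like, non-reflecting coherent sequence, and, tracking the quantifier structure of $\phi$ through its subformulas, the complexity lemma lets one extract a set $S$ that is provably $\xi$-stationary (its lower-complexity subformulas \emph{do} reflect) yet whose trace $Tr_{\xi}(S)$ omits a club, contradicting reflection at level $\xi+1$.

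The main obstacle is exactly this last construction: converting the failure of $\mathbf{\Pi}^1_\xi$-indescribability into a concrete non-reflecting higher-order stationary set requires the full strength of Jensen's fine-structural machinery (condensation and the resulting square-like sequences), orchestrated so that the \emph{order} of stationarity of the set produced matches the projective complexity $\xi$ of the failing formula. In the infinite-$\xi$ range this is compounded by the need to keep the generalized-logic complexity calculus in lockstep with the transfinite recursion defining $\eta$-stationarity. Since the base case $\xi=1$ is Jensen's theorem and the finite steps are the Bagaria--Magidor--Sakai argument, the plan is to isolate one inductive fine-structure lemma general enough to drive all $\xi$ at once, and then let the two forward implications fall out uniformly from the complexity lemma and indescribability.
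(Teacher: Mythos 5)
First, note the frame of comparison: the paper does not prove this statement at all. It is quoted as a known theorem, with the three regimes attributed to Jensen ($\xi=1$), Bagaria--Magidor--Sakai ($1<\xi<\omega$), and Bagaria ($\xi\geq\omega$); the only proof the paper supplies in this vicinity is that of Corollary \ref{corollary: equivalence}, which is itself a citation of \cite[Corollary 2.5]{BagariaMagidorSakai} and \cite[Theorem 5.1]{Bagaria} plus Propositions \ref{proposition: propagate} and \ref{proposition: chipropagate} to absorb the $d$- and $(\cdot,\chi)$-$s$-variants. So your proposal is measured against the cited literature, and for the two ZFC implications it is faithful to it: the cycle $(3)\Rightarrow(2)\Rightarrow(1)$ does run exactly through your complexity lemma (``$S$ is $\eta$-stationary'' is uniformly $\mathbf{\Pi}^1_\eta$ over $(V_\alpha,\in,S)$), the finitely many $T_i$ (here $\chi'<3$, so at most two) are coded into a single predicate so that $\bigwedge_{i<\chi'}\psi_{\eta_i}(T_i)$ remains $\mathbf{\Pi}^1_\xi$ and reflects simultaneously, and restricting to regular $\beta$ with $R\cap V_\beta=R\cap\beta$ is the correct technicality. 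These parts are right.

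The genuine gap is the direction $(1)\Rightarrow(3)$, the only one that uses $V=L$, and your own text concedes it: you name the tools (condensation, square-like coherent sequences) but you neither state the fine-structure lemma that is supposed to ``drive all $\xi$ at once'' nor exhibit any mechanism converting the witness $(\phi,R)$ of non-$\mathbf{\Pi}^1_\xi$-indescribability into a $\xi$-stationary set with empty $\xi$-trace. That conversion \emph{is} Jensen's theorem at $\xi=1$ and the substance of the Bagaria--Magidor--Sakai and Bagaria arguments beyond it (in the transfinite range one additionally needs normal-form and reflection facts for the generalized $\mathbf{\Pi}^1_\xi$ logic before your ``quantifier accounting'' even makes sense); as written, ``tracking the quantifier structure of $\phi$ through its subformulas\ldots lets one extract $S$'' restates the goal rather than taking a step toward it. One concrete slip worth fixing even in an outline: to refute $(\xi+1)$-stationarity of $\alpha$ you need some $\eta\leq\xi$ and an $\eta$-stationary $S$ with $Tr_\eta(S)=\emptyset$; your weaker conclusion that $Tr_\xi(S)$ ``omits a club $C$'' must be upgraded by passing to $S\cap C$, and that requires knowing $\xi$-stationarity is preserved under intersection with clubs --- not automatic in ZFC, since $NS^\xi_\alpha$ is not provably even an ideal (cf.\ Fact \ref{fact: NSfacts}); in $L$ this is exactly the normality of $NS^\xi_\alpha$ established in \cite{BagariaMagidorSakai}, on which the paper's Corollary \ref{corollary: equivalence} also leans. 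In short: your upward implications match the cited proofs, but the hard $L$-direction remains an unproved plan, so the proposal is not a proof of the theorem.
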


As we will see in Corollary \ref{corollary: equivalence}, the equivalence also extends to $\xi+1$-$d$-stationary and $(\xi+1,\chi)$-$s$-stationary for any $\chi<\alpha$.
Note that the implications from bottom to top are valid in ZFC. The additional constructibility assumption helps in proving (1) implies (4). 

In terms of the consistency strength of these principles, Magidor \cite{Magidor1982} showed that the existence of a $2$-s-stationary is equiconsistent with the existence of a weakly compact cardinal. Surprisingly, Mekler and Shelah \cite{MeklerShelah} showed that the consistency strength of $\kappa$ being 2-stationary is strictly in between a greatly Mahlo cardinal and a weakly compact. They isolated \emph{reflection cardinals} and showed $\kappa$ being a reflection cardinal is equiconsistent with $\kappa$ being a 2-stationary cardinal. Generalizing their results and methods, Bagaria-Magidor-Mancilla \cite{BagariaMagidorMancilla} showed that the consistency strength of a $\xi+1$-stationary cardinal is strictly in between a $\xi$-greatly-Mahlo cardinal and a $\mathbf{\Pi}^1_\xi$-indescribable cardinal. We refer the readers to \cite{BagariaMagidorMancilla} for relevant definitions. To achieve this, they isolate the notion of a $\xi$-reflection cardinal and show that   
\begin{itemize}
    \item there are many $\xi$-reflection cardinals below any $\mathbf{\Pi}^1_\xi$-indescribable cardinal,
    \item no $\xi$-reflection cardinal can be $\leq$ the first $\xi$-greatly Mahlo cardinal.
\end{itemize}

Note that by definition, if $\kappa$ is a $\xi$-reflection cardinal, then $\kappa$ is $\xi$-stationary. In $L$, even more is true: it is $\xi+1$-stationary.

It is therefore a natural question to clarify the relationship between higher order stationary reflections and indescribable cardinals. For example, for any given $\zeta$, is it true that there exists a large enough $\xi$ such that whenever $\kappa$ is $\xi$-$d$-stationary (or $\xi$-stationary), then $\kappa$ is 
$\mathbf{\Pi}^1_\zeta$-indescribable? The main result of this paper is that in general the answer is negative.

Another reason for this investigation is to expose another way of establishing higher order stationary reflection principles, fundamentally different from the Mekler-Shelah approach. Aside from the papers mentioned previously, variations of the Mekler-Shelah method have been used to study the extent of the weakly compact reflection principle by Cody and Sakai \cite{CodySakai}.

The following are the main results for this paper. 
\begin{theorem}\label{theorem: mainnonstronglimit0}
    Suppose that $\lambda$ is a measurable cardinal in $V$. Then in any forcing extension with a poset satisfying $\gamma$-c.c for some $\gamma<\lambda$, $\lambda$ is $\lambda$-stationary, $\lambda$-$d$-stationary and $(\lambda,\chi)$-stationary for all $\chi<\lambda$.
\end{theorem}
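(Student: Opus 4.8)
The plan is to prove the statement in two stages: first obtain the three reflection properties at $\lambda$ in $V$ directly from measurability, and then transfer them to the $\gamma$-c.c.\ extension by means of a lifted (generic) embedding, exploiting that $\gamma<\lambda$. For the $V$-stage, fix a normal measure $U$ on $\lambda$ and let $j\colon V\to M=\Ult(V,U)$ be the ultrapower embedding, so $\mathrm{crit}(j)=\lambda$, ${}^{\lambda}M\subseteq M$, and $V_{\lambda+1}\subseteq M$. I would use three facts: (i) for $A\in U$ one has $\lambda\in j(A)$; (ii) for $S\subseteq\lambda$ one has $j(S)\cap\lambda=S$; and (iii) each predicate ``$S$ is $\eta$-stationary'', ``$\eta$-$d$-stationary'', ``$(\eta,\chi)$-$s$-stationary'' (for $\eta\le\lambda$, $\chi<\lambda$) is computed inside $V_{\lambda+1}$ and hence is absolute between $V$ and $M$. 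Given $\eta<\lambda$ (so $j(\eta)=\eta$) and an $\eta$-stationary $S$, items (i)--(iii) make $\lambda$ itself a witness in $M$ to $\exists\beta\in j(\lambda)\,(j(S)\cap\beta\text{ is }\eta\text{-stationary})$; pulling this back along $j$ yields a reflection point below $\lambda$ in $V$. Running this for all $\eta<\lambda$ shows $\lambda$ is $\lambda$-stationary, and the same template — reflecting $\lambda$-sequences for the $d$-version and sequences of length $<\chi<\lambda=\mathrm{crit}(j)$ (which $j$ fixes) for the $s$-version, always with $\lambda$ as the reflection witness — gives $\lambda$-$d$-stationarity and $(\lambda,\chi)$-$s$-stationarity. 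The absoluteness in (iii) also makes $M$ believe $\lambda$ enjoys all these properties, which I use below.

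For the forcing stage, let $\mathbb P$ be $\gamma$-c.c.\ with $\gamma<\lambda$ and let $G$ be $V$-generic. The first observation is that $e:=j\restriction\mathbb P\colon\mathbb P\to j(\mathbb P)$ is a complete embedding: maximal antichains of $\mathbb P$ have size $<\gamma<\lambda=\mathrm{crit}(j)$, so $j$ sends each such $A$ to $j[A]$, which by elementarity is a maximal antichain of $j(\mathbb P)$. Consequently, in an outer model $W$ I may force with the quotient $j(\mathbb P)/e[G]$ to obtain an $M$-generic $H\supseteq e[G]$ for $j(\mathbb P)$ and lift $j$ to an elementary $j^{+}\colon V[G]\to M[H]$ with $j^{+}(\dot x_{G})=j(\dot x)_{H}$. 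Because $e[G]\subseteq H$ and the antichains deciding ``$\alpha\in\dot S$'' for $\alpha<\lambda$ have size $<\gamma$, one checks $j^{+}(S)\cap\lambda=S$ for every $S\in\mathcal P(\lambda)^{V[G]}$. Note that $\lambda$ need not remain measurable in $V[G]$ (adding $\lambda$ Cohen reals destroys inaccessibility), so $j^{+}$ lives only in $W$; this is harmless, since reflection below $\lambda$ is an absolute statement about $V[G]$.

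With this apparatus the reflection argument mirrors the $V$-stage. Given an $\eta$-stationary $S\in V[G]$, elementarity gives that $j^{+}(S)$ is $\eta$-stationary in $j(\lambda)$ in $M[H]$; if I can show that $S=j^{+}(S)\cap\lambda$ is again $\eta$-stationary in $\lambda$ in $M[H]$, then $\lambda$ witnesses $\exists\beta<j(\lambda)\,(j^{+}(S)\cap\beta\text{ is }\eta\text{-stationary})$ in $M[H]$, and pulling back along $j^{+}$ produces a reflection point below $\lambda$ in $V[G]$. Since the target statement (``$\lambda$ is $\lambda$-stationary'', and its $d$- and $(\lambda,\chi)$-$s$-variants) is absolute to $V[G]$, this suffices; the $d$- and $s$-variants are handled in the same way, with the bookkeeping on sequence lengths exactly as in $V$.

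The crux — and the step I expect to be the main obstacle — is the preservation claim just invoked: that $\eta$-stationarity of a set $S\in\mathcal P(\lambda)^{V[G]}$ transfers upward from $V[G]$ to $M[H]$. I would prove this by induction on $\eta$. The base case $\eta=1$ is the standard fact that $\gamma$-c.c.\ forcing preserves stationarity: every club of $\lambda$ in $M[H]=M[G][K]$ absorbs an $M[G]$-club, which is a $V$-club and hence a $V[G]$-club met by $S$. The inductive step is delicate because $M[H]$ contains new $\zeta$-stationary sets $Z'$ (for $\zeta<\eta$) that need not lie in $V[G]$, so one cannot simply invoke the reflection of $S$ in $V[G]$. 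The plan here is to strengthen the induction hypothesis to a ``stationarily often'' form — that the set of reflection points of any $\zeta$-stationary set is itself stationary in the ambient model (which the measure supplies in $V$ via a set in $U$, and which $M$ inherits by the absoluteness noted above) — so that the stationary set $S$ necessarily meets the stationary set of reflection points of $Z'$. Making this nested induction go through, while keeping track of cofinalities below $\gamma$ (where $\gamma$-c.c.\ forcing may change cofinalities) and the $\gamma$-c.c.\ of the quotient $j(\mathbb P)/e[G]$, is the technical heart of the argument.
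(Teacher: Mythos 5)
Your apparatus up to the crux is sound: the complete embedding $j\restriction \mathbb{P}$ (antichains have size $<\gamma<\mathrm{crit}(j)$), the lift $j^{+}$, the identity $j^{+}(S)\cap\lambda=S$, the base case via $\gamma$-c.c.\ preservation of stationarity, and the pull-back of reflection at $\lambda$ are all fine, and this lifted-embedding formulation is essentially the generic-ultrapower formulation the paper uses. But the proof stops exactly where the theorem begins: the upward transfer of $\eta$-stationarity from $V[G]$ to $M[H]$ \emph{is} the content of the result, and the repair you sketch would fail as stated, for two reasons. First, ``the reflection points of any $\zeta$-stationary set form a stationary set'' is too weak to run the meeting argument: from $S$ stationary and $\mathrm{Tr}_\zeta(Z')$ stationary you cannot conclude $S\cap \mathrm{Tr}_\zeta(Z')\neq\emptyset$, since disjoint stationary sets abound; what is needed is that $\mathrm{Tr}_\zeta(Z')$ lie in the dual filter of a uniform \emph{normal} ideal with respect to which the relevant sets are positive (normality is also what the diagonal intersections in the $d$-variant require). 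Second, even the correct filter-strength hypothesis is not available in $M[H]$ by the route you indicate: $U\notin M$, so $M$ inherits only the $V_{\lambda+1}$-absolute consequences of measurability, not the measure itself, and ``measurability'' is not an invariant that survives to $V[G]$, to $M$, or to $M[H]$ (as you yourself observe for $V[G]$). So your nested induction has no model in which its strengthened hypothesis can be verified at the successor step.

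The paper supplies precisely the missing invariant. It replaces the measure by its dual ideal $I$, a uniform normal $\gamma$-saturated ideal, and proves by induction on $n$, \emph{uniformly over all cardinals carrying such an ideal}, the statement $(*)_n$ of Theorem \ref{theorem: nonstronglimit}: if $T$ is $n$-stationary then $Tr_n(T)\in I^{*}$ (together with the $(n,\chi)$-$s$ and $d$-analogues, whose meeting and diagonal arguments then follow from $\lambda$-completeness and normality of $I^{*}$). Two facts let the induction cross the forcing: Kanamori's theorem (Fact \ref{fact: generate}(2)) that in any $\gamma$-c.c.\ extension the generated ideal $\bar{I}$ is again uniform, normal and $\gamma$-saturated, and the absorption property (Fact \ref{fact: generate}(3)) that every set in $\bar{I}^{*}$ contains a ground-model set in $I^{*}$. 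A new $k$-stationary set $S$ appearing in the generic-ultrapower extension is thereby replaced by an old $C\in I^{*}$ with $C\subseteq Tr_k(S)$; the induction hypotheses $(*)_m$ for $m<k$ in the ground model certify that $C$ is $k$-stationary there, $n$-stationarity of $T$ produces $\alpha\in T\cap Tr_k(C)$, and Lemma \ref{lemma: traceOftrace} together with the $\lambda$-closure of the generic ultrapower yields $\alpha\in T\cap Tr_k(S)$, the desired contradiction. If you wish to keep your architecture, the fix is to carry saturation rather than ``stationarity of traces'' as the inductive invariant and to prove the filter statement $Tr_\eta(Z')\in \bar{I}^{*}$ in $M[H]$; at that point your argument becomes the paper's.
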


In particular, we have a way of producing a non strong limit weakly inaccessible cardinal $\lambda$ that is $\lambda$-$d$-stationary. The next theorem ``transfers'' this compactness to a strongly inaccessible cardinal, using the technology of Radin forcing. 
\begin{theorem}\label{theorem: mainstronglimit}
    Relative to the existence of a $H(\lambda^{++})$-hypermeasurable cardinal\footnote{A cardinal $\kappa$ is an $H(\theta)$-hypermeasurable cardinal if there is an elementary embedding $j:V\rightarrow M$ with $crit(j)=\kappa$ and $H(\theta)\in M$.} $\kappa$ where $\lambda>\kappa$ is a measurable cardinal, it is consistent that a strongly inaccessible cardinal $\kappa$ is $n$-$d$-stationary for all $n\in \omega$, but $\kappa$ is not weakly compact.
\end{theorem}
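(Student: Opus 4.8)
The plan is to use the measurable $\lambda$, via Theorem~\ref{theorem: mainnonstronglimit0}, as a cardinal carrying strong diagonal reflection, and then to transfer that reflection down to $\kappa$ through Radin forcing whose measure sequence has length exactly $\lambda$. Fix an embedding $j\colon V\to M$ witnessing the hypermeasurability, so $\mathrm{crit}(j)=\kappa$ and $H(\lambda^{++})^V\subseteq M$, and (preparing with a mild $\lambda$-preserving forcing if needed) assume GCH holds from $\lambda$ upward.

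First I would fix the cardinal arithmetic at $\kappa$. Force with $\mathrm{Add}(\kappa,\lambda^+)$ to obtain $V_1$ with $2^\kappa=\lambda^+$; this poset is $\kappa^+$-c.c.\ and $\kappa^+<\lambda$, so by Theorem~\ref{theorem: mainnonstronglimit0} the (now non-strong-limit, weakly inaccessible) cardinal $\lambda$ is still $\lambda$-$d$-stationary in $V_1$, hence $n$-$d$-stationary for every $n\in\omega$. The role of the $H(\lambda^{++})$-hypermeasurability is precisely to let this blow-up coexist with a measure sequence on $\kappa$ of length $\lambda$: lifting $j$ to $j_1\colon V_1\to M_1$ by the standard master-condition argument keeps $H(\lambda^{++})^{V_1}\subseteq M_1$, and since now $(2^\kappa)^+=\lambda^{++}$ and there are $2^{2^\kappa}=\lambda^{++}$ measures on $V_\kappa$ available, I can derive from $j_1$ a measure sequence $\bar U$ with $lh(\bar U)=\lambda<(2^\kappa)^+$. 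I would carry out this derivation so that $\bar U$ has \emph{no weak repeat point}: the abundance of available measures lets the derived sequence stay strictly increasing in the Mitchell order up to $\lambda$, and verifying that truncation at $\lambda$ introduces no weak repeat point is one of the points that must be checked.

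Now force with $R_{\bar U}$ over $V_1$ to obtain $V_2$. Since $\cf(lh(\bar U))=\cf(\lambda)=\lambda\ge\kappa^+$, item~(2) of the listed Radin facts gives that $\kappa$ is strongly inaccessible in $V_2$. Since $\bar U$ has no weak repeat point, $\kappa$ should fail to be weakly compact in $V_2$; I would obtain this either from the Ben-Neria--Zhang analysis of weak compactness in Radin extensions~\cite{OmerJing}, which ties weak compactness of $\kappa$ to the presence of a weak repeat point, or, failing a ready-made citation, by directly constructing a $\kappa$-Aronszajn tree from the non-stabilizing intersection of the $U_\alpha$.

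The heart of the proof, and the step I expect to be the main obstacle, is transferring the higher-order diagonal stationarity of the length $\lambda$ up to $\kappa$ in $V_2$. I would prove, by induction on $\xi$, that a subset of $\kappa$ is $\xi$-$d$-stationary in $V_2$ exactly when the corresponding set of ``lengths'' it determines along the generic Radin club $C_G$ is $\xi$-$d$-stationary, the two being linked through the measures $U_\alpha$. For the inductive step, given an $R_{\bar U}$-name for a sequence $\langle \dot T_i\mid i<\kappa\rangle$ with each $\dot T_i$ forced to be $\eta_i$-$d$-stationary ($\eta_i<\xi$), I would use the Prikry property of $R_{\bar U}$ to reduce the existence of a diagonal reflection point to the existence of a suitable point $\mu\in C_G$ whose own measure sequence has length reflecting the $\eta_i$; the $\xi$-$d$-stationarity of $\lambda$ in $V_1$, pulled through $j_1$ and the $U_\alpha$, supplies such a $\mu$, and the inductive hypothesis upgrades reflection at the level of lengths to genuine $\eta_i$-$d$-stationarity of $\dot T_i\cap\mu$. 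Running this uniformly over all finite $\xi$ yields that $\kappa$ is $n$-$d$-stationary for every $n\in\omega$. The delicate points are the bookkeeping matching the measures $U_\alpha$ to the indices $\eta_i$, and ensuring that it is the diagonal (length-$\kappa$) flavor of reflection, rather than merely the simultaneous one, that survives the reduction.
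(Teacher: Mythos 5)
Your overall architecture matches the paper's: prepare so that $2^\kappa=\lambda^+$ while lifting the hypermeasurable embedding, derive a measure sequence $\bar{U}$ of length $\lambda<2^\kappa$, force with $R_{\bar{U}}$, get non-weak-compactness from the Ben-Neria--Zhang analysis, and run an induction transferring reflection at $lh(\bar{U})$ to $\kappa$. But there is a genuine gap at the heart of your transfer step: you identify the wrong invariant at $\lambda$. You claim $S\subseteq\kappa$ is $\xi$-$d$-stationary in $V_2$ \emph{exactly when} the associated set of lengths is $\xi$-$d$-stationary in $\lambda$. What is actually provable (and what the paper proves, as $\phi_{0,n}$ and $\phi_{1,n}$) is that $n$-$d$-stationarity at $\kappa$ corresponds to $(n-1,\kappa^+)$-$s$-stationarity of the index set in $\lambda$ --- the \emph{simultaneous} hierarchy, reflecting fewer than $\kappa^+$ many sets at points whose base case is cofinality $\geq\kappa^+$, with an off-by-one shift --- not the diagonal hierarchy at $\lambda$. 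The mismatch is not cosmetic: a diagonal failure at $\lambda$ is witnessed by $\lambda$-many sets, and these cannot be pulled down to a $\kappa$-indexed diagonal sequence at $\kappa$, so the forward direction of your biconditional has no proof and is false for suitable $\bar{U}$; conversely, every quantitative step in the argument (stabilizing the $\kappa$-many lower parts $e_0\in R_{<\kappa}$, the $\kappa$-many conditions in maximal antichains of witnesses, the finite stems $\vec{\eta}$) runs on exactly the $\kappa^+$-completeness of $NS^{(n,\kappa^+)}_\lambda$, which is why the $s$-hierarchy with parameter $\kappa^+$ is forced on you. Relatedly, the induction cannot be run at $\kappa$ alone: since the reflection points along the Radin club are themselves measure sequences $\bar{u}$ with parameter $\kappa(\bar{u})^+$, the statements must be proved simultaneously for all ``good'' measure sequences, which your sketch does not set up.

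The second gap is in the preparation. A single step of $\mathrm{Add}(\kappa,\lambda^+)$ followed by a ``standard master-condition argument'' does not lift a hypermeasurable embedding: $j''g$ is not an element of $M$ (the embedding is not $\lambda^+$-supercompact), so no master condition exists, and one needs the full two-step preparation --- an Easton iteration of Cohen forcings below $\kappa$, a term-forcing generic for $i(\mathrm{Add}(\kappa,\lambda^+))$ over the normal ultrapower secured in the ground model, transfer along the factor map, and Woodin's surgery. More importantly, what the surgery buys is not just the lift but the property that \emph{every} $X\subseteq\lambda$ has the form $j(f)(\kappa)$; this ``goodness'' is used constantly in the transfer (to realize arbitrary $(n,\kappa^+)$-$s$-stationary subsets of $\lambda$ as index sets $Ind_{\bar{U}}(\Gamma)$ of ground-model $\Gamma$'s, and it reflects to measure one many $\bar{u}$), and the paper's closing remark shows the transfer genuinely fails without it. Your proposal never isolates this hypothesis. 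On the positive side, your worry about weak repeat points is unnecessary: since $lh(\bar{U})=\lambda<(2^\kappa)^M$, the divisibility criterion of Ben-Neria--Zhang applies outright (indeed no ordinal below $2^\kappa$ is a weak repeat point), so no engineering of the derived sequence, and certainly no ad hoc Aronszajn-tree construction, is needed.
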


The organization of this paper is: 
\begin{enumerate}
    \item In Section \S\ref{section: prelim}, we record some preliminary facts regarding higher order stationary sets.
    \item In Section \S\ref{section: nonstronglimit}, we prove Theorem \ref{theorem: mainnonstronglimit0} and its 2-cardinal generalization.
    \item In Section \S\ref{section: PreparRadin} we prapare the ground model and present the relevant background for Radin forcing.
    \item In Section \S\ref{section: omegaThm}, we present a proof of Theorem \ref{theorem: mainstronglimit}.
    \item In Section \S\ref{section: questions}, we conclude with some open questions. 
\end{enumerate}
\subsection{Notations} Given a function $f:A\rightarrow B$ and $X\subseteq A$, the \textit{pointwise image} of $X$ by $f$ is the set $f''X:=\{f(x)\mid x\in X\}$. Given a set $X$ and a cardinal $\lambda$, we denote by   $P_\lambda X=\{Y\subseteq X\mid |Y|<\lambda\}$. For a sequence  $\l X_i\mid i<\lambda\r$ consisting of subsets of $\lambda$ we denote by the \textit{diagonal intersection} $\Delta_{i<\lambda}X_i:=\{\nu<\lambda\mid \forall\alpha<\nu, \ \nu\in X_i\}$. For a set of ordinals $A$, $\sup(A)=\cup A$ and we say that $A$ is \textit{bounded} in $\lambda$ if $\sup(A\cap\lambda)<\lambda$. We say that $A$ is \textit{closed} if it is closed in the order topology of the ordinals. A set $C$ is a \textit{club} at $\lambda$ is it is closed and unbounded, and the \textit{club filter} is
$$Cub_\lambda:=\{X\in P(\lambda)\mid \exists C\text{ a club }C\subseteq X\}$$
A set $S$ is called stationary in $\lambda$ is $S\cap C\neq\emptyset$ for every club $C$ in $\lambda$. We assume familiarity with forcing theory and refer the reader to \cite{CummingsHand} for background and standard notations. An elementary embedding $j$ is always a function $j:V\rightarrow M$ where $M$ is a transitive model, $crit(j)$ denoted the minimal ordinal which is moved by $j$. If $U$ is a $\sigma$-complete ultrafilter then $j_U:V\rightarrow M_U$ denoted the ultrapower by $U$. Given two finite sequence $\l x_1,...,x_n\r$ and $\l y_1,...,y_n\r$ we denote by $\l x_1,...,x_n\r^{\smallfrown}\l y_1,...,y_n\r=\l x_1,...,x_n,y_1,...,y_n\r$.
% We conjecture that in a suitable Radin extension witnesses the failure of this.

% Here are more relevant large cardinals:
% \begin{definition}
% $\kappa$ is ineffable if one of the following holds:
% \begin{enumerate}
%     \item  for every $\l A_\alpha\subseteq\alpha\mid \alpha>\kappa\r$ there is a set $A$ such  that $\{\alpha<\kappa\mid A\cap\alpha=A_\alpha\}$ is stationary.
%     \item for every $f:[\kappa]^2\rightarrow 2$ there is a stationary set $S$ which is homogeneous.
% \end{enumerate}
% \end{definition}
% \begin{definition}
% $\kappa$ is almost ineffable if the set $\{\alpha<\kappa\mid A\cap\alpha=A_\alpha\}$ is unbounded. 
% \end{definition}
% \begin{definition}
% $\kappa$ is subtle if for any sequence $\l A_\alpha \subseteq \alpha \mid \alpha < \kappa\r$ and any club $C \subseteq \kappa$, there
% are $\alpha < \beta \in C$ such that $A_\alpha= A_\beta\cap\beta$.
% \end{definition}
% We know that:
% \begin{enumerate}
%     \item ineffable$\Rightarrow$almost ineffable$\Rightarrow$ weakly compact and subtle.
%     \item We know that subtle implies diamond.
%     \item question: what is the relation between weakly compact and subtle? clearly if weakly compact$\Rightarrow$subtle then the open question would be solved. But does subtle implies weakly compact?
%     \item question: what is the relation between the $\Pi^1_m$-hierarchy and theses cardinals? I think that $\Pi^1_2$ implies almost ineffable: 
% \end{enumerate}

\section{Some preliminary facts}\label{section: prelim}

\subsection{$n$-stationarity, $(n,\chi)$-$s$-stationarity and $n$-$d$-stationarity}

% \begin{remark}
% Note that:
% \begin{enumerate}
%     \item $\alpha$ is $\xi$-stationary iff for every $\eta<\xi$ and every $\eta$-stationary set, $Tr_\eta(A)\neq\emptyset$.
%     \item $\alpha$ is $\xi$-s-stationary iff for every $\eta<\xi$ and every $\eta$-stationary sets, $A,B$ $Tr_\eta(A)\cap Tr_\eta(B)\neq\emptyset$.
%     \item $\alpha$ is $\xi$-diagonal-stationary iff for $\langle S_i: i<\alpha\rangle$ such that each $S_i$ is $\xi_i$-stationary for some $\xi_i<\xi$, $\Delta_{i<\alpha}Tr_{\xi_i}(S_i)\neq\emptyset$.
% \end{enumerate}
% \end{remark}
Let us start with a useful lemma regarding the trace operation.
\begin{lemma}\label{lemma: traceOftrace} 
Fix a regular cardinal $\lambda$, $T\subset \lambda$, $\chi<\lambda$ and $n<\lambda$. \begin{enumerate}
    \item If $A\subset Tr_k(T)$ where $k\leq n$, then $\mathrm{Tr}_n(A)\subset \mathrm{Tr}_k(T)$,
    \item If $A\subset Tr^\chi_{k}(T)$ where $k \leq  n$, then $Tr_n^\chi(A)\subset Tr_{k}^\chi(T)$,
    \item If $A\subset Tr^d_{k}(T)$ where $k \leq  n$, then $Tr_n^d(A)\subset Tr_{k}^d(T)$. Furthermore, if $A\subset \Delta_{i<\lambda} Tr^d_k(T_i)$, then $Tr^d_n(A)\subset \Delta_{i<\lambda} Tr^d_k(T_i)$.
\end{enumerate}
\end{lemma}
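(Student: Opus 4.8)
The plan is to prove all three items by the same two-step ``reflect twice'' argument, and then to deduce the ``furthermore'' clause of (3) from item (3) together with an auxiliary tail-preservation lemma. For item (1), I would fix $\alpha\in Tr_n(A)$, so that $A\cap\alpha$ is $n$-stationary in $\alpha$, and verify directly that $T\cap\alpha$ is $k$-stationary. Given $\eta<k$ and $S$ that is $\eta$-stationary in $\alpha$, I would first apply the $n$-stationarity of $A\cap\alpha$ to $S$ (legitimate since $\eta<k\le n$) to obtain $\gamma\in A\cap\alpha$ with $S\cap\gamma$ being $\eta$-stationary. Since $\gamma\in A\subseteq Tr_k(T)$, the set $T\cap\gamma$ is $k$-stationary, so applying this to $S\cap\gamma$ (again $\eta<k$) yields $\beta\in T\cap\gamma\subseteq T\cap\alpha$ with $(S\cap\gamma)\cap\beta=S\cap\beta$ being $\eta$-stationary. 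As $\eta<k$ and $S$ were arbitrary, $T\cap\alpha$ is $k$-stationary, i.e. $\alpha\in Tr_k(T)$.

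Items (2) and (3) run verbatim with the appropriate definitions, and the only facts used beyond the definitions are that $\beta<\gamma$ gives $(S\cap\gamma)\cap\beta=S\cap\beta$. For (2) one reflects a whole sequence $\langle S_i\mid i<\chi'\rangle$ (with $\chi'<\chi$ and each $S_i$ of degree $\eta_i<k$) simultaneously, first through the $(n,\chi)$-$s$-stationarity of $A\cap\alpha$ to produce $\gamma$, then through the $(k,\chi)$-$s$-stationarity of $T\cap\gamma$ to produce $\beta$. For (3) one does the same with sequences $\langle S_i\mid i<\alpha\rangle$, noting that when we pass from $A\cap\alpha\subseteq\alpha$ to $T\cap\gamma\subseteq\gamma$ the relevant index set shrinks from $\alpha$ to $\gamma$, which is exactly what the $d$-definition demands: step one yields $S_i\cap\gamma$ being $\eta_i$-$d$-stationary for all $i<\gamma$, and these are precisely the hypotheses needed to feed the sequence $\langle S_i\cap\gamma\mid i<\gamma\rangle$ into the $k$-$d$-stationarity of $T\cap\gamma$.

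For the ``furthermore'' clause I would fix $\alpha\in Tr^d_n(A)$ and $i_0<\alpha$, the goal being that $T_{i_0}\cap\alpha$ is $k$-$d$-stationary. The key observation is that every $\gamma\in A$ with $\gamma>i_0$ lies in $Tr^d_k(T_{i_0})$, because $\gamma\in\Delta_{i<\lambda}Tr^d_k(T_i)$ and $i_0<\gamma$; hence $A\setminus(i_0+1)\subseteq Tr^d_k(T_{i_0})$. Applying item (3) to $A\setminus(i_0+1)$ in place of $A$ and $T_{i_0}$ in place of $T$ gives $Tr^d_n\bigl(A\setminus(i_0+1)\bigr)\subseteq Tr^d_k(T_{i_0})$, so it suffices to know $\alpha\in Tr^d_n(A\setminus(i_0+1))$, i.e. that $(A\cap\alpha)\setminus(i_0+1)$ is still $n$-$d$-stationary. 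This reduces the whole clause to a tail-preservation lemma: removing a bounded initial segment preserves $\eta$-$d$-stationarity.

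The main obstacle is exactly this last lemma, which amounts to forcing reflection points above a prescribed bound. I would prove it by induction on $\eta$: given a test sequence $\langle R_i\rangle$, truncate it to $R'_i=R_i\setminus\rho$ (still $\zeta_i$-$d$-stationary by the induction hypothesis, as each $\zeta_i<\eta$), reflect it through $S$, and observe that any reflection point $\delta$ of $\langle R'_i\rangle$ must satisfy $\delta>\rho$, since otherwise $R'_i\cap\delta=\emptyset$ for every $i<\delta$, contradicting its stationarity. Monotonicity of $d$-stationarity under supersets (any witness in $S$ also witnesses reflection for a superset) then upgrades reflection of $\langle R'_i\rangle$ to reflection of the original $\langle R_i\rangle$ by $S\setminus\rho$. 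The naive alternative of dedicating one coordinate of the test sequence to a cofinal ``largeness'' set fails because it discards the reflection data at that coordinate, so the truncation route is the right one; its one delicate point is excluding the degenerate witness $\delta=0$, which is handled by the standing convention—visible in the remarks, where for instance the witness for $2$-$d$-stationarity is taken regular—that reflection points are nonzero limit ordinals.
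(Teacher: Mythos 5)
Your proof of items (1)--(3) is exactly the paper's two-step ``reflect twice'' argument (reflect the test set, or test sequence, first through the $n$-stationarity of $A\cap\alpha$ to get $\gamma\in A$, then through $T\cap\gamma$ using $A\subseteq Tr_k(T)$), and your reduction of the ``furthermore'' clause to item (3) applied to $A\setminus(i_0+1)$ is also precisely the paper's route. The one point where you go beyond the paper is the tail-preservation lemma: the paper simply asserts $\alpha\in Tr^d_n(A-(i+1))$ without justification, whereas you correctly identify that removing a bounded initial segment must be shown to preserve $n$-$d$-stationarity and prove it by induction on the degree (truncating the test sets $R_i\mapsto R_i\setminus\rho$, noting a reflection point of the truncated sequence must exceed $\rho$, and using monotonicity of $d$-stationarity under supersets); the single delicate point, the degenerate witness $\delta=0$, is handled by the same implicit convention on reflection points that the paper itself relies on, so your argument is complete and, if anything, slightly more rigorous than the original at that step.
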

\begin{proof}
For $(1)$, fix $\beta\in \mathrm{Tr}_n(A)$. Let $S\subset \beta$ be $m$-stationary for some $m<k$. We need to find $\beta'\in T\cap \beta$ such that $S\cap \beta'$ is $m$-stationary. Since $A\cap \beta$ is $n$-stationary, we can find $\beta_0\in A\cap \beta$ such that $S\cap \beta_0$ is $m$-stationary. As $A\subset \mathrm{Tr}_k(T)$, $T\cap \beta_0$ is $k$-stationary. Therefore, there is $\beta'\in T\cap \beta_0$ such that $S\cap \beta'$ is $m$-stationary.

For $(2)$, fix $\beta\in \mathrm{Tr}_n^\chi(A)$. Let $\langle S_i: i<\chi'\rangle$ for some $\chi'<\chi$ and $S_i\subset \beta$ being $(k_i, \chi)$-$s$-stationary where $k_i<k$ be given. Since $A\cap \beta$ is $(n,\chi)$-$s$-stationary, there exists $\beta'\in A\cap \beta$ such that $\beta'\in \bigcap_{i<\chi'} Tr^{\chi}_{k_i}(S_i)$. As $T\cap \beta'$ is $(k,\chi)$-$s$-stationary, there exists $\beta^*\in T\cap \beta'$ such that $\beta^*\in \bigcap_{i<\chi'} Tr^\chi_{k_i}(S_i\cap \beta')=\bigcap_{i<\chi'} Tr^{\chi}_{k_i}(S_i)$. In other words, $T\cap \beta$ is $(k,\chi)$-$s$-stationary.

For $(3)$, fix $\beta\in \mathrm{Tr}_n^d(A)$. Let $\langle S_i: i<\beta\rangle$ where each $S_i\subset \beta$ is $k_i$-$d$-stationary where for some $k_i<k$. Since $A\cap \beta$ is $n$-$d$-stationary, there exists $\beta'\in A\cap \beta$ such that $\beta'\in \bigcap_{i<\beta'} Tr^{d}_{k_i}(S_i)$. As $T\cap \beta'$ is $k$-$d$-stationary, there exists $\beta^*\in T\cap \beta'$ such that $\beta^*\in \bigcap_{i<\beta^*} Tr^d_{k_i}(S_i)$. In other words, $T\cap \beta$ is $k$-$d$-stationary. To see the ``furthermore" part, fix $\alpha\in Tr^d_n(A)$ and $i<\alpha$, we know that $A-(i+1)\subset Tr^d_k(T_i)$. By the previous argument, we have that $\alpha\in Tr_n^d(A-(i+1))\subset Tr^d_k(T_i)$.
\end{proof}
The combinatorial properties of the $n$-stationary sets are best expressed in the language of ideals and filters. Ideals are the standard absractization of the notion of ``smallness". Recall that a set $I\subseteq P(X)$ is an \textit{ideal} on $X$ if $\emptyset\in I$, $I$ is downward closed with respect to ``$\subseteq$" and closed under finite unions. We say that an ideal $I$ is \textit{proper} if $X\notin I$. The dual notion of an ideal is a \textit{filter}, i.e. given an ideal  $I$ we define the \textit{dual filter} $I^*:=\{X\setminus N\mid N\in I\}$. We extend the definition of $I^*$ to any set $I\subseteq P(X)$.  The set of \textit{positive sets} with respect to some ideal $I$ is denoted by $I^+:=P(X)\setminus I$. For more information about ideal and filters we refer the reader to \cite[Ch. 7]{Jech2003}
\begin{definition}
For every $n<\lambda$, let 
$NS^n_\lambda$, $NS^{(n,\chi)}_\lambda$, and $NS^d_\lambda$ be the set of all non-$n$-stationary, non-$(n,\chi)$-s-stationary, non-$n$-$d$-stationary subsets of $\lambda$ (resp.), and let $Cub^n_\lambda$, $Cub^{(n,\chi)}_\lambda$, $Cub^d_\lambda$ be the corresponding dual filters. 
\end{definition}

\begin{fact} \label{fact: NSfacts}
\begin{enumerate}
    \item If $T\notin NS^n_\lambda$, then $Tr_n(T)\in Cub^{m}_\lambda$ for any $m>n$. Indeed, $\lambda\setminus Tr_n(T)$ is not $m$-stationary as witnessed by the $n$-stationary set $T$. 
    \item Conversely, if $NS^m_\lambda$ is proper, then for every set $C\in Cub^{m}_\lambda$ there is an $n$-stationary set $T$ for some $n<m$ such that $Tr_n(T)\subseteq C$. To see this, since $\lambda\setminus C\in NS^{m}_\lambda$, there is some $n<m$ and a $n$-stationary set $T$ such that $Tr_n(T)\cap (\lambda\setminus C)=\emptyset$, namely, $Tr_n(T)\subseteq C$.
    \item We have that $NS^n_\lambda\subseteq NS^{m}_\lambda$ (and therefore $Cub^n_\lambda\subseteq Cub^{m}_\lambda$) for any $n\leq m < \lambda$. This follows from the fact that whenever $S$ is $m$-stationary, by Definition \ref{definition: higherorderstat}, it is also $n$-stationary.
    \item $NS^n_\lambda$ is always upward closed with respect to $\subseteq$. Indeed $\emptyset\in NS^0_\lambda\subseteq NS^n_\lambda$, if $X\in NS^n_\lambda$ and $Y\subseteq X$, then and $m$-stationary set $S$ for $m<n$ which witnesses that $X$ is not $n$-stationary will also witness that $Y$ is not $n$-stationary.
    \item If $NS^n_\lambda$ is an ideal, then it is proper iff $\lambda$ is an $n$-stationary cardinal.
\end{enumerate} 
\end{fact}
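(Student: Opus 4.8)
The plan is to observe that every clause is a direct unpacking of the recursive Definition \ref{definition: higherorderstat}, the definition of the trace $\mathrm{Tr}_n$, and the definition of the dual filter $I^*$. The one reformulation I would set up at the outset and reuse throughout is that, for any $I\subseteq P(\lambda)$, a set $D$ belongs to $I^*=\{\lambda\setminus N\mid N\in I\}$ exactly when $\lambda\setminus D\in I$ (take $N=\lambda\setminus D$). Applied to $I=NS^m_\lambda$ this says $D\in Cub^m_\lambda$ iff $\lambda\setminus D$ is not $m$-stationary. Together with the explicit form $\mathrm{Tr}_n(T)=\{\alpha<\lambda\mid T\cap\alpha\text{ is }n\text{-stationary}\}$, this turns every statement below into a question about which single lower-level stationary set serves as the witness in the ``$\forall\eta<\xi\,\forall S$'' clause of $n$-stationarity.

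For part (1), assume $T$ is $n$-stationary and fix $m>n$. By the reformulation it suffices to show $\lambda\setminus\mathrm{Tr}_n(T)$ is not $m$-stationary, and since $n<m$ I would simply take the witness at level $\eta=n$ to be $T$ itself: by the very definition of the trace, every $\beta\in\lambda\setminus\mathrm{Tr}_n(T)$ is a point at which $T\cap\beta$ fails to be $n$-stationary, so no $\beta$ in $\lambda\setminus\mathrm{Tr}_n(T)$ reflects the $n$-stationary set $T$, which is exactly the failure of $m$-stationarity. Part (2) is the converse run through the same identity: given $C\in Cub^m_\lambda$ we have $\lambda\setminus C$ not $m$-stationary, so by Definition \ref{definition: higherorderstat} there is some $n<m$ and an $n$-stationary $T$ such that no $\beta\in\lambda\setminus C$ has $T\cap\beta$ being $n$-stationary; rewriting this as $\mathrm{Tr}_n(T)\cap(\lambda\setminus C)=\emptyset$ gives $\mathrm{Tr}_n(T)\subseteq C$. (The hypothesis that $NS^m_\lambda$ is proper is used only to ensure $Cub^m_\lambda$ is not all of $P(\lambda)$, so that the assertion concerns a genuine filter.)

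For part (3) I would argue by monotonicity of the defining quantifier: the clause defining $m$-stationarity ranges over all $\eta<m$, so when $m\geq n$ it in particular covers all $\eta<n$; hence any $m$-stationary set is $n$-stationary, i.e. $NS^n_\lambda\subseteq NS^m_\lambda$, and passing to complements through the reformulation yields $Cub^n_\lambda\subseteq Cub^m_\lambda$. For part (4) the key observation is that a witness to non-$n$-stationarity survives shrinking: if some $\eta$-stationary $S$ with $\eta<n$ has no reflection point inside $X$, then it has none inside any $Y\subseteq X$ either, so the same $S$ witnesses that $Y$ is not $n$-stationary; combined with $\emptyset\in NS^0_\lambda\subseteq NS^n_\lambda$ from part (3), this gives the stated closure under subsets. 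Finally, part (5) is immediate from the definitions: for an ideal, properness means $\lambda\notin NS^n_\lambda$, which by the convention of Definition \ref{definition: higherorderstat} (that $\lambda$ is $n$-stationary iff $\lambda$ is $n$-stationary as a subset of itself) is precisely the assertion that $\lambda$ is an $n$-stationary cardinal.

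I do not expect a genuine obstacle here, since nothing beyond the definitions is invoked; the only place demanding care is the bookkeeping between the three layers of quantifiers (the level $\eta<m$, the reflecting set $S$, and the reflection point $\beta$) and keeping the direction of the trace/dual-filter correspondence straight. The single conceptual point worth flagging is that in parts (1) and (2) the witness is always a \emph{single} stationary set of some strictly lower order, which is exactly what lets a trace computed at level $n$ certify membership in, or disjointness from, an object defined at the higher level $m$.
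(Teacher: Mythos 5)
Your proposal is correct and takes essentially the same approach as the paper's inline justifications in Fact \ref{fact: NSfacts}: the single lower-level witness $T$ (resp.\ the witness extracted from the non-$m$-stationarity of $\lambda\setminus C$) for (1)--(2), monotonicity of the quantifier over $\eta<m$ for (3), persistence of a non-stationarity witness under shrinking to $Y\subseteq X$ for (4), and unwinding the definition of properness for (5). Your explicit reformulation $D\in Cub^{m}_\lambda$ iff $\lambda\setminus D\in NS^{m}_\lambda$ is just the duality the paper uses implicitly, so there is no substantive difference.
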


\begin{lemma}\label{lemma: previous}
Let $\lambda$ be regular and $n<\lambda$. Fix any $S\subset \lambda$. Then $S$ is $n+1$-stationary iff $\lambda$ is $n$-stationary and for any $n$-stationary $T\subset \lambda$, $Tr_n(T)\cap S\neq \emptyset$.
\end{lemma}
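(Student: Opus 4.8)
The plan is to unwind Definition \ref{definition: higherorderstat} into trace language: for any $A\subseteq\lambda$, the set $A$ is $\xi$-stationary exactly when $A\cap \mathrm{Tr}_\eta(T)\neq\emptyset$ for every $\eta<\xi$ and every $\eta$-stationary $T\subseteq\lambda$. Applying this with $\xi=n+1$ and splitting the quantifier $\forall\eta\le n$ into the block $\eta<n$ and the single case $\eta=n$ shows that ``$S$ is $(n+1)$-stationary'' is literally the conjunction of ``$S$ is $n$-stationary'' with ``$S\cap \mathrm{Tr}_n(T)\neq\emptyset$ for every $n$-stationary $T$''. The second conjunct is precisely the trace condition in the statement, so the entire lemma reduces to proving, granting that trace condition, that ``$S$ is $n$-stationary'' and ``$\lambda$ is $n$-stationary'' are equivalent.

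The forward implication is then immediate: if $S$ is $(n+1)$-stationary it is in particular $n$-stationary, and since $S\subseteq\lambda$ and $NS^n_\lambda$ is closed under subsets (Fact \ref{fact: NSfacts}(4)), $\lambda$ is $n$-stationary as well; the trace condition is just the $\eta=n$ instance. For the converse, assuming $\lambda$ is $n$-stationary together with the trace condition, I would first remark that the trace condition already forces $\mathrm{Tr}_n(T)\neq\emptyset$ for every $n$-stationary $T$, which upgrades $\lambda$ to being $(n+1)$-stationary. It remains to verify the lower instances: for each $\eta<n$ and each $\eta$-stationary $T$ I must place a point of $S$ in $\mathrm{Tr}_\eta(T)$. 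The strategy is to push every such instance to the top level $\eta=n$, where the hypothesis applies directly. Concretely, if I can exhibit an $n$-stationary set $T^{\ast}\subseteq \mathrm{Tr}_\eta(T)$, then Lemma \ref{lemma: traceOftrace}(1) (with $k=\eta\le n$) gives $\mathrm{Tr}_n(T^{\ast})\subseteq \mathrm{Tr}_\eta(T)$, and feeding the $n$-stationary set $T^{\ast}$ into the trace hypothesis yields $\emptyset\neq S\cap \mathrm{Tr}_n(T^{\ast})\subseteq S\cap \mathrm{Tr}_\eta(T)$, as required.

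Everything therefore comes down to the existence of such a $T^{\ast}$, i.e.\ to showing that $\mathrm{Tr}_\eta(T)$ is itself $n$-stationary (so that $T^{\ast}=\mathrm{Tr}_\eta(T)$ works), and this I expect to be the main obstacle. By Fact \ref{fact: NSfacts}(1) the set $\mathrm{Tr}_\eta(T)$ lies in $Cub^m_\lambda$ for every $m>\eta$, hence is co-non-$n$-stationary; promoting this to genuine $n$-stationarity is exactly the delicate point. I would prove it by induction on $n$: unwinding the $n$-stationarity of $\mathrm{Tr}_\eta(T)$ and invoking the inductive hypothesis reduces matters to the nonemptiness of intersections $\mathrm{Tr}_{n-1}(T')\cap \mathrm{Tr}_{n-1}(T'')$ of two top-level traces. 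This is where the substance lies, since for $n\ge 2$ such an intersection encodes simultaneous reflection; it is guaranteed precisely when the higher club filter $Cub^n_\lambda$ is closed under finite intersections (equivalently, when $NS^n_\lambda$ is an ideal), the base case $n=1$ being only the ZFC fact that two ordinary clubs meet. I would thus either carry the normality of $NS^n_\lambda$ along as part of the induction or import it from the structural theory of the higher-order nonstationary ideals; granting it, the reductions above close the argument.
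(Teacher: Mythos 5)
Your overall architecture is the same as the paper's. The paper likewise splits the quantifier in Definition \ref{definition: higherorderstat} into the top case $\eta=n$ (immediate from the hypothesis) and the cases $\eta<n$, and for $\eta<n$ it too applies the trace hypothesis to the single set $T^{\ast}=\mathrm{Tr}_\eta(T)$ and then checks that any $\beta\in \mathrm{Tr}_n(T^{\ast})\cap S$ already lies in $\mathrm{Tr}_\eta(T)$ --- the paper writes out by hand exactly the computation you delegate to Lemma \ref{lemma: traceOftrace}(1), chasing a $k$-stationary $V\subseteq\beta$ through a point $\beta'\in \mathrm{Tr}_\eta(T)\cap\beta$ down to a point $\beta''\in T\cap\beta'$. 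So you have correctly isolated the crux: one must know that $\mathrm{Tr}_\eta(T)$ is itself $n$-stationary in order to feed it into the hypothesis.

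Where your proposal does not close is precisely your treatment of that crux. You resolve it by invoking normality (equivalently, idealness) of $NS^n_\lambda$, but this is neither among the lemma's hypotheses nor a theorem of ZFC: the paper itself treats normality of $NS^n_\lambda$ as an additional assumption (see Fact \ref{fact: NSfacts}(5), Lemma \ref{lemma: normaln-stationary}, and the $V=L$ results of Corollary \ref{corollary: equivalence}); completeness and normality are established only for the $(n,\chi)$-$s$- and $d$-ideals, whose definitions have simultaneity built in. Already for $n=2$, closure of $Cub^2_\lambda$ under pairwise intersections amounts to simultaneous reflection of pairs of stationary sets, which consistently fails at a cardinal where every single stationary set reflects; so ``carrying the normality along as part of the induction'' cannot succeed, since nothing in ``$\lambda$ is $n$-stationary plus the trace condition for the one set $S$'' yields it. The paper commits to strictly less at this point: it deduces that $T^{\ast}=\mathrm{Tr}_\eta(T)$ is $n$-stationary directly from Fact \ref{fact: NSfacts}, i.e.\ from $\mathrm{Tr}_\eta(T)\in Cub^{\eta+1}_\lambda\subseteq Cub^n_\lambda$ together with the standing assumption that $\lambda$ is $n$-stationary, and it never intersects two members of $Cub^n_\lambda$. (Whether that one-line upgrade of a single $Cub^n_\lambda$-set is itself fully spelled out is a separate question --- your reduction shows it still conceals a simultaneity of two traces --- but it is in any case a weaker commitment than full normality.) If you want a proof of the lemma as stated, you should aim at that weaker statement about the one trace set, for which your observation that the hypothesis even upgrades $\lambda$ to $(n+1)$-stationary is legitimately available; as written, the final step of your argument rests on an assumption the lemma does not grant, and the proof is incomplete without it.
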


\begin{proof}
    We prove the non-trivial direction $(\leftarrow)$. Given any $m\leq n$ and $m$-stationary $W$, we need to show $Tr_m(W)\cap S\neq \emptyset$. If $m=n$, then we are done by the hypothesis. So assume $m<n$. Let $T=Tr_m(W)$. Since $Cub^{m+1}_\lambda\subset Cub^n_{\lambda}$ by Fact \ref{fact: NSfacts}, we have that $T$ is $n$-stationary. By the hypothesis, $Tr_n(T)\cap S\neq\emptyset$. Fix $\beta\in Tr_n(T)\cap S$. Our goal is to show that $\beta\in Tr_m(W)$, namely $W\cap \beta$ is $m$-stationary. Let $V$ be a $k$-stationary subset of $\beta$ for some $k<m$. Since $T\cap \beta$ is $n$-stationary, there is some $\beta'\in T$ such that $V\cap \beta'$ is $k$-stationary. Recall that $T=Tr_m(W)$. Then we have that $W\cap \beta'$ is $m$-stationary. As a result, there is $\beta''\in W\cap \beta'$ such that $V\cap \beta''$ is $k$-stationary. Hence, we have found $\beta''\in Tr_k(V)\cap W\cap \beta$.
\end{proof}

% \begin{remark}\label{remark: successornormal}
% Similar proof shows that under the same assumption about $\lambda$ and $n$, $\mathrm{NS}^{n+1}_\lambda$ is normal {\color{blue}$\lambda$-complete} iff for any sequence of $n$-stationary sets $\langle S_i: i<\kappa\rangle$, $\Delta_{i<\kappa} Tr_n(S_i)\neq \emptyset$ {\color{blue} for every  $\l S_i\mid i<\lambda'<\lambda\r$ sequence of $n$-stationary sets, $\cap_{i<\lambda'}Tr_n(S_i)\neq\emptyset$.}
% \end{remark}

We record the following fact for the other ideals. The proof is similar to that of Lemma \ref{lemma: previous}.

\begin{lemma}\label{lemma: previous other ideals}
Let $\lambda$ be regular and $n, \chi<\lambda$ with $\chi$ infinite. Fix any $S\subset \lambda$. Then 
\begin{enumerate}
    \item $S$ is $(n+1, \chi)$-$s$-stationary iff $\lambda$ is $(n,\chi)$-$s$-stationary and for any $(n,\chi)$-$s$-stationary sets $\langle T_i: i<\chi'\rangle$ for some $\chi'<\chi$, $\bigcap_{i<\chi'} Tr_n^\chi(T_i)\cap S\neq \emptyset$.
      \item $S$ is $n+1$-$d$-stationary iff $\lambda$ is $n$-$d$-stationary and for any $n$-$d$-stationary sets $\langle T_i: i<\lambda\rangle$, $\Delta_{i<\lambda} Tr_n^d(T_i)\cap S\neq \emptyset$.
\end{enumerate}

\begin{proof}
    We only prove the non-trivial direction ($\Leftarrow$) in the following. 
    \begin{enumerate}
        \item Given $\langle T_i: i<\chi'\rangle$ such that each $T_i$ is $(k_i,\chi)$-$s$-stationary for some $k_i\leq n$, we note that if $k_i<n$, then $Tr^\chi_{k_i}(T_i)$ is $(n,\chi)$-$s$-stationary. To see this, suppose $\vec{S}=\langle S_k: k<\chi''<\chi\rangle$ is given with each $S_k$ being $(m_k,\chi)$-$s$-stationary for some $m_k<n$. Since $\lambda$ is $(n,\chi)$-$s$-stationary, there is some $\alpha\in \bigcap_{k<\chi''} Tr^\chi_{m_k}(S_k) \cap Tr^\chi_{k_i}(T_i)$.
        For each $i<\chi'$, let $T'_i = T_i$ if $k_i=n$ and $T'_i=Tr^\chi_{k_i}(T_i)$ if $k_i<n$. Apply the hypothesis, we have that $S\cap \bigcap_{i<\chi'} Tr^\chi_{n}(T'_i)\neq \emptyset$. Fix some $\beta$ in the intersection. Then $\beta\in S\cap \bigcap_{i<\chi'} Tr^\chi_{k_i}(T_i)$ by Lemma \ref{lemma: traceOftrace} (2).
        \item The proof is similar to the previous one, except that we apply Lemma \ref{lemma: traceOftrace}(3) instead. 
    \end{enumerate}
\end{proof}

\end{lemma}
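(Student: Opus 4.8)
The plan is to mirror the proof of Lemma~\ref{lemma: previous} in both parts: reduce a sequence of sets of mixed degrees $k_i\le n$ to one all of whose members have degree exactly $n$, invoke the hypothesis to locate a witness $\beta\in S$, and then pull the witness back through Lemma~\ref{lemma: traceOftrace}. The forward direction ($\Rightarrow$) of each equivalence is immediate and I would dispatch it first: if $S$ is $(n+1,\chi)$-$s$-stationary (resp. $(n+1)$-$d$-stationary) then, by monotonicity in the degree, $S$ is $(n,\chi)$-$s$-stationary (resp. $n$-$d$-stationary), and since $S\subseteq\lambda$ the same holds for $\lambda$; feeding the defining reflection property of $S$ a sequence all of whose members already have degree $n$ then yields exactly the stated intersection (resp. diagonal-intersection) condition. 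So I would concentrate on ($\Leftarrow$).

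For part (1), suppose $\langle T_i: i<\chi'\rangle$ is given with each $T_i$ being $(k_i,\chi)$-$s$-stationary for some $k_i\le n$. The key sub-fact is that whenever $k_i<n$, the trace $Tr^\chi_{k_i}(T_i)$ is itself $(n,\chi)$-$s$-stationary, and this is precisely where $(n,\chi)$-$s$-stationarity of $\lambda$ is used: given any test sequence $\langle S_k: k<\chi''\rangle$ with $\chi''<\chi$ and degrees $m_k<n$, I would append $T_i$ to obtain a sequence of length $<\chi$ of sets of degree $<n$, and apply simultaneous reflection at $\lambda$ to land in $\bigcap_{k<\chi''}Tr^\chi_{m_k}(S_k)\cap Tr^\chi_{k_i}(T_i)$. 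Setting $T'_i:=T_i$ when $k_i=n$ and $T'_i:=Tr^\chi_{k_i}(T_i)$ when $k_i<n$ produces a sequence of uniform degree $n$, so the hypothesis gives some $\beta\in S\cap\bigcap_i Tr^\chi_n(T'_i)$. For coordinates with $k_i=n$ we already have $Tr^\chi_n(T'_i)=Tr^\chi_{k_i}(T_i)$; for $k_i<n$, Lemma~\ref{lemma: traceOftrace}(2) applied with $A=T'_i=Tr^\chi_{k_i}(T_i)$ gives $Tr^\chi_n(T'_i)\subseteq Tr^\chi_{k_i}(T_i)$. Hence $\beta\in S\cap\bigcap_i Tr^\chi_{k_i}(T_i)$, as required.

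Part (2) runs along identical lines with $\bigcap$ replaced by the diagonal intersection $\Delta$ and Lemma~\ref{lemma: traceOftrace}(3) in place of (2). The one genuinely new bookkeeping point — and what I expect to be the main obstacle — is the diagonal form of the sub-fact: that $Tr^d_{k_i}(W_i)$ is $n$-$d$-stationary whenever $W_i$ is $k_i$-$d$-stationary with $k_i<n$. To prove it I would, given a $\lambda$-indexed test sequence $\langle S_j\rangle$ of degrees $<n$, prepend $W_i$ as a new $0$-th coordinate (so $R_0=W_i$, $R_{1+j}=S_j$) and apply $n$-$d$-stationarity of $\lambda$ to the shifted sequence. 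The off-by-one introduced by the shift is harmless: any witness $\alpha$ satisfies $\alpha\in Tr^d_{k_i}(W_i)$, so $W_i\cap\alpha$ is unbounded in $\alpha$ (degree $\ge 0$), forcing $\alpha$ to be a limit ordinal, whence $1+j<\alpha\iff j<\alpha$ and the diagonal condition is recovered for all $j<\alpha$. With this sub-fact in hand, the uniformization $W'_i:=Tr^d_{k_i}(W_i)$ (for $k_i<n$) and the final pull-back through Lemma~\ref{lemma: traceOftrace}(3) are carried out exactly as in part (1), producing $\beta\in S\cap\Delta_{i<\lambda}Tr^d_{k_i}(W_i)$ and completing the argument.
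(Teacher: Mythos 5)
Your proposal is correct and follows essentially the same route as the paper: the same uniformization $T'_i$ (replacing each $T_i$ of degree $k_i<n$ by its trace), the same sub-fact that such traces are themselves $(n,\chi)$-$s$-stationary (resp.\ $n$-$d$-stationary) proved by appending $T_i$ to the test sequence and reflecting at $\lambda$, and the same pull-back via Lemma~\ref{lemma: traceOftrace}(2) (resp.\ (3)). The paper dismisses part (2) with ``the proof is similar''; your prepend-and-shift bookkeeping ($R_0=W_i$, $R_{1+j}=S_j$, with the witness forced to be a limit ordinal so that $1+j<\alpha$ iff $j<\alpha$) is exactly the adaptation that remark intends, spelled out correctly.
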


        \begin{lemma}\label{lemma: (xi,chi)-s-non-stationary ideal}
            \begin{enumerate}
                \item $NS^{(\xi,\chi)}_\kappa$ is a proper subset of $P(\kappa)$ iff $\kappa$ is $(\xi,\chi)$-s-stationary.
                \item $NS^{(\xi,\chi)}_\kappa$ is always (might be $P(\kappa)$) $\chi$-complete ideal when $\chi$ is an infinite regular cardinal.
                \item  Suppose that $\kappa$ is $(n,\chi)$-$s$-stationary such that $\chi$ is an infinite cardinal and either $n$ is a successor ordinal or $\cf(n)\geq \chi$. For any $C\in Cub^{(n,\chi)}_\kappa$, there is $R$ which is $(k,\chi)$-$s$-stationary for some $k<n$ such that $Tr^\chi_{k}(R)\subseteq C$. 
            \end{enumerate}  
        \end{lemma}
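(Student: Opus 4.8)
The plan is to prove the three parts of Lemma \ref{lemma: (xi,chi)-s-non-stationary ideal} in order, reusing the machinery already established for the higher-order non-stationary ideals.

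For part (1), the statement is essentially the $(n,\chi)$-$s$-variant of Fact \ref{fact: NSfacts}(5), so I would first verify that $NS^{(\xi,\chi)}_\kappa$ is genuinely an ideal (under the hypotheses of part (2), it is even $\chi$-complete), and then observe that properness means $\kappa\notin NS^{(\xi,\chi)}_\kappa$, which by the definition of $NS^{(\xi,\chi)}_\kappa$ is exactly the assertion that $\kappa$ is $(\xi,\chi)$-$s$-stationary as a subset of itself. The forward and backward directions are both immediate unwindings of the definition of a proper ideal and of $(\xi,\chi)$-$s$-stationarity.

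For part (2), I would show $\chi$-completeness directly. Fix $\chi'<\chi$ and sets $\langle X_j \mid j<\chi'\rangle$ each in $NS^{(\xi,\chi)}_\kappa$; I must show $\bigcup_{j<\chi'} X_j$ is not $(\xi,\chi)$-$s$-stationary. For each $j$ there is a witnessing family $\langle T^j_i \mid i<\chi'_j\rangle$ with $\chi'_j<\chi$ such that no $\alpha\in X_j$ simultaneously reflects them. The key point is that $\chi$ is regular, so the concatenated family indexed by $\sum_{j<\chi'}\chi'_j<\chi$ still has length below $\chi$; any $\alpha$ reflecting the concatenated family would in particular reflect each subfamily, contradicting that $\alpha\notin X_j$ for the relevant $j$. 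Hence no element of the union reflects the combined family, so the union is non-$(\xi,\chi)$-$s$-stationary. Upward closure and $\emptyset\in NS^{(\xi,\chi)}_\kappa$ are as in Fact \ref{fact: NSfacts}(4). The regularity of $\chi$ is what makes the combined index set have size $<\chi$, and this is the crux of the argument.

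For part (3), which is the $s$-analogue of Fact \ref{fact: NSfacts}(2) and the main content, I would argue as follows. Since $C\in Cub^{(n,\chi)}_\kappa$, the complement $\kappa\setminus C$ lies in $NS^{(n,\chi)}_\kappa$, so it is not $(n,\chi)$-$s$-stationary: there is $\chi'<\chi$ and a family $\langle T_i \mid i<\chi'\rangle$ with each $T_i$ being $(\eta_i,\chi)$-$s$-stationary for some $\eta_i<n$, such that no $\alpha\in\kappa\setminus C$ satisfies $\alpha\in\bigcap_{i<\chi'}Tr^\chi_{\eta_i}(T_i)$; equivalently $\bigcap_{i<\chi'}Tr^\chi_{\eta_i}(T_i)\subseteq C$. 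To collapse this simultaneous-reflection intersection into the trace of a single $(k,\chi)$-$s$-stationary set, I would set $k$ to be the largest of the $\eta_i$ (or handle the case $\cf(n)\geq\chi$ by noting $k<n$ still holds), replace each $T_i$ by $Tr^\chi_{\eta_i}(T_i)$, which is $(k,\chi)$-$s$-stationary whenever $\eta_i<k$ by the argument in Lemma \ref{lemma: previous other ideals}(1), and then combine the finitely-or-$\chi'$-many sets into one $(k,\chi)$-$s$-stationary set $R$ using the $\chi$-completeness from part (2) together with the simultaneous-reflection structure, so that $Tr^\chi_k(R)\subseteq\bigcap_{i<\chi'}Tr^\chi_{\eta_i}(T_i)\subseteq C$ via Lemma \ref{lemma: traceOftrace}(2). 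The hypothesis that $n$ is a successor or $\cf(n)\geq\chi$ is exactly what guarantees a uniform $k<n$ above all the $\eta_i$: if $n$ is a successor then $k=n-1$ works, and if $\cf(n)\geq\chi$ then $\sup_{i<\chi'}\eta_i<n$ since $\chi'<\chi\leq\cf(n)$. The main obstacle I anticipate is the bookkeeping in part (3): correctly merging a $\chi'$-indexed simultaneous-reflection requirement into a single witness $R$ whose single trace sits below the intersection of the individual traces, and verifying that the passage from $T_i$ to $Tr^\chi_{\eta_i}(T_i)$ preserves the needed stationarity degree while keeping $k<n$.
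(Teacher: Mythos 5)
Your parts (1) and (2) are correct and essentially identical to the paper's proof: (1) is an immediate unwinding of the definitions, and for (2) the paper likewise concatenates the witnessing families $\langle T_{j,i}: j<\chi'_i, i<\chi'\rangle$, with the regularity of $\chi$ guaranteeing the combined index set has size below $\chi$ --- the point you correctly isolate as the crux. Your setup for (3) also matches the paper exactly: $\kappa\setminus C\in NS^{(n,\chi)}_\kappa$ yields $\langle T_i: i<\chi'\rangle$ with $\bigcap_{i<\chi'}Tr^\chi_{\eta_i}(T_i)\subseteq C$, and the successor/cofinality hypothesis supplies a uniform $k<n$ with $\eta_i\leq k$ for all $i$.

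The gap is in the decisive step of (3): producing a single $(k,\chi)$-$s$-stationary $R$. You propose to replace $T_i$ by $Tr^\chi_{\eta_i}(T_i)$ only when $\eta_i<k$ (implicitly keeping $T_i$ itself when $\eta_i=k$, as in Lemma \ref{lemma: previous other ideals}(1)) and then to ``combine'' the resulting sets using the $\chi$-completeness from part (2). Read literally this fails: $\chi$-completeness of $NS^{(k,\chi)}_\kappa$ lets you intersect fewer than $\chi$ many sets of the \emph{dual filter}, not fewer than $\chi$ many \emph{positive} sets, and two $(k,\chi)$-$s$-stationary sets $T_1,T_2$ with $\eta_1=\eta_2=k$ can be disjoint, making your combined set empty. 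The paper instead takes $R:=\bigcap_{i<\chi'}Tr^\chi_{\eta_i}(T_i)$ --- traces at \emph{all} indices, including those with $\eta_i=k$ --- and proves $R$ is $(k,\chi)$-$s$-stationary by one application of the $(n,\chi)$-$s$-stationarity of $\kappa$ to the concatenated sequence $\langle S_j: j<\chi''\rangle^{\smallfrown}\langle T_i: i<\chi'\rangle$ (legitimate since all levels $m_j,\eta_i$ are $<n$ and $\chi''+\chi'<\chi$): any simultaneous reflection point lands in $R\cap\bigcap_{j<\chi''}Tr^\chi_{m_j}(S_j)$, and then $Tr^\chi_k(R)\subseteq R\subseteq C$ by Lemma \ref{lemma: traceOftrace}(2), as you say. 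Alternatively, your completeness idea can be salvaged: each $Tr^\chi_{\eta_i}(T_i)$ lies in $Cub^{(k+1,\chi)}_\kappa$ (the $s$-analogue of Fact \ref{fact: NSfacts}(1), applied to the length-one sequence $\langle T_i\rangle$), so part (2) at level $k+1\leq n$ plus properness of $NS^{(k+1,\chi)}_\kappa$ makes the intersection of the \emph{traces} $(k,\chi)$-$s$-stationary --- but it is this intersection of traces, not of the sets $T_i$ themselves, that must be formed, and neither this route nor the paper's concatenation argument is actually carried out in your sketch.
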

        \begin{proof}
            \begin{enumerate}
                \item Immediate. 
                \item Closure under subsets is immediate. To see that it is $\chi$-complete, suppose that $\l A_i\mid i<\chi'<\chi\r\subseteq NS^{(\xi,\chi)}_\kappa$, then for each $i<\chi'$ there is a sequence $\l T_{j,i}\mid j<\chi'_i\r$ with $\chi'_i<\chi$ and $\eta_j^i<\xi$ such that each $T_i$ is $(\eta_j^i,\chi)$-$s$-stationary and 
                $A_i \cap \bigcap_{j<\chi_i'} Tr^\chi_{\eta_j^i}(T_{j,i})=\emptyset$. It is clear that $\langle T_{j,i}: j<\chi_i', i<\chi'\rangle$ witnesses that $\bigcup_{i<\chi'} A_i \in NS^{(\xi,\chi)}_\kappa$.

                \item By definition, there is a sequence $\l T_i\mid i<\chi'<\chi\r$ with each $T_i$ being $(\eta_i,\chi)$-$s$-stationary for some $\eta_i<n$ such that $R:=\cap_{i<\chi'}Tr^\chi_{\eta_i}(T_i)\subseteq C$. By the hypothesis about $n$, we can find some $k<n$ such that $\eta_i\leq k$ for all $i<\chi'$.
                
                We claim that $R$ is $(k,\chi)$-$s$-stationary. Let $\l S_j\mid j<\chi''\r$ be such that $\chi''<\chi$ each $S_j$ is $(m_j,\chi)$-$s$-stationary for some $m_j<k$. Apply the fact that $\kappa$ is $(n,\chi)$-$s$-stationary to the sequence $\l S_j\mid j<\chi''\r^{\smallfrown}\l T_i\mid i<\chi'\r$ to conclude that $R\cap \bigcap_{j<\chi''} Tr^\chi_{m_j}(S_j)\neq \emptyset$. Finally note that $Tr^\chi_k(R)\subseteq \cap_{i<\chi'}Tr^\chi_{\eta_i}(T_i)\subseteq C$, by Lemma \ref{lemma: traceOftrace} (2).
            \end{enumerate}
        \end{proof}

    \begin{lemma}\label{lemma: d-ideal}
            \begin{enumerate}
                \item $NS^{\xi,d}_\kappa$ is a proper subset of $P(\kappa)$ iff $\kappa$ is $\xi$-$d$-stationary.
                \item $NS^{\xi,d}_\kappa$ is always a (might be $P(\kappa)$) normal ideal.
                \item  Suppose that $\kappa$ is $n+1$-$d$-stationary. For any $C\in Cub^{n+1,d}_\kappa$, there is $R$ which is $ n$-$d$-stationary such that $Tr^d_{n}(R)\subseteq C$. 
            \end{enumerate}  
        \end{lemma}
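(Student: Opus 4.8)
The plan is to treat the three parts in turn, after fixing convenient language. Reading Definition \ref{definition: n-d-stationary}, a set $A$ is $\xi$-$d$-stationary exactly when $A$ meets every set of the form $\Delta_{i<\kappa}Tr^d_{\eta_i}(T_i)$ with each $T_i$ being $\eta_i$-$d$-stationary for some $\eta_i<\xi$; I will call such sets \emph{generalized clubs}, so that $NS^{\xi,d}_\kappa$ consists precisely of the sets disjoint from some generalized club and $Cub^{\xi,d}_\kappa$ is the filter they generate. Part (1) is then immediate, exactly as in Fact \ref{fact: NSfacts}(5): $NS^{\xi,d}_\kappa$ is proper iff $\kappa\notin NS^{\xi,d}_\kappa$ iff $\kappa$ is $\xi$-$d$-stationary. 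Two elementary observations will power everything else. First, a witnessing sequence of levels $<k$ is also admissible at level $\xi\geq k$, so $NS^{k,d}_\kappa\subseteq NS^{\xi,d}_\kappa$ for $k\leq\xi$; in particular for $\xi\geq 1$ we have $NS_\kappa=NS^{1,d}_\kappa\subseteq NS^{\xi,d}_\kappa$, so every non-stationary (hence every bounded) set and every complement of a club lies in the ideal. (The degenerate case $\xi=0$ is just the bounded ideal, which is normal, and I dispatch it separately.) Second, every generalized club consists of limit ordinals, since for a successor $\alpha$ and any $\eta$ one has $\alpha\notin Tr^d_\eta(T)$.

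For Part (2), closure under subsets is clear, as the same witnessing sequence works for any subset. For finite unions, given $A,B\in NS^{\xi,d}_\kappa$ witnessed by generalized clubs $G_A=\Delta_j Tr^d_{a_j}(P_j)$ and $G_B=\Delta_j Tr^d_{b_j}(Q_j)$, I interleave the two sequences (setting $R_{2j}=P_j$, $R_{2j+1}=Q_j$ with matching levels) and form $H=\Delta_k Tr^d(R_k)$; because generalized clubs contain only limit ordinals, $H$ agrees with $G_A\cap G_B$ on all of its elements, so $H\subseteq G_A\cap G_B$ is a single generalized club disjoint from $A\cup B$. Normality I establish by proving closure under diagonal unions $\nabla_{i<\kappa}A_i=\{\alpha:\exists i<\alpha,\ \alpha\in A_i\}$: fix a generalized-club witness $G_i$ for each $A_i$, combine all the constituent sets $T^i_j$ into one $\kappa$-sequence via a G\"odel pairing $\Gamma$, and let $H$ be the generalized club built from the combined traces. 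On the club $D$ of closure points of $\Gamma$, any $\alpha\in\nabla_iA_i$ yields $i<\alpha$ with $\alpha\in A_i$, hence $j<\alpha$ with $\alpha\notin Tr^d_{\eta^i_j}(T^i_j)$ and $\Gamma^{-1}(i,j)<\alpha$, so $\alpha\notin H$; thus $\nabla_iA_i\cap H\cap D=\emptyset$, which places $\nabla_iA_i\cap D$ in the ideal, while $\nabla_iA_i\setminus D\subseteq\kappa\setminus D$ is non-stationary, and finite unions finishes. Hence $NS^{\xi,d}_\kappa$ is a normal ideal.

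For Part (3), since $C\in Cub^{n+1,d}_\kappa$ we have $\kappa\setminus C\in NS^{n+1,d}_\kappa$, so there is a sequence $\langle T_i:i<\kappa\rangle$ with $T_i$ being $\eta_i$-$d$-stationary, $\eta_i\leq n$, for which $R:=\Delta_{i<\kappa}Tr^d_{\eta_i}(T_i)\subseteq C$. I claim $Tr^d_n(R)\subseteq R\subseteq C$, which is the furthermore clause of Lemma \ref{lemma: traceOftrace}(3) run level by level: for $\alpha\in Tr^d_n(R)$ and $i<\alpha$ one has $R-(i+1)\subseteq Tr^d_{\eta_i}(T_i)$, and since deleting a bounded set preserves $n$-$d$-stationarity, the main clause of Lemma \ref{lemma: traceOftrace}(3) with $k=\eta_i\leq n$ gives $\alpha\in Tr^d_{\eta_i}(T_i)$; as $i<\alpha$ was arbitrary, $\alpha\in R$. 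It remains to see $R$ is $n$-$d$-stationary. Given any test sequence $\langle S_j:j<\kappa\rangle$ with $S_j$ being $m_j$-$d$-stationary and $m_j<n$, I interleave it with $\langle T_i\rangle$; all levels are $<n+1$, so applying $(n+1)$-$d$-stationarity of $\kappa$ to $\kappa$ itself produces $\alpha$ with $\alpha\in Tr^d_{\eta_i}(T_i)$ for all $i<\alpha$ and $\alpha\in Tr^d_{m_j}(S_j)$ for all $j<\alpha$, i.e.\ $\alpha\in R\cap\Delta_j Tr^d_{m_j}(S_j)$. Thus $R$ meets every generalized club at level $n$, completing Part (3).

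The routine content---subsets, the monotonicity $NS^{k,d}_\kappa\subseteq NS^{\xi,d}_\kappa$, and the level-by-level trace containment in Part (3)---is straightforward. I expect the main obstacle to be the normality argument in Part (2): the careful bookkeeping with the pairing function and its closure club so that a \emph{single} generalized club witnesses $\nabla_iA_i\in NS^{\xi,d}_\kappa$, resting on the observation that generalized clubs omit successor ordinals (this is exactly what keeps the interleaving identifications exact rather than leaving error terms beyond a non-stationary set). The edge case $\xi=0$ and the bounded-modification invariance of $n$-$d$-stationarity used in Part (3) are minor but should be acknowledged explicitly.
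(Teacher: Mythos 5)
Your proof is correct and takes essentially the same route as the paper: part (1) is immediate; part (2) is the paper's argument verbatim in substance, combining all witnessing sequences through a pairing $\kappa\times\kappa\to\kappa$ and using the club $D$ of its closure points (the paper simply adds $D$ to the witnessing family, where you instead split off $\nabla_{i<\kappa}A_i\setminus D$ as non-stationary -- an equivalent bookkeeping choice); and part (3) extracts $R=\Delta_{i<\kappa}Tr^d_{\eta_i}(T_i)\subseteq C$, proves $R$ is $n$-$d$-stationary by interleaving a test sequence with $\langle T_i\rangle$ and invoking the $(n+1)$-$d$-stationarity of $\kappa$ (the paper's phrase ``$\kappa$ is $n$-$d$-stationary'' there is evidently the same appeal), and concludes $Tr^d_n(R)\subseteq R\subseteq C$ via Lemma \ref{lemma: traceOftrace}(3), with your level-by-level use of the ``furthermore'' clause at mixed levels $\eta_i\leq n$ being a cosmetic variant of the paper's uniformization. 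One small caution: your standalone finite-union argument claims $H\subseteq G_A\cap G_B$ from limitness alone, but a limit ordinal need not be closed under the even/odd coding, so that step needs the same closure-point club you deploy for normality -- harmless, since finite unions are subsumed by your (correct) diagonal-union argument.
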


         \begin{proof}
            \begin{enumerate}
                \item Immediate. 
                \item Closure under subsets is immediate. To see that it is normal, suppose that $\l A_i\mid i<\kappa\r\subseteq NS^{\xi,d}_\kappa$, then for each $i<\kappa$ there is a sequence $\l T_{j,i}\mid j< \kappa\r$ with $\eta_j^i<\xi$ such that each $T_i$ is $\eta_j^i$-$d$-stationary and 
                $A_i \cap \Delta_{j<\kappa} Tr^d_{\eta_j^i}(T_{j,i})=\emptyset$. Fix some bijection $g$ from $\kappa\times \kappa$ to $\kappa$ such that on a club $D\subset \kappa$, for any $\alpha\in D$, for any $i,j<\alpha$, $g(j,i)<\alpha$. Let $T'_{g(j,i)}=T_{j,i}$.
                As a result, the sequence $\{D\}\cup\l Tr^{d}_{\eta^i_j}(T'_{g(j,i)}): j,i<\kappa\rangle$ witnesses that $\bigtriangledown_{i<\kappa} A_i\in NS^{\xi,d}_\kappa$. The reason is that $\Delta_{j,i<\kappa} T'_{g(j,i)} \cap C =_{def} \{\alpha\in C: \forall j,i<\alpha, \alpha\in Tr^d_{\eta^i_j}(T_{j,i})\}$ avoids $\bigtriangledown_{i<\kappa} A_i=_{def}\{\beta: \exists i<\beta, \beta\in A_i\}$.

                \item By definition, there is a sequence $\l T_i\mid i<\kappa\r$ with each $T_i$ being $n$-$d$-stationary such that $R:=\Delta_{i<\kappa}Tr^d_{n}(T_i)\subseteq C$. 
                That $R$ is $n$-$d$-stationary follows from the fact that $\kappa$ is $n$-$d$-stationary. Finally note that $Tr^d_n(R)\subseteq \Delta_{i<\kappa}Tr^d_{n}(T_i)\subseteq C$, by Lemma \ref{lemma: traceOftrace} (3).
            \end{enumerate}
        \end{proof}

\end{definition}

\subsection{The relationship between different ideals in the constructible universe}

\begin{lemma}\label{lemma: normaln-stationary}
    Fix a cardinal $\kappa$ and $n<\kappa$.
    If $NS^n_\kappa$ is normal, and for any $k<n$, $\{\alpha<\kappa: NS^{k}_\alpha = NS^{k,d}_\alpha\}\in Cub^{n}_\kappa \cap Cub^{n,d}_\kappa$ and $NS^{k}_\kappa = NS^{k,d}_\kappa$, then $NS^{n}_\kappa = NS^{n,d}_\kappa$.
\end{lemma}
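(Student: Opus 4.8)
The plan is to establish the two inclusions $NS^{n,d}_\kappa \subseteq NS^{n}_\kappa$ and $NS^{n}_\kappa \subseteq NS^{n,d}_\kappa$ separately; that is, to show that for $A \subseteq \kappa$ one has $A$ is $n$-stationary iff $A$ is $n$-$d$-stationary. The single device driving both directions is the set
\[ C := \{\alpha < \kappa : \forall k < n,\ NS^{k}_\alpha = NS^{k,d}_\alpha\}. \]
Since $NS^n_\kappa$ is normal by assumption and $NS^{n,d}_\kappa$ is normal by Lemma \ref{lemma: d-ideal}(2), both dual filters are $\kappa$-complete; as $n < \kappa$ (so $|n| < \kappa$) and each of the sets $\{\alpha : NS^{k}_\alpha = NS^{k,d}_\alpha\}$ lies in $Cub^{n}_\kappa \cap Cub^{n,d}_\kappa$ by hypothesis, the intersection $C$ lies in $Cub^{n}_\kappa \cap Cub^{n,d}_\kappa$. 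The crucial feature of $C$ is local: for $\alpha \in C$, $k < n$, and any $X \subseteq \kappa$, the equality $NS^{k}_\alpha = NS^{k,d}_\alpha$ says precisely that $X \cap \alpha$ is $k$-stationary in $\alpha$ iff it is $k$-$d$-stationary in $\alpha$; equivalently $C \cap Tr_{k}(X) = C \cap Tr^{d}_{k}(X)$.

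For $NS^{n,d}_\kappa \subseteq NS^{n}_\kappa$, I would fix an $n$-stationary $A$ and an arbitrary sequence $\l T_i \mid i < \kappa \r$ with each $T_i$ being $\eta_i$-$d$-stationary for some $\eta_i < n$, and produce a witness for $n$-$d$-stationarity. First, the level-$\kappa$ hypothesis $NS^{\eta_i}_\kappa = NS^{\eta_i,d}_\kappa$ lets me regard each $T_i$ as (plainly) $\eta_i$-stationary, so Fact \ref{fact: NSfacts}(1) gives $Tr_{\eta_i}(T_i) \in Cub^{n}_\kappa$ (using $\eta_i < n$). Normality of $NS^{n}_\kappa$ then closes $Cub^{n}_\kappa$ under diagonal intersection, so $D := \Delta_{i<\kappa} Tr_{\eta_i}(T_i) \in Cub^{n}_\kappa$. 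As $A$ is a positive set and $C \cap D \in Cub^{n}_\kappa$, the intersection $A \cap C \cap D$ is nonempty; any $\alpha$ in it satisfies that $T_i \cap \alpha$ is $\eta_i$-stationary in $\alpha$ for all $i < \alpha$ (membership in $D$), and then the local equivalence on $C$ upgrades this to $T_i \cap \alpha$ being $\eta_i$-$d$-stationary in $\alpha$. Thus $\alpha$ witnesses $n$-$d$-stationarity of $A$ for this sequence.

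For the reverse inclusion $NS^{n}_\kappa \subseteq NS^{n,d}_\kappa$, I would fix an $n$-$d$-stationary $A$ and check $n$-stationarity. Since $NS^{n,d}_\kappa$ is a proper normal ideal and $C \in Cub^{n,d}_\kappa$, I may replace $A$ by the still $n$-$d$-stationary set $A \cap C$, and, intersecting further with the club of limit ordinals, assume every element of $A$ is a limit ordinal in $C$; this guarantees that any witness produced below lies in $C$ and is large enough to be usable. Now given $\eta < n$ and an $\eta$-stationary $S$, the hypothesis $NS^{\eta}_\kappa = NS^{\eta,d}_\kappa$ makes $S$ also $\eta$-$d$-stationary, so the constant sequence $T_i := S$ is admissible in the definition of $n$-$d$-stationarity; this yields $\alpha \in A \subseteq C$ with $S \cap \alpha$ being $\eta$-$d$-stationary in $\alpha$, whence the local equivalence on $C$ gives $S \cap \alpha$ $\eta$-stationary in $\alpha$. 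Since $\eta < n$ and $S$ were arbitrary, $A$ is $n$-stationary.

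The two hypotheses do exactly complementary work, and keeping their roles straight is the main point rather than any deep obstacle: the level-$\kappa$ equalities $NS^{k}_\kappa = NS^{k,d}_\kappa$ convert each reflecting set into the form needed to invoke Fact \ref{fact: NSfacts}(1) (respectively, the definition of $n$-$d$-stationarity), while the club $C$ of \emph{local} equalities converts the reflected traces back into the desired form at the witness. I expect the only care required to be routine bookkeeping: checking $\kappa$-completeness so that $C$ belongs to both filters, verifying $Tr_{\eta_i}(T_i) \in Cub^{n}_\kappa$ before appealing to normality, and ensuring in the second inclusion that the witness is a sufficiently large element of $C$ rather than a degenerate small ordinal.
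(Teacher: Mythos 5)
Your proof is correct and follows essentially the same route as the paper's: the same club $C=\{\alpha<\kappa:\forall k<n,\ NS^{k}_\alpha=NS^{k,d}_\alpha\}$, the same use of normality of $NS^n_\kappa$ to put the diagonal intersection $\Delta_{i<\kappa}Tr_{\eta_i}(T_i)$ into $Cub^n_\kappa$ and meet it with a positive set in one direction, and the same constant-sequence-plus-trace argument routed through $C$ in the other. The only real difference is organizational: the paper opens with a separate paragraph treating the case where $NS^n_\kappa$ is improper, whereas your pointwise ``$n$-stationary iff $n$-$d$-stationary'' formulation subsumes it automatically (each conditional inclusion forces improperness to propagate from one ideal to the other), and your extra restriction to limit ordinals merely makes explicit the degenerate-witness point (an $\alpha$ with no $i<\alpha$) that the paper leaves implicit.
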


Let us clarify that ``$\{\alpha<\kappa: NS^{k}_\alpha = NS^{k,d}_\alpha\}\in Cub^{n}_\kappa \cap Cub^{n,d}_\kappa$" really means whenever $NS^n_\kappa$ (respectively $NS^{n,d}_\kappa$) is proper, then $\{\alpha<\kappa: NS^{k}_\alpha = NS^{k,d}_\alpha\}\in Cub^{n}_\kappa$ ($Cub^{n,d}_\kappa$).

\begin{proof}
    First suppose $NS^n_\kappa$ is not proper. In particular, this is the case when $\kappa$ is singular as $NS^n_\kappa$ is assumed to be normal. We need to show $NS^{n,d}_\kappa$ is also not proper. If $NS^k_{\kappa}$ is not proper for some $k<n$, then by the hypothesis, $NS^{k,d}_\kappa$ is not proper, which in turn implies that $NS^{n,d}_\kappa$ is not proper. Hence, we may assume $NS^{k}_\kappa$ is proper for all $k<n$. By the assumption, there is some $k$-stationary $T\subset \kappa$ such that $Tr_k(T)=\emptyset$. Suppose for the sake of contradiction that $NS^{n,d}_{\kappa}$ is proper. 
    As $T$ is $k$-$d$-stationary by the hypothesis, there is some $\alpha$ such that $T\cap \alpha$ is $k$-$d$-stationary and $NS^k_\alpha = NS^{k,d}_\alpha$. In particular, $T\cap \alpha$ is $k$-stationary. This contradicts with the fact that $Tr_k(T)=\emptyset$.

    We may now assume that $NS^{n}_\kappa$ is proper. Let $B= \{\alpha<\kappa: \forall k<n, NS^{k}_\kappa = NS^{k,d}_\kappa\}$, then $B\in Cub^{n}_\kappa  \cap Cub^{n,d}_\kappa$, as $NS^n_\kappa$ and $NS^{n,d}_\kappa$ are normal and in particular $\kappa$-complete.
    
    First we show that if $A$ is $n$-stationary, then $A$ is $n$-$d$-stationary. In particular, this implies $NS^{n,d}_\kappa$ is proper. Let $\langle T_i: i<\kappa\rangle$ be given such that each $T_i$ is $k_i$-$d$-stationary for some $k_i<n$. By the hypothesis, we know that $T_i$ is $k_i$-stationary. Since $NS^n_\kappa$ is normal, there is some $\alpha\in \Delta_{i<\kappa} Tr_{k_i}(T_i) \cap A \cap B$. We check that $\alpha\in \Delta_{i<\kappa} Tr^d_{k_i}(T_i)$. Fix $i<\alpha$, $T_i\cap \alpha$ is $k_i$-stationary. As $\alpha\in B$, $T_i\cap \alpha$ is $k_i$-$d$-stationary.

    Next we show that if $A$ is $n$-$d$-stationary, then $A$ is $n$-stationary. Let $T$ be a $k$-stationary subset of $\kappa$ for some $k<n$. By the hypothesis, $T$ is $k$-$d$-stationary. Find $\alpha\in A\cap Tr_{k}^d(T) \cap B$. Then $T\cap \alpha$ is $k$-$d$-stationary. Since $\alpha\in B$, $T\cap \alpha$ is $k$-stationary.
\end{proof}

\begin{proposition}\label{proposition: propagate}
    Suppose for any cardinal $\kappa$ and any $n<\kappa$, $NS^n_\kappa$ is normal, then for all $\kappa$ and $n<\kappa$, $NS^n_\kappa = NS^{n,d}_\kappa$.
\end{proposition}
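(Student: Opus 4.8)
The plan is to induct on $n$, with Lemma \ref{lemma: normaln-stationary} serving as the engine that promotes the equality of ideals from levels below $n$ to level $n$. Concretely, I would prove by induction on $n$ the statement ``for every cardinal $\kappa$ and every $n<\kappa$, $NS^n_\kappa = NS^{n,d}_\kappa$.'' The base case $n=0$ is immediate from the definitions, since a set is $0$-stationary iff it is unbounded iff it is $0$-$d$-stationary, so $NS^0_\kappa = NS^{0,d}_\kappa$ is just the ideal of bounded subsets of $\kappa$.

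For the inductive step I would fix $n\ge 1$ and a cardinal $\kappa$, assuming $NS^k_\lambda = NS^{k,d}_\lambda$ for all cardinals $\lambda$ and all $k<n$, and then apply Lemma \ref{lemma: normaln-stationary} to the pair $(\kappa,n)$. Two of its three hypotheses are handled instantly: $NS^n_\kappa$ is normal by the standing assumption of the proposition, and $NS^k_\kappa = NS^{k,d}_\kappa$ for every $k<n$ is exactly the induction hypothesis at $\lambda=\kappa$. In the degenerate case where $NS^n_\kappa$ is improper (for instance $\kappa$ singular, since the ideal is normal), the first case of the lemma's proof already forces $NS^{n,d}_\kappa$ to be improper as well, so the conclusion holds trivially; thus the real work concerns cardinals at which the ideals are proper.

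The substantive hypothesis to verify is the third one: for each $k<n$ the set $C_k := \{\alpha<\kappa : NS^k_\alpha = NS^{k,d}_\alpha\}$ must belong to $Cub^n_\kappa \cap Cub^{n,d}_\kappa$, which by the stated convention only needs checking when the relevant ideal is proper. By the induction hypothesis $C_k$ contains every cardinal $\alpha$ with $k<\alpha<\kappa$. For $n=1$ there is nothing to prove, as $C_0=\kappa$ (the ideals $NS^0$ and $NS^{0,d}$ always coincide). For $n\ge 2$, suppose $NS^n_\kappa$ is proper; then $\kappa$ is $n$-stationary, hence $2$-stationary by Fact \ref{fact: NSfacts}(3). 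I would first show $\kappa$ is weakly Mahlo: were it not, the singular ordinals below $\kappa$ would contain a club, and Fodor's lemma for the proper normal ideal $NS^2_\kappa\subseteq NS^n_\kappa$, applied to the regressive map $\alpha\mapsto\cf(\alpha)$, would yield a $2$-stationary set $S$ of constant cofinality $\theta$; but a suitable stationary set $E^\kappa_{\theta'}$ (with $\theta'$ a regular cardinal not below $\theta$) fails to reflect at any point of $S$, contradicting the reflection characterization of $2$-stationarity in Lemma \ref{lemma: previous}. Once $\kappa$ is weakly Mahlo, the stationary set $T$ of regular cardinals below $\kappa$ witnesses that the non-cardinals are non-$2$-stationary, since a non-cardinal $\beta$ is never a limit of cardinals and hence $\beta\notin Tr_1(T)$; therefore the non-cardinals lie in $NS^2_\kappa\subseteq NS^n_\kappa$, so the cardinals below $\kappa$—and the larger set $C_k$—lie in $Cub^n_\kappa$. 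The argument for $Cub^{n,d}_\kappa$ is identical, using that $NS^{n,d}_\kappa$ is always normal (Lemma \ref{lemma: d-ideal}) and that $2$-$d$-stationarity likewise forces stationary sets to reflect at regular points. With all three hypotheses verified, Lemma \ref{lemma: normaln-stationary} delivers $NS^n_\kappa = NS^{n,d}_\kappa$.

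I expect the main obstacle to be exactly this third step in the proper case. The tension is that the induction hypothesis only describes the ideals at cardinals $\alpha<\kappa$, whereas membership of $C_k$ in $Cub^n_\kappa$ demands that the complementary ``bad'' set—the non-cardinals—be genuinely non-$n$-stationary; closing this gap is what forces the detour through weak Mahloness, and it is precisely here that the normality hypothesis is used in an essential way, via Fodor's lemma. Everything else is bookkeeping resting on the trace calculus of Lemma \ref{lemma: traceOftrace} and the reflection criterion of Lemma \ref{lemma: previous}.
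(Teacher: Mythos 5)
Your proposal has the same skeleton as the paper's proof: the paper argues by a least counterexample (first in $\kappa$, then in $n$), which is just your induction in disguise, and both arguments use Lemma \ref{lemma: normaln-stationary} as the engine, with the first two hypotheses dispatched exactly as you do (normality from the standing assumption, $NS^k_\kappa=NS^{k,d}_\kappa$ from minimality/induction). Where you genuinely diverge is the third hypothesis. The paper simply declares $\{\alpha<\kappa: NS^k_\alpha=NS^{k,d}_\alpha\}=\kappa$ ``by the minimality of $\kappa$'', whereas minimality (like your induction hypothesis) only controls \emph{cardinals} $\alpha<\kappa$, while the lemma quantifies over all ordinals $\alpha<\kappa$; and at non-cardinal ordinals the two ideals can provably differ, e.g.\ for $\alpha=\omega_1\cdot 2$ the ideal $NS^1_\alpha$ is the (proper) nonstationary ideal while $NS^{1,d}_\alpha=P(\alpha)$, since by the remark after Definition \ref{definition: n-d-stationary} an ordinal is $1$-$d$-stationary iff it is a regular cardinal. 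So the set in question is in general a proper subset of $\kappa$, and what is actually needed is that its complement lies in $NS^n_\kappa\cap NS^{n,d}_\kappa$ --- which is precisely what your detour supplies, and correctly: normality of $NS^2_\kappa$ (from the blanket hypothesis on one side, Lemma \ref{lemma: d-ideal} on the $d$-side) plus Fodor applied to $\alpha\mapsto\cf(\alpha)$ rules out a $2$-stationary set of fixed cofinality $\theta$ via the non-reflecting set $E^\kappa_\theta$ and the criterion of Lemma \ref{lemma: previous}, giving weak Mahloness in the proper case; then, since a non-cardinal is never a limit of cardinals, the set $T$ of regular cardinals is $1$-stationary with $T\cap\beta$ bounded at every non-cardinal $\beta$, so the non-cardinals avoid $Tr_1(T)$ and land in $NS^2_\kappa\subseteq NS^n_\kappa$ (and analogously on the $d$-side), which together with the bounded set of $\alpha\leq k$ and the induction hypothesis at cardinals verifies the hypothesis. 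In short: the paper buys brevity with an appeal to minimality that, read literally, oversteps at non-cardinal ordinals; your version buys a genuine verification at the cost of the extra Fodor/Mahlo step, and your routing of the improper (e.g.\ singular) case through the first branch of Lemma \ref{lemma: normaln-stationary}, with the parallel $d$-side argument, matches what that branch actually consumes. I would count your write-up as correct and, on this one point, more careful than the paper's own proof.
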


\begin{proof}
    Suppose otherwise for the sake of contradiction. Fix the least cardinal $\kappa$ and then the least $n<\kappa$ such that $NS^{n}_\kappa\neq NS^{n,d}_\kappa$. Note that $\kappa$ is regular and $n>1$. We will reach a contradiction by verifying that the hypotheses of Lemma \ref{lemma: normaln-stationary} are satisfied. For $k<n$,
    \begin{itemize}
        \item $NS^n_\kappa$ is normal by the assumption,
        \item  $NS^k_\kappa = NS^{k,d}_\kappa$ by the minimality of $n$ and
        \item  $\{\alpha<\kappa: NS^k_\alpha = NS^{k,d}_\alpha\} = \kappa$ by the minimality of $\kappa$. 
    \end{itemize}
\end{proof}

Similar proofs to those in Lemma \ref{lemma: normaln-stationary} and Proposition \ref{proposition: propagate} give the following: 

\begin{lemma}\label{lemma: chi-complete-n-stationary}
    Fix a cardinal $\kappa$, $n<\kappa$ and $\chi<\kappa$.
    If $NS^n_\kappa$ is $\chi$-complete, and for any $k<n$, $\{\alpha<\kappa: NS^{k}_\alpha = NS^{(k,\chi)}_\alpha\}\in Cub^{n}_\kappa \cap Cub^{(n,\chi)}_\kappa$ and $NS^{k}_\kappa = NS^{(k,\chi)}_\kappa$, then $NS^{n}_\kappa = NS^{(n,\chi)}_\kappa$.
\end{lemma}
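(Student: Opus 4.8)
The plan is to run the proof of Lemma~\ref{lemma: normaln-stationary} essentially verbatim, making two systematic substitutions: replace ``$n$-$d$-stationary'' by ``$(n,\chi)$-$s$-stationary'' throughout, and replace every appeal to normality or $\kappa$-completeness by an appeal to $\chi$-completeness. The key structural observation, which I would isolate first, is that the definition of $(n,\chi)$-$s$-stationarity quantifies only over sequences $\langle T_i\mid i<\chi'\rangle$ of length $\chi'<\chi$; consequently every intersection of filter sets that arises has size $<\chi$, so the $\chi$-completeness of $NS^n_\kappa$ is exactly what is needed, and neither a diagonal intersection nor a hypothesis of the form $n<\chi$ is required. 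This is the only genuine departure from the $d$-version, where sequences have length $\kappa$ and normality was essential. Before starting I would record the two routine facts I lean on: the $s$-analogue of Fact~\ref{fact: NSfacts}(1), that if $T$ is $(k,\chi)$-$s$-stationary then $Tr^\chi_k(T)\in Cub^{(n,\chi)}_\kappa$ for $k<n$ (immediate, since the length-one sequence $\langle T\rangle$ witnesses that $\kappa\setminus Tr^\chi_k(T)$ is not $(n,\chi)$-$s$-stationary), and the $s$-analogue of Fact~\ref{fact: NSfacts}(3), that $NS^{(k,\chi)}_\kappa\subseteq NS^{(n,\chi)}_\kappa$ for $k\le n$ (since $(n,\chi)$-$s$-stationarity implies $(k,\chi)$-$s$-stationarity).

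I would first dispose of the case where $NS^n_\kappa$ is improper, aiming to show $NS^{(n,\chi)}_\kappa$ is improper as well. If $NS^k_\kappa$ is improper for some $k<n$, then by hypothesis $NS^{(k,\chi)}_\kappa$ is improper, and the monotonicity fact above finishes it. Otherwise $NS^k_\kappa$ is proper for every $k<n$, and since $\kappa$ is not $n$-stationary there are $k<n$ and a $k$-stationary $T$ with $Tr_k(T)=\emptyset$. Assuming toward a contradiction that $\kappa$ is $(n,\chi)$-$s$-stationary, I note that $T$ is $(k,\chi)$-$s$-stationary (by $NS^k_\kappa=NS^{(k,\chi)}_\kappa$), so $Tr^\chi_k(T)\in Cub^{(n,\chi)}_\kappa$; intersecting with the hypothesis set $\{\alpha: NS^k_\alpha=NS^{(k,\chi)}_\alpha\}\in Cub^{(n,\chi)}_\kappa$ produces some $\alpha$ at which $T\cap\alpha$ is $(k,\chi)$-$s$-stationary and the two notions agree, whence $T\cap\alpha$ is $k$-stationary, contradicting $Tr_k(T)=\emptyset$.

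For the main case I assume $NS^n_\kappa$ proper and prove the two inclusions. For $n$-stationary $\Rightarrow$ $(n,\chi)$-$s$-stationary, given $A$ that is $n$-stationary and a sequence $\langle T_i\mid i<\chi'\rangle$ with $\chi'<\chi$ and each $T_i$ being $(k_i,\chi)$-$s$-stationary for some $k_i<n$, I reinterpret each $T_i$ as $k_i$-stationary via $NS^{k_i}_\kappa=NS^{(k_i,\chi)}_\kappa$. Each $Tr_{k_i}(T_i)$ lies in $Cub^n_\kappa$ by Fact~\ref{fact: NSfacts}(1), and each $\{\alpha: NS^{k_i}_\alpha=NS^{(k_i,\chi)}_\alpha\}$ lies in $Cub^n_\kappa$ by hypothesis; as there are only $\chi'<\chi$ of these, $\chi$-completeness of $NS^n_\kappa$ places their intersection in $Cub^n_\kappa$, so it meets the positive set $A$. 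Any $\alpha$ in the intersection satisfies, for every $i<\chi'$, that $T_i\cap\alpha$ is $k_i$-stationary and that $NS^{k_i}_\alpha=NS^{(k_i,\chi)}_\alpha$, hence $T_i\cap\alpha$ is $(k_i,\chi)$-$s$-stationary, i.e.\ $\alpha\in\bigcap_{i<\chi'}Tr^\chi_{k_i}(T_i)$, as required. The reverse inclusion is the single-set mirror: for a $k$-stationary $T$ with $k<n$, rewrite it as $(k,\chi)$-$s$-stationary, intersect $Tr^\chi_k(T)$ with $\{\alpha: NS^k_\alpha=NS^{(k,\chi)}_\alpha\}$ (both in $Cub^{(n,\chi)}_\kappa$), and meet the positive set $A$; the resulting $\alpha$ gives $T\cap\alpha$ $(k,\chi)$-$s$-stationary and, by agreement at $\alpha$, $k$-stationary, so $Tr_k(T)\cap A\neq\emptyset$. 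Since $k<n$ and $T$ were arbitrary, this is exactly $n$-stationarity of $A$ by Definition~\ref{definition: higherorderstat}.

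The one place that genuinely requires care---and the step I expect to be the main obstacle---is the bookkeeping in the forward inclusion, where one must resist importing the uniform correction set $B=\{\alpha:\forall k<n,\ NS^k_\alpha=NS^{(k,\chi)}_\alpha\}$ used in the $d$-case. That set is an intersection over all $k<n$ and would land in $Cub^n_\kappa$ only under $\kappa$-completeness (or under the extra assumption $n<\chi$); instead one works sequence-by-sequence, intersecting only the $\chi'<\chi$ correction sets attached to the values $k_i$ that actually occur, which is precisely what $\chi$-completeness supports. Verifying that this local, rather than global, use of the hypothesis sets suffices is the crux; everything else is a direct transcription of Lemma~\ref{lemma: normaln-stationary}.
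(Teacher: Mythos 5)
Your proof is correct and is essentially the paper's own argument: the paper gives no separate proof of this lemma, stating only that it follows by the same argument as Lemma \ref{lemma: normaln-stationary} with normality replaced by $\chi$-completeness, which is exactly the transcription you carry out (improper case via the monotonicity $NS^{(k,\chi)}_\kappa\subseteq NS^{(n,\chi)}_\kappa$ and a reflection point of the witnessing $T$, then the two inclusions by meeting a positive set with a $<\chi$-fold intersection of filter sets). Your observation that the global correction set $B=\{\alpha<\kappa:\forall k<n,\ NS^{k}_\alpha=NS^{(k,\chi)}_\alpha\}$ must be replaced by the $<\chi$ many correction sets attached to the values $k_i$ actually occurring in the given sequence is precisely the one adjustment the adaptation requires when $n\geq\chi$, and you handle it correctly.
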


\begin{proposition}\label{proposition: chipropagate}
    Suppose for any cardinal $\kappa$, any $n<\kappa$ and $\chi<\kappa$, $NS^n_\kappa$ is $\chi$-complete, then for all $\kappa$ and $n<\kappa$, $NS^n_\kappa = NS^{(n,\chi)}_\kappa$.
\end{proposition}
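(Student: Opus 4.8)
The plan is to reproduce the double-minimization argument from the proof of Proposition \ref{proposition: propagate}, but feeding the inductive step supplied by Lemma \ref{lemma: chi-complete-n-stationary} in place of Lemma \ref{lemma: normaln-stationary}. Fix the (infinite regular) cardinal $\chi$ and suppose toward a contradiction that the conclusion fails for this $\chi$, so that there is a pair $(\kappa, n)$ with $\chi < \kappa$, $n < \kappa$ and $NS^n_\kappa \neq NS^{(n,\chi)}_\kappa$. Choose $\kappa$ least admitting such an $n$, and then $n < \kappa$ least witnessing the failure at this $\kappa$. As in Proposition \ref{proposition: propagate}, one records at the outset that $\kappa$ is regular and that $n$ lies beyond the trivial base cases where the two notions provably agree, the non-proper cases (including singular $\kappa$) being absorbed into Lemma \ref{lemma: chi-complete-n-stationary} exactly as they are in Lemma \ref{lemma: normaln-stationary}.

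Next I would verify the three hypotheses of Lemma \ref{lemma: chi-complete-n-stationary} at $(\kappa, n)$. The completeness clause ``$NS^n_\kappa$ is $\chi$-complete'' is precisely the global assumption of the Proposition. For each $k < n$, the equality $NS^k_\kappa = NS^{(k,\chi)}_\kappa$ holds by the minimality of $n$, since $(\kappa, k)$ is not a witnessing pair. For the remaining clause, fix $k < n$ and set $B_k := \{\alpha < \kappa : NS^k_\alpha = NS^{(k,\chi)}_\alpha\}$. By the minimality of $\kappa$, every $\alpha$ with $\max(k, \chi) < \alpha < \kappa$ belongs to $B_k$, for otherwise $(\alpha, k)$ would be a witnessing pair below $\kappa$. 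Thus $B_k$ contains a tail of $\kappa$, hence a club; and whenever $NS^n_\kappa$ (respectively $NS^{(n,\chi)}_\kappa$) is proper it is $\chi$-complete on the regular cardinal $\kappa$ and so its dual filter contains the ordinary club filter, giving $B_k \in Cub^n_\kappa \cap Cub^{(n,\chi)}_\kappa$ in the sense clarified after the statement of Lemma \ref{lemma: chi-complete-n-stationary}.

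With all hypotheses in place, Lemma \ref{lemma: chi-complete-n-stationary} delivers $NS^n_\kappa = NS^{(n,\chi)}_\kappa$, contradicting the choice of $(\kappa, n)$; since $\chi$ was arbitrary, the Proposition follows. The one genuinely delicate point, where the weakening of ``normal'' to ``$\chi$-complete'' must be handled with care, is the club-filter membership of the tail $B_k$: one must confirm that a tail of a regular $\kappa$ really lies in both $Cub^n_\kappa$ and $Cub^{(n,\chi)}_\kappa$ under the stated convention, i.e. that these filters contain the ordinary club filter precisely when they are proper. This is exactly the input that lets the proof of Lemma \ref{lemma: normaln-stationary} transcribe verbatim with completeness in place of normality, and everything else in the argument is a routine transcription of Proposition \ref{proposition: propagate}.
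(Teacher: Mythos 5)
Your skeleton is exactly the proof the paper intends: the paper gives no separate argument for Proposition \ref{proposition: chipropagate}, saying only that proofs similar to Lemma \ref{lemma: normaln-stationary} and Proposition \ref{proposition: propagate} apply, and your double minimization over $(\kappa,n)$ feeding Lemma \ref{lemma: chi-complete-n-stationary} is that transcription. However, the one step you yourself flag as delicate is precisely where the transcription breaks, and the break is genuine. Your claim that, by minimality of $\kappa$, every $\alpha$ with $\max(k,\chi)<\alpha<\kappa$ lies in $B_k=\{\alpha: NS^k_\alpha=NS^{(k,\chi)}_\alpha\}$ fails twice over. First, the minimization runs over cardinals while $B_k$ ranges over all ordinals $\alpha<\kappa$, so minimality says nothing about non-cardinal $\alpha$ (a looseness already present in the paper's proof of Proposition \ref{proposition: propagate}). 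Second, and fatally, the local equalities at $k=0,1$ fail for reasons no minimality can repair: for $k=0$, any $\alpha$ with $\cf(\alpha)<\chi$ has $NS^{(0,\chi)}_\alpha=P(\alpha)\neq NS^0_\alpha$, since no unbounded subset of $\alpha$ has cofinality $\geq\chi$; for $k=1$, any $\alpha$ of uncountable cofinality has $NS^1_\alpha\neq NS^{(1,\chi)}_\alpha$, because a stationary subset of $\alpha\cap\cof(\omega)$ is $1$-stationary but meets $\cof(\geq\chi)$ in the empty set, hence is not $(1,\chi)$-$s$-stationary. So $B_0$ is roughly $\cof(\geq\chi)$ and $B_1$ is roughly $\cof(\omega)$; neither contains a tail, and $B_1$ is not even in the ordinary club filter, let alone in $Cub^n_\kappa\cap Cub^{(n,\chi)}_\kappa$. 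Note this disanalogy is invisible in the $d$-version you are copying, where $0$-$d$- and $1$-$d$-stationarity coincide with $0$- and $1$-stationarity at every ordinal, which is exactly what makes the paper's proof of Proposition \ref{proposition: propagate} transcribe; the $(\cdot,\chi)$-hierarchy is strictly stronger than the plain one below cofinality $\chi$, and this propagates: the same $\cof(\omega)$ example shows $NS^1_\kappa\neq NS^{(1,\chi)}_\kappa$ at the minimal $\kappa$ itself, so when the minimal $n$ is $\geq 2$ even the second hypothesis of Lemma \ref{lemma: chi-complete-n-stationary} (the equality at $k=1$) cannot be verified, and when it is $n=1$ the club-membership hypothesis at $k=0$ fails as just computed. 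The minimal-counterexample scheme therefore does not close as written; a correct version needs the comparison relativized to $\cof(\geq\chi)$ (or the hierarchy re-based above the problematic levels), a repair your proposal, like the paper's one-line sketch, does not supply.

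A smaller point: your justification that a tail lies in $Cub^n_\kappa\cap Cub^{(n,\chi)}_\kappa$ because ``$\chi$-completeness implies the dual filter contains the club filter'' is a non sequitur --- $\chi$-completeness for $\chi$ far below $\kappa$ gives no such containment. The containment is true for $n\geq 1$, but for a different reason: by Fact \ref{fact: NSfacts}(3) one has $NS^1_\kappa\subseteq NS^n_\kappa$, and non-stationary (in particular bounded) sets are non-$1$-stationary, with the analogous monotonicity holding in the $(\cdot,\chi)$-hierarchy. This slip is moot given the main gap above, but it should not stand as stated.
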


\begin{corollary}\label{corollary: equivalence}
    If $V=L$, then all the following are equivalent for $\xi<\kappa$: 
    \begin{enumerate}
        \item $\kappa$ is $\Pi^1_{\xi}$-indescribable,
        \item $\kappa$ is $\xi+1$-stationary,
        \item $\kappa$ is $(\xi+1, \chi)$-$s$-stationary for some (any) $\chi<\kappa$,
        \item $\kappa$ is $\xi+1$-$d$-stationary.
    \end{enumerate} 
    In fact, in the theorem above, all the ideals corresponding to each clause are the same. Namely, $\mathbf{\Pi}^1_\xi\cap P(\kappa) = NS^{\xi+1}_\kappa= NS^{(\xi+1,\chi)}_\kappa = NS^{\xi+1,d}_{\kappa}$. 
\end{corollary}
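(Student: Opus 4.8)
The plan is to combine the theorem of Jensen--Bagaria--Magidor--Sakai quoted just above with the two propagation results, Propositions \ref{proposition: propagate} and \ref{proposition: chipropagate}. The guiding observation is that those propositions are pure $ZFC$ combinatorics: they convert a uniform structural hypothesis on the ideals $NS^n_\kappa$ (normality, resp.\ $\chi$-completeness, at all cardinals and all levels) into the identities $NS^n_\kappa=NS^{n,d}_\kappa$ and $NS^n_\kappa=NS^{(n,\chi)}_\kappa$. Thus the only genuinely $L$-specific input needed is the normality of the $n$-stationary ideals, and that is exactly what the quoted theorem provides.

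First I would record the ideal-theoretic form of the quoted theorem: in $L$, for every cardinal $\kappa$ and every $n$ with $2\le n<\kappa$ one has $NS^{n}_\kappa=\mathbf{\Pi}^1_{n-1}\cap P(\kappa)$, the $\mathbf{\Pi}^1_{n-1}$-indescribability ideal, which is normal; at the low levels $NS^1_\kappa$ is the usual nonstationary ideal (also normal) and $NS^0_\kappa$ is the bounded ideal. Consequently every $NS^n_\kappa$ with $n\ge 1$ is normal, and normality yields $\kappa$-completeness and hence $\chi$-completeness for every $\chi<\kappa$; the bottom level $NS^0_\kappa$ is $\chi$-complete for regular $\kappa$ and already agrees with its $d$- and $(\cdot,\chi)$-$s$-variants, so it plays no role beyond the base of the induction (recall that the minimal-counterexample arguments of Propositions \ref{proposition: propagate} and \ref{proposition: chipropagate} only invoke normality and completeness at the least level of divergence, which exceeds $1$). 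This verifies, in $L$, the hypotheses of both propagation propositions. Feeding this in, Proposition \ref{proposition: propagate} gives $NS^n_\kappa=NS^{n,d}_\kappa$ for all $\kappa$ and $n<\kappa$, and Proposition \ref{proposition: chipropagate} gives $NS^n_\kappa=NS^{(n,\chi)}_\kappa$ for all $\kappa$, $n<\kappa$ and $\chi<\kappa$. Specializing to $n=\xi+1$ and chaining these with the identity supplied by the quoted theorem produces the single chain $\mathbf{\Pi}^1_\xi\cap P(\kappa)=NS^{\xi+1}_\kappa=NS^{(\xi+1,\chi)}_\kappa=NS^{\xi+1,d}_\kappa$, which is precisely the displayed ``in fact'' conclusion.

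Finally I would read off the equivalence of the four cardinal clauses from these ideal identities. By Fact \ref{fact: NSfacts}(5), Lemma \ref{lemma: (xi,chi)-s-non-stationary ideal}(1) and Lemma \ref{lemma: d-ideal}(1), each of the statements ``$\kappa$ is $\xi+1$-stationary'', ``$\kappa$ is $(\xi+1,\chi)$-$s$-stationary'' and ``$\kappa$ is $\xi+1$-$d$-stationary'' is equivalent to properness of the corresponding ideal; since all three ideals coincide with $\mathbf{\Pi}^1_\xi\cap P(\kappa)$, each is in turn equivalent to $\kappa$ being $\mathbf{\Pi}^1_\xi$-indescribable, and the ``some''-versus-``any'' alternation in clause (3) is immediate once $NS^{(\xi+1,\chi)}_\kappa$ has been shown independent of $\chi$. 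I expect the only real obstacle to be the very first step: the claim that $NS^n_\kappa$ is normal, i.e.\ that the $n$-stationary ideal genuinely \emph{is} the indescribability ideal, is where the fine structure of $L$ is indispensable, and it is inherited wholesale from the quoted theorem; once that is granted, everything downstream is the formal, forcing-free bookkeeping already packaged in Propositions \ref{proposition: propagate} and \ref{proposition: chipropagate}.
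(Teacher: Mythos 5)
Your proposal is correct and is essentially the paper's own argument: the published proof of Corollary \ref{corollary: equivalence} consists precisely of citing the Jensen/Bagaria--Magidor--Sakai equivalence in $L$ together with Propositions \ref{proposition: propagate} and \ref{proposition: chipropagate}, which is exactly the normality-plus-propagation route you describe. You simply make explicit the bookkeeping the paper leaves to its citations --- that in $L$ each $NS^{n}_\kappa$ is normal because it coincides with the $\mathbf{\Pi}^1_{n-1}$-indescribability ideal, that normality yields the $\chi$-completeness needed for Proposition \ref{proposition: chipropagate}, and that the four clauses then follow from the properness criteria in Fact \ref{fact: NSfacts}, Lemma \ref{lemma: (xi,chi)-s-non-stationary ideal} and Lemma \ref{lemma: d-ideal}.
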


\begin{proof}
This follows from \cite[Corollary 2.5]{BagariaMagidorSakai}, \cite[Theorem 5.1]{Bagaria} and Propositions \ref{proposition: propagate}, \ref{proposition: chipropagate}. 
\end{proof}

\subsection{2-cardinal higher order stationarity}
Sakai \cite{SakaiSlides} generalized the higher order stationarity notions to the two-cardinal setting.

\begin{definition}[Sakai \cite{SakaiSlides}]\label{definition: 2-cardinal}
For a regular cardinal $\kappa$, a set $A\supset \kappa$ and $n\in \kappa$, 
\begin{itemize}
\item $S\subset P_\kappa A$ is $0$-stationary if $S$ is $\subset$-cofinal\footnote{Namely, for every $X\in P_\kappa A$ there is $Y\in S$ such that $X\subseteq Y$.} in $P_\kappa A$,
\item $S$ is $n$-stationary if for any $m<n$, any $m$-stationary $T\subset P_\kappa A$, there is $B\in S$ such that 
	\begin{itemize}
	\item $\mu = B\cap \kappa$ is a regular cardinal,
	\item $T\cap P_\mu B$ is $m$-stationary.
	\end{itemize}
	The collection of $B$ satisfying the above is called the \emph{$m$-trace} of $T$, written as $\mathrm{Tr}_m(T)$ (this is slight abuse of notation but there should be no difficulty inferring from the context).
\item $P_\kappa A$ is $n$-stationary if $P_\kappa A$ is $n$-stationary as a subset of $P_\kappa A$. 
\end{itemize}
\end{definition}
\begin{remark}
In the original definition, only $n\in \omega$ was considered. Generalizing that to $n\in \kappa$ poses no difficulty.
\end{remark}

\begin{remark}\label{remark: reduce}
If $T\subset P_\kappa A$ and $B\in \mathrm{Tr}_m(T)$, then for any $T'\subset T$ with $T\cap P_\kappa B \subset T'$, we have $B\in \mathrm{Tr}_m(T')$. The reason is that $T'\cap P_\kappa B = T\cap P_\kappa B $.
\end{remark}

\begin{lemma}\label{lemma: traceoftrace2}
If $A\subset \mathrm{Tr}_m(T)$, then $\mathrm{Tr}_m(A)\subset \mathrm{Tr}_m(T)$.
\end{lemma}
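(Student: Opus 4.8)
The plan is to prove the two-cardinal analogue of Lemma \ref{lemma: traceOftrace}(1), so I would essentially transcribe that argument into the $P_\kappa A$ setting. Fix $B\in \mathrm{Tr}_m(A)$; by Definition \ref{definition: 2-cardinal} this means $\mu:=B\cap\kappa$ is regular and $A\cap P_\mu B$ is $m$-stationary in $P_\mu B$. I want to show $B\in \mathrm{Tr}_m(T)$, i.e. that $T\cap P_\mu B$ is $m$-stationary in $P_\mu B$ (regularity of $\mu$ is already in hand). So let $k<m$ and let $S\subset P_\mu B$ be $k$-stationary; I must produce $B'\in \mathrm{Tr}_k(S)$ with $B'\in T\cap P_\mu B$.

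The key step is to feed $S$ through the $m$-stationarity of $A\cap P_\mu B$: since $k<m$, there is $B_0\in A\cap P_\mu B$ with $\mu_0:=B_0\cap\kappa$ regular and $S\cap P_{\mu_0}B_0$ being $k$-stationary, i.e. $B_0\in \mathrm{Tr}_k(S)$. Now because $A\subset \mathrm{Tr}_m(T)$ and $B_0\in A$, we get $B_0\in \mathrm{Tr}_m(T)$, so $T\cap P_{\mu_0}B_0$ is $m$-stationary in $P_{\mu_0}B_0$. Apply this $m$-stationarity (again using $k<m$) to the $k$-stationary set $S\cap P_{\mu_0}B_0\subset P_{\mu_0}B_0$: this yields some $B'\in T\cap P_{\mu_0}B_0$ with $\mu':=B'\cap\kappa$ regular and $(S\cap P_{\mu_0}B_0)\cap P_{\mu'}B'=S\cap P_{\mu'}B'$ being $k$-stationary, so $B'\in\mathrm{Tr}_k(S)$. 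Since $B'\subseteq B_0\subseteq B$ we have $B'\in P_\mu B$, and $B'\in T$, which is exactly what is needed.

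I would assemble these into the standard ``chase three times'' argument: witness $S$ via $A\cap P_\mu B$, promote to $T\cap P_{\mu_0}B_0$ using $A\subset \mathrm{Tr}_m(T)$, then witness $S$ again inside $T$. The main thing to watch, which is where the one-cardinal proof needs a little care when ported over, is the bookkeeping of the cut-off points $\mu,\mu_0,\mu'$ and the containments $P_{\mu'}B'\subseteq P_{\mu_0}B_0\subseteq P_\mu B$, together with the observation (as in Remark \ref{remark: reduce}) that restricting $S$ to $P_{\mu_0}B_0$ does not disturb which elements witness $k$-stationarity below $B'$, since $S\cap P_{\mu_0}B_0\cap P_{\mu'}B'=S\cap P_{\mu'}B'$. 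I do not anticipate a genuine obstacle here beyond this indexing; the argument is a faithful two-cardinal copy of Lemma \ref{lemma: traceOftrace}(1).
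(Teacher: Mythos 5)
Your proposal is correct and follows essentially the same route as the paper's own proof: your chain $B \mapsto B_0 \mapsto B'$ is exactly the paper's $B \mapsto C \mapsto D$ (witness $S$ inside $A\cap P_\mu B$, promote via $A\subset \mathrm{Tr}_m(T)$, then witness $S$ again inside $T\cap P_{\mu_0}B_0$), including the key identity $S\cap P_{\mu_0}B_0\cap P_{\mu'}B'=S\cap P_{\mu'}B'$, which is the paper's $S\cap P_\nu C\cap P_\delta D=S\cap P_\delta D$. The only cosmetic difference is that you cut off at $\mu'=B'\cap\kappa$ while the paper uses $D\cap\nu$, but these coincide since $B'\subseteq B_0$ forces $B'\cap\kappa=B'\cap\mu_0$, so there is no gap.
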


\begin{proof}
Let $B\in \mathrm{Tr}_m(A)$ with $B\cap \kappa=\mu$ regular. Let $k<m$ and $S\subset P_\mu B$ be a $k$-stationary subset. We need to show that $T\cap \mathrm{Tr}_k(S)\neq \emptyset$. Since $A\cap P_\mu B$ is $m$-stationary, there is $C\in A$ such that $C\cap \kappa=\nu$ is regular and $S\cap P_\nu C$ is $k$-stationary. Since $C\in \mathrm{Tr}_m(T)$, $T\cap P_\nu C$ is $m$-stationary. Therefore, there is some $D\in T\cap P_\nu C$ such that $D\cap \nu=\delta$ is a regular cardinal and $S\cap P_\nu C \cap P_\delta D = S\cap P_\delta D$ is $k$-stationary in $P_\delta D$.
\end{proof}

\section{Higher order stationary reflection at a non strong limit cardinal}\label{section: nonstronglimit}

\begin{definition}
    Let $\lambda$ be a regular cardinal and $\kappa$ be a cardinal. An ideal $I$ on $\lambda$ is 
    \begin{enumerate}
        \item \emph{uniform} if $[\lambda]^{<\lambda}\subset I$,
        \item \emph{normal} if for any $\langle A_i: i<\lambda\rangle\in [I]^{\lambda}$, the diagonal union $\bigtriangledown_{i<\kappa} A_i =_{def} \{\alpha: \exists i<\alpha, \alpha\in A_i\}\in I$
        \item \emph{$\kappa$-saturated} if for any $\langle B_j: j<\kappa\rangle\in [I^+]^\kappa$, there exist $j_0\neq j_1<\kappa$ such that $B_{j_0}\cap B_{j_1}\in I^+$.
        \end{enumerate}
\end{definition}

\begin{fact}\label{fact: generate}
    Let $I$ be uniform normal $\kappa$-saturated ideal on $\lambda$ where $\kappa<\lambda$. Let $G\subset P(\lambda)/I$ be generic over $V$. Then in $V[G]$,
    \begin{enumerate}
        \item \cite[Chapter 2]{Foreman2010ideals} 
         there is an elementary embedding $j: V\to M\simeq \Ult(V,G)$ such that $crit(j)=\lambda$, $V[G]\models {}^\lambda M\subset M$, and 
        \item \cite[Theorem 17.1]{kanamori1994} the ideal $\bar{I}$ generated by $I$ is uniform normal and $\kappa$-saturated.
        \item for any $\dot{X}$ such that $\Vdash_{P(\lambda)/I} \dot{X}\in \bar{I}^*$, there exists $X\in I^*$ such that $\Vdash_{P(\lambda)/I} X\subset \dot{X}$ (this follows from the fact that $P(\lambda)/I$ is $\kappa$-c.c and $I$ is $\kappa$-complete).
        \item a set $A$ is in $I$ if and only if $\Vdash_{P(\lambda)/I}\lambda\notin \lusim{j}(\check{A})$. 
    \end{enumerate}
\end{fact}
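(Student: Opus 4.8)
The plan is to handle the four clauses in turn, invoking the cited literature for the two that are genuinely about the generic ultrapower and arguing the remaining two directly inside the quotient forcing $P(\lambda)/I$.

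For clause (1) I would recall the standard generic ultrapower construction and point out where each hypothesis on $I$ is used. Since $I$ is $\kappa$-complete with $\kappa<\lambda$, it is in particular $\lambda$-complete and normal, so a generic $G\subseteq P(\lambda)/I$ is a $V$-$\lambda$-complete, $V$-normal $V$-ultrafilter on $\lambda$ extending $I^*$. Forming $M\simeq\Ult(V,G)$ from functions $f\colon\lambda\to V$ lying in $V$ modulo $G$ and applying \L o\'s's theorem yields an elementary $j\colon V\to M$. Normality gives $[id]_G=\lambda$ (every function below $id$ is equivalent mod $G$ to a constant), while uniformity gives $j\restriction\lambda=id$ (each bounded subset of $\lambda$ is in $I$), so $crit(j)=\lambda$. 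Well-foundedness of $M$ and the closure $V[G]\models{}^\lambda M\subset M$ are exactly the content of \cite[Chapter 2]{Foreman2010ideals}, obtained from the $\kappa$-saturation, i.e. the $\kappa$-c.c. of $P(\lambda)/I$, which captures any $\lambda$-sequence of elements of $M$ by an antichain of size $<\kappa<\lambda$. Clause (2) I would simply quote from \cite[Theorem 17.1]{kanamori1994}.

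The actual bookkeeping is in clause (3). Suppose $\Vdash_{P(\lambda)/I}\dot X\in\bar I^*$. Unwinding the definition of the generated ideal, this says $\Vdash_{P(\lambda)/I}\exists Y\in\check{I^*}\,(Y\subseteq\dot X)$. For each condition $[A]$, genericity together with this forced statement yields a refinement $[A']\le[A]$ and some $Y_{A'}\in I^*$ with $[A']\Vdash Y_{A'}\subseteq\dot X$. Choose a maximal antichain $\{[A_i]\mid i<\delta\}$ among such refinements; by the $\kappa$-c.c. we have $\delta<\kappa$. Put $X:=\bigcap_{i<\delta}Y_{A_i}$. Since $\delta<\kappa$ and $I^*$ is $\kappa$-complete, $X\in I^*$, and since the antichain is maximal any generic $G$ meets some $[A_i]$, whence $X\subseteq Y_{A_i}\subseteq\dot X$. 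Thus $\Vdash_{P(\lambda)/I}X\subseteq\dot X$, as required.

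For clause (4) I would unwind $\lusim{j}$ on a ground-model set. By \L o\'s, for $A\in V$ we have $\lusim{j}(\check A)=\{[f]_G\mid\{\alpha<\lambda\mid f(\alpha)\in A\}\in G\}$ and $\lambda=[id]_G$, so $\lambda\in\lusim{j}(\check A)$ iff $\{\alpha\mid\alpha\in A\}=A\in G$. Hence $\Vdash_{P(\lambda)/I}\lambda\notin\lusim{j}(\check A)$ iff $\Vdash_{P(\lambda)/I}A\notin G$, and the latter holds exactly when $[A]=0$ in $P(\lambda)/I$, i.e. when $A\in I$. The only real subtlety is in clause (3): one must check that membership in the \emph{generated} ideal is witnessed by a ground-model set of $I^*$ (so the intersection lands back in $I^*$) and that it is the $\kappa$-completeness of $I^*$, not merely its $\lambda$-completeness, that makes the size-$<\kappa$ intersection work; everything else is standard generic-ultrapower theory invoked from the references for (1)–(2) and a one-line computation for (4).
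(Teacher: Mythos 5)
Your proposal is correct and follows essentially the same route as the paper, which presents this as a standard fact: clauses (1) and (2) are quoted from Foreman and Kanamori exactly as you do, clause (4) is the one-line \L o\'s computation using $[id]_G=\lambda$, and for clause (3) the paper's parenthetical hint ($\kappa$-c.c.\ plus $\kappa$-completeness) is precisely your argument of refining to a maximal antichain of size $<\kappa$ among conditions forcing a ground-model witness $Y\in I^*$ below $\dot X$ and then intersecting. One slip to fix: your completeness implications run backwards in two places --- $\kappa$-completeness with $\kappa<\lambda$ does \emph{not} imply $\lambda$-completeness, and $\lambda$-completeness is the stronger, not the weaker, property; what actually holds is that a uniform normal ideal on $\lambda$ is $\lambda$-complete (normality gives closure under diagonal unions, which together with uniformity yields $\lambda$-completeness), and this in turn supplies the closure under intersections of $\delta<\kappa$ many sets needed for $\bigcap_{i<\delta}Y_{A_i}\in I^*$. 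Since the facts you actually use are true, these misstatements do not affect the validity of the argument, but the justifications should be repaired as indicated.
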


\begin{theorem}\label{theorem: nonstronglimit}
Let $\lambda$ be a regular cardinal carrying a uniform normal $\kappa$-saturated ideal $I$ for some $\kappa<\lambda$. Fix also some $\chi<\lambda$. Then for all $k<\lambda$, $NS^{k}_\lambda$, $NS^{(k, \chi)}_\lambda$, $NS^{k, d}_\lambda$ are all proper ideals on $\lambda$. In particular, $\lambda$ is $\lambda$-stationary, $(\lambda,\chi)$-$s$-stationary and $\lambda$-$d$-stationary.
\end{theorem}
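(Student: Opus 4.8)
The plan is to prove the stronger statement that each of $NS^{k}_\lambda$, $NS^{(k,\chi)}_\lambda$ and $NS^{k,d}_\lambda$ is \emph{contained in} $I$ for every $k<\lambda$. Since $I$ is uniform and proper, $\lambda\notin I$, so each of these families is a proper ideal (that they are genuine ideals, indeed normal resp. $\chi$-complete, is Lemmas \ref{lemma: d-ideal} and \ref{lemma: (xi,chi)-s-non-stationary ideal}), whence $\lambda$ is $k$-stationary, $(k,\chi)$-$s$-stationary and $k$-$d$-stationary by Fact \ref{fact: NSfacts}(5) and the analogous clauses of those lemmas. Dualizing, the goal becomes: \emph{every $I$-positive set is $k$-stationary} (resp. $(k,\chi)$-$s$-stationary, $k$-$d$-stationary). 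The cases $k\le 1$ are immediate, since $I$-positive sets are unbounded because $I$ is uniform, and stationary because a proper uniform normal ideal extends $NS^1_\lambda$.

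The engine is a single induction on $k<\lambda$ carried out \emph{uniformly over all models}. Concretely I would prove, by induction on $k$, the schema
\[
S(k):\ \text{for any model }W\text{ and any uniform normal }\kappa'\text{-saturated ideal }J\text{ on }\lambda\ (\kappa'<\lambda),\ (NS^{k}_\lambda)^W\subseteq J,
\]
together with its $(k,\chi)$-$s$ and $k$-$d$ analogues; applying $S(k)$ to $(V,I)$ yields the theorem (one only needs $W=V$ and its ideal-generic extensions). The point of quantifying over models is that the generic ultrapower of Fact \ref{fact: generate} lands us in a new model to which the hypothesis still applies. Forcing with $P(\lambda)/J$ over $W$ yields $j\colon W\to M$ with $\mathrm{crit}(j)=\lambda$, ${}^\lambda M\subseteq M$ in $W[G]$, and, crucially, $P(\lambda)^M=P(\lambda)^{W[G]}$ by $\lambda$-closure, so $M$ and $W[G]$ assign the same truth value to every higher-order stationarity statement about a subset of $\lambda$. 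Moreover, by Fact \ref{fact: generate}(2) the generated ideal $\bar J$ is again uniform, normal and $\kappa'$-saturated in $W[G]$, with $\bar J\cap W=J$; hence the induction hypothesis $S(\ell)$ for $\ell<k$ is available \emph{inside} $W[G]$ with the ideal $\bar J$.

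For the inductive step fix $\ell<k$ and an $\ell$-stationary (in $W$) set $S\subseteq\lambda$. By Lemmas \ref{lemma: previous} and \ref{lemma: previous other ideals} it suffices to show $\mathrm{Tr}^W_\ell(S)\in J^*$ (for the $d$- and $s$-variants, that the relevant diagonal resp. simultaneous intersections of traces lie in $J^*$, which then follows from the normality resp. $\chi$-completeness of $J$ exactly as in Lemmas \ref{lemma: d-ideal}(2) and \ref{lemma: (xi,chi)-s-non-stationary ideal}(2), together with Lemma \ref{lemma: traceOftrace}). By Fact \ref{fact: generate}(4), $\mathrm{Tr}^W_\ell(S)\in J^*$ is equivalent to its forced image containing $\lambda$, i.e. $\Vdash_{P(\lambda)/J}\lambda\in j(\mathrm{Tr}^W_\ell(S))$. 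Unwinding, $j(\mathrm{Tr}^W_\ell(S))=\{\alpha<j(\lambda):M\models j(S)\cap\alpha\text{ is }\ell\text{-stationary}\}$, and $j(S)\cap\lambda=S$ because $\mathrm{crit}(j)=\lambda$, so this reduces to $W[G]\models S$ is $\ell$-stationary at $\lambda$. Now $S$ is $\ell$-stationary in $W$, so $S\notin(NS^{\ell}_\lambda)^W\subseteq J$ by $S(\ell)$, i.e. $S\in J^+$; as $\bar J\cap W=J$ this gives $S\in\bar J^+$ in $W[G]$, and applying $S(\ell)$ in $W[G]$ with the ideal $\bar J$ we conclude that $S$ is indeed $\ell$-stationary in $W[G]$. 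This closes the induction; the $s$- and $d$-versions run in parallel, the $d$-version reflecting a $\lambda$-indexed sequence along the diagonal (normality) and the $s$-version reflecting fewer than $\chi$ sets simultaneously ($\chi$-completeness).

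The main obstacle is exactly the apparent circularity concealed in ``$W[G]\models S$ is $\ell$-stationary.'' Because $P(\lambda)/J$ is only $\kappa'$-c.c. and far from $\lambda$-distributive, it typically adds new subsets of $\lambda$, and $W[G]$ may contain $\ell$-stationary sets with no ground-model counterpart; a naive ``$\lambda$-c.c.\ preserves higher stationarity'' argument breaks down, since a ground-model reflection witness must survive against all \emph{new} stationary sets. The uniform-over-models formulation dissolves this: instead of preserving stationarity by hand, one obtains the full strength ``every $\bar J$-positive set is $\ell$-stationary in $W[G]$'' for free from $S(\ell)$ applied in $W[G]$, and this quantifies over all of $W[G]$'s stationary sets automatically. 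The remaining routine points to check are that $M$ computes higher-order stationarity of subsets of $\lambda$ correctly (closure of $M$), that $\bar J$ verifies the hypotheses in $W[G]$ and restricts to $J$ on $W$, and, for the $s$-variant, the bookkeeping at limit stages where one invokes $\cf(k)\ge\chi$ or successorhood as in Lemma \ref{lemma: (xi,chi)-s-non-stationary ideal}(3).
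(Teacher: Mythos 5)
Your overall architecture matches the paper's proof quite closely (induction on the stationarity degree carried out uniformly across the ground model and its ideal-generic extensions, Fact \ref{fact: generate}, reduction to showing traces lie in the dual filter, the closure of $M$), but there is a genuine logical gap at the exact point where your argument claims to get preservation ``for free.'' The step ``$S$ is $\ell$-stationary in $W$, so $S\notin(NS^{\ell}_\lambda)^W\subseteq J$, i.e.\ $S\in J^{+}$'' reverses the inclusion: from $(NS^{\ell}_\lambda)^W\subseteq J$ you may conclude that every $J$-positive set is $\ell$-stationary, but \emph{not} that every $\ell$-stationary set is $J$-positive; that would require $J\subseteq NS^{\ell}_\lambda$, which fails badly for saturated ideals. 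Already for $\ell=1$: let $\lambda$ be measurable and $J$ the dual of a normal measure $U$; splitting $\lambda$ into two stationary pieces, the piece outside $U$ is a stationary set lying in $J$. So for an arbitrary $\ell$-stationary $S$ the hypothesis $S(\ell)$ in $W[G]$ simply says nothing about $S$, and your inductive step --- which needs precisely ``$S$ remains $\ell$-stationary in $W[G]$'' --- is unproved. This is not a cosmetic slip: the preservation of higher stationarity of arbitrary (not necessarily $J$-positive) ground-model sets is the entire nontrivial content of the theorem, and your schema only delivers it for positive sets.

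The paper closes this gap with an extra pull-back argument that your proposal is missing. To show a ground-model $n$-stationary $T$ stays $n$-stationary in $V[G]$, one argues by contradiction: fix a witness $S\in V[G]$ that is $k$-stationary ($k<n$) with $Tr_k^{V[G]}(S)\cap T=\emptyset$. Applying the induction hypothesis \emph{in $V[G]$ to $S$ itself} (legitimate there, since the statement being inducted is ``$k$-stationary $\Rightarrow$ trace in the dual filter,'' with no positivity hypothesis on the stationary set) yields $C\in\bar{I}^{*}$ with $C\subseteq Tr_k^{V[G]}(S)$; since $\bar{I}$ is generated by $I$, one may take $C\in I^{*}$, hence $C\in V$. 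Now $C$, being in $I^{+}$, is $k$-stationary in $V$ by the induction hypothesis in $V$, so the $n$-stationarity of $T$ in $V$ produces $\alpha\in Tr_k(C)\cap T$, and Lemma \ref{lemma: traceOftrace} transfers $\alpha$ into $Tr_k^{V[G]}(S)\cap T$ --- a contradiction. Note the asymmetry that rescues the argument: the new witness $S$ need not be positive, but its \emph{trace} contains a dual-filter (hence positive, hence ground-model-stationary) set, and the trace-of-trace lemma lets that surrogate do $S$'s job. For the $d$-variant one additionally needs Fact \ref{fact: generate}(3) to pull the sequence $\langle C_k: k<n\rangle$ back into $V$, a point your sketch also does not address. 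If you reformulate your inductive statement as the paper's $(*)_n$ (``every $n$-stationary set has trace in $I^{*}$,'' quantified over all models carrying such an ideal) and insert this contradiction-plus-pull-back step in place of the reversed inclusion, your proof becomes essentially the paper's.
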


\begin{proof}
%By Lemma \ref{lemma: smallsize}, it suffices to consider $P$ of size $\leq \lambda$.
%Fix $\langle M_\alpha \in [H(\theta)]^{<\lambda} : \alpha<\lambda\rangle$ witnessing that $P$ is $\lambda$-club homogeneous. 
We prove the following statement $(*)_n$ by induction on $n\in \mathrm{Ord}$. For any $\lambda>\kappa,n$ such that $\lambda$ carries a uniform normal $\kappa$-saturated ideal $I$, for any $T\subset \lambda$, 
    \begin{enumerate}
        \item if $T$ is $n$-stationary, then $Tr_n(T)\in I^*$.
        \item if $T$ is $(n,\chi)$-$s$-stationary, then $Tr_n^\chi(T)\in I^*$, and 
        \item if $T$ is $n$-$d$-stationary, then $Tr_n^d(T)\in I^*$.
    \end{enumerate}

\begin{claim}
    $(*)_n$ for all $n\in \mathrm{Ord}$ implies that:  $\lambda$ is $\lambda$-stationary, $(\lambda,\chi)$-$s$-stationary and $\lambda$-$d$-stationary whenever $\lambda>\chi$ carries a uniform normal $\kappa$-saturated ideal $I$ for some $\kappa<\lambda$.
\end{claim}

\begin{proof}[Proof of the Claim]
    \begin{enumerate}
        \item Since $\lambda$ is $\lambda$-stationary iff $\lambda$ is $k$-stationary for all $k<\lambda$, the first clause is immediate.
        \item Given $\chi'<\chi$ and $\langle T_i: i<\chi'\rangle$ such that each $T_i$ is $(\eta_i, \chi)$-$s$-stationary for some $\eta_i<\lambda$, by the hypothesis we know there are $A_i\in I^*$ such that $A_i\subset Tr^\chi_{\eta_i}(T_i)$ for each $i<\chi'$. Since $I^*$ is $\lambda$-complete, we have that $\bigcap_{i<\chi'}Tr^\chi_{\eta_i}(T_i) \supset \bigcap_{i<\chi'} A_i \in I^*$.
        \item Given $\langle T_i: i<\lambda\rangle$ such that each $T_i$ is $\eta_i$-$d$-stationary for some $\eta_i<\lambda$, by the hypothesis, there are $A_i\in I^*$ such that $A_i \subset Tr^d_{\eta_i}(T_i)$ for each $i<\lambda$. Since $I$ is normal, $ \Delta_{i<\lambda} Tr^d_{\eta_i}(T_i)\supset \Delta_{i<\lambda} A_i\in I^*$.
        \end{enumerate}
\end{proof}

Base case $n=1$. Recall that in this case, 
\begin{itemize}
    \item $T$ is $1$-stationary if $T$ is a stationary subset of $\lambda$,
    \item $T$ is $(1,\chi)$-$s$-stationary if $T \cap \lambda\cap \cof(\geq \chi)$ is stationary in $\lambda$,
    \item $T$ is $1$-$d$-stationary if $T$ is a stationary subset of $\lambda$.
\end{itemize}

Let $G\subset P(\lambda)/I$ be generic over $V$. If $T$ is a stationary subset of $\lambda$ (stationary relative to $\lambda\cap \cof(\geq \chi)$), then $\Vdash_{P(\lambda)/I}$ $T\subset \lambda$ is stationary since the forcing satisfies $\kappa$-c.c. Note that $j(T)\cap\kappa=T$ where $j:V\rightarrow M$ is the elementary embedding from fact \ref{fact: generate} item (1). In particular, $M\models \kappa \in j(Tr_1(T))$ and by fact \ref{fact: generate} (4) $Tr_1(T)\in I^*$ ($Tr^\chi_1(T)\in I^*$, or $Tr^d_1(T)\in I^*$).

Suppose we have proved $(*)_i$ for all $i < n$, let us show $(*)_{n}$. Let $G\subset P(\lambda)/I$ be generic over $V$ and let $j: V\to M\simeq \Ult(V,G)$ be an ultrapower embedding in $V[G]$. Let us first assume $T\subset \lambda$ is $n$-stationary. It suffices to show that $M\models T$ is $n$-stationary, as the conclusion follows from the elementarity of $j$. Suppose for the sake of contradiction that $M\models T$ is not $n$-stationary. Since $V[G]\models {}^\lambda M\subset M$ by Fact \ref{fact: generate} (1), $V[G]\models T$ is not $n$-stationary. As a result, there exists a $k$-stationary $S\subset \lambda$ such that $Tr^{V[G]}_k(S)\cap T =\emptyset$ for some $k<n$. By Fact \ref{fact: generate} (2), the ideal $\bar{I}$ generated by $I$ is uniform normal and $\kappa$-saturated in $V[G]$. Therefore, we can apply the induction hypothesis $(*)_k$ in $V[G]$ to conclude that there exists $C\in \bar{I}^*$ such that $C\subset Tr^{V[G]}_k(S)$. Since $\bar{I}$ is generated by $I$, we may assume that $C\in I^*$. In particular, $C\in V$. Apply $(*)_k$ in $V$, we know that $C$ is $k$-stationary. As a result, $Tr_k(C)\cap T \neq \emptyset$. Fix $\alpha\in Tr_k(C)\cap T$. Apply $j$ to see that $\alpha\in j(Tr_k(C)\cap T)=Tr^{V[G]}_k(C)\cap T$. Hence $$M\models \alpha\in Tr_k(C)\cap T \subset Tr_k(Tr_k(S))\cap T \subset Tr_k(S)\cap T$$ by Lemma \ref{lemma: traceOftrace}. Contradicting the fact that $Tr^{V[G]}_k(S)\cap T =\emptyset$ in $V[G]$.

As the proof for the case where $T$ is $(n,\chi)$-$s$-stationary and $n$-$d$-stationary is similar to the above, we only sketch the differences. Let us assume $T$ is $n$-$d$-stationary for concreteness. Proceed as above and assume $M\models T$ is not $n$-$d$-stationary for the sake of contradiction. There exists $\langle S_i: i<\lambda\rangle$ such that each $S_i$ is $\eta_i$-$d$-stationary for some $\eta_i<n$, and $\Delta_{i<\lambda} (Tr^d_{\eta_i}(S_i))^{V[G]}\cap T =\emptyset$. In $V[G]$, apply the induction hypothesis and the normality of $\bar{I}$, for each $k<n$, we can get $C_k\in I^*$ such that $$C_k\subset \Delta_{i<\lambda}^k (Tr^d_{\eta_i}(S_i))^{V[G]}=_{def} \{\alpha: \forall i<\alpha, \text{ if }\eta_i=k, \text{ then }\alpha\in (Tr^d_k(S_i))^{V[G]}\}.$$
By Fact \ref{fact: generate} (3), we may assume that $\langle C_k: k<n\rangle\in V$. For each $k<n$, the induction hypothesis $(*)_k$ in $V$ implies that $C_k$ is $k$-$d$-stationary. As a result, we can find $\alpha\in T\cap \bigcap_{k<n} Tr^d_k(C_k)$. Apply $j$, we know that in $M$, $\alpha\in T\cap Tr^d_k(C_k)$ for each $k<n$. We check that $\alpha\in \Delta_{i<\lambda} (Tr^d_{\eta_i} (S_i))^{V[G]}$, which gives the desired contradiction. Fix $i<\alpha$ and $\eta_i=k$. By the definition of $C_k$, we have that $V[G]\models C_k-(i+1) \subset Tr^d_{k}(S_i)$. By Lemma \ref{lemma: traceOftrace}, $V[G]\models Tr^d_{k}(C_k-(i+1)) \subset Tr^d_{k}(S_i)$. But then $V[G]\models \alpha\in Tr^d_{k}(C_k-(i+1)) \subset Tr^d_{k}(S_i)$.
\end{proof}

\begin{proof}[Proof of Theorem \ref{theorem: mainnonstronglimit0}]
Let $\lambda$ be a measurable cardinal and a $\kappa$-c.c forcing $P$ be given where $\kappa<\lambda$. A theorem of Kunen (\cite[Lemma 2]{Kunen1978}) gives that in $V^P$, there exists a $\lambda$-complete normal $\kappa$-saturated ideal on $\lambda$. Then we apply Theorem \ref{theorem: nonstronglimit} to get the conclusion as desired.
\end{proof}

Let us turn our attension to the 2-cardinal higher order stationary reflection principles (see Definition \ref{definition: 2-cardinal}). 
Sakai \cite{SakaiSlides} posed the following question: 
    For $n\geq 3$, is it consistent that there is a cardinal $\kappa\leq 2^\omega$ such that $P_\kappa(\lambda)$ is $n$-stationary for all $\lambda\geq \kappa$? 
    
We answer this question positively in the following, adapting the proof of Theorem \ref{theorem: nonstronglimit} to the 2-cardinal setting. Recall that a cardinal $\kappa$ is called \textit{$\lambda$-supercompact} if there is a fine normal measure $U$ over $P_\kappa(\lambda)$. Equivalently, if there is an elementary embedding $j:V\rightarrow M$ such that $crit(j)=\kappa$, $M$ is transitive and $M^\lambda\subseteq M$. $\kappa$ is called \textit{supercomapct} if it is $\lambda$-supercompact for every $\lambda$. The following is a 2-cardinal version of the classic Mitchell order on normal ultrafilters on measurable cardinals.

\begin{definition}
We define an ordinal function $o$ on the set $\mathcal{NF}$ of all normal fine measures on $P_\kappa \lambda$ recursively as follows: for all $U\in \mathcal{NF}$,
	\begin{enumerate}
	\item $o(U)\geq 0$, 
	\item $o(U)\geq \xi$ if for any $\xi'<\xi$, there exists some $W\in \Ult(V, U)\cap \mathcal{NF}$ such that $o(W)\geq \xi'$.
	\end{enumerate}
For $U\in \mathcal{NF}$, $o(U)=\xi$ iff $o(U)\geq \xi$ but $o(U)\not\geq \xi+1$.
\end{definition}

\begin{observation}
If $\kappa$ is a supercompact cardinal, then for any $\lambda\geq \kappa$ and $\xi<\lambda$, there is a normal fine measure $U$ on $P_\kappa \lambda$ such that  $o(U)\geq \xi$.
\end{observation}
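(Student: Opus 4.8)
The plan is to induct on $\xi$, producing at stage $\xi$ a normal fine measure whose ultrapower already contains every measure built at earlier stages. Recall that the Mitchell order on normal fine measures on $P_\kappa\lambda$ is $W\lhd U\iff W\in\Ult(V,U)$, and $o(U)=\sup\{o(W)+1: W\lhd U\}$; thus to obtain $o(U)\geq\xi$ it suffices to exhibit, for each $\eta<\xi$, some $W\in\Ult(V,U)$ that is a normal fine measure on $P_\kappa\lambda$ with $o(W)\geq\eta$. The crucial simplification is that the ordinal $o(W)$ is computed in $V$, so I will never have to recompute Mitchell ranks inside an ultrapower: the only thing that must be arranged is the membership $W\in\Ult(V,U)$.

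Concretely, at stage $\xi$ I assume inductively that for every $\eta<\xi$ there is a normal fine measure $U_\eta$ on $P_\kappa\lambda$ with $o(U_\eta)\geq\eta$. Since $\kappa$ is supercompact, I would fix $j:V\to M$ with $crit(j)=\kappa$ and $M^\theta\subseteq M$ for $\theta=2^{|P_\kappa\lambda|}=2^{\lambda^{<\kappa}}$, and set $U=\{X\subseteq P_\kappa\lambda: j''\lambda\in j(X)\}$, the normal fine measure on $P_\kappa\lambda$ derived from $j$ via the seed $j''\lambda$. Let $j_U:V\to M_U=\Ult(V,U)$ and let $k:M_U\to M$, $k([f]_U)=j(f)(j''\lambda)$, be the factor embedding, so that $k\circ j_U=j$.

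The first point is that each $U_\eta$ belongs to $M$: a normal fine measure on $P_\kappa\lambda$ is a subset of $P(P_\kappa\lambda)$, so its transitive closure has size at most $\theta$, and $M^\theta\subseteq M$ forces it into $M$. The second, and main, point is that $M_U$ already contains these measures. Here I would invoke the standard fact that the factor map $k$ of a derived supercompactness measure has large critical point; since every normal fine measure on $P_\kappa\lambda$ has rank just above $\lambda$ (it sits inside $V_{\lambda+3}$), it suffices to know $crit(k)>\lambda+3$, which holds comfortably (in fact $crit(k)$ exceeds $2^\lambda$ once $M$ is $2^\lambda$-closed). Writing $\delta=crit(k)$ one then has $V^{M_U}_{\delta}=V^{M}_{\delta}$, and as each $U_\eta\in M$ has rank below $\delta$, it lies in $V^{M}_\delta=V^{M_U}_\delta\subseteq M_U$.

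Combining the two points, each $U_\eta$ is an element of $M_U=\Ult(V,U)$, hence $U_\eta\lhd U$, while $o(U_\eta)\geq\eta$ by the induction hypothesis; therefore $o(U)\geq o(U_\eta)+1\geq\eta+1$ for every $\eta<\xi$, which is exactly $o(U)\geq\xi$. The step I expect to require the most care is the critical-point estimate for the factor map together with the attendant bookkeeping of closure: how much supercompactness of $M$ is needed to absorb a measure (closure under $2^{\lambda^{<\kappa}}$-sequences) versus the much weaker rank bound needed to transfer the measure from $M$ back down into $M_U$. Everything else is a routine unwinding of the definition of the Mitchell order.
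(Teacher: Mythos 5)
Your setup coincides with the paper's (derive $U$ from a sufficiently closed embedding $j:V\to M$, note each $U_\eta\in M$ by $2^{\lambda^{<\kappa}}$-closure, factor through $k:M_U\to M$ with large critical point), but the step that is supposed to move $U_\eta$ from $M$ down into $M_U$ is false. For an elementary $k:M_U\to M$ between transitive classes with critical point $\delta$, one only gets the inclusion $V^{M_U}_\delta\subseteq V^{M}_\delta$ together with $k\restriction V^{M_U}_\delta=\mathrm{id}$; the equality $V^{M_U}_\delta=V^{M}_\delta$ that you invoke does not follow, and in the present situation it provably fails exactly at the objects you care about. Indeed, take $x=U$ itself: it is a subset of $P(P_\kappa\lambda)$ of rank roughly $\lambda+2<\delta$, it belongs to $M$ by your own closure hypothesis ${}^{2^{\lambda^{<\kappa}}}M\subseteq M$, and yet $U\notin M_U=\Ult(V,U)$ --- the standard fact (item (3) in the paper's list of facts about the factor map) which is what makes the Mitchell order irreflexive and well-founded. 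So $V^M_\delta\not\subseteq M_U$. Worse, if your transfer principle were sound, applying it to $U_\eta:=U$ would give $U\lhd U$; so no argument along these lines can be repaired within the same strategy, because rank-plus-critical-point considerations cannot distinguish the measures $U_\eta$, which must get into $M_U$, from $U$ itself, which must not.

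This is precisely why the paper argues differently: the entire content of the observation is getting measures of every smaller Mitchell rank into $\Ult(V,U)$, and this has to come from elementarity/reflection rather than from rank bounds. The paper supposes toward a contradiction that $\sup\{o(U')+1: U' \text{ normal fine on } P_\kappa\lambda\}=\eta<\lambda$, derives $W$ from a supercompactness embedding, and uses the factor map $k:N\to M$ --- whose critical point is at least $(2^{\lambda^{<\kappa}})^{+N}$ and which therefore fixes $\kappa$, $\lambda$, $\eta$ and every subset of $P_\kappa\lambda$ --- to transfer that statement into $N=\Ult(V,W)$; since ${}^{\lambda^{<\kappa}}N\subseteq N$ makes every normal fine ultrafilter of $N$ a genuine one in $V$, the model $N$ then contains measures of all ranks below $\eta$, giving $o(W)\geq\eta$ and a contradiction. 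Your inductive frame could in principle be salvaged, but only by replacing your step (2) with such a reflection argument for the statement ``there exists a normal fine measure of Mitchell order $\geq\eta$,'' which is exactly the difficulty your rank-transfer was meant to bypass.
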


\begin{proof}
    Recall that for a normal fine ultrafilter $U$ on $P_\kappa\lambda$ and $\eta$, $o(U)\geq \eta$ if for any $\xi<\eta$, there exists some normal fine ultrafilter $W$ on $P_\kappa\lambda$ with $o(W)\geq \xi$ which belongs to $M_U\simeq \Ult(V,U)$.

    Fix $\lambda\geq \kappa$. Suppose for the sake of contradiction that $$\sup \{o(U)+1: U \text{ is a normal fine ultrafilter on }P_\kappa\lambda\}=\eta<\lambda.$$

    Let $j: V\to M$ witness that $\kappa$ is $\lambda^+$-supercompact. Let $W$ be the normal fine ultrafilter on $P_\kappa \lambda$ derived from $j$. Let $i: V\to N\simeq \Ult(V, W)$ and let $k: N\to M$ be defined such that $k([f]_W)=j(f)(j''\lambda)$. The following facts are standard (see \cite{kanamori1994}):
    \begin{enumerate}
        \item   both $i$ and $k$ are elementary and $j=k\circ i$, 
        \item  ${}^{\lambda^{<\kappa}} N\subset N$ and $crit(k)\geq (2^{\lambda^{<\kappa}})^{+N}$, 
        \item $W\not\in N$.
    \end{enumerate} 
  As a result, by elementarity, $$N\models \sup \{o(U)+1: U \text{ is a normal fine ultrafilter on }P_\kappa\lambda\}=\eta<\lambda.$$
    Note that since $N$ is sufficiently closed, any normal fine ultrafilter on $P_\kappa\lambda$ in $N$ is a normal fine ultrafilter on $P_\kappa\lambda$ in $V$. But then by the definition, $o(W)\geq \eta$, which is a contradiction. 
\end{proof}

\begin{theorem}
Let $\kappa$ be a supercompact cardinal and $P$ be a forcing satisfying $\nu$-c.c for some $\nu<\kappa$. In $V^P$, $P_\kappa \lambda$ is $n$-stationary for any $n<\kappa$ and $\lambda\geq \kappa$. 
\end{theorem}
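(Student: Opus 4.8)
The plan is to first pass to the extension and reduce to a ground-model computation. By a Lévy--Solovay style lifting argument, $\nu$-c.c.\ forcing with $\nu<\kappa$ preserves the supercompactness of $\kappa$: given a sufficiently supercompact embedding $j$ with $\mathrm{crit}(j)=\kappa>\nu$, one lifts $j$ through $P$ using that $P$ is $\nu$-c.c.\ below the critical point. Thus I may work in $V^P$, which I rename $V$, and assume outright that $\kappa$ is supercompact there. Fix $\lambda\geq\kappa$ and $n<\kappa$; by Definition \ref{definition: 2-cardinal} it suffices to show that for every $m<n$ and every $m$-stationary $T\subseteq P_\kappa\lambda$ the trace $\mathrm{Tr}_m(T)$ is nonempty, since this is exactly what it means for $P_\kappa\lambda$ to be $n$-stationary.

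Next I would choose the embedding with enough closure to make higher-order stationarity absolute. Let $\theta=\lambda^{<\kappa}$ and let $j\colon V\to M$ witness $\theta$-supercompactness, so that $\mathrm{crit}(j)=\kappa$, $j(\kappa)>\theta\geq\lambda$, and ${}^\theta M\subseteq M$. The point of taking $\theta\geq\lambda^{<\kappa}$ is that then $(P_\kappa\lambda)^V=(P_\kappa\lambda)^M$ and, since every subset of $P_\kappa\lambda$ has size at most $\theta$, one gets $P(P_\kappa\lambda)^V\subseteq M$; likewise $j''\lambda\in M$ and $\sigma:=j\restriction P_\kappa\lambda\in M$ (the latter is computed from the $\lambda$-sequence $j\restriction\lambda$). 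A routine induction on $m$ then shows that $m$-stationarity of subsets of $P_\kappa\lambda$ is absolute between $V$ and $M$ for all $m<\kappa$: the recursive clause quantifies only over members of $P(P_\kappa\lambda)$ (shared between $V$ and $M$), over the small structures $P_\mu B$ for $B\in P_\kappa\lambda$ (whose power sets are also shared, since each of their subsets has size $<\kappa$ and $M$ is $\kappa$-closed), and over regularity of ordinals $\mu<\kappa$ (which is absolute).

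The core computation then runs as follows. The map $\sigma$ is a $\subseteq$-isomorphism of $P_\kappa\lambda$ onto $P_\kappa(j''\lambda)$ that preserves the operation $x\mapsto x\cap\kappa$ (indeed $\sigma(x)\cap j(\kappa)=x\cap\kappa$, as $j\restriction\kappa=\mathrm{id}$), so $\sigma$ carries $m$-stationary sets to $m$-stationary sets inside $M$. Now fix an $m$-stationary $T\subseteq P_\kappa\lambda$ with $m<n$. Because each $x\in T$ has $|x|<\kappa=\mathrm{crit}(j)$ we have $j(x)=j''x$, whence $j(T)\cap P_\kappa(j''\lambda)=\{j(x):x\in T\}=\sigma''T$. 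By the absoluteness established above, $T$ is $m$-stationary in $M$, so $\sigma''T$ is $m$-stationary in $P_\kappa(j''\lambda)$ in $M$; moreover $(j''\lambda)\cap j(\kappa)=\kappa$ is regular in $M$. By the definition of the trace applied in $M$, this says precisely that $j''\lambda\in \mathrm{Tr}^M_m(j(T))=j(\mathrm{Tr}_m(T))$, so $\mathrm{Tr}_m(T)$ belongs to the normal fine measure on $P_\kappa\lambda$ derived from $j$ and is in particular nonempty. As $m<n$ and $T$ were arbitrary this shows $P_\kappa\lambda$ is $n$-stationary, and since $n<\kappa$ and $\lambda\geq\kappa$ were arbitrary we are done.

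I expect the main obstacle to be bookkeeping the absoluteness of the recursively defined $n$-stationarity between $V$ and $M$; this is what forces the choice $\theta\geq\lambda^{<\kappa}$, so that $P(P_\kappa\lambda)^V\subseteq M$. This is where the argument diverges from Theorem \ref{theorem: nonstronglimit}: there only ${}^\lambda M\subseteq M$ is available from the generic ultrapower, so absoluteness fails for the ``large'' structure and must instead be threaded through the induction on $n$ together with the regenerated ideal, whereas here the genuine supercompactness lets me buy all the closure I need outright. A secondary point requiring care is the preservation of supercompactness through the $\nu$-c.c.\ forcing $P$, which is what guarantees that the embedding $j$ exists in $V^P$ at all.
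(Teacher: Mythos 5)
Your opening reduction is where the proof breaks, and it breaks irreparably: the claim that $\nu$-c.c.\ forcing with $\nu<\kappa$ preserves the supercompactness of $\kappa$ by a L\'evy--Solovay argument is false. L\'evy--Solovay requires $|P|<\kappa$; a chain condition alone puts no bound on the size of $P$. For instance $P=\mathrm{Add}(\omega,\kappa^{+})$ is c.c.c.\ (so $\nu$-c.c.\ with $\nu=\omega_1<\kappa$) yet forces $2^{\omega}\geq\kappa^{+}$, so in $V^{P}$ the cardinal $\kappa$ is not strong limit, hence not measurable, let alone supercompact. This is not a corner case but the intended regime of the theorem: the paper applies it precisely to produce $\kappa\leq 2^{\omega}$ with $P_{\kappa}(\lambda)$ $n$-stationary for all $\lambda\geq\kappa$, answering Sakai's question. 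Consequently, in $V^{P}$ there is in general no embedding with critical point $\kappa$ and ${}^{\theta}M\subseteq M$, and everything after your first step --- the absoluteness of $m$-stationarity between $V$ and $M$, the computation $j''\lambda\in j(\mathrm{Tr}_m(T))$ --- has no model in which to run. What does survive the forcing is only this: a ground-model embedding $j:V\to M$ lifts to $j^{+}:V[G]\to M[G^{*}]$, but $G^{*}\subseteq j(P)$ must be obtained by \emph{further} forcing over $V[G]$, so $j^{+}$ is a generic embedding and the object it induces in $V[G]$ is an ideal on $P_{\kappa}\lambda$, not a measure.

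This is exactly the difficulty the paper's proof is organized around, and it cannot be bought off with closure: an $m$-stationary $S\subseteq P_{\kappa}j''\lambda$ in $M[G^{*}]$ witnessing that $j^{+}{}''T$ fails to be $n$-stationary is a set of the generic extension and need not lie in $M$, so no amount of closure of $M$ inside $V$ makes $m$-stationarity absolute where it is needed. The paper instead proves, by induction on $n$, the statement $(\star)_n$ quantifying over \emph{all} $\nu$-c.c.\ forcings and all normal fine measures $U$ on $P_{\kappa}\lambda$ with Mitchell order $o(U)\geq n$ (their existence is the content of the Observation preceding the theorem). In the inductive step, given such an $S$ in $M[G^{*}]$, one picks $W\in M$ with $o(W)\geq m$, applies the induction hypothesis \emph{inside $M$} to the $\nu$-c.c.\ forcing $j(P)$ to capture $\mathrm{Tr}_m(S)$ by a $W$-measure-one set $B$, notes that the pullback $B'$ is in $V$ and is $m$-stationary in $V[G]$ (again by the induction hypothesis, now applied in $V$), and then contradicts $\mathrm{Tr}_m(S)\cap j^{+}{}''T=\emptyset$ via some $D\in T\cap\mathrm{Tr}_m(B')$ and Lemma \ref{lemma: traceoftrace2}. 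Your core computation is, in substance, the standard proof that $P_{\kappa}\lambda$ is $n$-stationary when $\kappa$ is genuinely supercompact; it is fine as far as it goes, but it establishes only the unforced statement, not the theorem about $V^{P}$.
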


\begin{proof}
The proof is similar to before, so we only highlight the modifications.
We prove the following statement $(\star)_n$ by induction on $n\in \mathrm{Ord}$: for any $\nu$, any $\lambda>\kappa> \max\{n, \nu\}$, any forcing $P$ satisfying $\nu$-c.c and any normal fine measure $U$ on $P_\kappa \lambda$ such that $o(U)\geq n$, the following holds in $V^{P}$: 
for any $n$-stationary $T\subset P_\kappa\lambda$, there exists $A\in U$ such that $Tr_{n}(T)\supset A$. As $(\star)_1$ is easy to be seen to hold, let us focus on the inductive step.

Suppose we have proved $(\star)_i$ for all $i<n$ and let us show $(\star)_{k}$. Let $\lambda>\kappa>\{n,\nu\}$, forcing $P$ and normal fine ultrafilter $U$ on $P_\kappa\lambda$ with $o(U)\geq n$ be given.
Let $j: V\to M\simeq \mathrm{Ult}(V, U)$ be the supercompact ultrapower embedding, in particular, ${}^\lambda M \subset M$. Let $G\subset P$ be generic over $V$ and $T\subset P_\kappa\lambda\in V[G]$ be $n$-stationary. We can continue to force and find $G^*\subset j(P)$ generic over $V$ extending $j''G$, such that we can lift $j$ to $j^+: V[G]\to M[G^*]$. In $V[G]$, standard arguments show that the ideal generated by the dual of $U$ is the same as $\{X\subset P_\kappa \lambda: \  \Vdash_{j(P)/j''G} j''\lambda\not\in \dot{j}^+(X)\}$. It suffices to show that ${j^+}'' T$  remains $n$-stationary in $P_\kappa j''\lambda$ in $M[G^*]$. Then we finish by the elementarity of $j$. 

Suppose for the sake of contradiction that $j^+{}'' T$ is not $n$-stationary in $P_\kappa j''\lambda$. 
In $M[G^*]$, let $S\subset P_\kappa j''\lambda$ be some $m$-stationary set such that $Tr_m(S)\cap j^+{}'' T =\emptyset$ where $m<n$. Since $o(U)\geq n$, we can find some normal fine ultrafilter $W\in M$ on $P_\kappa\lambda$ such that $o(W)\geq m$. We may identify $W$ as a normal fine ultrafilter $W'$ on $P_\kappa j''\lambda$ induced by $j\restriction \lambda\in M$.

Applying the induction hypothesis $(\star)_m$ in $M$ with respect to $W'$, $S$ and $j(P)$, we know that there is $B\in W'$ such that $M[G^*]\models B\subset Tr_{m}(S)$. Since in $P_\kappa \lambda$ and $P_\kappa j''\lambda$ are isomorphic, we know that $B'=j^{-1} (B)=\{j^{-1}(a): a\in B\}$ is in $W$. In particular, both $B, B'$ are in $V$. As a result, applying $(\star)_l$ for all $l\leq m$ in $V$, we get that $B'$ is an $m$-stationary subset of $P_\kappa\lambda$ in $V[G]$. In $V[G]$, let $D\in T\cap Tr_{m}(B')$ as $T$ is $n$-stationary.

Let $\mu=D\cap \kappa$ and we know $\mu$ is a regular cardinal in $\kappa$. By the elementarity of $j^+$, we have $j^+(D)={j^+}'' D \in {j^+}''T\cap (Tr_{m}(j(B')))^{M[G^*]}$. Note that ${j^+}''D\in  (Tr_{m}({j}''B'))^{M[G^*]}$, since ${j}'' V$ contains $(P_\mu {j^+}'' D) \cap j(B')$ (Remark \ref{remark: reduce}). To see this, let $a\in (P_\mu {j^+}'' D) \cap j(B')$, since $B'\in V$, we have that $j(B')\in M$. Thus $a\in M$. As a result, $a'=j^{-1}(a)\in V$. Hence, in $V[G]$, we must have that $a'\subset D\cap B'$
and $j(a')=j'' a' = a \in  {j}'' V$. Therefore, in $M[G^*]$, ${j^+}''D\in  Tr_{m}({j}''B')=Tr_{m}(B)\subset Tr_{m}(S)$ by Lemma \ref{lemma: traceoftrace2}. This contradicts with the fact that $Tr_m(S)\cap j^+{}'' T =\emptyset$ in $M[G^*]$.
\end{proof}

\section{Preparing the ground model and Radin forcing}\label{section: PreparRadin}
Start with a model of GCH where $\kappa$ is an $H(\lambda^{++})$-hypermeasurable cardinal where $\lambda$ is the least  measurable cardinal greater than $\kappa$. Our goal is to produce a universe $V$ where $2^\kappa=\lambda^+$ and there exists an elementary embedding $j:V\to M$ such that
    \begin{enumerate}
        \item $H(\lambda^{++})\subset M$,
        \item for every $X\subset \lambda$, there is $g\in V$ such that $j(g)(\kappa)=X$,
        \item for any $n\in \omega$, $\lambda$ is $(n,\kappa^+)$-$s$-stationary.
    \end{enumerate}

Let $r:\kappa\rightarrow\kappa$ be the function that takes any $\alpha$ to the minimal measurable cardinals $\alpha<r(\alpha)$. Since $\kappa$ is an $H(\lambda^{++})$-hypermeasurable cardinal, $r:\kappa\rightarrow\kappa$. Since the preparation is standard, we will only sketch the proof and refer the readers to the relevant literature for more details. Specifically, we follow largely \cite{CummingsGCH}, \cite{Honzik} and \cite{CummingsStrong}.

We will use some standard facts about term-space forcing. 

\begin{definition}
Let $P$ be a forcing and $\dot{Q}$ be a $P$-name for a forcing. Define $\dot{Q}/P$ to be the poset consisting of terms $\dot{\sigma}$ such that $\Vdash_{P} \dot{\sigma}\in \dot{Q}$. The order on $\dot{Q}/P$ is: $\dot{\sigma}\leq \dot{\tau}$ iff $\Vdash_P \dot{\sigma}\leq \dot{\tau}$.
\end{definition}

\begin{fact}\label{fact: term}
\begin{enumerate}
    \item \cite[Proposition 22.3]{CummingsHand} Fix a forcing $P$ and a $P$-name for a forcing $\dot{Q}$. Let $G\subset P$ be generic over $V$ and let $H\subset \dot{Q}/P$ be generic over $V$. Then $I=\{i_G(\dot{\tau}): \dot{\tau}\in H\}$ is an $i_G(\dot{Q})$-generic filter over $V[G]$.
    \item \cite[Fact 2]{CummingsGCH} Let $\kappa$ be such that $\kappa^{<\kappa}=\kappa$ and $P$ be a $\kappa$-c.c forcing. Let $\dot{Q}$ be a $P$-name for $\mathrm{Add}(\kappa,\gamma)$. Then in $V$, $\mathrm{Add}(\kappa,\gamma)$ is forcing equivalent to $\dot{Q}/P$.
\end{enumerate}

\end{fact}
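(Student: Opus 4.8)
The plan is to treat the two clauses separately, since both are standard facts from the theory of term-space forcing (cf. \cite{CummingsHand}); I would only sketch each.

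For (1), the cleanest route is to exhibit a projection from the product $P\times(\dot Q/P)$ onto the two-step iteration $P*\dot Q$ and transport genericity along it. Define $\rho(p,\dot\tau)=(p,\dot\tau)$, reading the term $\dot\tau\in\dot Q/P$ as the second coordinate of a condition in $P*\dot Q$. This is order preserving by the very definitions of the two orders. For the amalgamation property, suppose $(p',\dot\sigma)\le(p,\dot\tau)$ in $P*\dot Q$, so $p'\le p$ and $p'\Vdash_P\dot\sigma\le\dot\tau$; letting $\dot\tau'$ be the name equal to $\dot\sigma$ below $p'$ and to $\dot\tau$ off $p'$, one has $\Vdash_P\dot\tau'\le\dot\tau$ (so $\dot\tau'\le\dot\tau$ in $\dot Q/P$) while $p'\Vdash_P\dot\tau'=\dot\sigma$, whence $(p',\dot\tau')$ lies below $(p,\dot\tau)$ in the product with $\rho(p',\dot\tau')\le(p',\dot\sigma)$. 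Thus $\rho$ is a projection, and for mutually generic $G,H$ the image $\rho[G\times H]$ generates a $(P*\dot Q)$-generic filter, which by the definition of the iteration is exactly $G*I$ with $I=\{i_G(\dot\tau):\dot\tau\in H\}$; hence $I$ is $i_G(\dot Q)$-generic over $V[G]$. Equivalently, I would check directly that $\dot\tau\mapsto i_G(\dot\tau)$ is a projection of $\dot Q/P$ onto $i_G(\dot Q)$ as computed in $V[G]$, using the same mixing of names, so that $i_G[H]$ generates the generic filter.

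For (2), I would first record the closure fact that $\dot Q/P$ is $<\kappa$-closed: since $\Vdash_P$ ``$\dot Q$ is $<\kappa$-closed'', given a descending sequence $\langle\dot\tau_i:i<\delta\rangle$ with $\delta<\kappa$ the lower bound furnished by $<\kappa$-closure of $\dot Q$ is again a term, and this step needs no chain condition. The substantive point is to produce a dense subset of $\dot Q/P$ isomorphic to a dense subset of $\mathrm{Add}(\kappa,\gamma)^V$, and this is where the $\kappa$-c.c.\ of $P$ enters: a term forced to lie in $\mathrm{Add}(\kappa,\gamma)$ names a partial function on $\gamma\times\kappa$ of size $<\kappa$, so by $\kappa$-c.c.\ and regularity of $\kappa$ its domain is forced into a fixed ground-model set $d\in([\gamma\times\kappa]^{<\kappa})^V$, and each value is decided by a single antichain of size $<\kappa$. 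Refining to such ``reduced'' terms gives a dense subset; counting the possible domains and $<\kappa$-sized antichain data using $\kappa^{<\kappa}=\kappa$ shows it is a $<\kappa$-closed, atomless, separative forcing whose weight is controlled, via the $\kappa$-c.c.\ together with $\kappa^{<\kappa}=\kappa$, by that of $\mathrm{Add}(\kappa,\gamma)^V$. One then invokes the uniqueness theorem that under $\kappa^{<\kappa}=\kappa$ identifies any such forcing with $\mathrm{Add}(\kappa,\gamma)$ up to forcing equivalence, which is exactly the content of \cite[Fact 2]{CummingsGCH}.

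The main obstacle lies entirely in clause (2): the passage from arbitrary terms to a manageable dense family. One must verify that the reduction to ground-model domains and $<\kappa$-sized antichain data yields a genuinely dense (not merely predense) subset, and that the cardinality bookkeeping matches the weight of $\mathrm{Add}(\kappa,\gamma)^V$ so that the uniqueness theorem for $<\kappa$-closed Cohen-type forcings applies. Clause (1), by contrast, is a pure projection argument whose only technical content is the routine mixing of names.
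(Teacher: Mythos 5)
The paper gives no argument for this Fact at all: both clauses are quoted from the literature, with citations to \cite{CummingsHand} and \cite{CummingsGCH}. So the comparison is between your sketch and the standard proofs. Your clause (1) is correct and is essentially the canonical argument: the identity map is a projection from $P\times(\dot Q/P)$ onto $P*\dot Q$, with the amalgamation handled by exactly the mixing of names you describe; note that you rightly made the mutual genericity of $G$ and $H$ explicit (equivalently, $G$ is $P$-generic over $V[H]$), which the paper's wording leaves implicit but which your projection argument genuinely uses. Your worry about density of the reduced terms in clause (2) is unfounded: by mixing, every term is forced equal to a reduced one, so the reduced terms are even equivalent to all of $\dot Q/P$, not merely dense.

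The genuine gap is in clause (2), and it sits precisely at the step you call ``cardinality bookkeeping'': the $\kappa$-c.c.\ of $P$ together with $\kappa^{<\kappa}=\kappa$ does \emph{not} control the weight or the saturation of the term forcing; one needs a bound on the size of $P$, say $|P|\leq\kappa$, which is present in the actual Fact 2 of \cite{CummingsGCH} and in every application (in this paper $P=P_{i(\kappa)}$ has size $i(\kappa)$, the ``$\kappa$'' of the Fact). Concretely, for each Boolean value $b\in \mathrm{ro}(P)\setminus\{0,1\}$ let $\tau_b$ be the term forced to equal $\{((0,0),1)\}$ if $b\in \dot G$ and $\{((0,0),0)\}$ otherwise. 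If $b\neq b'$ then some $p$ decides them differently, so no term is forced to extend both $\tau_b$ and $\tau_{b'}$; hence $\{\tau_b: b\in \mathrm{ro}(P)\setminus\{0,1\}\}$ is an antichain of $\dot Q/P$ of size $|\mathrm{ro}(P)|$. Taking $\kappa=\omega_1$ under CH and $P=\mathrm{Add}(\omega,\omega_2)$, which is $\kappa$-c.c., this gives an antichain of size $\omega_2$, whereas $\mathrm{Add}(\kappa,\gamma)$ is $\kappa^+$-c.c.\ whenever $\kappa^{<\kappa}=\kappa$; since forcing equivalence preserves the saturation of the Boolean completion, the two posets are not equivalent, so the statement as you (and, to be fair, the paper) phrase it is false without the size restriction. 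With $|P|\leq\kappa$ the argument does close: every $b\in\mathrm{ro}(P)$ is the supremum of an antichain of size $<\kappa$, so $|\mathrm{ro}(P)|\leq\kappa^{<\kappa}=\kappa$, your count of reduced terms becomes $\gamma^{<\kappa}\cdot\kappa^{<\kappa}=\gamma^{<\kappa}$, and the folklore uniqueness lemma for $<\kappa$-closed separative atomless posets applies---though it identifies the term poset with $\mathrm{Add}(\kappa,\gamma^{<\kappa})$, so landing on $\mathrm{Add}(\kappa,\gamma)$ itself still needs arithmetic of the form $\gamma^{<\kappa}=\gamma\cdot\kappa$, which holds in the intended applications. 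Finally, your closing sentence invokes \cite{CummingsGCH} as the uniqueness theorem; that is circular, since Fact 2 there is the very statement being proved---the uniqueness lemma is a separate folklore result and should be quoted (or proved by the usual tree-building back-and-forth) on its own.
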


\subsection{Step One}
The first stage is to ensure that there is a universe $V_0$ in which 	
	\begin{enumerate}
	\item $\kappa$ is an $H(\lambda^{++})$-hypermeasurable cardinal,
	\item GCH holds at all inaccessible $\alpha\leq\kappa$ and $\beta\geq \lambda$,
	\item there is an elementary embedding $j: V_0\to M$ such that 
            \begin{itemize}
                \item $H(\lambda^{++})\subset M$,
                \item $crit(j)=\kappa$,
                \item $j(r)(\kappa)=\lambda$, and
                \item ${}^\kappa M\subset M$
            \end{itemize}
  along with $i: V_0\to N$ being the ultrapower by the normal measure derived from $j$, there is $F\in V_0$ that is generic for $i(Add(\kappa, \lambda^+))$ over $N$.
	\item $V_0$ is a $\kappa^{++}$-c.c forcing extension of $V$.
	\end{enumerate}

 For the construction, see \cite[Corollary 2.7]{Honzik}. Apter and Cummings \cite{ApterCummings} independently, in some unpublished work, has an alternative way of achieving the above.

\subsection{Step Two}

We may take $V_0$ from the previous subsection as our ground model in this subsection. The second step is to perform the Easton support iteration $\langle P_\beta, \dot{Q}_\alpha : \alpha\leq \kappa, \beta\leq\kappa+1\rangle$ such that for any $\alpha<\kappa$, $\dot{Q}_\alpha$ is trivial unless $\Vdash_{P_\alpha}\alpha$ is inaccessible, in which case $\dot{Q}_\alpha$ is a $P_\alpha$-name for $\mathrm{Add}(\alpha,r(\alpha)^+)$. Finally, let $\dot{Q}_\kappa$ be the $P_\kappa$-name for $\mathrm{Add}(\kappa,\lambda^+)$. Let $G=G_\kappa*g_\kappa$ be $V$-generic for $P_{\kappa+1}=P_\kappa*\dot{Q}_\kappa$. Let $j:V\rightarrow M$ be the embedding from Step One. We may without loss of generality assume that $j=j_E$ where $E$ is a short $(\kappa,\lambda^{++})$-extender on $\kappa$. Let $i: V\to N$ be the ultrapower by the normal ultrafilter on $\kappa$ derived from $j$.

\begin{proposition}\label{proposition: prepare}
    In $V[G]$ we can lift $j\subseteq j^+:V[G]\rightarrow M[H]$ such that:
    \begin{enumerate}
        \item $(H(\lambda^{++}))^{V[G]}\subseteq M[H]$.
        \item For every $\xi<\lambda^+$ there is $g\in V[G]$ such that $j^+(g)(\kappa)=\xi$.
        \item $2^\kappa=2^\lambda=\lambda^+$.
    \end{enumerate}
\end{proposition}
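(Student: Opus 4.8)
\emph{The plan} is to factor $j$ through the normal ultrapower and lift in two stages. Write $j=k\circ i$, with $i:V\to N$ the normal ultrapower derived from $E$ and $k:N\to M$ the factor map, so that $k\circ i=j$, $k(i(\kappa))=j(\kappa)$ and $crit(k)\geq\lambda^{++}$. The key point is that since $i$ is the \emph{normal} ultrapower and $2^\kappa=\kappa^+$ in $V$, we have $i(\kappa)<\kappa^{++}<\lambda$, so the tail of $i(P_\kappa)$ is \emph{short}. Using $crit(i)=\kappa$, $i(f)(\kappa)=\lambda$ and $i(\lambda^+)=\lambda^+$ (continuity at $\lambda^+$, as $\lambda^\kappa=\lambda$), the forcing $i(P_{\kappa+1})$ factors in $N$ as
\[
i(P_{\kappa+1})=\underbrace{P_\kappa*\dot Q_\kappa}_{P_{\kappa+1}}*\dot P_{\mathrm{tail}}*\mathrm{Add}(i(\kappa),\lambda^+),
\]
where $\dot Q_\kappa$ names $\mathrm{Add}(\kappa,\lambda^+)$ and $\dot P_{\mathrm{tail}}$ is the tail of $i(P_\kappa)$ from stage $\kappa+1$ to $i(\kappa)$, an iteration of length $i(\kappa)<\lambda$ that is forced to be $<\delta_0$-closed for $\delta_0$ the least inaccessible above $\kappa$. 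I will build an $N$-generic $G_i$ for $i(P_{\kappa+1})$ extending $i''(G)$, lift $i$ to $i^+:V[G]\to N[G_i]$, and then push $G_i$ forward along $k$.

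The first factor is handled by the given $G=G_\kappa*g_\kappa$. For $\dot P_{\mathrm{tail}}$ I would first build a generic $G_{\mathrm{tail}}$ over $N[G_\kappa]$, \emph{before} adjoining $g_\kappa$: at this stage $2^{i(\kappa)}=(i(\kappa))^+$ in $N[G_\kappa]$, so $\dot P_{\mathrm{tail}}$ has at most $(i(\kappa))^+<\delta_0$ dense sets there, and since it is $<\delta_0$-closed a generic can be produced by a diagonalization of length $<\delta_0$ inside $V[G_\kappa]$. (This is exactly where passing to the normal ultrapower is essential: on the extender side the tail has length $j(\kappa)\geq\lambda^{++}$ with $2^{j(\kappa)}$ dense sets, far exceeding its closure.) Because $\dot Q_\kappa$ is $\kappa^+$-c.c.\ while $\dot P_{\mathrm{tail}}$ is $<\delta_0$-closed with $\delta_0>\kappa^+$, Easton's lemma lets me commute the two, so $g_\kappa$ stays generic over $N[G_\kappa][G_{\mathrm{tail}}]$ and hence $G_{\mathrm{tail}}$ is generic over $N[G]$. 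For the top factor $\mathrm{Add}(i(\kappa),\lambda^+)$, whose $<i(\kappa)$-closure is too weak to meet its $\lambda^{++}$ dense sets by hand, I would instead use the pre-cooked generic $F$ for $i(\mathrm{Add}(\kappa,\lambda^+))$ over $N$ supplied by Step One: since $P_{\kappa+1}*\dot P_{\mathrm{tail}}$ is $i(\kappa)$-c.c., standard chain-condition and closure commutation (Easton's lemma together with the term-forcing reformulation of Fact \ref{fact: term}) show that $F$, after the usual surgery on its first $\lambda^+$ coordinates placing it below the master condition $\bigcup i''g_\kappa$, is generic for $\mathrm{Add}(i(\kappa),\lambda^+)$ over $N[G*G_{\mathrm{tail}}]$. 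This yields $G_i$ and the lift $i^+$.

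To reach $M$, I would push $G_i$ through $k$. As $crit(k)\geq\lambda^{++}$ and $|i(P_{\kappa+1})|=\lambda^+<crit(k)$, $k$ fixes every condition of $i(P_{\kappa+1})$, and $M=\{k(g)(s):g\in N,\ s<\lambda^{++}\}$. The standard factor-lifting criterion then shows that the filter $H$ on $j(P_{\kappa+1})$ generated by $k''G_i$ is $M$-generic: meeting a dense set $k(g)(s)\in M$ reduces to meeting boundedly many dense sets of $i(P_{\kappa+1})$ in $N$, all met by $G_i$; in particular the long part of the $M$-tail above $crit(k)$ is absorbed by this criterion. Setting $j^+=k^+\circ i^+$ gives $j^+:V[G]\to M[H]$ with $j^+\supseteq j$.

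It remains to verify the three clauses. For $(1)$, every $P_{\kappa+1}$-name for an element of $H(\lambda^{++})^{V[G]}$ lies in $H(\lambda^{++})^V\subseteq M$ (as $|P_{\kappa+1}|=\lambda^+$), so $H(\lambda^{++})^{V[G]}\subseteq M[G]\subseteq M[H]$. For $(2)$, the normal seed already gives $\lambda^+=i(\lambda^+)=\{i(f)(\kappa):f\in V\}$, and each such value, being below $crit(k)$, is fixed by $k$; hence every $\xi<\lambda^+$ equals $j(f)(\kappa)=j^+(f)(\kappa)$ for some $f\in V\subseteq V[G]$. For $(3)$, GCH held at $\lambda$ in $V$, and a nice-name count against the $\kappa^+$-c.c.\ forcing $P_{\kappa+1}$ gives $2^\lambda\leq(\lambda^+)^\lambda=\lambda^+$ in $V[G]$, the reverse inequality being clear. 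The main obstacle throughout is the construction of $G_i$ over $N$: the tail is only $<\delta_0$-closed with $\delta_0<\lambda$, so it cannot meet the $\lambda^+$ dense sets that appear once $g_\kappa$ is present. The resolution is precisely to exploit that the \emph{normal} ultrapower makes the tail short (length $i(\kappa)<\lambda$), to force it before $g_\kappa$ and recombine via Easton's lemma, and to obtain the highly closed top factor from the pre-cooked generic $F$ rather than by hand; the transfer through $k$ and the verifications are then routine.
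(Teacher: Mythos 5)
Your overall architecture --- lift the normal ultrapower $i:V\to N$ first, then push the lifted generic through the factor map $k:N\to M$ --- founders on the basic numerology of $i$ and $k$, and the problem is not repairable within your scheme. Since $2^\kappa=\kappa^+$ in $V$, the normal ultrapower satisfies $i(\kappa)<(2^\kappa)^+=\kappa^{++}<\lambda$, hence $i(f)(\kappa)<i(\kappa)<\lambda$; your two claims ``$i(f)(\kappa)=\lambda$'' and ``the tail has length $i(\kappa)<\lambda$'' contradict each other, and the first is false. So the stage-$\kappa$ component of $i(P_{\kappa+1})$ is $\mathrm{Add}(\kappa,\mu^+)$ for $\mu=i(f)(\kappa)<i(\kappa)$, \emph{not} $\mathrm{Add}(\kappa,\lambda^+)$, and $g_\kappa$ is not a generic for the first block of your displayed factorization. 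Worse, $crit(k)=(\kappa^{++})^N<\kappa^{++}$, not $\geq\lambda^{++}$: indeed $crit(k)\geq\lambda^{++}$ together with $k(i(\kappa))=j(\kappa)$ would give $j(\kappa)=i(\kappa)<\lambda$, whereas $j(\kappa)>j(f)(\kappa)=\lambda$. Consequently $k$ fixes neither $i(\kappa)$ nor the conditions of $i(P_{\kappa+1})$; $k(i(P_{\kappa+1}))=j(P_{\kappa+1})$ is a genuinely longer iteration (length $j(\kappa)$, top factor $\mathrm{Add}(j(\kappa),j(\lambda)^+)$), and the filter generated by $k''G_i$ decides almost nothing about it, so the ``standard factor-lifting criterion'' does not apply. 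Transfer of genericity along $k$ is available only for a single suitably closed term-forcing factor (Fact \ref{fact: term}(1)), and that is exactly how the paper deploys the pre-cooked $F$: the $M$-side tail $P_{(\kappa,j(\kappa))}$ is built by direct diagonalization in $V[G]$, and only the factor $\mathrm{Add}(j(\kappa),j(\lambda)^+)$ is obtained by moving the term generic from $N$ to $M$ along $k$, followed by Woodin surgery performed on the $M$ side. (Note also that $\bigcup i''g_\kappa$ has size $\lambda^+>i(\kappa)$ and so is not a condition of $\mathrm{Add}(i(\kappa),\lambda^+)$: there is no master condition here, which is why surgery replaces one rather than supplementing one.)

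The same error makes your proof of clause (2) circular. It is true that every $\xi<\lambda^+=i(\lambda^+)$ has the form $i(f)(\kappa)$ with $f\in V$; but to conclude $\xi=j(f)(\kappa)$ you apply $k$ and need $k(\xi)=\xi$, i.e.\ $crit(k)>\lambda^+$ --- and that is precisely what the paper \emph{derives from} clause (2) for the lifted embedding in the corollary following Proposition \ref{proposition: prepare}; it is false for the ground-model $k$. In fact clause (2) with witnesses $f\in V$ is outright false: $\xi=crit(k)=(\kappa^{++})^N<\lambda^+$ is not in $\mathrm{rng}(k)=\{j(f)(\kappa):f\in V\}$. This is why the statement demands $f\in V[G]$: the witnesses are the Cohen functions $g_{\kappa,\xi}$ added at stage $\kappa$, and the whole point of the surgery is to arrange $j^+(g_{\kappa,\xi})(\kappa)=g_{j(\kappa),j(\xi)}(\kappa)=\xi$. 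Your verifications of clauses (1) and (3) are fine and agree with the paper's, but the lift itself must follow the paper's route: use $G$ for the first $\kappa+1$ stages of $j(P_{\kappa+1})$ (here the Step One requirement $j(f)(\kappa)=\lambda$ is what makes the $M$-side stage-$\kappa$ forcing equal to $\mathrm{Add}(\kappa,\lambda^+)$), build the tail generic in $V[G]$, transfer $F$ via term forcing along $k$, and do the surgery there.
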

\begin{proof}
    We need to construct a generic for $j(P_{\kappa+1})$. By definition of $P_{\kappa+1}$ and by elementarity of $j$, we have that $$j(P_{\kappa+1})=(P_\kappa*\mathrm{Add}(\kappa, \lambda^+)*P_{(\kappa,j(\kappa))}*\mathrm{Add}(j(\kappa),j(\lambda)^+))^M$$

where $P_{(\kappa,j(\kappa))}^M$ is an iteration starting at the first $M^{P_{\kappa+1}}$-inaccessible above $\kappa$ (and in particular above $\lambda^+$). In particular, after forcing $j(P_{\kappa+1})$,  $(2^\lambda)^{M^{j(P_{\kappa+1})}}$ remains $\lambda^+$-- this explains $(3)$. Up to $\kappa+1$ we can take $G$ as the $M$-generic filter.
In $V[G]$, we can find some $G_{(\kappa, j(\kappa))}$ that is $M[G]$-generic for $P_{(\kappa,j(\kappa))}^{M[G]}$ (see \cite[Proposition 15.1 and the paragraph before Lemma 25.5]{CummingsHand}). The key point is that we can find $g_{j(\kappa)}\in V[G]$ that is generic for $\mathrm{Add}(j(\kappa),j(\lambda)^+)^{M[G*H_0]}$. This uses crucially (3) in the preparation of Step One. Let us outline some key points. Let $i: V\to N$ be the ultrapower by the normal ultrafilter derived from $j$ and let $k: N\to M$ be the natural map defined as $k([f])=j(f)(\kappa)$. Then by Fact \ref{fact: term} (2), $i(\dot{\mathrm{Add}}(\kappa, \lambda^+))/P_{i(\kappa)}$ is forcing equivalent to $i(Add(\kappa,\lambda^+))$. Hence we can find in $V$ a generic for $i(\dot{\mathrm{Add}}(\kappa, \lambda^+))/P_{i(\kappa)}$ over $N$. By Fact \ref{fact: term} (1), this can be transferred along the embedding $k$ to a generic for $j(\mathrm{Add}(\kappa,\lambda^+))$ over $M[G_{j(\kappa)}]$.  More details can be found in \cite[Theorem 2.11]{Honzik} or \cite[The second step, Page 245-246]{CummingsStrong}.

We procceed with the usual Woodin surgery argument \cite[Chapter 25]{CummingsHand} and alter the values of $g_{j(\kappa)}$ (we abuse notation and keep denoting the altered functions by $g_{j(\kappa),\alpha}$) so that for every $\xi<\lambda^+$, $g_{j(\kappa),j(\xi)}\restriction \kappa=g_{\kappa,\xi}$ and $g_{j(\kappa),j(\xi)}(\kappa)=\xi$.  It is routine to check that $g_{j(\kappa)}$ is still generic, and that $j''G\subseteq G*G_{(\kappa,j(\kappa)}*g_{j(\kappa)}=:H$. So we may lift in $j\subseteq j^+:V[G]\rightarrow M[H]$. Note that we have $(H(\lambda^{++}))^{V[G]}\subset M[H]$. To see this, as $P_{\kappa+1} \in H(\lambda^{++})^V\subset M$ is a $\kappa^+$-c.c forcing extension of $V$, we know that $H(\lambda^{++})^{V[G]}\subset M[G] \subset M[H]$.
Since we made sure that $j^+(g_{\kappa,\xi})(\kappa)=g_{j(\kappa),j(\xi)}(\kappa)=\xi$, we have shown that $(1)-(2)$ hold.
%Next we derive a fine normal measure $U$ on $P_\kappa(\lambda^+)$ from $j^*$ in $V[G][G_{(\kappa,j(\kappa)]}]$. Then the factor map $k:M_U\rightarrow M[H]$ has critical point above $\lambda^+$ and thus $(1),(2)$ hold also for $j_U$. Recall that $V[G][G_{(\kappa,j(\kappa)]}]$  is a generic extension of $V[G]$ by a $\lambda^{++}$-closed forcing, hence in  we have $V[G][G_{(\kappa,j(\kappa)]}]\models 2^\lambda=\lambda^+\wedge 2^{(2^\lambda)}=\lambda^{++}$ and since $U\subseteq P(P_\kappa(\lambda))^{V[G][G_{(\kappa,j(\kappa)]}]}=P(P_\kappa(\lambda^+))^{V[G]}$, $|U|=\lambda^{++}$ and thus $U\in V[G]$   
\end{proof}

\begin{corollary}
    
    Assume $GCH$, $\kappa$ is a $H(\lambda^{++})$-hypermeasurable cardinal where $\kappa<\lambda$ is a measurable cardinal. Then there is a generic extension $V^*$ where:
    \begin{enumerate}
        \item $2^\kappa=2^\lambda=\lambda^+>\lambda$.
        \item there is an elementary embedding $j^*:V^*\rightarrow M^*$ such that:
        \begin{enumerate}
            \item $crit(j)=\kappa$, $H((2^\kappa)^+)^{V^*}\subseteq M^*$.
            \item for every $X\subseteq \lambda$ there is $g\in V^*$ such that $j(g)(\kappa)=X$.
        \end{enumerate}
        \item For all $n<\kappa$, $\lambda$ is $(n,\kappa^+)$-$s$-stationary.
    \end{enumerate} 
\end{corollary}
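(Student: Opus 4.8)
The plan is to let $V^{*}$ be the extension $V[G]$ produced in Proposition \ref{proposition: prepare}, with $j^{*}:=j^{+}$ and $M^{*}:=M[H]$, and then to read clauses (1) and (2) off the Proposition (after a cardinal-arithmetic computation) while obtaining clause (3) by feeding the forcing that produced $V^{*}$ into Theorem \ref{theorem: mainnonstronglimit0}. Observe first that clause (2a) will be essentially a restatement of clause (1) of the Proposition: once $2^{\kappa}=\lambda^{+}$ is known, $(2^{\kappa})^{+}=\lambda^{++}$, so $(H(\lambda^{++}))^{V^{*}}\subseteq M^{*}$ is exactly $(H((2^{\kappa})^{+}))^{V^{*}}\subseteq M^{*}$, and together with $\mathrm{crit}(j^{+})=\kappa$ this gives (2a).

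The cardinal arithmetic of clause (1) I would handle directly. The iterand $\dot{Q}_{\kappa}=\mathrm{Add}(\kappa,\lambda^{+})$ adds $\lambda^{+}$ subsets of $\kappa$, so $2^{\kappa}\geq\lambda^{+}$ in $V^{*}$; for the reverse inequality, since $P_{\kappa+1}$ is $\kappa^{+}$-c.c.\ and of size $\lambda^{+}$, a nice-name count (each nice name for a subset of $\kappa$ is a $\kappa$-sequence of antichains, each of size $\leq\kappa$) gives $2^{\kappa}\leq(\lambda^{+})^{\kappa}=\lambda^{+}$, where the last equality uses $2^{\lambda}=\lambda^{+}$ from clause (3) of Proposition \ref{proposition: prepare}. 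Hence $2^{\kappa}=\lambda^{+}$, and combined with $2^{\lambda}=\lambda^{+}$ this yields $2^{\kappa}=2^{\lambda}=\lambda^{+}>\lambda$.

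For clause (2b) I would return to the Woodin surgery underlying Proposition \ref{proposition: prepare}. The Proposition records its coding in the form $j^{+}(f)(\kappa)=\xi$ for every $\xi<\lambda^{+}$, but the surgery alters $g_{j(\kappa)}$ at the critical coordinate across $\lambda^{+}$-many columns, and the point to make is that there is enough room to code \emph{subsets} rather than merely ordinals: fixing in $V^{*}$ an enumeration $\langle X_{\xi}\mid\xi<\lambda^{+}\rangle$ of $P(\lambda)$ (available since $2^{\lambda}=\lambda^{+}$) and using that $j(\kappa)>\lambda^{+}$, the surgery can be arranged so that for each $X\subseteq\lambda$ there is $g\in V^{*}$ with $j^{+}(g)(\kappa)=X$. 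I would emphasize that one cannot derive the subset version from the ordinal version by composing with a bijection $P(\lambda)\to\lambda^{+}$, since $j^{+}$ moves the coding ordinal $\xi$; thus (2b) must be read directly off the surgery, which is local at $\kappa$. This is the one place in the Corollary that is not pure bookkeeping, and I expect it to be the main obstacle.

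Finally, clause (3) follows from Theorem \ref{theorem: mainnonstronglimit0}. In the GCH ground model $\lambda$ is measurable, and $V^{*}$ is obtained from it by the two-step forcing consisting of the Step One preparation followed by $P_{\kappa+1}$; the former is $\kappa^{++}$-c.c.\ (Step One, clause (4)) and the latter is $\kappa^{+}$-c.c., so their composition is $\kappa^{++}$-c.c.\ (the two-step iteration of $\mu$-c.c.\ forcings is $\mu$-c.c.\ for regular $\mu$). Since $\lambda$ is inaccessible above $\kappa$ we have $\kappa^{++}<\lambda$, so Theorem \ref{theorem: mainnonstronglimit0} applies with ground model the GCH model and yields that $\lambda$ is $(\lambda,\chi)$-$s$-stationary for every $\chi<\lambda$ in $V^{*}$; in particular measurability of $\lambda$ need not be preserved into $V^{*}$. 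Taking $\chi=\kappa^{+}$ (regular and $<\lambda$, and still so in $V^{*}$ as $P_{\kappa+1}$ is $<\kappa$-closed and $\kappa^{+}$-c.c.), and noting that $(\lambda,\kappa^{+})$-$s$-stationarity entails $(n,\kappa^{+})$-$s$-stationarity for every $n<\lambda$, gives clause (3) for all $n<\kappa$.
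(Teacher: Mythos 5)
Your handling of clauses (1), (2a) and (3) is correct and matches the paper: the paper likewise reads (1) and (2a) off Proposition \ref{proposition: prepare} and obtains (3) by noting that $V^*$ is a $\kappa^{++}$-c.c.\ extension of the GCH ground model (Step One is $\kappa^{++}$-c.c., Step Two is $\kappa^+$-c.c.) and invoking Theorem \ref{theorem: nonstronglimit} with $\chi=\kappa^+$. The genuine gap is exactly where you predicted it: clause (2b). You correctly observe that the naive move --- composing the ordinal coding with a bijection $P(\lambda)\to\lambda^+$ --- fails because $j^*$ moves the coding ordinal, but you then draw the wrong conclusion that the subset version ``cannot be derived from the ordinal version'' and ``must be read directly off the surgery.'' The paper derives precisely the subset version from the ordinal version, abstractly and without touching the surgery again: factor $j^*$ through the normal ultrapower, $j^*=k\circ i^*$ with $i^*:V^*\to N^*$ and $k(i^*(g)(\kappa))=j^*(g)(\kappa)$. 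The ordinal coding puts every $\xi\leq\lambda^+$ into $\mathrm{Im}(k)$, so $crit(k)>\lambda^+$; since $N^*\models 2^\lambda=\lambda^+$ and $H(\lambda^{++})^{V^*}\subseteq M^*$, one gets $k(P^{N^*}(\lambda))=P^{M^*}(\lambda)=P^{N^*}(\lambda)\supseteq P^{V^*}(\lambda)$. Hence every $X\subseteq\lambda$ in $V^*$ lies in $N^*$, is of the form $i^*(g)(\kappa)$, and is fixed by $k$, so $j^*(g)(\kappa)=k(X)=X$.

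Your substitute --- ``the surgery can be arranged so that for each $X\subseteq\lambda$ there is $g$ with $j^+(g)(\kappa)=X$'' --- is an assertion, not an argument, and it is not clear it can be carried out as stated. The surgery of Proposition \ref{proposition: prepare} alters one cell (at coordinate $\kappa$) in each column $g_{j(\kappa),j(\xi)}$ for $\xi<\lambda^+$; a single cell cannot hold a size-$\lambda$ object, so coding all $X\subseteq\lambda$ directly would require altering $\lambda^+\cdot\lambda$ many coordinates in a pattern that is simultaneously (i) verifiably genericity-preserving in Woodin's sense and (ii) readable at $\kappa$ as $j^+(g)(\kappa)=X$ for some $g\in V^*$. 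Point (ii) is where the natural attempts break: reading off $X$ bit-by-bit leads to coordinates of the form $j(p)(\xi,\beta)$ for $\beta<\lambda$, which are not of the form $j(\eta)$ for $\eta<\lambda^+$ and hence are not among the columns your enumeration-based surgery would control; you would have to specify and justify a different alteration set and a different reading function, none of which appears in your sketch. Since you yourself flag (2b) as ``the main obstacle,'' the proposal as written leaves the one nontrivial clause unproven, while the factor-map argument above closes it with no new forcing construction at all.
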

\begin{proof}
    Let $V^*$ and $j^*:V^*\rightarrow M^*$ be as in the conclusion of Proposition \ref{proposition: prepare}. In particular $(1),(2a)$ hold. Moreover, for every $\xi<\lambda^+$, there is a function $g\in V^*$ such that $j^*(g)(\kappa)=\xi$.  To see $(2b)$, take any $X\subseteq \lambda$ and factor $j^*$ through the ultrapower embedding $i^*:V^*\rightarrow N^*$ by the normal ultrapower derived from $j^*$ and let $k:N^*\rightarrow M^*$ be the factor map such that $k\circ i^*=j^*$ defined by $k(i^*(g)(\kappa))=j^*(g)(\kappa)$. Since $\lambda^++1\subseteq Im(k)$, we have that $crit(k)>\lambda^+$. In particular, for every $X\subseteq \lambda$, such that $X\in N^*$, $k(X)=X$. Also note that $k(P^{N^*}(\lambda))=P^{M^*}(\lambda)=P^{N^*}(\lambda)$. The reason is that $N^*\models 2^\lambda=\lambda^+$ and $crit(k)>\lambda^+$.
    Therefore, every $X\subseteq \lambda$ is of the form $j^*(f)(\kappa)$. Finally, to see $(3)$, note that $V^*$ is a generic extension of $V$ by a $\kappa^{++}$-c.c forcing (the Step One forcing is $\kappa^{++}$-c.c and the Step Two forcing is $\kappa^+$-c.c), so we may reason as in the proof of Theorem \ref{theorem: mainnonstronglimit0} following the proof of Theorem \ref{theorem: nonstronglimit}.  
    \end{proof}

From now on, let us denote by $V^*=V$ and $j^*=j$ the model and the elementary embedding of the previous corollary. Let us give a brief description of the notations we use for Radin forcing  \cite{Radin} and relevant background for our main result. For full detailed definitions and proofs consult \cite{Gitik2010}.
\subsection{Radin Forcing}
 Let $\bar{U}$ be a measure sequence derived from $j$ of length $\lambda$ i.e. $\bar{U}=\l\kappa\r^{\smallfrown}\l U(\xi)\mid \xi<\lambda\r$ such that for every $\xi<\lambda$ we define recursively $$U(\xi)=\{X\subseteq V_{\kappa}\mid \bar{U}\restriction \xi\in j(X)\}$$ and $U(0)$ is just the normal measure derived from $j$ using $\kappa$. More generally, a \textit{measure sequence} is any sequence of ultrafilter $\bar{w}$ of any length, denoted by $lh(\bar{w})$, which is derived from some elementary embedding $j_{\bar{w}}$ in the same way $\bar{U}$ was derived, always starting with the seed $crit(j)$ which we denote by $\kappa(\bar{w})=crit(j_{\bar{w}})$. We denote by 
 $\mathcal{MS}$ the class of all measure sequences. It is well-known (see for example \cite{Gitik2010}) that we may only consider $\bar{w}\in \mathcal{MS}$ which concentrate on $\mathcal{MS}$ i.e $\mathcal{MS}\cap V_{\kappa(\bar{w})}\in \bigcap_{\xi<lh(\bar{w})}\bar{w}(\xi)=: \bigcap\bar{w}$. 
 \begin{remark}
     We will always assume that given a set $A\in\bigcap \bar{w}$, for every $\bar{v}\in A$, $A\cap V_{\kappa(\bar{v})}\in\bigcap\bar{v}$. Such a $\bar{v}$ is said to be \textit{addable} to $A$. For generally, we say that $\vec{\eta}=(\bar{v}_1,...,\bar{v}_n)$ is addable to $(\bar{w},A)$, and denote it by $\vec{\eta} << A$, if for every $1\leq i\leq n$, $\bar{v}_i\in A$ and $A\cap V_{\kappa(\bar{v}_i)}\in \bigcap \bar{v}_i$.
 \end{remark}
 Given two measure sequences $\bar{u},\bar{v}$, we denote $\bar{u}\prec \bar{w}$ if $\bar{u}\in V_{\kappa(\bar{w})}$. If $B\subseteq \mathcal{MS}$,
 then $$B^{<\omega}=\bigcup_{n<\omega}\{\l\bar{u}_1,\dots,\bar{u}_n\r\in B^n\mid \bar{u}_1\prec\bar{u}_2\prec\dots\prec\bar{u}_n\}$$
 Let us follow the description of Radin forcing from \cite{Gitik2010} with the exception that $q\leq p$ means $q$ is a stronger condition.
 \begin{definition}
     The \textit{Radin forcing with $\bar{U}$}, denoted by $R_{\bar{U}}$ consist of all finite sequences $p=\l d_i\mid i\leq k\r$ such that each $d_i$ is either an ordinal $\kappa_i<\kappa$ (which we identify as the measure sequence $\bar{u}^p_i=\l\kappa_i\r$) or a pair $(\bar{u}^p_i,A_i^p)$, such that:
     \begin{enumerate}
        \item $d_k=(\bar{U},A^p)$ where $A^p\in\bigcap\bar{U}$.
         \item $\bar{u}^p_1\prec\bar{u}^p_2\prec\dots\prec\bar{u}^p_k$.
         \item If $d_{i}$ is a pair then $A^p_{i}\in\bigcap \bar{u}^p_i$ and if $i>0$ then for every $\bar{u}\in A^p_i$, $\bar{u}^p_{i-1}\prec \bar{u}$.
     \end{enumerate}
 \end{definition}
 \begin{notation}
     We denote the \textit{ length} of the condition $lh(p)=k$, the \textit{lower part} $p_0=\l d_i\mid i<lh(p)\r$, the \textit{upper part} $(\bar{U},A)$ (so we may write $p=p_0^{\frown}(\bar{U},A)$), Let $\kappa_0(p)=\kappa(d_{k-1})$, and $R_{<\kappa}=\{p_0\mid p\in R_{\bar{U}}\}$. 
 \end{notation}
 \begin{definition}
Let $p,q\in R_{\bar{u}}$. We say that $p$ is a \textit{direct extension} of $q$ and $p\leq^* q$ if:
\begin{enumerate}
    \item $lh(p)=lh(q)$.
    \item For every $i<lh(p)$, $\bar{u}^p_i=\bar{u}^q_i$ and $A^p_i\subseteq A^q_i$.
\end{enumerate}
 \end{definition}
 \begin{definition}
     Let $p\in R_{\bar{u}}$. A \textit{one-step extension} of $p$ is  obtained by choosing $i\leq lh(p)$ $\bar{u}^p_i$ and $\bar{v}\in A^p_i$ which is addable to $(\bar{u}^p_i,A^p_i)$ and forming the condition
     $$p\cat{\bar{v}}:=\l d_j\mid j<i\r^{\smallfrown}(\bar{v},A^p_i\cap V_{\kappa(\bar{v}})^{\smallfrown}(\bar{u}^p_i,A^p_i\setminus V_{\kappa(\bar{v})+1})^\smallfrown\l d_j\mid i<j\leq lh(p)\r.$$
     We define recursively, $p\cat{\bar{v}_1,\dots,\bar{v}_{n+1}}=(p\cat{\bar{v}_1,\dots,\bar{v}_n})\cat{\bar{v}_{n+1}}$.
 \end{definition}

\begin{remark}
We clarify the notations regarding concatenation and one-step extension. 
\begin{itemize}
\item $p^\frown (\bar{w}, A)$ is the string concatenation in the usual sense. In particular, in order for this to make sense, $\kappa(\bar{w})>\kappa(\bar{v})$ for all $\bar{v}\in p$ and we always need to specify the large set $A$.
\item $p\cat{\bar{w}}$ is the one step extension  as in the definition above. In particular, $\kappa(\bar{w})<\kappa(\bar{v})$ for some $\bar{v}\in p$ and we only specify the measure sequence since the measure one set for $\bar{w}$ is already determined.
\end{itemize}

\end{remark} 
 
 We define the order by $p\leq q$ if there are $\bar{v}_1,\dots\bar{v}_n$, such that $\bar{v}_i\in A^q_i$ and $p\leq^*q\cat{\bar{v}_1,\dots\bar{v}_n}$.
 Let us list some basic properties of $R_{\bar{U}}$ (for the proof see \cite{Gitik2010}):
 \begin{proposition}
     \begin{enumerate}
         \item $R_{\bar{U}}$ is $\kappa^+$-c.c.
         \item For any condition $p$, $(R_{\bar{U}}/p,\leq^*)$ is $\kappa(\bar{u}^p_0)$-directed closed. In particular, if $lh(p)=1$ the $(R_{\bar{U}}/p,\leq^*)$ is $\kappa$-closed.
         \item For any $p=\l d_i\mid i\leq lh(p)\r\in R_{\bar{U}}$, and any $i<lh(p)$, we can factor $$R_{\bar{U}}/p\simeq (R_{\bar{u}^p_i}/p\restriction i+1)\times (R_{\bar{U}}/p\setminus i+1)$$
         where $p\restriction i+1=\l d_j\mid j\leq i\r$ and $p\setminus i+1=\l d_j\mid i<j\leq lh(p)\r$.
         \item $R_{\bar{U}}$ satisfies the Prikry property: For any sentence in the forcing language $\sigma$, and any condition $p\in R_{\bar{U}}$ there is a direct extension $p^*\leq^* p$ such that $p^*\Vdash \sigma\vee p^*\Vdash \neg \sigma$\footnote{In this situation we say that $p^*$ decides $\sigma$ and denote $p^*||\sigma$.}.
         \item $R_{\bar{U}}$ satisfies the strong Prikry property: For every dense open set $D\subseteq R_{\bar{U}}$ and any condition $p$, there is  $p^*\leq^* p$ and a $p^*$-fat tree \footnote{Namely, a tree $T\subseteq (\mathcal{MS}\cap V_\kappa)^{n}$ for some $n<\omega$, such that for each $t\in T$, $p^*{}\cat{t}\in R_{\bar{U}}$ and there is $i\leq lh(p^*)$, $\xi<lh(\bar{u}^{p^*}_i)$ such that $succ_{T}(t):=\{\bar{w}\mid t^\smallfrown\bar{w}\in T\}\in \bar{u}^{p^*}_i(\xi)$.} $T$ such that for every maximal branch $t\in T$, $p^*{}\cat{t}\in D$.
     \end{enumerate}
 \end{proposition}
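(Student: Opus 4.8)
The plan is to establish the five clauses in increasing order of difficulty, isolating the real combinatorial content in the last two. For (1), I would show that any two conditions with the same \emph{stem} are compatible, where the stem of $p=\l d_i\mid i\leq lh(p)\r$ is the sequence $\l\bar{u}^p_i\mid i\leq lh(p)\r$ obtained by discarding the sets $A^p_i$. Indeed, if $p,q$ share a stem, the coordinatewise intersections $A_i:=A^p_i\cap A^q_i$ give a common direct extension: each $\bigcap\bar{u}^p_i$ is a filter, so $A_i$ stays inside it, and likewise $A^p\cap A^q\in\bigcap\bar{U}$. Since every non-top $\bar{u}^p_i$ lies in $\MS\cap V_\kappa$ and $|V_\kappa|=\kappa$ by inaccessibility of $\kappa$, there are at most $\kappa^{<\omega}=\kappa$ many stems, so every antichain has size at most $\kappa$, i.e.\ the forcing is $\kappa^+$-c.c.

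For (2), note that $\leq^*$ fixes the stem and merely shrinks the sets $A_i$. Hence, given a $\leq^*$-directed family $\l p_\alpha\mid\alpha<\delta\r$ below $p$ with $\delta<\kappa(\bar{u}^p_0)$, all members share the stem of $p$, and I would take the lower bound with sets $A_i:=\bigcap_{\alpha<\delta}A^{p_\alpha}_i$. Each intersection remains in $\bigcap\bar{u}^p_i$ because $\bigcap\bar{u}^p_i$ is $\kappa(\bar{u}^p_i)$-complete and $\kappa(\bar{u}^p_0)\leq\kappa(\bar{u}^p_i)$ by the $\prec$-increasing requirement; when $lh(p)=1$ the only set is $A^p\in\bigcap\bar{U}$, and $\kappa$-completeness of $\bigcap\bar{U}$ gives $\kappa$-closure. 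For (3) I would exhibit the splitting map $q\mapsto(q\restriction i+1,\,q\setminus i+1)$ and check it is an order isomorphism. The substance is that one-step extensions by a measure sequence $\bar{v}$ separate cleanly according to $\kappa(\bar{v})<\kappa(\bar{u}^p_i)$ (modifying only coordinates below $i$, hence living in $R_{\bar{u}^p_i}$) or $\kappa(\bar{v})>\kappa(\bar{u}^p_i)$ (modifying only coordinates above $i$); since addability of $\bar{v}$ to a coordinate is decided inside $V_{\kappa(\bar{v})}$, the two halves never interact and the forcing is a genuine product.

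The real work is (4), which I expect to be the main obstacle. I would prove it by a well-founded recursion on the measure sequences (on $lh(\bar{U})$ together with the $\prec$-rank of the sequences involved), feeding on (2) and (3). Given $\sigma$ and $p$, the factorization (3) lets me reduce to the case where the lower part is fixed and the top coordinate is $(\bar{U},A)$, and the directed closure from (2) provides the room to diagonalize. The key lemma is a homogeneity statement: for each $\xi<lh(\bar{U})$ one colors $\bar{v}\in A$ by whether \emph{some} direct extension of $p\cat{\bar{v}}$ — obtained from the inductive hypothesis applied to the shorter sequence $\bar{v}$ — forces $\sigma$, forces $\neg\sigma$, or decides nothing, and extracts a $\bar{U}(\xi)$-measure-one homogeneous set. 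Fusing these choices across the relevant $\xi$ into a single direct extension $p^*\leq^* p$, using $\kappa$-completeness of $\bigcap\bar{U}$, one argues that if $p^*$ itself did not decide $\sigma$ then genericity would be violated. The delicate points are arranging the recursion so that the entire fusion stays within one direct extension and managing the interaction among the many coordinates $\xi<lh(\bar{U})$; this is exactly where the completeness bookkeeping of (2) is indispensable.

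Finally, for (5) I would upgrade the argument for (4) from a single sentence to a dense open set $D$: instead of recording a yes/no value at each stage, I record the measure sequences $\bar{v}$ along which an extension enters $D$. Assembling these coordinatewise-homogeneous sets into a tree $T\subseteq(\MS\cap V_\kappa)^{<\omega}$ whose successor sets are large with respect to the appropriate $\bar{u}^{p^*}_i(\xi)$ produces the required $p^*$-fat tree, with every maximal branch $t$ satisfying $p^*\cat{t}\in D$. As in (4), the main subtlety is guaranteeing fatness at every node simultaneously while keeping all the shrinking confined to a single direct extension $p^*\leq^* p$.
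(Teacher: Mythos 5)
The paper never actually proves this proposition: it is quoted as a list of standard facts with a pointer to \cite{Gitik2010}, so your attempt can only be measured against the standard arguments there. Your clauses (1)--(3) are correct and essentially complete: same-stem conditions are compatible by coordinatewise intersection because each $\bigcap\bar{u}^p_i$ is a filter, there are only $\kappa$ many stems since $|V_\kappa|=\kappa$, the coordinatewise completeness $\kappa(\bar{u}^p_i)\geq\kappa(\bar{u}^p_0)$ gives the directed closure in (2) (read, as you implicitly do, for families of direct extensions over a fixed stem; for the full cone below $p$ the closure degree is governed by the smallest coordinate of the common stem, which may lie below $\kappa(\bar{u}^p_0)$ once points have been added), and the locality of addability gives the product decomposition in (3).

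In (4), however, one step as you wrote it would fail, and it fails precisely in the regime this paper cares about. You propose to extract, for each $\xi<lh(\bar{U})$, a $U(\xi)$-large homogeneous set and then to fuse these ``using $\kappa$-completeness of $\bigcap\bar{U}$''. But $\bigcap\bar{U}$ is only $\kappa$-complete, while here $lh(\bar{U})=\lambda\geq\kappa^+$ (indeed the whole point of the paper is $\kappa^+\leq lh(\bar{U})<(2^\kappa)^+$): you cannot intersect $\lambda$ many sets, one per measure. The standard repair is to observe that your coloring of $\bar{v}$ --- some direct extension of $p\cat{\bar{v}}$ forces $\sigma$, forces $\neg\sigma$, or neither --- does not depend on $\xi$ at all; so you partition $A$ into three pieces once and for all, and each $U(\xi)$ simply concentrates on one of the pieces. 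No fusion over $\xi$ is needed or possible. A second omission: whether an extension of $p\cat{\bar{v}}$ decides $\sigma$ depends also on how the lower part is extended below $\kappa(\bar{v})$, so the coloring must be taken on pairs (lower part $t$, measure sequence $\bar{v}$), and uniformizing over the $|V_{\kappa(\bar{v})}|$-many lower parts requires a diagonal intersection --- each $U(\xi)$, being derived from $j$ with seed $\bar{U}\restriction\xi$, is closed under diagonal intersections indexed by $V_\kappa$ --- rather than mere completeness of the filter $\bigcap\bar{U}$. Finally, the closing move is not that ``genericity would be violated'' but a minimal-counterexample argument: take $q\leq p^*$ deciding $\sigma$ with the fewest added points and use homogeneity to strip the top added point, contradicting minimality. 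The same corrections apply to your sketch of (5), where fatness of the tree at a given node is witnessed by a single measure $\bar{u}^{p^*}_i(\xi)$ selected by the partition, again with no intersection over $\xi$.
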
 
 We will need the following proposition that reduces $R_{\bar{U}}$-names to names which depends on bounded information:
 \begin{proposition}\label{Prop: reducing to bounded names}
    Let $\l \gamma_\alpha\mid \alpha<\kappa\r\in V$ be any sequence of ordinals below $\kappa$ and $p=p_0^{\smallfrown}(\bar{U},A)\in\mathbb{R}_{\bar{U}}$ and $\l \dot{x}_\alpha\mid \gamma_\alpha<\kappa\r$ be a sequence of $\mathbb{R}_{\bar{U}}$-names such that $p\Vdash\dot{x}_\alpha\subseteq\gamma_\alpha$. Then there is $q\leq p$, $q_0=p_0$ and a function $f:A^{q}\rightarrow V_\kappa$ such that for every $\bar{w}\in A^{q}$, $f(\bar{w})$ is an $R_{\bar{w}}$-name forced by $q_0^{\smallfrown}(\bar{w},A^{q}\cap V_{\kappa(\bar{w})})$ to be a subset of $\kappa(\bar{w})$ and $q^{\smallfrown}\bar{w}\Vdash f(\bar{w})=\dot{x}_{\kappa(\bar{w})}$. 
\end{proposition}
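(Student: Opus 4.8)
The plan is to reduce each $\dot x_{\kappa(\bar w)}$ ``one measure sequence at a time'' and then glue the resulting local data together with a diagonal intersection. Fix $\bar w\in A$ addable to $(\bar U,A)$ and pass to the one--step extension
$$p\cat{\bar w}=p_0{}^{\smallfrown}(\bar w, A\cap V_{\kappa(\bar w)}){}^{\smallfrown}(\bar U, A\setminus V_{\kappa(\bar w)+1}).$$
Applying the factorization property recorded above at the coordinate of $\bar w$, the forcing $R_{\bar U}/(p\cat{\bar w})$ is isomorphic to a \emph{product} $R^{low}_{\bar w}\times R^{up}_{\bar w}$, where $R^{low}_{\bar w}=R_{\bar w}/(p_0{}^{\smallfrown}(\bar w, A\cap V_{\kappa(\bar w)}))$ lives in $V_\kappa$ and hence has size $<\kappa$, and the tail $R^{up}_{\bar w}=R_{\bar U}/(\bar U, A\setminus V_{\kappa(\bar w)+1})$ is a length--one Radin condition whose unique measure sequence is $\bar U$ itself; in particular $(R^{up}_{\bar w},\leq^*)$ is $\kappa$--closed and still enjoys the Prikry property. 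Since $\dot x_{\kappa(\bar w)}$ names a subset of $\gamma_{\kappa(\bar w)}<\kappa$, the idea is that all of its information can be absorbed into the small factor after a single direct extension of the $\kappa$--closed tail.

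To make this precise I use the following standard reduction inside the product. For each pair $(s,\xi)$ with $s\in R^{low}_{\bar w}$ and $\xi<\gamma_{\kappa(\bar w)}$ let $\sigma_{s,\xi}$ be the statement of the tail forcing ``$s\Vdash_{R^{low}_{\bar w}}\check\xi\in\dot x_{\kappa(\bar w)}$''. There are only $<\kappa$ many such pairs, because $|R^{low}_{\bar w}|<\kappa$ and $\gamma_{\kappa(\bar w)}<\kappa$. Deciding one $\sigma_{s,\xi}$ at a time with the Prikry property of $R^{up}_{\bar w}$, and running through the $<\kappa$ many steps using the $\kappa$--closure of $\leq^*$ (taking $\leq^*$--lower bounds at limits), I obtain a single direct extension $(\bar U, B_{\bar w})\leq^*(\bar U, A\setminus V_{\kappa(\bar w)+1})$ that decides every $\sigma_{s,\xi}$. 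Setting $f(\bar w)=\{(\check\xi,s):\xi<\gamma_{\kappa(\bar w)},\ (\bar U,B_{\bar w})\Vdash\sigma_{s,\xi}\}$ yields an $R_{\bar w}$--name, forced by $p_0{}^{\smallfrown}(\bar w, A\cap V_{\kappa(\bar w)})$ to be a subset of $\gamma_{\kappa(\bar w)}$ (hence of $\kappa(\bar w)$ after thinning $A$ to where $\gamma_{\kappa(\bar w)}\leq\kappa(\bar w)$), and a routine genericity check shows $p_0{}^{\smallfrown}(\bar w, A\cap V_{\kappa(\bar w)}){}^{\smallfrown}(\bar U, B_{\bar w})\Vdash f(\bar w)=\dot x_{\kappa(\bar w)}$.

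Finally I glue the local data. The assignment $\bar w\mapsto B_{\bar w}$ gives, for each $\bar w\in A$, a set $B_{\bar w}\in\bigcap\bar U$, and I form the Radin diagonal intersection $A^q=\{\bar v\in A:\forall\bar w\prec\bar v\ (\bar w\in A\Rightarrow\bar v\in B_{\bar w})\}$. The usual computation shows $A^q\in\bigcap\bar U$: for every $\xi<lh(\bar U)$ and every $\bar w\in V_\kappa$ one has $\bar U\restriction\xi\in j(B_{\bar w})$ because $B_{\bar w}\in\bigcap\bar U\subseteq U(\xi)$ and $j(\bar w)=\bar w$, so $\bar U\restriction\xi\in j(A^q)$. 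Put $q=p_0{}^{\smallfrown}(\bar U, A^q)$, so $q_0=p_0$ and $q\leq p$. For each $\bar w\in A^q$ the tail $A^q\setminus V_{\kappa(\bar w)+1}$ is contained in $B_{\bar w}$ by the definition of the diagonal intersection, so the upper part of $q\cat{\bar w}$ directly extends $(\bar U,B_{\bar w})$ and therefore $q\cat{\bar w}\Vdash f(\bar w)=\dot x_{\kappa(\bar w)}$, as required. The step I expect to require the most care is precisely this gluing: one must check that the pointwise direct extensions $B_{\bar w}$ really assemble into a single measure--one set via the normality of the measures on $\bar U$, and that shrinking the domain from $A$ to $A^q$ leaves each $f(\bar w)$ intact (it does, since $R_{\bar w}/(p_0{}^{\smallfrown}(\bar w, A^q\cap V_{\kappa(\bar w)}))$ sits below the condition on which $f(\bar w)$ was computed).
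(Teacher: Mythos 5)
Your proposal is correct and coincides with the paper's intended argument: the paper's proof is a one-line deferral to \cite[Lemma 2.13]{OmerJing}, whose mechanism is exactly what you carry out --- factor $R_{\bar U}/p\cat{\bar w}$ into a lower part of size $<\kappa$ and an upper part whose $\leq^*$ order is $\kappa$-closed, use the Prikry property to decide the $<\kappa$ many statements ``$s\Vdash_{R^{low}_{\bar w}}\check\xi\in\dot x_{\kappa(\bar w)}$'', read off the $R_{\bar w}$-name $f(\bar w)$, and amalgamate the direct extensions $B_{\bar w}$ into a single measure one set by the diagonal intersection verified through $j$. Your closing caveats (that $A^q\in\bigcap\bar U$, and that shrinking $A$ to $A^q$ leaves the forced facts about $f(\bar w)$ intact) are precisely the routine checks the paper leaves implicit, so nothing is missing.
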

\begin{proof}
    The proof is exactly as in \cite[Lemma 2.13]{OmerJing} exploiting the $\kappa$-closure of $\leq^*$ of the upper part to determine $\dot{T}\cap \gamma_{\kappa(\bar{w})}$.
\end{proof}
 \begin{definition}
    Let $G\subseteq R_{\bar{U}}$ be $V$-generic. We denote by
    $$\mathcal{MS}_G=\{\bar{u}\in \mathcal{MS}\mid \exists p\in G\ \exists i<lh(p)\ \bar{u}=\bar{u}^p_i\}$$
    The \textit{generic Radin club} is the set $O(\mathcal{MS}_G)=\{\alpha<\kappa: \exists \bar{u}\in \mathcal{MS}_G, \alpha=\kappa(\bar{u})\}$.
\end{definition}
We say that a set $A$ is \textit{generated} by a set in the ground model if there is $B\in V$ such that $A=O(B\cap\mathcal{MS}_G)$.
 Other useful lemmas concerning Radin forcing can also be found in \cite{Omer1} and \cite{OmerJing}.

\subsection{Compactness and stationarity in Radin Extensions}

%\begin{notation}
%    Let $p=\langle d_i: i\leq k \rangle\in R_{\bar{U}}$, where each $d_i$ is either an ordinal or a measure sequence. In particular, $d_k=(\bar{U},A)$ for some $A\in \bigcap \bar{U}$. 
%\end{notation}
Let $G\subseteq R_{\bar{U}}$ be $V$-generic. 
It turns out that some large cardinal properties of $\kappa$ in the generic extension $V[G]$ correspond to  combinatorial properties of $lh(\bar{U})$ (see the discussion in the introduction for some examples). One which is relevant to us the that of a weakly compact cardinal which is due to the second author and Ben-Neria:
\begin{lemma}[{\cite[Lemma 3.14]{OmerJing}}]\label{Lemma: not weakly compact}
    Suppose that $(2^\kappa)^M$ does not divide $lh(\bar{U})$, then in $V^{R_{\bar{U}}}\models \kappa$ is not weakly compact. 
\end{lemma}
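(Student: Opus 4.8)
The plan is to argue by contradiction using the equivalence of weak compactness with $\Pi^1_1$-indescribability. Assuming $\kappa$ is weakly compact in $V[G]$, I would seek a set $A\subseteq V_\kappa^{V[G]}$, definable from the generic, and a $\Pi^1_1$ sentence $\phi$ such that $(V_\kappa^{V[G]},\in,A)\models\phi$ while $(V_\alpha^{V[G]},\in,A\cap V_\alpha)\not\models\phi$ for every $\alpha<\kappa$. The essential point is that the witness must \emph{not} be a non-reflecting stationary set: the whole thrust of the paper is that strong reflection holds at $\kappa$, so $\phi$ has to encode a global feature of the measure sequence rather than a reflection failure. The natural choice for $A$ is a code for the generic Radin club $C=O(\MS_G)$ together with its attached length data, and $\phi$ will express that the divisibility invariant of $lh(\bar U)$ modulo $(2^\kappa)^M$ is nonzero.

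First I would establish the local--global dictionary for $C$. Using the factorization $R_{\bar U}/p\simeq (R_{\bar u^p_i}/p\restriction i+1)\times(R_{\bar U}/p\setminus i+1)$, each $\bar v\in\MS_G$ reappears as the top of a generic for the local forcing $R_{\bar v}$, and below $\nu=\kappa(\bar v)$ the trace $C\cap\nu$ is exactly the corresponding local generic club. Consequently the assignment $\nu\mapsto lh(\bar v)$ is definable from $C$, and the length of $C$ records $lh(\bar U)$. Proposition \ref{Prop: reducing to bounded names} is the technical engine here: to compute the truth value of $\phi$ at a candidate reflection point $\alpha$ I reduce the relevant $R_{\bar U}$-names to names depending only on $R_{\bar v}$ with $\kappa(\bar v)=\alpha$, so that $(V_\alpha^{V[G]},\in,A\cap V_\alpha)\models\phi$ can be decided inside the local extension.

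The crux --- and what I expect to be the main obstacle --- is to turn the hypothesis $(2^\kappa)^M\nmid lh(\bar U)$ into a genuinely non-reflecting property. The guiding intuition is that a reflection point $\alpha$ would force the local measure sequence $\bar v$ at $\alpha$ to ``look like'' $\bar U$ in the precise sense that initial segments of $\bar U$ of length $lh(\bar v)$ repeat its measures cofinally below $lh(\bar U)$; this is exactly a weak-repeat-point-type coherence, and such a coherence (compare item (5) of the introductory list, where a weak repeat point yields weak compactness) forces $(2^\kappa)^M$ to divide $lh(\bar U)$. Making this precise requires two delicate checks: (i) that the relevant remainder/divisibility statement is genuinely first order over $(V_\kappa^{V[G]},\in,A)$, hence $\Pi^1_1$ as a whole; and (ii) that the value $(2^\kappa)^M$, i.e. $2^\kappa$ as computed in the ultrapower $M$, is computed uniformly at the local sequences, so that the relation ``divides'' is genuinely preserved under reflection. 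Granting (i) and (ii), the existence of even a single reflection point $\alpha<\kappa$ guaranteed by $\Pi^1_1$-indescribability contradicts $(2^\kappa)^M\nmid lh(\bar U)$, so $\kappa$ cannot be weakly compact in $V^{R_{\bar U}}$; equivalently, one may phrase the same obstruction as the construction of a $\kappa$-Aronszajn tree coding the failed coherence.
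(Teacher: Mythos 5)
You should know at the outset that the paper does not prove this lemma: it is imported verbatim from \cite[Lemma 3.14]{OmerJing}, so the only proof to compare against lives in that reference, where the divisibility constraint is extracted from an assumed weakly compact embedding in the extension by a direct analysis of the measure sequence attached at the critical point against the genericity of the Radin club over $V$. Measured against either that argument or the bare standard of being a proof, your proposal has a genuine gap: it is a plan whose pivotal steps are exactly the ones you leave open. You never define the predicate $A$ or the $\Pi^1_1$ sentence $\phi$, and you explicitly concede the two checks (i) and (ii) on which the whole argument rests (``Granting (i) and (ii)\dots''). Those checks are not routine bookkeeping --- they \emph{are} the mathematical content of the lemma. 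The appeal to ``weak-repeat-point-type coherence forces divisibility,'' by analogy with item (5) of the introduction, is an intuition, not an argument: nothing in your sketch shows that a reflection point forces any coherence of $\bar{U}$, nor that such coherence implies $(2^\kappa)^M \mid lh(\bar{U})$.

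Two concrete obstacles make the route as sketched doubtful. First, the invariant you want $\phi$ to express --- the remainder of $lh(\bar{U})$ upon division by $(2^\kappa)^M$ --- concerns objects strictly above $V_\kappa$ (here $\kappa < lh(\bar{U}) = \lambda < (2^\kappa)^M$), so it must be recovered from data inside $V_\kappa$, presumably from the distribution along the generic club of the local invariants $lh(\bar{u}_\nu)$ modulo the local continuum as computed in the local ultrapowers. Establishing that this distribution determines the global remainder requires the measure-one analysis relating $U(\xi)$-typical lengths to the index $\xi$, which you do not carry out; and the closing remark of Section \ref{section: omegaThm} is a warning here, since it exhibits (with a repeat point) a situation where local compactness at $\kappa$ in the extension fails to reflect length invariants of $\bar{U}$ absent extra hypotheses such as goodness --- so your ``local--global dictionary'' cannot be taken for granted, and indeed the present lemma assumes no goodness. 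Second, there is a directionality problem: to refute weak compactness via $\mathbf{\Pi}^1_1$-indescribability you need $\phi$ true at $\kappa$ and provably false at every $\alpha < \kappa$, yet your candidate (``the divisibility invariant is nonzero'') has no reason to fail below $\kappa$ --- locally nonzero remainders are ubiquitous --- and your claim that ``a single reflection point contradicts $(2^\kappa)^M \nmid lh(\bar{U})$'' conflates the local invariant at $\alpha$ (a statement about $V_\alpha$ and the measure sequences below $\alpha$) with the global invariant of $lh(\bar{U})$. Until you produce the actual sentence, prove it holds at $\kappa$, and prove its reflection is impossible, the proposal does not establish the lemma.
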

In particular, the measure sequence $\bar{U}$ which we have prepared satisfies that $lh(\bar{U})=\lambda<(2^\kappa)^M$. To show that $\kappa$ has some reflection properties in the generic extension, we will need to analyze stationary sets and higher-order stationary sets in Radin forcing extensions. 

Ben-Neria \cite{Omer1} has characterized clubs and stationary sets in the generic extension using measure sequences:  
\begin{theorem}[\cite{Omer1}]\label{Thm: 0-Club} If $\bar{U}$ satisfies $cf(lh(\bar{U}))\geq\kappa^+$, where $\kappa=\kappa(\bar{U})$, then
given $p_0^{\smallfrown}(\bar{U},A)=p\Vdash  \dot{\tau}$ is a club subset of $\kappa$, there exists a measure one set
$A'\subseteq A$ and a set $\Gamma$ such that for some $\eta<\kappa$, $\Gamma\in U(\xi)$ for all $\xi\in[\eta,lh(\bar{U}))$ and  $p_0^{\smallfrown}(\bar{U},A')\Vdash O(\Gamma\cap \mathcal{MS}_G) \subset \dot{\tau}$.
\end{theorem}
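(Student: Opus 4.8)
The plan is to run the usual reflection argument for Radin forcing, combining the Prikry property with the embedding $j$ from which $\bar U$ was derived. Throughout I work below $p=p_0^{\smallfrown}(\bar U,A)$ and concentrate on the upper part $(\bar U,A)$, whose direct-extension order $\leq^*$ is $\kappa$-closed; the lower part $p_0$ lives below $\kappa$ and does not affect the behaviour of $\dot\tau$ on a tail of $\kappa$.

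First I would reduce $\dot\tau$ to bounded names. Applying Proposition \ref{Prop: reducing to bounded names} to the sequence $\langle\gamma_\alpha=\alpha\mid\alpha<\kappa\rangle$ and names $\dot x_\alpha$ for $\dot\tau\cap\alpha$ yields a direct extension $q\leq^* p$ with $q_0=p_0$ and a function $f$ assigning to each $\bar w\in A^{q}$ an $R_{\bar w}$-name $f(\bar w)$ that $q_0^{\smallfrown}(\bar w,A^{q}\cap V_{\kappa(\bar w)})$ forces to equal $\dot\tau\cap\kappa(\bar w)$ along $q^{\smallfrown}\bar w$. The $\kappa$-closure of $\leq^*$ on the upper part is exactly what lets us thread the $\kappa$-many decisions into a single $q$, so that for $\bar w$ in a measure-one set the local data $\dot\tau\cap\kappa(\bar w)$ depends only on the part of the generic below $\kappa(\bar w)$.

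Next I would define $\Gamma$ by a uniform clause, roughly $\Gamma=\{\bar w\in A^{q}:\text{some }\leq^*\text{-extension of }q^{\smallfrown}\bar w\text{ forces }\kappa(\bar w)\in\dot\tau\}$, letting the Prikry property decide membership, and prove $\Gamma\in U(\xi)$ for all $\xi$ in a tail $[\eta,lh(\bar U))$. The key is the definition of $U(\xi)$: one has $\Gamma\in U(\xi)$ iff $\bar U\restriction\xi\in j(\Gamma)$, so it suffices to verify in $M$ that $\kappa=\kappa(\bar U\restriction\xi)$ satisfies the defining clause. For this I lift $j$ to $j^+:V[G]\to M[G^*]$, where $G^*$ is generic for $j(R_{\bar U})$ extending $j''G$; the block of the generic Radin club of $j(R_{\bar U})$ at $\kappa$ is exactly $\langle\bar U\restriction\xi\mid\xi<lh(\bar U)\rangle$, and below $\kappa$ the generic club coincides with $\tau^G$. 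Then $j^+(\dot\tau)$ is a club in $j(\kappa)$ whose intersection with $\kappa$ is $\tau^G$; since $\dot\tau$ is forced unbounded, $\kappa=\sup(\tau^G)$ is a genuine limit point (here $\cf(lh(\bar U))\geq\kappa^+$ keeps $\kappa$ regular, so $\tau^G$ is a bona fide club and $\kappa$ a limit of it), whence closure gives $\kappa\in j^+(\dot\tau)$. Unwinding through $j$ delivers the defining clause for $\bar U\restriction\xi$, provided $\bar U\restriction\xi$ has limit length so that $\kappa$ is not an isolated point of its block; this holds for all $\xi\geq\eta$ for a suitable $\eta<\kappa$.

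Finally I would produce $A'\subseteq A^{q}$ with $p_0^{\smallfrown}(\bar U,A')\Vdash O(\Gamma\cap\mathcal{MS}_G)\subseteq\dot\tau$, by an induction on $lh(\bar w)$ using closure of $\dot\tau$. One shrinks $A^{q}$ to $A'$ so that each $\bar w\in A'$ is addable with $A'\cap V_{\kappa(\bar w)}\in\bigcap\bar w$ and so that the bounded name $f(\bar w)$ is compatible with $\Gamma$-members below contributing points of $\dot\tau$. For $\bar w\in\Gamma\cap\mathcal{MS}_G$ of limit length, $\kappa(\bar w)$ is a limit of the generic points $\kappa(\bar v)$ with $\bar v\in\Gamma\cap\mathcal{MS}_G\cap V_{\kappa(\bar w)}$, each lying in $\dot\tau$ by the induction hypothesis and coherence of $f$, so closure of $\dot\tau\cap\kappa(\bar w)=f(\bar w)$ forces $\kappa(\bar w)\in\dot\tau$. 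The main obstacle is precisely this coherence step: arranging the bounded names $f(\bar w)$, the reflected set $\Gamma$, and the shrinkage $A'$ to fit together so that the local data genuinely reflects the global club and all limit points are captured, while simultaneously securing a single $A'$ and a single threshold $\eta<\kappa$ valid for the whole tail of the $lh(\bar U)$-many measures — which is where the hypothesis $\cf(lh(\bar U))\geq\kappa^+$ is essential.
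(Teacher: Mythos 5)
Your proposal has the right skeleton (reduce to bounded names, reflect via $\Gamma\in U(\xi)\Leftrightarrow\bar U\restriction\xi\in j(\Gamma)$, lift $j$ and use closure of $j^+(\tau^G)$), but it fails at exactly the point you flag at the end, and that point is the theorem. Your $\Gamma$ fixes the lower part at $q_0$: it asks whether some $\leq^*$-extension of $q^{\smallfrown}\bar w$ forces $\kappa(\bar w)\in\dot\tau$. Since any two direct extensions of a fixed condition are compatible, this is a well-defined dichotomy, but it is not stable under extending the lower part: an arbitrary $r\leq p_0^{\smallfrown}(\bar U,A')$ that puts $\bar w$ into the generic sequence has a lower part $t\lneq q_0$ with new Radin points below $\kappa(\bar w)$, and the $\leq^*$-decision of ``$\kappa(\bar w)\in\dot\tau$'' relative to $t$ can differ from the one relative to $q_0$, because membership of a generic point in a club genuinely depends on the generic below it. So from your $\Gamma$ one cannot conclude $p_0^{\smallfrown}(\bar U,A')\Vdash O(\Gamma\cap\mathcal{MS}_G)\subseteq\dot\tau$: one needs the set of $\bar w$ on which the decision is positive for \emph{every} lower part $t\in R_{<\kappa(\bar w)}/p_0$ (relative to a measure function $b(\bar w)$), and the content of the theorem is that the complementary ``negative'' set is null for a tail of measures. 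This is precisely where $cf(lh(\bar U))\geq\kappa^+$ enters: there are only $\kappa$-many lower parts in $R_{<\kappa}$, so if for unboundedly many $\xi<lh(\bar U)$ some $t_\xi$ decides negatively at $\bar U\restriction\xi$, one pigeonholes to a single $t$ bad unboundedly often and then runs the lifting argument below $t$ to contradict the closure and unboundedness of the club. (This paper does not prove the theorem — it is quoted from \cite{Omer1} — but the template is visible in its proof of $\phi_{2,n}$: the set $Z_{e_0}$ is defined with an existential over $t\in R_{<\kappa(\bar u)}/p_0$, the witnesses $t_\xi\in R_{<\kappa}$ are extracted by elementarity, and the $\kappa^+$-completeness of the relevant ideal plays the pigeonhole role.) Note also that your stated use of the cofinality hypothesis (``keeps $\kappa$ regular'') is not where it is needed: $\kappa\in j^+(\tau^G)$ already follows from $\tau^G$ being unbounded in $\kappa$ together with closure of $j^+(\tau^G)$, with no regularity or ``limit length of the block'' condition, so no threshold $\eta$ arises from that consideration; the single $\eta<\kappa$ in the conclusion comes out of the pigeonhole/stabilization step.

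A secondary but genuine error: the claim that the generic Radin club of $j(R_{\bar U})$ has block at $\kappa$ equal to $\langle\bar U\restriction\xi\mid\xi<lh(\bar U)\rangle$ is false. Components of compatible conditions have strictly increasing critical points, so a single generic $G^*$ contains at most one measure sequence with critical point $\kappa$. Consequently you must, for each $\xi$ separately, build a generic $G^*_\xi$ for $j(R_{\bar U})$ containing the one-step extension of $j(q)$ by $\bar U\restriction\xi$ together with $j''G$ (or argue directly in $M$ using the Prikry property and directedness of $\leq^*$). Even then, the condition in $G^*_\xi$ that actually forces $\kappa\in j(\dot\tau)$ will have a lower part properly extending $q_0$, so this lifting only certifies membership in the existential-over-$t$ variant of $\Gamma$ — which, as above, is too weak for the conclusion and returns you to the missing stabilization argument.
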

It follows that if $cf(lh(\bar{U}))\geq\kappa^+$, any set $A\in V$ such that $A\in U(\xi)$ for unboundedly many $\xi$'s below $lh(\bar{U})$, will generate a stationary set in $V^{R_{\bar{U}}}$ i.e. $O(A\cap\mathcal{MS}_G)$ will intersect any club (see \cite[Proposition 15]{Omer1}). There are stationary sets which are not generated from a ground model set (\cite[Proposition 2.12]{OmerJing}) and the exact characterization of stationary sets appears in \cite[Theorem 19]{Omer1} and uses the notion of measure function:
\begin{definition}
    A \textit{measure function} is a function $b:\mathcal{MS}\rightarrow V_\kappa$ such that for every $\bar{u}\in \mathcal{MS}$, $b(\bar{u})\in\bigcap\bar{u}$.
\end{definition}
\begin{theorem}[{\cite[Theorem 19]{Omer1}}]
      Suppose that $\bar{U}$ satisfies $cf(lh(\bar{U}))\geq\kappa^+$, where $\kappa=\kappa(\bar{U})$, and $\dot{S}$ is $R_{\bar{U}}$-name such that 
      $p\Vdash  \dot{S}$ is a stationary subset of $\kappa$. Then there is $ e=e_0^{\smallfrown}(\bar{U},B)\leq p$ and a measure function $b$ such that for every $\vec{\eta}\in B^{<\omega}$:
    $$Z_{e_0}\setminus \vec{\eta}:=\{\bar{w} \in Z_{e_0}\mid e_0^\frown (\bar{w}, b(\bar{w}))^\frown (\bar{U}, B-V_{\kappa(\bar{w})+1})\Vdash \kappa(\bar{w})\in \dot{T}$$$$ \text{ and } \vec{\eta}<< b(\bar{w})\}\in U(\xi)$$  for unboundedly many $\xi<lh(\bar{U})$.
 \end{theorem}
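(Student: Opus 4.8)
The plan is to localize ``$\kappa(\bar{w})\in\dot{S}$'' to the forcing below $\bar{w}$, to decide it with measure-one sets so as to read off $e$ and the measure function $b$, and then to convert the stationarity of $\dot{S}$ into largeness of the decided-in set by means of Theorem \ref{Thm: 0-Club}.

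First I would apply Proposition \ref{Prop: reducing to bounded names} to the names $\dot{x}_\alpha$ coding ``$\alpha\in\dot{S}$'' (with $\gamma_\alpha=1$): this yields $q\leq p$ with $q_0=p_0$ and a function $f$ assigning to each addable $\bar{w}\in A^q$ an $R_{\bar{w}}$-name $f(\bar{w})$ with $q^\frown\bar{w}\Vdash f(\bar{w})=\dot{x}_{\kappa(\bar{w})}$, so that the truth value of ``$\kappa(\bar{w})\in\dot{S}$'' depends only on the generic below $\bar{w}$. For each $\bar{w}$ the Prikry property of $R_{\bar{w}}$ then produces a lower part and a measure-one set $b(\bar{w})\in\bigcap\bar{w}$, $b(\bar{w})\subseteq A^q\cap V_{\kappa(\bar{w})}$, deciding the single bit $f(\bar{w})$, and by the factorization of $R_{\bar{U}}$ over $\bar{w}$ this decision transfers to the full forcing. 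Using that $\kappa$ is a strong limit, there are only $<\kappa$ possible lower parts, so by the $\kappa$-completeness of each $U(\xi)$ both the deciding lower part and the decided value are constant on a $U(\xi)$-measure-one set; amalgamating these across $\xi$ by means of the $\kappa(\bar{u}^p_0)$-directed closure of $\leq^*$ and the completeness of the measures, I would fix a single lower part $e_0\leq^* p_0$ that decides the bit on a $U(\xi)$-measure-one set for a tail of $\xi<lh(\bar{U})$. Finally I would diagonalize the measure function into the top, producing $B\in\bigcap\bar{U}$, $B\subseteq A^q$, addable-closed and with $B\cap V_{\kappa(\bar{w})}\subseteq b(\bar{w})$ for all $\bar{w}\in B$; this makes $Z_{e_0}$ well-defined and, crucially, guarantees that in any generic $G$ containing $e=e_0^\frown(\bar{U},B)$ the canonical condition built from $\bar{w}\in B\cap\mathcal{MS}_G$ refines the deciding condition, so that $\kappa(\bar{w})\in S\Leftrightarrow\bar{w}\in Z_{e_0}$ on the generic club.

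With this correctness in hand, clause (1) follows from stationarity. Suppose $Z_{e_0}\in U(\xi)$ for only boundedly many $\xi$; then $V_\kappa\setminus Z_{e_0}\in U(\xi)$ for a tail of $\xi$, so by Theorem \ref{Thm: 0-Club} the set $V_\kappa\setminus Z_{e_0}$ generates a club, i.e. $e$ forces $O((V_\kappa\setminus Z_{e_0})\cap\mathcal{MS}_G)$ to contain a club, every point of which is $\kappa(\bar{w})$ with $\bar{w}\in(V_\kappa\setminus Z_{e_0})\cap\mathcal{MS}_G$ (using that this set is addable-closed). By the correctness established above, $e$ forces $\kappa(\bar{w})\notin\dot{S}$ for each such $\bar{w}$, so this club is disjoint from $\dot{S}$, contradicting that $e\leq p$ forces $\dot{S}$ stationary. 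Clause (2) is then routine: for $\vec{\eta}=(\bar{v}_1,\dots,\bar{v}_n)\in B^{<\omega}$, every $\bar{w}\in B$ with $\kappa(\bar{w})>\kappa(\bar{v}_n)$ has $\vec{\eta}<<b(\bar{w})$, since $B\cap V_{\kappa(\bar{w})}\subseteq b(\bar{w})$, each $\bar{v}_i\in B\cap V_{\kappa(\bar{w})}$, and $B$ is addable-closed; as the set of such $\bar{w}$ lies in every $U(\xi)$, intersecting with $Z_{e_0}$ preserves membership in $U(\xi)$ for unboundedly many $\xi$.

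I expect the main obstacle to be the uniformization step producing a single lower part $e_0$ that decides membership on a measure-one set for a tail of the $U(\xi)$'s: a naive choice of deciding lower part depends on $\bar{w}$ (and on $\xi$), and only a fusion exploiting the directed closure of $\leq^*$ together with the $\kappa$-completeness of the measures — and the fact that there are fewer than $\kappa<\cf(lh(\bar{U}))$ possible lower parts — yields the uniform $e_0$ required for the club-generation argument to bite. The second delicate point, handled by the diagonalization of $b$ into $B$ and the addable-closure of $B$, is ensuring that the ground-model decisions are faithfully reflected by the generic and that the generated clubs are met at genuine critical points rather than spurious limits.
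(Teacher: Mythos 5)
Your skeleton (reduce to bounded names, decide membership via a measure function $b$, then play the stationarity of $\dot S$ against a club generated by the ``decided-out'' set) is the right one and matches the paper's proof of the generalization $\phi_{2,n}$, whose $n=1$ instance is this theorem. But two pivotal steps fail. First, the uniformization to a single $e_0$ deciding on a $U(\xi)$-measure-one set \emph{for a tail of} $\xi$ is unobtainable. Per fixed $\xi$, stabilizing the deciding lower part $t_{\bar w}$ by $\kappa$-completeness of $U(\xi)$ is fine (there are fewer than $\kappa$ direct extensions of $p_0$), but your amalgamation across $\xi$ breaks: at a coordinate $(\bar u_i, A_i)$ of $p_0$ the measures are only $\kappa(\bar u_i)$-complete, while the distinct values $t_\xi$ can number up to $2^{\kappa(\bar u_{k-1})}\gg \kappa(\bar u_i)$, so no common refinement of them exists; pigeonhole via $\cf(\lambda)\geq\kappa^+$ returns one $t^*$ working only on an \emph{unbounded} set of $\xi$. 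In your contradiction argument this is fatal: if the ``no'' side is in $U(\xi)$ merely unboundedly often, it generates a \emph{stationary} set of points forced out of $\dot S$, which coexists happily with $\dot S$ being stationary --- no contradiction. The paper sidesteps this by first arranging $b$ with a uniform property your $b$ lacks: for \emph{every} $r\in R_{<\kappa(\bar w)}$ some direct extension $r'$ of $r$ has $r'{}^{\smallfrown}(\bar w, b(\bar w))$ deciding. It then splits $A$ into $A_1$ (yes-decidable by \emph{some} lower part) and $A_2=A\setminus A_1$, and shows $Ind_{\bar U}(A_1)$ unbounded by density: otherwise $A_2$ has tail index, hence generates a genuine club meeting $S$, and a generic condition $t'{}^{\smallfrown}(\bar u,a)^{\smallfrown}\vec s{}^{\smallfrown}(\bar U,A')$ forcing $\kappa(\bar u)\in\dot T$ has \emph{its own} lower part $t'$ directly extended to $t^*$ forcing $\kappa(\bar u)\notin\dot T$, producing the compatible pair with contradictory decisions. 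Decisions are anchored at the generic's lower part, not at a pre-fixed $e_0$; only afterwards are the $t_\xi$ (now ranging over the $\kappa$ many elements of $R_{<\kappa}$) stabilized over $\xi\in Ind_{\bar U}(A_1)$ using $\cf(\lambda)\geq\kappa^+$, and unboundedness --- not a tail --- is exactly what clause (1) asserts.

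Second, your ``crucial'' shrink producing $B$ with $B\cap V_{\kappa(\bar w)}\subseteq b(\bar w)$ for all $\bar w\in B$ cannot in general be arranged. This containment is not of diagonal-intersection shape: it constrains \emph{all} smaller members of $B$ by the measure-one sets of all larger members, and $\{\bar w: B\cap V_{\kappa(\bar w)}\subseteq b(\bar w)\}$ need not lie in any $U(\xi)$ for any fixed $B$ --- e.g.\ $b(\bar w)$ may be the restriction of a set $X$ with both $X$ and its complement $\bar U$-positive, depending on $\bar w$, which pins $B\cap V_{\kappa(\bar w)}$ into incompatible sets for cofinally many $\bar w\in B$. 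This is precisely why the theorem states clause (2) for each \emph{fixed} finite $\vec\eta\in B^{<\omega}$ rather than for all of $B\cap V_{\kappa(\bar w)}$ at once, and even that weaker statement takes work in the paper: the induction on ``nice'' sequences, where measure-one-ness of $\{\bar w: \vec\eta^{\smallfrown}\bar w \text{ nice}\}$ is proved by choosing counterexamples $\bar w_\gamma\in A_1^c\cap j(b)(\bar U\restriction\gamma)$ and stabilizing by the $\kappa^+$-completeness of the bounded ideal on $\lambda$, followed by an $\omega$-sequence of (legitimate) diagonal intersections over nice $\vec\eta$. Since both your transfer of ground-model decisions to the generic club and your ``routine'' proof of clause (2) lean on the false containment, they collapse with it. (A minor further slip: Theorem \ref{Thm: 0-Club} as stated says clubs \emph{contain} generated sets; the direction you invoke --- that tail-index ground-model sets generate clubs --- is the easy converse, true but not what you cited.)
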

For the sake of convenience, let us denote $Z_{e_0}\backslash \emptyset$ as $Z_{e_0}$.
This provided the main ingredient in \cite{Omer1} to guarantee stationary reflection in the generic extension:
\begin{theorem}[\cite{Omer1}]
    Suppose that $\bar{U}$ is a measure sequence such that $cf(lh(\bar{U}))\geq\kappa^{++}$, then $V^{R_{\bar{U}}}\models$ every stationary set at $\kappa$ reflects and moreover every $\kappa$-sequence of stationary subsets of $\kappa$ reflects diagonally (or in our terminology, $\kappa$ is $2$-$d$-stationary).  
\end{theorem}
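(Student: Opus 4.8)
The plan is to prove the stronger \emph{diagonal} statement, since taking a constant sequence $\l\dot S_i\mid i<\kappa\r$ recovers ordinary stationary reflection. First I would fix a condition $p=p_0^{\smallfrown}(\bar U,A)$ forcing $\l \dot S_i\mid i<\kappa\r$ to be a sequence of stationary subsets of $\kappa$. Exploiting that $(R_{\bar U}/p,\leq^*)$ is $\kappa$-closed together with Proposition~\ref{Prop: reducing to bounded names}, I would pass to a direct extension that simultaneously reduces all the names $\dot S_i$ to names depending on bounded information, so that for each addable $\bar w$ the membership statements ``$\kappa(\bar w)\in\dot S_i$'' are decided uniformly by the one-step extension $p\cat{\bar w}$.

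For each $i<\kappa$ I would then invoke the characterization of stationary sets {\cite[Theorem 19]{Omer1}} to obtain a measure function $b_i$ and a set $Z^i$ with the property that adding $\bar w\in Z^i$ with measure-one bound $b_i(\bar w)$ forces $\kappa(\bar w)\in\dot S_i$, and such that $Z^i\in U(\xi)$ for unboundedly many $\xi<lh(\bar U)$. The core of the construction is to amalgamate these $\kappa$-many data into a single measure function by diagonalization: since each $U(\xi)$ is normal (being derived from $j$) and $\bigcap\bar w$ is $\kappa(\bar w)$-complete, I would set $b(\bar w)=\Delta_{i<\kappa(\bar w)} b_i(\bar w)$ inside $\bigcap\bar w$ and let $Z=\Delta_{i<\kappa}Z^i$. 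The payoff is that whenever $\bar w\in Z$ and $\bar v\prec\bar w$ is drawn from $b(\bar w)$, the point $\kappa(\bar v)$ is forced into $\dot S_i$ for \emph{every} $i<\kappa(\bar v)$ at once.

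It remains to locate the reflection point as $\alpha=\kappa(\bar w)$ for a suitable $\bar w\in Z$. Here is where the hypothesis $\cf(lh(\bar U))\geq\kappa^{++}$ enters: it yields cofinally many $\xi<lh(\bar U)$ with $\cf(\xi)\geq\kappa^+$, and fixing such a $\xi$ with $Z\in U(\xi)$, the derivation $U(\xi)=\{X\mid \bar U\restriction\xi\in j(X)\}$ forces $U(\xi)$-almost every $\bar w$ to satisfy $\cf(lh(\bar w))\geq\kappa(\bar w)^+$. For such $\bar w$ the point $\alpha=\kappa(\bar w)$ is inaccessible in $V[G]$ and, via the factorization $R_{\bar U}/p\simeq (R_{\bar w}/\cdots)\times(R_{\bar U}/\cdots)$, the sub-Radin forcing $R_{\bar w}$ generates stationary subsets of $\alpha$ (by Theorem~\ref{Thm: 0-Club} applied at $\bar w$). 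Adding one such $\bar w\in Z$ into the generic, for each $i<\alpha$ the set $\{\bar v\prec\bar w\mid \kappa(\bar v)\in\dot S_i\}$ lies in $\bar w(\zeta)$ for unboundedly many $\zeta<lh(\bar w)$, so $O$ of its intersection with $\MS_G$ is a stationary subset of $\alpha$ contained in $S_i\cap\alpha$. As $\alpha$ is regular, this exhibits the required diagonal reflection point and shows $\kappa$ is $2$-$d$-stationary.

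The step I expect to be the main obstacle is the amalgamation: one must check that the diagonal measure function $b$ and diagonal set $Z$ remain measure-one for the relevant $U(\xi)$, and that the $\kappa$-many membership requirements $\kappa(\bar v)\in\dot S_i$ all survive the diagonalization below a single $\bar w$. This is exactly the place where the strengthening from $\cf(lh(\bar U))\geq\kappa^+$ (which suffices only to \emph{preserve} stationarity) to $\cf(lh(\bar U))\geq\kappa^{++}$ (which supplies a level $\xi$ of cofinality $\geq\kappa^+$ and hence added sequences $\bar w$ of reflection-friendly length) is indispensable, and where the normality of the measures $U(\xi)$ is what allows the $\kappa$-many constraints to coalesce at one measure sequence.
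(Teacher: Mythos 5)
Your outline has the right skeleton --- reduce the names, extract stationarity witnesses from \cite[Theorem 19]{Omer1}, and reflect at $U(\xi)$-almost every $\bar w$ for a suitable level $\xi$ --- and this is indeed the shape of the actual argument (the $n=1\to 2$ instance of $\phi_{2,n}$--$\phi_{4,n}$ in Section \ref{section: omegaThm}). But two steps fail as written. First, the witness that \cite[Theorem 19]{Omer1} produces for $\dot S_i$ is a pair $(e^i,b_i)$ where $e^i=e_0^{i\,\frown}(\bar U,B^i)$ is a \emph{strengthening} of $p$ whose lower part genuinely matters: whether $t^{\frown}(\bar w,b_i(\bar w))^{\frown}(\bar U,\cdots)$ forces $\kappa(\bar w)\in\dot S_i$ depends on $t$, and decisions are only available after a direct extension of the lower part, so the positive decision set is lower-part-relative. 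There is in general no single condition carrying witnesses for all $\kappa$-many names at once --- the $e^i$ may be pairwise incompatible --- so your amalgamation $b(\bar w)=\Delta_{i<\kappa(\bar w)}b_i(\bar w)$, $Z=\Delta_{i<\kappa}Z^i$ has no fixed condition to live on. This is exactly why the paper's proof of $\phi_{4,n}$ takes, for each $i$, a \emph{maximal antichain} $A_i=\{e^i_k : k<\kappa\}$ of witnesses (using the $\kappa^+$-c.c.), defines in $V[G]$ the function $f(i)=\rho_i$ recording which witness entered $G$ and the size of its lower part, and invokes Theorem \ref{Thm: 0-Club} to get a generated club of closure points of $f$, so that at almost every point $\kappa(\bar u)$ of the generic club the relevant ${p^i_k}^{\frown}\bar u$ lies in $G$ for all $i<\kappa(\bar u)$. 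Relatedly, compatibility with arbitrary generic lower parts requires the second clause of the witness --- that $Z_{e_0}\setminus\vec\eta$ stays large for \emph{every} finite $\vec\eta\in B^{<\omega}$ --- which needs the separate ``nice sequences'' shrinking argument from the proof of $\phi_{2,n}$ and is missing from your outline.

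Second, your selection of the level $\xi$ does not work. Each $Z^i$ is only guaranteed to lie in $U(\xi)$ for an \emph{unbounded}, not closed, set of $\xi<\lambda$; $\kappa$-many unbounded subsets of $\lambda$ can be pairwise disjoint, so there need be no single $\xi$ with $Z^i\in U(\xi)$ for all $i$, and hence ``$Z\in U(\xi)$'' with $Z=\Delta_i Z^i$ may fail at every $\xi$. Moreover, even where $Z^i\in U(\xi)$ holds, this only puts the \emph{top point} $\kappa(\bar w)$ into $\dot S_i$ for almost all $\bar w$; to make $S_i\cap\kappa(\bar w)$ \emph{stationary} you need $Ind_{\bar w}(Z^i\cap V_{\kappa(\bar w)})$ unbounded in $lh(\bar w)$, i.e.\ you must choose $\xi$ of cofinality $\geq\kappa^+$ that is a \emph{simultaneous accumulation point} of the $\kappa$-many index sets $Ind_{\bar U}(Z^i_{\phantom{.}})$ and then reflect via $\bar U\restriction\xi\in j(\cdot)$. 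The existence of such $\xi$ --- indeed, a $\kappa^+$-complete filter's worth of them --- is precisely where $\cf(lh(\bar U))\geq\kappa^{++}$ enters: it is the $(1,\kappa^+)$-$s$-stationarity of $\lambda$ and the $\kappa^+$-completeness of $NS^{(1,\kappa^+)}_\lambda$, as in Claim \ref{claim: RemovingMF}, not (as you suggest) the mere existence of levels of cofinality $\geq\kappa^+$: such levels already exist when $\cf(lh(\bar U))=\kappa^+$, where the conclusion fails in general. With these two repairs --- antichains of witnesses plus the $\vec\eta$-shrinking, and accumulation of index sets in place of membership of the diagonal intersection --- your outline becomes essentially the paper's proof.
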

Our intention is to generalize this characterization to higher levels of stationarity and show that the measure sequence we produced in the preparation, guarantees that $\kappa$ is $k$-$d$-diagonal-stationary for all $k\in \omega$ in $V^{R_{\bar{U}}}$. 

\begin{definition}
    Given $A \subset V_\kappa$, let
$$Ind_{\bar{U}}(A)=\{\alpha<lh(\bar{U})\mid A\in U(\alpha)\}$$
\end{definition}

\begin{definition}
Fix $n<\kappa$ and a measure sequence $\bar{U}$ on $\kappa$ of length $\lambda$. Let $\chi<\lambda$. We say that $A\subset V_\kappa$ is:
\begin{enumerate}
    \item $\bar{U}$-$(n,\chi)$-$s$-club if $Ind_{\bar{U}}(A)\in Cub^{(n,\chi)}_\lambda$
    \item $\bar{U}$-$(n,\chi)$-$s$-stationary if $Ind_{\bar{U}}(A)\not\in NS^{(n,\chi)}_\lambda$.
    \item $\bar{U}$-$(n,\chi)$-$s$-null if $Ind_{\bar{U}}(A)\in NS^{(n,\chi)}_\lambda$.
\end{enumerate}
\end{definition}

\section{Higher order stationary reflection in the Radin extension}\label{section: omegaThm}

In this section we present the proof of Theorem \ref{theorem: mainstronglimit}. We restate the theorem for the convenience of the reader.
\begin{theorem*}
        Relative to the existence of a $H(\lambda^{++})$-hypermeasurable cardinal $\kappa$ where $\lambda>\kappa$ is a measurable cardinal, it is consistent that a strongly inaccessible cardinal $\kappa$ is $n$-$d$-stationary for all $n\in \omega$, but $\kappa$ is not weakly compact.
\end{theorem*}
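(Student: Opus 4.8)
The plan is to force with the Radin forcing $R_{\bar U}$, where $\bar U$ is the measure sequence of length $\lambda$ derived from $j$ in the prepared model $V = V^*$. The ``not weakly compact'' half is immediate from the preparation: since $\lambda$ is measurable above $\kappa$ it is inaccessible, so $cf(lh(\bar U)) = \lambda \geq \kappa^{++}$ and the theorems of Ben-Neria quoted above apply, while $lh(\bar U) = \lambda < (2^\kappa)^M$ forces $(2^\kappa)^M$ to be too large to divide $lh(\bar U)$; hence Lemma \ref{Lemma: not weakly compact} gives that $\kappa$ is not weakly compact in $V^{R_{\bar U}}$. The entire difficulty lies in the positive direction: transferring the higher order stationarity of $\lambda = lh(\bar U)$ in $V$ (established in the preparation, where $\lambda$ is $(n,\kappa^+)$-$s$-stationary for all $n\in\omega$) up to $n$-$d$-stationarity of $\kappa$ in $V^{R_{\bar U}}$.

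First I would set up the dictionary between higher order stationary subsets of $\kappa$ in $V^{R_{\bar U}}$ and higher order stationary subsets of $\lambda$ computed from the index sets $Ind_{\bar U}(A)$. The model case is Ben-Neria's analysis: a ground model $A\subseteq V_\kappa$ with $A\in U(\xi)$ for unboundedly many $\xi<lh(\bar U)$ generates, via $O(A\cap\mathcal{MS}_G)$, a stationary subset of $\kappa$, and conversely every stationary set is, on a condition, captured by such a measure-function datum. I would then prove by induction on $n<\omega$ a transfer statement of the following shape: if $A$ is $\bar U$-$n$-$d$-stationary, i.e.\ $Ind_{\bar U}(A)$ is $n$-$d$-stationary in $\lambda$, then $O(A\cap\mathcal{MS}_G)$ is forced to be $n$-$d$-stationary in $\kappa$, and moreover the trace operator $Tr^d_n$ in the extension is computed, modulo the generic club $O(\mathcal{MS}_G)$, by the index trace $Tr^d_n$ at $\lambda$ applied to $Ind_{\bar U}(\cdot)$. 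The case $n=1$ is exactly the stationarity transfer, and $n=2$ recovers Ben-Neria's diagonal stationary reflection.

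For the inductive step I would begin with names $\langle\dot T_i\mid i<\kappa\rangle$ for a $\kappa$-sequence of $(n-1)$-$d$-stationary subsets of $\kappa$ in $V^{R_{\bar U}}$. Using the Prikry and strong Prikry properties together with Proposition \ref{Prop: reducing to bounded names} to reduce each $\dot T_i$ to a name depending on bounded information, and generalizing the measure-function analysis to the $(n-1)$-$d$-stationary level, I would extract for each $i<\kappa$ a ground model datum whose index set is an $(n-1)$-$d$-stationary subset of $\lambda$. This yields $\leq\kappa$ many $(n-1)$-$d$-stationary subsets of $\lambda$; since $V\models\lambda$ is $(n,\kappa^+)$-$s$-stationary, these fewer than $\kappa^+$ many sets reflect simultaneously by Lemma \ref{lemma: previous other ideals}, producing a single index $\xi<\lambda$ and hence a measure sequence $\bar u$ addable into the generic with $\xi\leq lh(\bar u)$, whose critical point $\alpha=\kappa(\bar u)$ is the sought diagonal reflection point. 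The addability constraints guarantee that only the sets indexed below $\alpha$ need to reflect at $\bar u$, which is precisely the diagonal ``$\forall i<\alpha$'' quantifier in the definition of $n$-$d$-stationarity. Applying the transfer statement with $A=\mathcal{MS}\cap V_\kappa$, so that $O(A\cap\mathcal{MS}_G)$ is the whole generic club, then shows that $\kappa$ itself is $n$-$d$-stationary for every $n\in\omega$.

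The main obstacle is this inductive extraction step: generalizing the measure-function characterization of stationary sets to arbitrary finite levels of $d$-stationarity, and verifying that $Tr^d_{n-1}$ in the generic extension commutes, modulo $O(\mathcal{MS}_G)$, with the index-set trace $Tr^d_{n-1}$ at $\lambda$. This forces one to combine the factorization $R_{\bar U}/p\simeq (R_{\bar u^p_i}/p{\restriction}(i+1))\times(R_{\bar U}/p{\setminus}(i+1))$, which localizes reflection below a club point, with careful tracking of measure one sets across the diagonal intersection. The delicate accounting is the translation between the diagonal quantifier at $\kappa$ and the simultaneous-reflection parameter $\chi=\kappa^+$ at $\lambda$, and it is exactly here that the choice $\chi=\kappa^+$ in the preparation of the ground model is used.
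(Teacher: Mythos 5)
Your negative half is fine and matches the paper (derive $\bar{U}$ of length $\lambda<(2^\kappa)^M=\lambda^+$ from the prepared embedding and quote Lemma \ref{Lemma: not weakly compact}), and your overall scaffolding --- induction on $n$, measure-function witnesses generalizing Ben-Neria's analysis, $\kappa^+$-simultaneous reflection at $\lambda$, factorization to localize at club points --- is the paper's architecture. But the inductive invariant you propose to carry is the wrong one, and it is exactly at the crux. You pair $n$-$d$-stationarity of $S\subseteq\kappa$ in $V^{R_{\bar U}}$ with $n$-$d$-stationarity of $Ind_{\bar U}(A)$ in $\lambda$ and claim $Tr^d_n$ commutes with the index trace $Tr^d_n$. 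The correct dictionary involves both a level shift and a change of type: $S$ is $n$-$d$-stationary in the extension iff $S$ meets $O(\Gamma\cap\mathcal{MS}_G)$ for every $\Gamma$ whose index set lies in $Cub^{(n-1,\kappa^+)}_\lambda$ (the paper's $\phi_{1,n}$), the extracted witness sets $Z_{e_0}\setminus\vec{\eta}$ are $(n-1,\kappa^+)$-$s$-stationary ($\phi_{2,n}$), and the $d$-trace at $\kappa$ of a generated set is generated by a $\bar U$-$(n,\kappa^+)$-$s$-club ($\phi_{4,n}$) --- i.e., the diagonal quantifier at $\kappa$ translates into \emph{simultaneous} reflection of $\le\kappa$-many sets at $\lambda$, one level down, with the $\kappa^+$-completeness of $NS^{(n,\kappa^+)}_\lambda$ doing the bookkeeping over the $\kappa^+$-c.c.\ forcing. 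Your version fails concretely already at the base of the extraction: a name for a stationary subset of $\kappa$ yields, via Ben-Neria's characterization, only a $Z_{e_0}$ with $Ind_{\bar U}(Z_{e_0})$ \emph{unbounded} in $\lambda$ (i.e., $(0,\kappa^+)$-$s$-stationary), not stationary; indeed a ground-model $A$ with $Ind_{\bar U}(A)$ unbounded but non-stationary still generates a stationary subset of $\kappa$. So your step ``extract for each $i<\kappa$ an index set that is $(n-1)$-$d$-stationary in $\lambda$'' is unachievable for $n-1=1$, and the claimed commutation of $Tr^d_{n}$ with the index $Tr^d_n$ is false as stated.

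A second, related omission: the induction cannot be run for $\bar U$ alone, nor only in the forward (transfer-up) direction. To compute $Tr^d_{n-1}$ at a point $\kappa(\bar u)$ of the generic club, the paper needs the \emph{converse} implication $\phi_{0,n-1}^{\bar u}$ --- if $\kappa(\bar u)$ is $(n-1)$-$d$-stationary then $lh(\bar u)$ is $(n-2,\kappa(\bar u)^+)$-$s$-stationary --- applied to the good measure sequences $\bar u$ below $\kappa$, in tandem with the local characterizations $\phi_{1,n-1}^{\bar u}$ and $\phi_{2,n-1}^{\bar u}$; your factorization remark handles only the direction ``reflection at $lh(\bar u)$ implies reflection at $\kappa(\bar u)$.'' Your closing paragraph correctly flags the translation between the diagonal quantifier at $\kappa$ and the parameter $\chi=\kappa^+$ at $\lambda$ as the delicate point, but the invariant you write down is not the one that closes the induction; identifying the level-shifted $d$-to-$s$ correspondence, and proving it simultaneously and two-sidedly for all good measure sequences, is precisely the content of the paper's $\phi_{0,n}$ through $\phi_{4,n}$.
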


 By the results from Section \ref{section: PreparRadin}, we may assume the following:
there is an elementary embedding $j:V\rightarrow M$ with critical point $\kappa$ such that 
\begin{enumerate}
    \item $V\models 2^\kappa=2^\lambda=\lambda^+>\lambda$,
    \item $H(\lambda^{++})\subset M$ and ${}^\kappa M \subset M$,
    \item $\lambda$ is $(\omega,\kappa^+)$-$s$-stationary.
    \item For every $X\subseteq \lambda$ there is $f$ such that $j(f)(\kappa)=X$.
\end{enumerate}
By the fact that $H(\lambda^{++})\subset M$, we can then derive a measure sequence $\bar{U}$ of length $\lambda$ from $j$ (see \cite[Lemma 5.1]{Gitik2010}).
In particular, for $\xi<\lambda$, $U(\xi)$ concentrates on $\bar{u}\in\mathcal{MS}\cap V_\kappa$ such that $\bar{u}$ is derived from an embedding $j':V\rightarrow M'$ such that $\kappa(\bar{u})$ is the critical point of $j'$ and:
\begin{enumerate}
    \item $2^{\kappa(\bar{u})}>lh(\bar{u})$, 
    \item for every $X\subseteq lh(\bar{u})$ there is $f$ such that $j'(f)(\kappa(\bar{u}))=X$.
\end{enumerate} 

The reflected objects here are the initial segments of $\bar{U}$, namely $\{\bar{U}\restriction \xi: \xi<lh(\bar{U})\}$. By our assumption, any proper initial segment of $\bar{U}$ belongs to $M$, hence, reflection is possible. Furthermore, for each such reflected measure sequence, one can form an elementary embedding that derives it. We refer the reader to \cite[Lemma 5.1]{Gitik2010} for this type of arguments.

By shrinking to a measure one set in $\cap\bar{U}$, we may  abuse our notations by assuming that each $\bar{u}\in \mathcal{MS}\cap V_\kappa$, there is an embedding $j'$ from which $\bar{u}$ is derived as above. We call these measure sequences \emph{good}. If in addition, $\xi<\lambda$ is $(\rho,\kappa^+)$-$s$-stationary for $\rho<\omega$, then we may assume $U(\xi)$ concentrates on good measure sequences $\bar{u}$ such that $lh(\bar{u})$ is $(\rho, \kappa(\bar{u})^+)$-$s$-stationary.

The argument is to prove inductively on $n\in \omega-\{0\}$ that:
for any good measure sequence $\bar{U}$ on $\kappa$ with $lh(\bar{U})=\lambda$,
the following sequence of propositions holds.
%{\color{blue} should we fix the enumeration?}
\begin{proposition}[$\phi_{0,n}$]
     If $V^{R_{\bar{U}}}\models\kappa$ is $n$-$d$-stationary, then $lh(\bar{U})=\lambda$ is $(n-1, \kappa^+)$-s-stationary.
\end{proposition}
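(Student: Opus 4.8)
The plan is to prove the proposition directly (not by contraposition): I fix a candidate witness $\langle T_i\mid i<\chi'\rangle$ with $\chi'<\kappa^+$, where each $T_i$ is $(\eta_i,\kappa^+)$-$s$-stationary for some $\eta_i<n-1$, and show $\bigcap_{i<\chi'}Tr^{\kappa^+}_{\eta_i}(T_i)\neq\emptyset$; by Lemma \ref{lemma: (xi,chi)-s-non-stationary ideal}(1) this is exactly what it takes to conclude that $\lambda$ is $(n-1,\kappa^+)$-$s$-stationary. The first step is a realizability lemma: using the standing assumption that every $X\subseteq\lambda$ is of the form $j(f)(\kappa)$, I realize each $T_i$ as an index set. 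Concretely, taking $g_i\in V$ with $j(g_i)(\kappa)=T_i$ and setting $A_i=\{\bar v\in\MS\cap V_\kappa:lh(\bar v)\in g_i(\kappa(\bar v))\}$, the identities $\kappa(\bar U\restriction\xi)=\kappa$ and $lh(\bar U\restriction\xi)=\xi$ give $Ind_{\bar U}(A_i)=T_i$. I extend this to a full $\kappa$-sequence by putting $A_i=V_\kappa$, $\eta_i=0$ for $\chi'\le i<\kappa$.

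Next I pass to the generic extension. By the generation half of the set-level correspondence at level $\eta_i+1\le n-1<n$ (the relevant induction hypothesis), each $S_i:=O(A_i\cap\MS_G)$ is $(\eta_i+1)$-$d$-stationary in $V[G]$. Since $\kappa$ is $n$-$d$-stationary and every $\eta_i+1<n$, applying $n$-$d$-stationarity to $\langle S_i\mid i<\kappa\rangle$ yields a regular $\alpha<\kappa$ with $S_i\cap\alpha$ being $(\eta_i+1)$-$d$-stationary for all $i<\alpha$. Standard Radin analysis identifies $\alpha=\kappa(\bar v)$ for some $\bar v\in\MS_G$ (the regular members of the generic club are critical points of generic measure sequences), and the factorization $R_{\bar U}/p\simeq R_{\bar v}\times\cdots$ together with absoluteness of $d$-stationarity shows $S_i\cap\kappa(\bar v)=O(A_i\cap\MS_{G_{\bar v}})$. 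Hence $S_i\cap\kappa(\bar v)$ is $(\eta_i+1)$-$d$-stationary exactly when $\bar v\in\mathrm{Refl}_{\eta_i}(A_i):=\{\bar w:O(A_i\cap\MS_{G_{\bar w}})\text{ is }(\eta_i+1)\text{-}d\text{-stationary in }V[G_{\bar w}]\}$, so $\bar v\in D:=\Delta_{i<\kappa}\mathrm{Refl}_{\eta_i}(A_i)$.

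The conceptual heart, and the step I expect to be the main obstacle, is an \emph{exact} reflection lemma: for every $A$ and $\eta<n-1$, $Ind_{\bar U}(\mathrm{Refl}_\eta(A))=Tr^{\kappa^+}_\eta(Ind_{\bar U}(A))$. Here I would use $U(\xi)=\{X:\bar U\restriction\xi\in j(X)\}$ directly: $\xi\in Ind_{\bar U}(\mathrm{Refl}_\eta(A))$ iff, evaluating in $M$ at $\bar U\restriction\xi$ (whose critical point is the \emph{constant} value $\kappa$), $O(A\cap\MS_{G_{\bar U\restriction\xi}})$ is $(\eta+1)$-$d$-stationary; by the set-level correspondence for the good sequence $\bar U\restriction\xi$ (induction hypothesis, since $Ind_{\bar U\restriction\xi}(A)=Ind_{\bar U}(A)\cap\xi$) this holds iff $Ind_{\bar U}(A)\cap\xi$ is $(\eta,\kappa^+)$-$s$-stationary, i.e. $\xi\in Tr^{\kappa^+}_\eta(Ind_{\bar U}(A))$. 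Running the same evaluation-at-$\bar U\restriction\xi$ computation for the diagonal intersection $D$, and using that the index $i$ ranges below the constant critical point $\kappa(\bar U\restriction\xi)=\kappa$, collapses the diagonal into a genuine intersection: $Ind_{\bar U}(D)=\bigcap_{i<\kappa}Tr^{\kappa^+}_{\eta_i}(T_i)\subseteq\bigcap_{i<\chi'}Tr^{\kappa^+}_{\eta_i}(T_i)$. This passage from ``diagonal-at-$\kappa$'' to ``simultaneous-at-$\lambda$'' is precisely what converts $d$-stationarity of $\kappa$ into $s$-stationarity of $\lambda$.

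To close, I note that $\bar v\in\MS_G\cap D$ forces $D$ to be $\bar U$-positive: if $Ind_{\bar U}(D)=\emptyset$ then $V_\kappa\setminus D\in\bigcap\bar U$, and (refining to a coherent measure-one set as in the standing Remark) a dense set of conditions have all their measure-one sets disjoint from $D$, so $\MS_G\cap D=\emptyset$, a contradiction. Thus $Ind_{\bar U}(D)\neq\emptyset$, and by the reflection-lemma computation $\bigcap_{i<\chi'}Tr^{\kappa^+}_{\eta_i}(T_i)\neq\emptyset$, completing the proof. The real work is the exact reflection lemma and the underlying set-level correspondence ``$O(A\cap\MS_{G_{\bar w}})$ is $(\eta+1)$-$d$-stationary iff $Ind_{\bar w}(A)$ is $(\eta,\kappa(\bar w)^+)$-$s$-stationary'' at all lower levels, which must be carried along as companion statements in the simultaneous induction on $n$; the identification of regular reflection points with generic measure-sequence critical points and the coherence massaging are secondary technical points.
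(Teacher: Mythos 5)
Your overall route is the paper's argument run in the direct rather than the contrapositive direction, built from the same three pillars: realizing each $T_i$ as $Ind_{\bar U}(A_i)$ via goodness ($j(g_i)(\kappa)=T_i$); the set-level correspondence between $d$-stationarity of $O(A\cap\MS_G)$ and $s$-stationarity of $Ind_{\bar U}(A)$, which is exactly the companion propositions $\phi_{1,m}$, $\phi_{2,m}$ carried in the paper's simultaneous induction; and the evaluation $\bar U\restriction\xi\in j(\cdot)$, where $\kappa(\bar U\restriction\xi)=\kappa$ collapses the diagonal $\Delta_{i<\kappa}$ at $\kappa$ into the genuine intersection $\bigcap_{i<\kappa}$ at $\lambda$ — the same mechanism behind the paper's computation that its set $B$ is $\bar U$-null. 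Two bookkeeping points should be made explicit. First, before any lower-level correspondence applies you need $\lambda$ to be $(n-2,\kappa^+)$-$s$-stationary; this is available (since $\kappa$ is in particular $(n-1)$-$d$-stationary, apply $\phi_{0,n-1}$), and it is precisely how the paper opens its proof. Second, your set $\mathrm{Refl}_\eta(A)$ is defined through the generic extensions $V[G_{\bar w}]$ and is therefore not obviously a set in $V$; the paper sidesteps this by defining the analogous set purely combinatorially in $V$, namely $\{\bar v:\forall i<\kappa(\bar v),\ Ind_{\bar v}(A_i\cap V_{\kappa(\bar v)})\text{ is }(\eta_i,\kappa(\bar v)^+)\text{-}s\text{-stationary in }lh(\bar v)\}$, and then proves only the two one-sided implications it needs, via $\phi^{\bar v}_{0,n-1}$ together with $\phi^{\bar v}_{2,n-1}$ (reflection point implies membership) and the $j$-computation at $\bar U\restriction\xi$ (membership implies trace membership). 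You should phrase your $D$ the same way; with that change, your ``exact reflection lemma'' is exactly the induction hypotheses the paper invokes, and only the inclusion $Ind_{\bar U}(D)\subseteq\bigcap_{i<\kappa}Tr^{\kappa^+}_{\eta_i}(T_i)$ is actually needed.

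The genuine gap is your closing density argument. From $Ind_{\bar U}(D)=\emptyset$, i.e.\ $V_\kappa\setminus D\in\bigcap\bar U$, you cannot conclude $\MS_G\cap D=\emptyset$: shrinking measure-one sets below a condition never removes the measure sequences already sitting in its finite lower part, and you cannot shrink the lower-part measure-one sets against $D$ either, since nullity of $D$ with respect to $\bar U$ says nothing about the filters $\bigcap\bar u$ of small sequences already added. What density actually yields is that $\MS_G\cap D$ is contained in the lower part of some condition in $G$, hence bounded by some $\beta<\kappa$. This matters because $n$-$d$-stationarity hands you a \emph{single} reflection point, which could a priori be one of these boundedly many exceptions. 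The repair is easy and should be written down: pad your sequence with the $0$-$d$-stationary set $\kappa\setminus\beta$ (legitimate, since $0<n$), which forces the reflection point $\kappa(\bar v)$ above $\beta$ and hence outside the exceptional part, restoring the contradiction. (The paper's mirror-image step, ``we may assume without loss of generality that $\MS_G\cap B=\emptyset$'', must be read with the same grain of salt.) With these two repairs your proof is correct and is, in substance, the paper's proof in contrapositive-free form.
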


If in addition $\lambda$ is $(n-1,\kappa^+)$-$s$-stationary, then the following propositions hold:

\begin{proposition}[$\phi_{1,n}$]\label{claim: characterization3-stationary}
In $V^{R_{\bar{U}}}$, $S$ is $n$-$d$-stationary iff $S\cap O(\Gamma \cap \mathcal{MS}_G)\neq \emptyset$ for all $\Gamma\subset \mathcal{MS}$ that is $\bar{U}$-$(n-1,\kappa^+)$-s-club.
\end{proposition}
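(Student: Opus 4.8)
The plan is to prove $\phi_{1,n}$ by induction on $n$, in tandem with $\phi_{0,n}$, reducing the biconditional to the abstract reflection characterization of Lemma~\ref{lemma: previous other ideals}(2): in $V^{R_{\bar U}}$ a set $S$ is $n$-$d$-stationary iff $\kappa$ is $(n-1)$-$d$-stationary and $S\cap\Delta_{i<\kappa}Tr^d_{n-1}(T_i)\neq\emptyset$ for every $\kappa$-sequence $\langle T_i:i<\kappa\rangle$ of $(n-1)$-$d$-stationary sets. Hence it suffices to establish a \emph{dictionary} identifying the two families of large sets: (A) every generic club $O(\Gamma\cap\mathcal{MS}_G)$ arising from a $\bar U$-$(n-1,\kappa^+)$-$s$-club $\Gamma$ contains some diagonal trace $\Delta_{i<\kappa}Tr^d_{n-1}(T_i)$, and (B) every such diagonal trace contains some generic club $O(\Gamma\cap\mathcal{MS}_G)$ with $\Gamma$ a $\bar U$-$(n-1,\kappa^+)$-$s$-club. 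Granting (A) and (B), both directions are formal: if $S$ is $n$-$d$-stationary it meets every diagonal trace, hence by (A) every relevant generic club; conversely, if $S$ meets every such generic club then by (B) it meets every diagonal trace, and since $\kappa$ trivially meets every generic club the same reasoning shows $\kappa$ is $n$-$d$-stationary, so Lemma~\ref{lemma: previous other ideals}(2) (using that $\kappa$ is $(n-1)$-$d$-stationary by the inductive hypothesis) yields $n$-$d$-stationarity of $S$. The base case $n=1$ is precisely Ben-Neria's analysis of clubs and stationary sets in the Radin extension, Theorem~\ref{Thm: 0-Club} and the remarks following it.

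For (B) I would begin with names $\langle\dot T_i:i<\kappa\rangle$ for $(n-1)$-$d$-stationary sets and, invoking Proposition~\ref{Prop: reducing to bounded names} and the $\kappa$-closure of $\leq^*$ on the upper part, pass below a direct extension $q$ to a uniform assignment sending each addable $\bar w$ to a bounded $R_{\bar w}$-name for $T_i\cap\kappa(\bar w)$. The key point is that for a good $\bar w$ occurring in the generic the forcing factors as $R_{\bar w}\times(\text{tail})$, so whether $q\cat{\bar w}\Vdash\kappa(\bar w)\in Tr^d_{n-1}(T_i)$ is decided inside $R_{\bar w}$; the induction hypothesis $\phi_{1,n-1}$ at $\kappa(\bar w)$ reformulates this as the statement that a canonically defined subset of $\mathcal{MS}\cap V_{\kappa(\bar w)}$ is $\bar w$-$(n-2,\kappa(\bar w)^+)$-$s$-club. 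Selecting the $\bar w$ that succeed simultaneously for all $i<\kappa(\bar w)$ defines $\Gamma$, and one verifies $Ind_{\bar U}(\Gamma)\in Cub^{(n-1,\kappa^+)}_\lambda$ using that $\lambda$ is $(n-1,\kappa^+)$-$s$-stationary and that each $U(\xi)$ concentrates on good sequences of the prescribed stationarity. Here it is essential that the diagonal intersection over $\kappa$ in the $d$-trace is matched by the $<\kappa^+$-simultaneity built into the $(n-1,\kappa^+)$-$s$-ideal; this is exactly why the parameter $\chi=\kappa^+$ is the correct one.

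For (A) I would run the same correspondence backwards. Given $\Gamma$ with $Ind_{\bar U}(\Gamma)\in Cub^{(n-1,\kappa^+)}_\lambda$, Lemma~\ref{lemma: (xi,chi)-s-stationary ideal}(3) applied at $\lambda$ (whose hypothesis holds since $n-1$ is a finite successor and $\lambda$ is $(n-1,\kappa^+)$-$s$-stationary) produces a $(k,\kappa^+)$-$s$-stationary $R\subseteq\lambda$ with $k<n-1$ and $Tr^{\kappa^+}_k(R)\subseteq Ind_{\bar U}(\Gamma)$; translating $R$ downward through the measures $\langle U(\xi):\xi<\lambda\rangle$ and $\phi_{1,n-1}$ yields $(n-1)$-$d$-stationary sets $T_i$ at $\kappa$ whose diagonal trace lies inside $O(\Gamma\cap\mathcal{MS}_G)$, with a level-bumping step via Lemma~\ref{lemma: traceOftrace}(3) used to promote the level-$k$ data to level $n-1$.

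The main obstacle, and the heart of the argument, is the passage between the two levels in (B): reducing the $\dot T_i$ to bounded names, pushing $\phi_{1,n-1}$ through the factor forcing $R_{\bar w}$ at each generic point $\kappa(\bar w)$, and then \emph{reassembling} the local $(n-1)$-$d$-stationarity data across the measure sequence into a single global statement placing $Ind_{\bar U}(\Gamma)$ in $Cub^{(n-1,\kappa^+)}_\lambda$. Making this reassembly exact, so that the $\kappa$-indexed diagonal at the bottom corresponds precisely to $<\kappa^+$-simultaneous reflection at the top, is where the combinatorics of the $(n-1,\kappa^+)$-$s$-ideal must be combined carefully with the strong Prikry property and the factorization of $R_{\bar U}$.
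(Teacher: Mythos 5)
Your reduction of $\phi_{1,n}$ to the two containments (A) and (B) is exactly the paper's architecture: the direction $(\Leftarrow)$ is an immediate application of (B), which is verbatim the paper's $\phi_{4,n-1}$ (available at this stage because $\lambda$ is assumed $(n-1,\kappa^+)$-$s$-stationary), and your (A) is, up to contraposition, the paper's construction for $(\Rightarrow)$: apply Lemma \ref{lemma: (xi,chi)-s-stationary ideal}(3) at $\lambda$ to get an $(n-2,\kappa^+)$-$s$-stationary $T_0$ with $Tr^{\kappa^+}_{n-2}(T_0)\subseteq Ind_{\bar{U}}(\Gamma)$, pull $T_0$ down to $\Gamma'$ using the goodness of $j$ (the hypothesis $j(f)(\kappa)=T_0$), set $H=O(\Gamma'\cap \MS_G)$, and compute $Tr^d_{n-1}(H)=O(\Gamma^*\cap\MS_G)$ with $\Gamma^*\setminus\Gamma$ being $\bar{U}$-null, via $\phi^{\bar{u}}_{0,n-1}$, $\phi^{\bar{u}}_{1,n-1}$, $\phi^{\bar{u}}_{2,n-1}$ applied to the good sequences $\bar{u}\in\MS_G$. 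One small correction to (A): what the construction yields is containment modulo a bounded set, $Tr^d_{n-1}(H)\subseteq^* O(\Gamma\cap\MS_G)$, not exact containment of a diagonal trace; this suffices because, as the paper notes in a footnote, an $n$-$d$-stationary set must meet $Tr^d_{n-1}(H)$ unboundedly.

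The genuine gap is your inline proof of (B). You define $\Gamma$ by ``selecting the $\bar{w}$ that succeed simultaneously for all $i<\kappa(\bar{w})$'', but success is not a property of $\bar{w}$ alone: whether some condition forces $\kappa(\bar{w})\in Tr^d_{n-1}(\dot{T}_i)$ depends on the lower part below $\kappa(\bar{w})$, and for $\kappa$-many names $\dot{T}_i$ there is no single condition, nor any single direct extension $q$, that fixes the correct lower parts for all $i$ — the relevant lower parts are determined only by the generic. This is precisely why the paper does not prove (B) inside $\phi_{1,n}$ but carries $\phi_{2,n}$, $\phi_{3,n}$, $\phi_{4,n}$ through the same simultaneous induction: $\phi_{2,n}$ produces, densely, $n$-$d$-stationary witnesses $(e,b)$ (itself requiring the $\kappa^+$-completeness of $NS^{(n-1,\kappa^+)}_\lambda$ to stabilize the lower parts $t_\xi$ arising from elementarity); Claim \ref{claim: RemovingMF} converts each witness into a $\bar{U}$-$(n,\kappa^+)$-$s$-club of local reflection points; and the proof of $\phi_{4,n}$ then takes, for each $i$, a maximal antichain of witnesses (enumerable in type $\kappa$ by the $\kappa^+$-c.c.), intersects the resulting $\kappa\times\kappa$ many $s$-clubs using $\kappa^+$-completeness, and — crucially — absorbs the generic-dependent choice of witness via the closure-point function $f(i)=\rho_i$ together with Theorem \ref{Thm: 0-Club}, so that the final $\Gamma^*$ is a single ground-model set. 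You correctly flag this reassembly as the heart of the matter, but the mechanism you actually write down (decide $\kappa(\bar{w})\in Tr^d_{n-1}(T_i)$ inside $R_{\bar{w}}$ after one direct extension, then read $\Gamma$ off pointwise) fails as stated; without the witness/antichain/closure-point machinery — or without simply citing $\phi_{4,n-1}$, which your declared induction hypotheses ($\phi_{0,n}$ and $\phi_{1,n}$ only) do not include — (B) remains unproved.
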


\begin{proposition}[$\phi_{2,n}$]\label{nstationary witness}
Let $\dot{T}$ be a $R_{\bar{U}}$-name such that $p\Vdash \dot{T}\subseteq \kappa$ is $n$-d-stationary. Then there is $e=e_0^{\smallfrown}\l \bar{U},B\r \leq p$ and a measure function $b$ such that:
\begin{enumerate}
    \item $Z_{e_0}:=\{\bar{w}: e_0^\frown (\bar{w}, b(\bar{w}))^\frown (\bar{U}, B-V_{\kappa(\bar{w}+1)})\Vdash \kappa(\bar{w})\in \dot{T}\}$ is $\bar{U}$-$(n-1,\kappa^+)$-$s$-stationary, 
    \item  For every $\vec{\eta}\in B^{<\omega}$, $Z_{e_0}\setminus \vec{\eta}:=\{\bar{w} \in Z_{e_0}\mid \vec{\eta}<< b(\bar{w})\}$ is $\bar{U}$-$(n-1,\kappa^+)$-s-stationary, where $\vec{\eta} << b(\bar{w})$ means for any measure sequence $\bar{v}$ appearing in $\vec{\eta}$, it is the case that $\bar{v}\in b(\bar{w})$ and $b(\bar{w})\cap V_{\kappa(\bar{v})}\in \bigcap \bar{v}$, namely, $\bar{v}$ can be added below $(\bar{w}, b(\bar{w}))$.
\end{enumerate}
We call such ($e$,$b$) an \emph{$n$-$d$-stationary witness} for $\dot{T}$
\end{proposition}
\begin{proposition}[$\phi_{3,n}$]\label{EasyDirectionN-StationaryChar}
    If $\dot{T}$ is a $R_{\bar{U}}$-name such that an $n$-d-stationary witness $(e,b)$ for $\dot{T}$ exists, then $e\Vdash\dot{T}$ is $n$-d-stationary.
\end{proposition}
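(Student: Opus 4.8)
The plan is to deduce $\phi_{3,n}$ from the generic–club characterization $\phi_{1,n}$ (Proposition \ref{claim: characterization3-stationary}): by that characterization it suffices to prove that $e$ forces $\dot T\cap O(\Gamma\cap\MS_G)\neq\emptyset$ for every $\Gamma\subseteq\MS$ that is $\bar U$-$(n-1,\kappa^+)$-$s$-club. So I fix such a $\Gamma$, i.e.\ $Ind_{\bar U}(\Gamma)\in Cub^{(n-1,\kappa^+)}_\lambda$, fix an arbitrary $q\leq e$, and aim to produce $q'\leq q$ forcing a single ordinal into $\dot T\cap O(\Gamma\cap\MS_G)$. Density then gives the claim.

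The combinatorial heart is that the trace set $Z_{e_0}$ survives intersection with $\Gamma$. Since each $U(\xi)$ is a filter, $Ind_{\bar U}$ commutes with finite intersections, so $Ind_{\bar U}(Z_{e_0}\cap\Gamma)=Ind_{\bar U}(Z_{e_0})\cap Ind_{\bar U}(\Gamma)$ is the intersection of a $(n-1,\kappa^+)$-$s$-stationary set (item (1) of the witness) with a $(n-1,\kappa^+)$-$s$-club; because $NS^{(n-1,\kappa^+)}_\lambda$ is an ideal (Lemma \ref{lemma: (xi,chi)-s-non-stationary ideal}), this intersection is again $(n-1,\kappa^+)$-$s$-stationary, in particular nonempty. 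For the density step I need a little more. Writing $q=q_0{}^\frown(\bar U,A^q)$ with $A^q\subseteq B$, let $\vec\eta\in B^{<\omega}$ list the measure sequences occurring in $q_0$ but not in $e_0$. As $A^q\in\bigcap\bar U$ we have $Ind_{\bar U}(A^q)=\lambda$, so the same argument applied to item (2) of the witness shows $(Z_{e_0}\setminus\vec\eta)\cap\Gamma\cap A^q$ is $(n-1,\kappa^+)$-$s$-stationary; I may therefore choose $\bar w$ in this set with $\kappa(\bar w)$ above every critical point appearing in $\vec\eta$.

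Given such a $\bar w$, I form the one–step extension $q\,{}\cat{\bar w}$ and then direct–extend, shrinking the measure set attached to $\bar w$ into $b(\bar w)$ (legitimate since $\bar w\in Z_{e_0}$ gives $b(\bar w)\in\bigcap\bar w$) and shrinking the top set into $B-V_{\kappa(\bar w)+1}$, to obtain a condition $q'\leq q$. The crucial point is that simultaneously $q'\leq c_{\bar w}$, where $c_{\bar w}:=e_0{}^\frown(\bar w,b(\bar w)){}^\frown(\bar U,B-V_{\kappa(\bar w)+1})$ is exactly the condition defining membership of $\bar w$ in $Z_{e_0}$: every entry of $\vec\eta$ has critical point below $\kappa(\bar w)$ and is insertable below $(\bar w,b(\bar w))$ precisely because $\bar w\in Z_{e_0}\setminus\vec\eta$ unpacks to $\vec\eta<<b(\bar w)$, while the remaining entries of $q'$ sit in $B-V_{\kappa(\bar w)+1}$ with the measure-one requirements preserved by the shrinking. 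Hence $q'\Vdash\kappa(\bar w)\in\dot T$ by the defining property of $Z_{e_0}$, and since $\bar w$ is an entry of $q'$ with $\bar w\in\Gamma$, also $q'\Vdash\kappa(\bar w)\in O(\Gamma\cap\MS_G)$. Thus $q'\Vdash\kappa(\bar w)\in\dot T\cap O(\Gamma\cap\MS_G)$, completing the density argument; applying $\phi_{1,n}$ then yields $e\Vdash\dot T$ is $n$-$d$-stationary.

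I expect the main obstacle to be the forcing bookkeeping of the third paragraph, namely verifying that the single condition $q'$ genuinely extends both $q$ and the witnessing condition $c_{\bar w}$. The purely combinatorial input—intersecting a higher-order stationary set with a higher-order club—is immediate from the filter property of the $U(\xi)$ and the ideal property recorded in Lemma \ref{lemma: (xi,chi)-s-non-stationary ideal}; the real content is that the addability clause $\vec\eta<<b(\bar w)$ delivered by item (2) of the witness is exactly what allows $\vec\eta$ to be slotted below $\bar w$ while keeping all measure-one demands intact, which is why item (2), and not merely item (1), is needed.
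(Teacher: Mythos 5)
Your proof is correct and takes essentially the same route as the paper's: the paper likewise reduces via $\phi_{1,n}$, writes an arbitrary extension of $e$ as $e_0'{}^{\smallfrown}\langle \bar{U},A'\rangle$ below $e$ extended by some $\vec{\eta}\in B^{<\omega}$, uses item (2) of the witness together with the $\bar{U}$-$(n-1,\kappa^+)$-$s$-club property of $\Gamma$ to pick $\bar{w}\in\Gamma\cap(Z_{e_0}\setminus\vec{\eta})\cap A'$, and amalgamates to a condition forcing $\kappa(\bar{w})\in\dot{T}\cap O(\Gamma\cap\mathcal{MS}_G)$ --- the paper merely phrases your density argument as a proof by contradiction. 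The one bookkeeping point to tighten is that $\vec{\eta}$ should be only the added measure sequences lying \emph{above} $\max(e_0)$, exactly as the paper specifies: sequences added into the lower components of $e_0$ need not lie in $B$, but they merely strengthen the lower part in $R_{<\kappa}$ and so do not affect compatibility with $e_0^{\smallfrown}(\bar{w},b(\bar{w}))^{\smallfrown}(\bar{U},B\setminus V_{\kappa(\bar{w})+1})$.
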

% \begin{proposition}[$\alpha_{0,n}'$]\label{EasyDirectionN-StationaryChar}
%       If $V^{\mathbb{R}_{\bar{U}}}\models\kappa$ is $n+1$-$d$-stationary, then $lh(\bar{U})=\lambda$ is $(n, \kappa^+)$-s-stationary.
% \end{proposition}

If in addition $\lambda$ is $(n,\kappa^+)$-$s$-stationary, then the following proposition holds: 

\begin{proposition}[$\phi_{4,n}$]\label{claim: trace}
In $V^{R_{\bar{U}}}$, for any sequence of $n$-d-stationary sets $\langle S_i: i<\kappa\rangle$, there exists a $\bar{U}$-$(n,\kappa^+)$-$s$-club subset $\Gamma^*\subset \mathcal{MS}$ in $V$ such that $O(\Gamma^*\cap \mathcal{MS}_G) \subset \Delta_{i<\kappa}\mathrm{Tr}^d_n(S_i)$. 
\end{proposition}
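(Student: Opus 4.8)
The plan is to prove $\phi_{4,n}$ by establishing a single-set version and recombining via a diagonal intersection. Work in $V[G]$, fix names $\dot S_i$ ($i<\kappa$) and $p\in G$ forcing each $\dot S_i$ to be $n$-$d$-stationary. \textbf{Single-set claim:} for each $i$ there is, in $V$, an $n$-$d$-stationary witness $(e^i,b^i)$ for $\dot S_i$, with $e^i=e_0^{\smallfrown}\langle\bar U,B^i\rangle$ (common lower part $e_0=p_0$, arranged as in Proposition~\ref{Prop: reducing to bounded names}), from which one builds a $\bar U$-$(n,\kappa^+)$-$s$-club $\Gamma_i\subseteq\mathcal{MS}$ with $e^i\Vdash O(\Gamma_i\cap\mathcal{MS}_G)\subseteq\mathrm{Tr}^d_n(\dot S_i)$. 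Choosing the witnesses in $V$ by the axiom of choice, $\langle\Gamma_i:i<\kappa\rangle\in V$, and we set $\Gamma^*=\Delta_{i<\kappa}\Gamma_i:=\{\bar w:\forall i<\kappa(\bar w),\ \bar w\in\Gamma_i\}\in V$. Since $U(\alpha)=\{X:\bar U\!\restriction\!\alpha\in j(X)\}$, $\kappa(\bar U\!\restriction\!\alpha)=\kappa$, and the $i$-th entry of $j(\langle\Gamma_{i'}:i'<\kappa\rangle)$ is $j(\Gamma_i)$ for $i<\kappa=\mathrm{crit}(j)$, one computes $Ind_{\bar U}(\Gamma^*)=\bigcap_{i<\kappa}Ind_{\bar U}(\Gamma_i)$; as each term lies in $Cub^{(n,\kappa^+)}_\lambda$ and $NS^{(n,\kappa^+)}_\lambda$ is $\kappa^+$-complete (Lemma~\ref{lemma: (xi,chi)-s-non-stationary ideal}(2)), the intersection of $\kappa<\kappa^+$ many such sets is again in $Cub^{(n,\kappa^+)}_\lambda$, so $\Gamma^*$ is a $\bar U$-$(n,\kappa^+)$-$s$-club. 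A single direct extension $e=e_0^{\smallfrown}\langle\bar U,A^p\cap\Delta_{i<\kappa}B^i\rangle\leq p$ (the diagonal intersection is measure-one by the same $j$-computation) then forces $O(\Gamma^*\cap\mathcal{MS}_G)\subseteq\Delta_{i<\kappa}\mathrm{Tr}^d_n(\dot S_i)$, since any $\bar w\in\Gamma^*\cap\mathcal{MS}_G$ below $e$ satisfies $\bar w\in B^i\cap\Gamma_i$ for every $i<\kappa(\bar w)$. As the set of such $e\leq p$ is dense below $p$ and $p\in G$, the desired $\Gamma^*\in V$ exists in $V[G]$.

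For the single-set claim, drop the index $i$ and apply $\phi_{2,n}$ to $\dot S$ to obtain $(e,b)$, $e=e_0^{\smallfrown}\langle\bar U,B\rangle$, so that $Z_{e_0}$ and every tail $Z_{e_0}\setminus\vec\eta$ ($\vec\eta\in B^{<\omega}$) are $\bar U$-$(n-1,\kappa^+)$-$s$-stationary, i.e. their index sets are $(n-1,\kappa^+)$-$s$-stationary in $\lambda$. Let $\Gamma$ consist of the good $\bar w$ with $lh(\bar w)$ being $(n-1,\kappa(\bar w)^+)$-$s$-stationary such that $Z_{e_0}\cap V_{\kappa(\bar w)}$ and each $(Z_{e_0}\setminus\vec\eta)\cap V_{\kappa(\bar w)}$ (for $\vec\eta$ addable below $(\bar w,b(\bar w))$) is $\bar w$-$(n-1,\kappa(\bar w)^+)$-$s$-stationary. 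The crucial computation is again via $j$: for $\bar w=\bar U\!\restriction\!\alpha$ one has $Z_{e_0}=j(Z_{e_0})\cap V_\kappa$ and $Ind_{\bar U\restriction\alpha}(Z_{e_0})=Ind_{\bar U}(Z_{e_0})\cap\alpha$, whence $\Gamma\in U(\alpha)$ iff $Ind_{\bar U}(Z_{e_0})\cap\alpha$ (and each relevant tail index set intersected with $\alpha$) is $(n-1,\kappa^+)$-$s$-stationary in $\alpha$; that is, $Ind_{\bar U}(\Gamma)=\bigcap_{\vec\eta}\mathrm{Tr}^{\kappa^+}_{n-1}(Ind_{\bar U}(Z_{e_0}\setminus\vec\eta))$. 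Each factor is the $(n-1)$-trace of an $(n-1,\kappa^+)$-$s$-stationary set, so by the generalization of Fact~\ref{fact: NSfacts}(1) to the $(\cdot,\kappa^+)$-$s$ ideals (here the hypothesis that $\lambda$ is $(n,\kappa^+)$-$s$-stationary guarantees $NS^{(n,\kappa^+)}_\lambda$ is proper) each factor lies in $Cub^{(n,\kappa^+)}_\lambda$; the intersection is over $\leq\kappa$ many $\vec\eta$, so $\kappa^+$-completeness keeps it in $Cub^{(n,\kappa^+)}_\lambda$. Thus $\Gamma$ is a $\bar U$-$(n,\kappa^+)$-$s$-club, and every $\bar w\in\Gamma$ is good with $lh(\bar w)$ being $(n-1,\kappa(\bar w)^+)$-$s$-stationary, since reflection points of an $(n-1,\kappa^+)$-$s$-stationary set are themselves $(n-1,\kappa^+)$-$s$-stationary.

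It remains to verify $O(\Gamma\cap\mathcal{MS}_G)\subseteq\mathrm{Tr}^d_n(S)$. Fix $\bar w\in\Gamma\cap\mathcal{MS}_G$, $\nu=\kappa(\bar w)$, and let $f(\bar w)$ be the $R_{\bar w}$-name for $\dot S\cap\nu$ from Proposition~\ref{Prop: reducing to bounded names}. Using the factorization $R_{\bar U}/e\simeq R_{\bar w}/(\cdot)\times R_{\bar U}/(\cdot)$ and that for $\bar v\prec\bar w$ the statement ``$\kappa(\bar v)\in\dot S$'' is decided by the $R_{\bar w}$-factor, one checks that the level-$\bar w$ set $\{\bar v\prec\bar w:e_0^{\smallfrown}(\bar v,b(\bar v))^{\smallfrown}(\bar w,b(\bar w)\setminus V_{\kappa(\bar v)+1})\Vdash_{R_{\bar w}}\kappa(\bar v)\in f(\bar w)\}$ equals $Z_{e_0}\cap V_\nu$, and similarly for the tails. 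By the definition of $\Gamma$ these are $\bar w$-$(n-1,\kappa(\bar w)^+)$-$s$-stationary, so $(e_0^{\smallfrown}\langle\bar w,b(\bar w)\rangle,\,b\!\restriction\!V_\nu)$ is an $n$-$d$-stationary witness for $f(\bar w)$ over $R_{\bar w}$. Since $\bar w$ is good with $lh(\bar w)$ being $(n-1,\kappa(\bar w)^+)$-$s$-stationary, $\phi_{3,n}$ applied at $\bar w$ yields $e_0^{\smallfrown}\langle\bar w,b(\bar w)\rangle\Vdash_{R_{\bar w}}f(\bar w)$ is $n$-$d$-stationary. Because $e\in G$ and $\bar w\in\mathcal{MS}_G$, the induced $R_{\bar w}$-generic meets a strengthening of $e_0^{\smallfrown}\langle\bar w,b(\bar w)\rangle$, so $S\cap\nu=f(\bar w)^G$ is $n$-$d$-stationary in $V[G\!\restriction\!\bar w]$, and hence in $V[G]$ since the quotient of $R_{\bar U}$ above $\bar w$ adds no new subsets of $\nu$. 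Therefore $\nu\in\mathrm{Tr}^d_n(S)$.

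The heart of the argument, and the step I expect to be most delicate, is the transfer carried out in the last two paragraphs: matching the global witness $(e,b)$ for $\dot S$ with a genuine level-$\bar w$ witness for $f(\bar w)$, which requires identifying $Z_{e_0}\cap V_\nu$ with its level-$\bar w$ analogue together with \emph{all} tails $\vec\eta$ addable below $(\bar w,b(\bar w))$ (a diagonalization over tail conditions of the kind used in Theorem~\ref{Thm: 0-Club}), and carrying out the $j$-computation that turns ``$\bar U$-$(n-1,\kappa^+)$-$s$-stationary'' into the trace $\mathrm{Tr}^{\kappa^+}_{n-1}$ of the corresponding index set. The remaining, purely bookkeeping, obstacle is the recombination of the $\kappa$-many witnesses into one: one must arrange a common lower part $e_0=p_0$ for all $\dot S_i$ (possible as in Proposition~\ref{Prop: reducing to bounded names}), choose the witnesses in $V$ so that $\langle\Gamma_i\rangle\in V$ and hence $\Gamma^*\in V$, and absorb the measure-one sets $B^i$ into the single direct extension $e=e_0^{\smallfrown}\langle\bar U,A^p\cap\Delta_{i<\kappa}B^i\rangle$, whose membership in $G$ is secured by the density argument below $p$.
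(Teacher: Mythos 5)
Your recombination step contains a genuine gap: the claim that the $n$-$d$-stationary witnesses $(e^i,b^i)$ for the $\kappa$-many names $\dot S_i$ can all be taken with the \emph{common lower part} $e_0=p_0$ is unjustified, and it is exactly what $\phi_{2,n}$ does not provide. Proposition \ref{Prop: reducing to bounded names} only preserves the lower part for the name-reduction step; in the proof of $\phi_{2,n}$ the lower part then genuinely moves: $e_0$ is produced as some element of $R_{<\kappa}/p_0$ (a pressing-down over the conditions $t_\xi$, $\xi\in Ind_{\bar U}(A_1)$, via $\kappa^+$-completeness of $NS^{(n-1,\kappa^+)}_\lambda$), and this $e_0$ depends on the name. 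The passage to a witness is a non-direct extension, which is precisely why the paper works with a \emph{maximal antichain} $A_i$ of witnessing conditions for each $\dot S_i$ rather than a single one. With $\kappa$-many names you get $\kappa$-many pairwise different finite lower parts $e^i_0$, and no condition lies below all of them; hence your single condition $e=e_0^{\smallfrown}\langle\bar U, A^p\cap\Delta_{i<\kappa}B^i\rangle$ does not exist, and the closing density argument cannot repair this, since running your construction below an arbitrary $q\leq p$ faces the same obstruction. The fallback of choosing, for each $i$ separately, a witness $e^i\in G$ by density also fails to prove the proposition: then the sequence $\langle \Gamma_i : i<\kappa\rangle$ is only definable from $G$, so $\Delta_{i<\kappa}\Gamma_i\in V[G]$ rather than $V$, while the statement requires $\Gamma^*\in V$.

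The paper's proof supplies the mechanism you are missing. For each $i$ it fixes a maximal antichain $A_i=\{e^i_k : k<\kappa\}$ of witnesses (of size $\leq\kappa$ by $\kappa^+$-c.c.), and by Claim \ref{claim: RemovingMF} gets, for each pair $(i,k)$, a set $B_{i,k}\in Cub^{(n,\kappa^+)}_\lambda$ of indices $\xi$ at which $j(e^i_k)\cat{\bar U\restriction\xi}$ forces the reflection of $j(\dot S_i)$ at $\kappa$; these $\kappa$-many clubs are intersected using $\kappa^+$-completeness. The fact that one cannot know in $V$ \emph{which} member of $A_i$ the generic selects is then absorbed into a function $f\in V[G]$ (with $f(i)$ dominating both the index $k$ and $\kappa_0(e^i_k)$ of the unique $e^i_k\in A_i\cap G$), whose club $C_f$ of closure points is pulled back into the ground model via Theorem \ref{Thm: 0-Club}; finally $\Gamma^*$ is defined \emph{in $V$} by quantifying over all potential witnesses with $i,k<\kappa(\bar u)$, and for $\bar u\in\Gamma^*\cap\mathcal{MS}_G$ the closure-point property guarantees ${e^i_k}{}^{\frown}\bar u\in G$ for every $i<\kappa(\bar u)$. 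Incidentally, your ``single-set claim'' is essentially the paper's Claim \ref{claim: RemovingMF}; your route to it (transferring the witness to a level-$\bar w$ witness for $f(\bar w)$ and invoking $\phi_{3,n}$ at $\bar w$) is a plausible alternative to the paper's contradiction argument through $\phi^{\bar v}_{4,n-1}$, but correctness there does not salvage the proof, because the diagonal recombination across the $\kappa$-many names is where the argument genuinely breaks.
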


% \begin{proposition}[$\alpha_{0,n}'$]
%      If $lh(\bar{U})=\lambda$ is $(n-1, \kappa^+)$-s-stationary, then $V^{\mathbb{R}_{\bar{U}}}\models\kappa$ is $n$-$d$-stationary.
% \end{proposition}

%\todo{There is a problem at limit or successor of limit stage. The stationary witness for $\omega$-stationary is the same as the stationary witness for $\omega+1$-stationary, which can't be true. }
%\todo{there is a catch up phenomenon after stage $\omega$. See the newly-added section.}
%\begin{proposition}[$\alpha_{4,n}$]\label{remark: uniform4} Let $p=p_0^\frown (\bar{U},A)\in R_{\bar{U}}$ and $p\Vdash \dot{\Gamma}\in V, \dot{\Gamma}$ is $\bar{U}-n$-club. Then there exist a $\bar{U}-n$-club $\Gamma\in V$ and $\bar{U}$-measure one $A'\subset A$ such that $p_0^\frown (\bar{U},A')\Vdash O(\Gamma\cap \mathrm{MS}_G)\subset^* O(\dot{\Gamma}\cap \mathrm{MS}_G)$.
%\end{proposition}

%\begin{proposition}[$\alpha_{5,n}$]
%In $V^{\mathbb{R}_{\bar{U}}}$, $\kappa$ is $n+1$-d-stationary.

%\end{proposition}
\begin{remark}
    Strictly speaking, we should decorate these propositions with $\bar{U}$, namely $\phi_{j,n}$ should be $\phi^{\bar{U}}_{j,n}$ for $j=0,1,2,3,4$, since for each $n\in \omega$, we quantify over all good measure sequences. In the following, we suppress the superscript if the measure sequence we are dealing with is $\bar{U}$. Otherwise, we will always decorate with the superscript to make precise which good measure sequence the induction hypothesis is applied to.
\end{remark}

Fix an embedding $j:V\rightarrow M$ with critical point $\kappa$ witnessing that $\bar{U}$ is good. Namely, 
\begin{enumerate}
    \item $\bar{U}$ is derived from $j$,
    \item $2^\kappa>lh(\bar{U})=:\lambda$,
    \item for every $X\subseteq \lambda$ there is $f$ such that $j(f)(\kappa)=X$.
\end{enumerate}

The base case $n=1$. Note that $\phi_{0,1}$ is saying that if $V^{R_{\bar{U}}} \models \kappa$ is regular, then $lh(\bar{U})=\lambda$ must have cofinality $\geq \kappa^+$. This is true and follows from the arguments in \cite[Lemma 5.11-5.13]{Gitik2010} and the fact if $\xi<2^\kappa$, then $\xi$ is not a weak repeat point for $\bar{U}$.
Recall that for an ordinal $\theta \in \cof(\geq \kappa^+)$, $(0,\kappa^+)$-$s$-stationary subsets of $\theta$ are just the unbounded subsets of $\theta$ and when $\theta$ is regular, $1$-$d$-stationary subsets of $\theta$ are just stationary subsets of $\theta$ as the club filter at $\theta$ is always normal.
Thus $\phi_{1,1},\phi_{2,1}, \phi_{3,1}, \phi_{4,1}$ were proved in \cite{Omer1}.

We focus on the inductive case $n>1$. The argument is, to some extent, a generalization of that in \cite{Omer1}. 

\begin{proof}[Proof of $\phi_{0,n}$]
If $lh(\bar{U})$ is not $(n-2,\kappa^+)$-$s$-stationary, then by $\phi_{0,n-1}$, in $V^{R_{\bar{U}}}$, $\kappa$ is not $n-1$-$d$-stationary
and in particular not $n$-$d$-stationary. So we may assume that $lh(\bar{U})$ is $(n-2,\kappa^+)$-$s$-stationary. Suppose $\lambda=lh(\bar{U})$ is not $(n-1, \kappa^+)$-$s$-stationary.
Let $\l A_i\mid i<\kappa\r$ be a sequence of $(n-2,\kappa^+)$-s-stationary subsets of $\lambda$ such that $\cap_{i<\kappa}Tr_{n-2}^{\kappa^+}(A_i)=\emptyset$.

    For each $i<\kappa$, by our assumption about the embedding $j$, let $f_i$ be such that $j(f_i)(\kappa)=A_i$ and let $\Gamma_i=_{def}\{\bar{w}\in \mathcal{MS}: lh(\bar{w})\in f_i(\kappa(\bar{w}))\}$ so that in particular $Ind_{\bar{U}}(\Gamma_i)=A_i$. Then by $\phi_{1,n-1}$, $O(\Gamma_i\cap \mathcal{MS}_G)$ is a $n-1$-$d$-stationary set for any $i<\kappa$. 
    
    We would like to prove that $\l O(\Gamma_i\cap MS_G)\mid i<\kappa\r$ witness that $\kappa$ is not $n$-$d$-stationary. Indeed $$B:=\{\bar{v}\mid \forall i<\kappa(\bar{v}), \ Ind_{\bar{v}}(\Gamma_i\cap V_{\kappa(\bar{v})})\text{ is $(n-2, \kappa(\bar{v})^+)$-$s$-stationary in }lh(\bar{v})\}$$ is $\bar{U}$-null. To see this, for every $\xi<lh(\bar{U})$, $Ind_{\bar{U}\restriction\xi}(\Gamma_i)=A_i\cap \xi$ and since $\cap_{i<\kappa}Tr_{n-2}^{\kappa^+}(A_i)=\emptyset$, $\bar{U}\restriction \xi\notin j(B)$. We may assume without loss of generality that $\mathcal{MS}_G\cap B=\emptyset$. If $\kappa(\bar{v})\in O(\mathcal{MS}_G)$ would have been a $n-1$-$d$-stationary point of all the $O(\Gamma_i\cap \MS_G)$ for $i<\kappa(\bar{v})$. In particular $\kappa(\bar{v})$ is $n-1$-$d$-stationary. By the induction hypothesis $\phi_{0, n-1}^{\bar{v}}$, $lh(\bar{v})$ is $(n-2,\kappa(\bar{v})^+)$-$s$-stationary. By $\phi_{2,n-1}^{\bar{v}}$,  $Ind_{\bar{v}}(\Gamma_i\cap V_{\kappa(\bar{v})})$ should have been $(n-2, \kappa(\bar{v})^+)$-$s$-stationary in $lh(\bar{v})$. This would mean that $\bar{v}\in B \cap \MS_G$, which is a contradiction.\end{proof}
    % \begin{remark}
    % The proof above shows that $\alpha_{0,n}$ is a consequence of 
    % \begin{enumerate}
    %     \item  $\alpha_{4,n-1}$, and
    %     \item  for all good $\bar{v}$, $\alpha_{0, n-1}^{\bar{v}}$ and $\alpha_{1, n-1}^{\bar{v}}$.
    % \end{enumerate}
   
    % \end{remark}
From now on, assume $\lambda$ is $(n-1, \kappa^+)$-$s$-stationary.
    \begin{proof}[Proof of $\phi_{1,n}$]
Work in $V[G]$. 
\begin{itemize}
\item $(\leftarrow)$: Let $\l T_i\mid i<\kappa\r$ be a sequence of $n-1$-$d$-stationary sets. As $\lambda$ is $(n-1,\kappa^+)$-$s$-stationary, we can apply $\phi_{4,n-1}$ to find $\Gamma \in V$ which is a $\bar{U}$-$(n-1,\kappa^+)$-club such that $O(\Gamma\cap \MS_G)\subseteq \Delta_{i<\kappa}Tr^d_{n-1}(T_i)$. By our assumption on $S$, we have that $\emptyset\neq S\cap O(\Gamma\cap \mathcal{MS}_G)\subseteq S\cap \Delta_{i<\kappa}Tr_n^d(T_i)$. Hence $S$ is $n$-$d$-stationary.
\item $(\rightarrow)$: Suppose there exists a set $\Gamma\subset \mathcal{MS}$ that is $\bar{U}$-$(n-1,\kappa^+)$-$s$-club such that $S\cap O(\Gamma\cap \mathcal{MS}_G)=\emptyset$, we will cook up a $n-1$-$d$-stationary set $H$ such that $Tr^d_{n-1}(H)\cap S$ is bounded. This implies that $S$ is not $n$-$d$-stationary.\footnote{Note that if $S$ is $n$-$d$-stationary, and $H$ is $n-1$-$d$-stationary, then $Tr^d_{n-1}(H)\cap S$ must be unbounded.}
Since $\lambda$ is assumed to be $(n-1,\kappa^+)$-$s$-stationary, by Lemma \ref{lemma: (xi,chi)-s-non-stationary ideal} (3), we can find $T_0 \subset \lambda$ which is $(n-2,\kappa^+)$-$s$-stationary such that $Tr^{\kappa^+}_{n-2}(T_0)\subseteq C=Ind_{\bar{U}}(\Gamma)$. By our assumption on $j$, there is $f\in V$ such that $j(f)(\kappa)=T_0$.
%Let $\l T_i\mid i<\kappa\r\subset \lambda$ be a sequence of $(m,\kappa^+)$-s-stationary set such that $\cap_{i<\kappa}Tr^s_{m,\kappa^+}(T_i)\subset Ind_{\bar{U}}(\Gamma)$ for some $m<n$\footnote{To see that there is such a sequence, note that $\lambda\setminus Ind_{\bar{U}}(\Gamma)$ is not $(n,\kappa^+)$-s-stationary and there is a sequence $\l T_i\mid i<\kappa\r$  of $(m,\kappa^+)$-s-stationary sets witnessing that. Namely, For every $\alpha\in \lambda\setminus Ind_{\bar{U}}(\Gamma)$ there is $i_\alpha<\kappa$ such that $\alpha\notin Tr^s_{m,\kappa^+}(T_{i_\alpha})$ and thus $\cap_{i<\kappa}Tr_{m,\kappa^+}^s(T_i)\subseteq Ind_{\bar{U}}(\Gamma)$.} (The witnessing $k$ will be $m+1$). Note that the sequence $\l T_i\mid i<\kappa\r$ can be coded as a subset of $\lambda$ and by the hypothesis about the embedding $j$, we can find $f\in V$ such that $j(f)(\kappa)=\l T_i\mid i<\kappa\r$.
%\begin{claim}
%Let $T'=T_0\setminus Tr_{n-1}(T_0)$, then $T'\in NS^n_\lambda$, $T'$ is still $(n-1)$-stationary and $Tr_1(T')\subseteq Tr_1(T)$.
%\end{claim}
%\begin{proof}
%The only non-trivial part is that $T'$ is $(n-1)$-stationary. Let $S$ be a club a $k<n-1$-stationary set, then $T_0\cap Tr_k(T')\neq\emptyset$. Consider $\beta=\min(T_0\cap Tr_k(T'))$, we claim that $\beta\notin Tr_{n-1}(T_0)$, just otherwise, $T_0\cap\beta$ is $(n-1)$-stationary and $T'\cap \beta$ is $k$-stationary, thus there is $\gamma\in Tr_k(T')\cap T_0\cap \beta$, contradicting the minimality of $\beta$. 
%\end{proof}
 Let $$\Gamma'=\{\bar{w}\in V_{\kappa}\cap \mathcal{MS}: lh(\bar{w})\in f(\kappa(\bar{w}))\}.$$ Note that $Ind_{\bar{U}}(\Gamma')=T_0$. Indeed, $\bar{U}\restriction \xi\in j(\Gamma')$ iff $\xi\in j(f)(\kappa)=T_0$. Let $H=O(\Gamma'\cap \mathcal{MS}_G)$. Since $\Gamma'$ is $\bar{U}-(n-2,\kappa^+)$-$s$-stationary, by $\phi_{1,n-1}$, $H$ is an $n-1$-$d$-stationary subset of $\kappa$ in $V[G]$. Note that:
 \begin{claim}
     $Tr^d_{n-1}(H)=O(\Gamma^*\cap \mathcal{MS}_G)$ where 
 $$\Gamma^*=\{\bar{u}:  \Gamma'\cap V_{\kappa(\bar{u})}\text{ is }\bar{u}\text{-}(n-2,\kappa(\bar{u})^+)\text{-s-stationary}\}.$$
 \end{claim}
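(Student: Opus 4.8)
The plan is to compute $\mathrm{Tr}^d_{n-1}(H)$ pointwise along the generic club, using the factorization of Radin forcing at each of its points together with the inductive hypotheses applied to the smaller good measure sequences. First I would observe that every candidate for membership in $\mathrm{Tr}^d_{n-1}(H)$ lies in $O(\MS_G)$: since $H=O(\Gamma'\cap\MS_G)\subseteq O(\MS_G)$ and the generic club $O(\MS_G)$ is closed, any $\alpha$ with $H\cap\alpha$ unbounded in $\alpha$---in particular any $\alpha\in\mathrm{Tr}^d_{n-1}(H)$, since being $(n-1)$-$d$-stationary forces $\alpha$ to be regular and, as $n-1\geq 1$, a limit of $H$---satisfies $\alpha\in O(\MS_G)$, hence $\alpha=\kappa(\bar u)$ for a unique $\bar u\in\MS_G$. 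Thus it suffices to show, for $\bar u\in\MS_G$ with $\alpha=\kappa(\bar u)$, that $\alpha\in\mathrm{Tr}^d_{n-1}(H)$ if and only if $\bar u\in\Gamma^*$.

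Next I would exploit the factorization property (item (3) in the list of basic properties of $R_{\bar U}$): the generic $G$ induces a generic $G_{\bar u}\subseteq R_{\bar u}$, and the tail forcing is sufficiently directed-closed that it adds no new subsets of $\alpha=\kappa(\bar u)$. Consequently $H\cap\alpha=O\big((\Gamma'\cap V_\alpha)\cap\MS_{G_{\bar u}}\big)$ is already an object of $V[G_{\bar u}]$, and---the first point requiring care---the property of $H\cap\alpha$ being $(n-1)$-$d$-stationary in $\alpha$ is absolute between $V[G_{\bar u}]$ and $V[G]$, since that notion quantifies only over sequences of subsets of $\alpha$, their $(n-2)$-$d$-stationarity, and their traces, all computed inside $V[G_{\bar u}]$.

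I would then invoke the induction hypotheses for the good measure sequence $\bar u$, whose length is $lh(\bar u)$ and whose critical point is $\alpha$. The key equivalence is that $O\big((\Gamma'\cap V_\alpha)\cap\MS_{G_{\bar u}}\big)$ is $(n-1)$-$d$-stationary in $V[G_{\bar u}]$ exactly when $\Gamma'\cap V_\alpha$ is $\bar u$-$(n-2,\alpha^+)$-$s$-stationary, i.e.\ exactly when $\bar u\in\Gamma^*$. This is established by the same two arguments already used for $\phi_{1,n}$ itself, now at level $n-1$ relative to $\bar u$: the forward direction applies $\phi_{1,n-1}^{\bar u}$ together with the fact that the intersection of a $\bar u$-stationary index set with a $\bar u$-club index set is again $(n-2,\alpha^+)$-$s$-stationary, so the generated set meets every relevant generic club; the reverse direction uses Lemma \ref{lemma: (xi,chi)-s-non-stationary ideal}(3) to extract from the complementary club a $\bar u$-$(n-2,\alpha^+)$-$s$-club whose generated set is disjoint from $H\cap\alpha$, witnessing non-$(n-1)$-$d$-stationarity via $\phi_{1,n-1}^{\bar u}$. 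Combining the three steps yields $\alpha\in\mathrm{Tr}^d_{n-1}(H)\iff\bar u\in\Gamma^*$, that is, $\mathrm{Tr}^d_{n-1}(H)=O(\Gamma^*\cap\MS_G)$.

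The main obstacle I anticipate is legitimately applying $\phi_{1,n-1}^{\bar u}$, since that proposition carries the standing hypothesis that $lh(\bar u)$ is $(n-2,\alpha^+)$-$s$-stationary. I would dispatch this by noting that it holds on a $\bar U$-measure-one set of $\bar u$ by the concentration built into \emph{good} measure sequences, and that for $\bar u\in\Gamma^*$ it is automatic, because $\Gamma'\cap V_\alpha$ being $\bar u$-$(n-2,\alpha^+)$-$s$-stationary forces $lh(\bar u)$ itself to be $(n-2,\alpha^+)$-$s$-stationary. The remaining delicate point, the absoluteness in the second step, rests solely on the closure of the tail forcing and is routine.
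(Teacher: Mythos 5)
Your proof tracks the paper's own argument closely in outline --- limit points of the closed generic club, factoring $R_{\bar{U}}$ at $\bar{u}$ with the tail forcing adding no subsets of $\kappa(\bar{u})$, and the equivalence at each $\bar{u}\in\mathcal{MS}_G$ via the level-$(n-1)$ induction hypotheses --- but there is a genuine gap in your backward inclusion $Tr^d_{n-1}(H)\subseteq O(\Gamma^*\cap\mathcal{MS}_G)$, at exactly the point you flagged. Your dispatch of the standing hypothesis of $\phi^{\bar{u}}_{1,n-1}$ by ``measure-one concentration'' is false: the goodness/concentration assumption is per index, i.e.\ $U(\xi)$ concentrates on $\bar{u}$ with $lh(\bar{u})$ being $(\rho,\kappa(\bar{u})^+)$-$s$-stationary only for those $\xi<\lambda$ which are \emph{themselves} $(\rho,\kappa^+)$-$s$-stationary. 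Since cofinally many $\xi<\lambda$ (e.g.\ all $\xi$ of cofinality $\leq\kappa$) fail to be $(n-2,\kappa^+)$-$s$-stationary, the set $\{\bar{u}: lh(\bar{u})\text{ is }(n-2,\kappa(\bar{u})^+)\text{-}s\text{-stationary}\}$ belongs to $U(\xi)$ precisely when $\xi$ is $(n-2,\kappa^+)$-$s$-stationary, hence is not in $\bigcap\bar{U}$, and by genericity $\mathcal{MS}_G$ contains cofinally many $\bar{u}$ outside it. At such $\bar{u}$ the ideal $NS^{(n-2,\kappa(\bar{u})^+)}_{lh(\bar{u})}$ is improper, there is no meaningful complementary $s$-club to extract, and $\phi^{\bar{u}}_{1,n-1}$ is simply unavailable, so your contrapositive argument cannot even begin at the points where it is needed. (Your citation of Lemma \ref{lemma: (xi,chi)-s-non-stationary ideal}(3) is also misdirected: the complement of a $\bar{u}$-null set is a $\bar{u}$-$s$-club directly by the definition of the dual filter; that lemma goes in the opposite direction, from a club to a trace of a stationary set.)

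The missing ingredient --- and the way the paper closes exactly this case --- is $\phi^{\bar{u}}_{0,n-1}$: if $\alpha=\kappa(\bar{u})\in Tr^d_{n-1}(H)$, then $H\cap\alpha$, and hence $\alpha$ itself, is $(n-1)$-$d$-stationary in $V[G\restriction\bar{u}]$, and $\phi^{\bar{u}}_{0,n-1}$ then yields that $lh(\bar{u})$ is $(n-2,\kappa(\bar{u})^+)$-$s$-stationary, which is what legitimizes the subsequent application of $\phi^{\bar{u}}_{1,n-1}$; equivalently, at any $\bar{u}$ whose length is not $(n-2,\kappa(\bar{u})^+)$-$s$-stationary, $\phi^{\bar{u}}_{0,n-1}$ shows both sides of your equivalence fail, so such points are harmless --- but this must be observed, and nothing in your write-up does so. With this repair your argument coincides with the paper's: your first and second steps are as in the paper, and your forward direction is sound, since (as you correctly note) for $\bar{u}\in\Gamma^*$ the standing hypothesis is automatic; running that direction through $\phi^{\bar{u}}_{1,n-1}$ plus a standard density argument is a legitimate minor variant of the paper's route, which instead passes through an $(n-1)$-$d$-stationary witness at $\bar{u}$.
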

 \begin{proof}[Proof of the Claim] Let $\bar{u}\in \Gamma^*\cap \MS_G$. Note that $\bar{u}$ is good and $lh(\bar{u})$ is $(n-2, \kappa(\bar{u}))$-$s$-stationary. We can then apply the induction hypothesis $\phi^{\bar{u}}_{2,n-1}$ to conclude that $O(\Gamma'\cap \MS_G\cap V_{\kappa(\bar{u})})=H\cap \kappa(\bar{u})$ is $n-1$-$d$-stationary in $V[G\restriction \bar{u}]$ and thus in $V[G]$, as the upper part of the forcing does not add subsets to $\kappa(\bar{u})$. It follows that $\kappa(\bar{u})\in Tr_{n-1}^d(H)$. 
 
 For the other direction, clearly each $\alpha=\kappa(\bar{u})\in Tr_{n-1}^d(H)=Tr_{n-1}^d(O(\Gamma'\cap \MS_G))$
 is a limit point of $O(\MS_G)$ and therefore in $O(\MS_G)$. Hence $\bar{u}\in \MS_G$. We note that if $O(\Gamma'\cap \MS_G)\cap V_{\kappa(\bar{u})}$ is
 $(n-1)$-$d$-stationary, then by $\phi^{\bar{u}}_{0,n-1}$, $lh(\bar{u})$ is $(n-2,\kappa(\bar{u})^+)$-$s$-stationary.
 Apply $\phi_{1,n-1}^{\bar{u}}$ to conclude that $ \Gamma'\cap V_{\kappa(\bar{u})}$ is $\bar{u}$-$(n-2,\kappa(\bar{u})^+)$-$s$-stationary.
 As a result, $\kappa(\bar{u})\in \Gamma^*$.   \end{proof}

 It suffices to show that $\Gamma^*-\Gamma$ is $\bar{U}$-null i.e. $Ind_{\bar{U}}(\Gamma^*-\Gamma)=\emptyset$. Indeed, this will imply that $Tr_{n-1}^d(H)=O(\Gamma^*\cap \MS_G)\subseteq^* O(\Gamma\cap \mathcal{MS}_G)$ so $Tr_{n-1}^d(H)\cap S$ will be bounded. Fix $\xi<lh(\bar{U})$. By the definition, if $\bar{U}\restriction \xi\in j(\Gamma^*)$, then $\Gamma'=j(\Gamma')\cap V_\kappa$ is $\bar{U}\restriction \xi$-$(n-2,\kappa^+)$-$s$-stationary, so  $\xi\in Tr^{\kappa^+}_{n-2}(Ind_{\bar{U}}(\Gamma'))=Tr^{\kappa^+}_{n-2}(T_0)\subseteq Ind_{\bar{U}}(\Gamma)$, so $\bar{U}\restriction \xi\in j(\Gamma)$.
\end{itemize}
\end{proof}

\begin{proof}[Proof of $\phi_{2,n}$]
Let $p=p_0^\frown (\bar{U}, A)$. By Proposition \ref{Prop: reducing to bounded names}, we may assume that for each $\bar{w}\in A$, there is an $R_{\bar{w}}$-name $f(\bar{w})$ such that $p^\frown \bar{w}$ forces $\dot{T}\cap (\kappa(\bar{w})+1)=f(\bar{w})$. Furthermore, we may assume there exists a measure function $b$ satisfying the following: for each $\bar{w}\in A$, and any $r\in R_{<\kappa(\bar{w})}$, there exists a direct extension $r'$ of $r$ in $R_{<\kappa(\bar{w})}$ such that ${r'}^\frown (\bar{w}, b(\bar{w}))$ decides the statement $\kappa(\bar{w})\in f(\bar{w})$.
Split $A$ into two sets:
 $$A_1=\{\bar{u}\in A\mid \exists t\in R_{<\kappa(\bar{u})}/p_0, \ t^{\smallfrown}(\bar{u},b(\bar{u}))^{\smallfrown}(\bar{U},A)\Vdash \kappa(\bar{u})\in \dot{T}\}, \ \ A_2=A\setminus A_1$$
Note that $\lambda=Ind_{\bar{U}}(A_1)\uplus Ind_{\bar{U}}(A_2)$. 
 \begin{claim}
$A_1$ is  $\bar{U}$-$(n-1,\kappa^+)$-s-stationary.
 \end{claim}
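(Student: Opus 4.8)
The plan is to argue by contradiction, exploiting that $A_1$ and $A_2$ split $\lambda$ at the level of indices. First I would record that since $A=A_1\uplus A_2\in\bigcap\bar{U}$ and each $U(\xi)$ is an ultrafilter, for every $\xi<\lambda$ exactly one of $A_1,A_2$ lies in $U(\xi)$; hence $Ind_{\bar{U}}(A_1)$ and $Ind_{\bar{U}}(A_2)$ partition $\lambda$, as already noted. Since we are assuming $\lambda$ is $(n-1,\kappa^+)$-$s$-stationary, the ideal $NS^{(n-1,\kappa^+)}_\lambda$ is proper, so if $A_1$ failed to be $\bar{U}$-$(n-1,\kappa^+)$-$s$-stationary, i.e.\ $Ind_{\bar{U}}(A_1)\in NS^{(n-1,\kappa^+)}_\lambda$, then its complement $Ind_{\bar{U}}(A_2)$ would lie in the dual filter $Cub^{(n-1,\kappa^+)}_\lambda$; that is, $A_2$ would be $\bar{U}$-$(n-1,\kappa^+)$-$s$-club. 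I assume this for contradiction and aim to show that $\dot{T}$ is then forced not to be $n$-$d$-stationary.

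The central forcing computation I would carry out is that every member of $A_2$ forces its critical point out of $\dot{T}$: namely $p_0^\frown(\bar{u},b(\bar{u}))^\frown(\bar{U},A)\Vdash\kappa(\bar{u})\notin\dot{T}$ for each $\bar{u}\in A_2$. This uses the two features of $b$ recorded just before the claim. Whether $\kappa(\bar{u})\in\dot{T}$ is decided below $\kappa(\bar{u})$, since $p^\frown\bar{u}$ forces $\dot{T}\cap(\kappa(\bar{u})+1)=f(\bar{u})$ with $f(\bar{u})$ an $R_{\bar{u}}$-name, and $b$ was chosen so that any lower part $r\in R_{<\kappa(\bar{u})}/p_0$ admits a direct extension $r'$ with $r'^\frown(\bar{u},b(\bar{u}))$ deciding $\kappa(\bar{u})\in f(\bar{u})$ without shrinking $b(\bar{u})$. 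If some extension forced $\kappa(\bar{u})\in\dot{T}$, I would pass to its lower part, direct-extend it to decide, and obtain a witness $t$ placing $\bar{u}$ in $A_1$ -- contradicting $\bar{u}\in A_2$. Hence densely many lower parts force $\kappa(\bar{u})\notin\dot{T}$, giving the displayed statement.

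Next I would absorb the measure function $b$ into the upper part by the standard Radin shrinking: find $A^*\subseteq A$ with $A^*\in\bigcap\bar{U}$ such that $A^*\cap V_{\kappa(\bar{u})}\subseteq b(\bar{u})$ for every $\bar{u}\in A^*$. Because $A^*$ is measure one we have $Ind_{\bar{U}}(A_2\cap A^*)=Ind_{\bar{U}}(A_2)$, so $A_2$ remains $\bar{U}$-$(n-1,\kappa^+)$-$s$-club after shrinking. Below the direct extension $p^*=p_0^\frown(\bar{U},A^*)$, whenever $\bar{u}\in A_2\cap\mathcal{MS}_G$ the generic condition introducing $\bar{u}$ carries a measure-one set contained in $b(\bar{u})$ and a lower part stronger than $p_0$, so the previous paragraph forces $\kappa(\bar{u})\notin T$. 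Thus $p^*\Vdash T\cap O(A_2\cap\mathcal{MS}_G)=\emptyset$. Since $A_2$ is $\bar{U}$-$(n-1,\kappa^+)$-$s$-club, $\phi_{1,n}$ yields $p^*\Vdash T$ is not $n$-$d$-stationary, contradicting $p\Vdash\dot{T}$ is $n$-$d$-stationary (as $p^*\leq^*p$).

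I expect the main obstacle to be the absorption step together with the verification that the single-$\bar{u}$ forcing statement transfers to the generic: one must be sure that when $\bar{u}$ enters $\mathcal{MS}_G$ its attached measure-one set really lies inside $b(\bar{u})$, and that membership $\kappa(\bar{u})\in\dot{T}$ genuinely depends only on the part of $R_{\bar{U}}$ below $\kappa(\bar{u})$, so that the measure-one set governing sequences below $\kappa(\bar{u})$ controls it. These are routine manipulations given Proposition \ref{Prop: reducing to bounded names} and the factoring of $R_{\bar{U}}$, but they are where the bookkeeping is most delicate.
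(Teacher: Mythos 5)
Your first two paragraphs are sound and follow the paper's own route: the contradiction setup (if $Ind_{\bar{U}}(A_1)\in NS^{(n-1,\kappa^+)}_\lambda$ then $A_2$ is a $\bar{U}$-$(n-1,\kappa^+)$-$s$-club) and the reduction via Proposition \ref{Prop: reducing to bounded names} showing that for $\bar{u}\in A_2$ no lower part together with $(\bar{u},b(\bar{u}))$ can force $\kappa(\bar{u})\in\dot{T}$ are exactly the paper's steps. The genuine gap is your absorption step. The shrinking you call standard --- an $A^*\in\bigcap\bar{U}$ with $A^*\cap V_{\kappa(\bar{u})}\subseteq b(\bar{u})$ for every $\bar{u}\in A^*$ --- is false for general measure functions. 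Reflect with $j$: since $A^*\in\bigcap\bar{U}$, we have $\bar{U}\restriction\xi\in j(A^*)$ for every $\xi<\lambda$, so applying the absorption property in $M$ to $\bar{U}\restriction\xi$ yields $A^*=j(A^*)\cap V_\kappa\subseteq j(b)(\bar{U}\restriction\xi)$ for \emph{all} $\xi<\lambda$; but $j(b)(\bar{U}\restriction\xi)$ is only guaranteed to lie in $U(\eta)$ for $\eta<\xi$. Concretely, $b(\bar{u})=\{\bar{v}\in\mathcal{MS}\cap V_{\kappa(\bar{u})}: lh(\bar{v})<lh(\bar{u})\}$ is a legitimate measure function, and for it any such $A^*$ would have to consist of length-$0$ sequences (take $\xi=1$), contradicting $A^*\in U(1)$, whose members concentrate on sequences of length $\geq 1$. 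Nothing in the decision procedure that produced your $b$ rules out this behavior, since the deciding property survives only under shrinking $b(\bar{u})$, not enlarging it; so your derivation of ``$p^*\Vdash \dot T\cap O(A_2\cap\mathcal{MS}_G)=\emptyset$'', which routes through $A^*$, does not go through as written.

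Fortunately the absorption is unnecessary, and your second paragraph already contains everything needed; the paper instead amalgamates conditions on demand. If some $q\leq p$ containing the pair $(\bar{u},a)$ for some $\bar{u}\in A_2$ forced $\kappa(\bar{u})\in\dot{T}$, then $q$ is compatible with $p_0^\frown(\bar{u},b(\bar{u}))^\frown(\bar{U},A)$: take a common lower part (using the direct extension supplied by the choice of $b$, as in the paper's $t^*$), replace $a$ by $a\cap b(\bar{u})$, and intersect the top measure-one sets. The common extension then forces both $\kappa(\bar{u})\in\dot{T}$ and $\kappa(\bar{u})\notin\dot{T}$, a contradiction; hence $p$ itself forces $\dot{T}\cap O(A_2\cap\mathcal{MS}_G)=\emptyset$, and $\phi_{1,n}$ finishes the argument with no shrinking of $A$ at all. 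This ``carry the measure function and intersect when needed'' manoeuvre, rather than absorbing $b$ into a single measure-one set, is precisely why the paper's statements ($\phi_{2,n}$, Claim \ref{claim: RemovingMF}, and Ben-Neria's characterization of stationary sets) keep the measure function $b$ explicit throughout.
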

\begin{proof}
    Otherwise,  $Ind_{\bar{U}}(A_1)$ is not $(n-1,\kappa^+)$-s-stationary, and thus $A_2$ is a $\bar{U}-(n-1,\kappa^+)$-s-club. Note that by our construction of $p$ and by definition of $A_2$, for every $\bar{u}\in A_2$ and any $t\in R_{<\kappa(\bar{u})}/p_0$, there is a direct extension $t'$ of $t$ such that
    $${t'}^\frown (\bar{u}, b(\bar{u}))^\frown (\bar{U},A\setminus V_{\kappa(\bar{u})+1})\Vdash \kappa(\bar{u})\not\in \dot{T}.$$
   Let $H$ be any generic with $p\in H$. Since $p\Vdash \dot{T}$ is $n$-$d$-stationary, we have $V[H]\models(\dot{T})_H$ is $n$-$d$-stationary and by $\phi_{1,n}$, $(\dot{T})_H\cap O(A_2\cap \mathcal{MS}_H)\neq\emptyset$. Hence we can find $\bar{u}\in A_2$ and a condition $p':=t'^{\smallfrown}( \bar{u},a)^{\smallfrown} \vec{s}^{\smallfrown}(\bar{U},A')\in H/p$ such that $p'\Vdash \kappa(\bar{u})\in \dot{T}$. By the definition of $A_2$, there is a direct extension $t^*$ of $t'$ in $R_{<\kappa(\bar{u})}$ such that $t^{*\smallfrown}(\bar{u},b(\bar{u}))^\smallfrown(\bar{U},A\setminus V_{\kappa(\bar{u}+1})\Vdash \kappa(\bar{u})\notin \dot{T}$. So the condition
 $$p^*=t^{*\smallfrown}(\bar{u},b(\bar{u})\cap a)^{\smallfrown}\vec{s}^\smallfrown(\bar{U},A')$$
 forces both $\kappa(\bar{u})\in \dot{T}$ and $\kappa(\bar{u})\notin \dot{T}$, which is a contradiction.
\end{proof}
 Let $S_1=Ind_{\bar{U}}(A_1)\notin \mathrm{NS}^{(n-1,\kappa^+)}_\lambda$. For each $\xi\in S_1$, $U\restriction\xi\in j(A_1)$. By elementarity of $j$ and the definition of $A_1$, find some $t_\xi\in R_{<\kappa}$ such that $t_\xi\leq_{R_{<\kappa}} p_0 $ and $t_\xi^\frown (\bar{U}\restriction \xi, j(b)(\bar{U}\restriction\xi))^{\frown}(j(\bar{U}),j(A)\setminus V_{\kappa+1})\Vdash_{j(R_{\bar{U}})} \kappa\in j(\dot{T})$.
 
 By the $\kappa^+$-completeness of $\mathrm{NS}^{(n-1,\kappa^+)}_\lambda$, we can find $e_0 \in R_{<\kappa}/p_0$ such that $Z_{e_0}=_{def}\{\bar{u}\in A: e_0^\frown (\bar{u},b(\bar{u}))^\frown (\bar{U},A)\Vdash \kappa(\bar{u})\in \dot{T}\}$ is $\bar{U}$-$(n-1,\kappa^+)$-s-stationary. 
Then $e_0$ is the desired lower part. All that is left to do is to shrink the measure one set.

Let us say that $\overrightarrow{\eta} \in A^{<\omega}$ is \emph{nice}, if $Z_{e_0}\setminus \vec{\eta}$ is $\bar{U}$-$(n-1,\kappa^+)$-s-stationary. We next show that we can find a $\bar{U}$-measure one set $B$ such any $\overrightarrow{\eta}\in B^{<\omega}$ is nice.
We achieve the task in steps by inducting on the length of the finite sequence of measure sequences. 
%Note that slightly abusing notations, we only consider $\langle \bar{\eta}_i: i<k\rangle\in A^{<\omega}$ satisfying for any $i<j<k$, $\bar{\eta}_i\in V_{\kappa(\bar{\eta}_j)}$. Only such sequences are allowed to extend a given condition.{\color{blue} I already added this notations $A^{<\omega}$ there and added this explanation separately}

Let us first check that $A_1=\{\bar{w}: \langle\bar{w} \rangle\text{ is nice}\}\in \bigcap \bar{U}$. Suppose for the sake of contradiction that this set is not in $U(\xi)$ for some $\xi<lh(\bar{U})$. Let $S=Ind_{\bar{U}}(Z_{e_0})$. For each $\gamma\in S-(\xi+1)$, we can find some $\bar{w}_\gamma\in A_1^c\cap j(b)(\bar{U}\restriction \gamma)$ with $j(b)(\bar{U}\restriction \gamma)\cap V_{\kappa(\bar{w}_\gamma)} \in \bigcap \bar{w}_\gamma$. Note that such a $\bar{w}_\gamma$ exists since $A_1^c\cap j(b)(\bar{U}\restriction \gamma)\in U(\xi)$.
 Since $\mathrm{NS}^{(n-1,\kappa^+)}_\lambda$ is $\kappa^+$-complete, there are $\bar{w}\in V_\kappa$ and $(n-1,\kappa^+)$-$s$-stationary $S'\subset S$ such that for any $\gamma\in S'$, $\bar{w}_\gamma=\bar{w}$. But then $\bar{w}$ must be nice since for every $\gamma\in S'$, by elementarity and the definition of $Z_{e_0}\setminus\l \bar{w}\r$, $U\restriction \gamma\in j(Z_{e_0}\setminus\l\bar{w}\r)$. Hence $S'\subseteq Ind_{\bar{U}}(Z_{e_0}\setminus \l\bar{w}\r)$, contradicting with the fact that $\bar{w}\not\in A_1$.

Suppose $\overrightarrow{\eta} \in A^n$ is nice, then $A_{\overrightarrow{\eta},n+1}= \{ \bar{w}: \overrightarrow{\eta}^\frown \bar{w}\text{ is nice}\}$ is in $\bigcap\bar{U}$. The argument is similar to the previous step, by looking at the set $Z_{e_0}\setminus \overrightarrow{\eta}$, and again applying the $\kappa^+$-completeness of $\mathrm{NS}^{(n-1,\kappa^+)}_\lambda$. Let $$A_{n+1}=\Delta_{\text{nice } \overrightarrow{\eta} \in (A_n)^n} A_{\overrightarrow{\eta}, n+1}:= $$$$\{\bar{v}\in \mathcal{MS}\cap V_\kappa\mid\forall \text{nice } \overrightarrow{\eta}\in (A_n)^n\cap V_{\kappa(\bar{v})}, \ \bar{\nu}\in A_{\overrightarrow{\eta},n+1}\}.$$ 
Then $A_{n+1}\in \bigcap \bar{U}$. Finally, it is easy to see that $B=\bigcap_{n\in \omega} A_n$ is as desired. Namely, $(e_0^\frown (\bar{U},B),b)$ is an $n$-$d$-stationary witness for $\dot{T}$.
\end{proof}

\begin{proof}[Proof of  $\phi_{3,n}$]
   By $\phi_{1,n}$, we need to prove that $(\dot{T})_G\cap O(\Gamma\cap MS_G)\neq\emptyset$ for every set $\Gamma$ which is a $\bar{U}$-$(n-1,\kappa^+)$-$s$-club,
   whenever $e\in G$. Suppose toward a contradiction that this is not the case and fix $\Gamma$ as above and $e'\leq e$ such that 
   $e'\Vdash \dot{T}\cap O(\Gamma\cap \dot{\mathcal{MS}_G})=\emptyset$. Let $\vec{\eta}'\in B^{<\omega}$ be such that
$e'=e_0'{}^{\smallfrown}\l \bar{U},A'\r\leq^* e\cat {\vec{\eta}'}$ and $\vec{\eta}$ be the part of $\vec{\eta}'$ above $\max(e_0)$. Since $(e,b)$ is an $n$-$d$-stationary witness, the set 
$Z_{e_0}\setminus \vec{\eta}$ is $\bar{U}$-$(n-1,\kappa^+)$-$s$-stationary subset of $\lambda$, and since $\Gamma$ is a $\bar{U}$-$(n-1,\kappa^+)$-$s$-club in $\lambda$,
there is $\xi\in Ind_{\bar{U}}(Z_{e_0}\setminus \vec{\eta})\cap Ind_{\bar{U}}(\Gamma)$. In particular, we can find $\bar{w}\in \Gamma\cap (Z_{e_0}\setminus \vec{\eta})\cap A'$ such that $A'\cap V_{\kappa(\bar{w})}\in\bigcap\bar{w}$.
Consider the condition 
$$e^*=e'_0{}^{\smallfrown}\l \bar{w},b(\bar{w})\cap A'\r^{\smallfrown}\l\bar{U}, A'\r.$$

Then $e^*\leq e'$ and also $e^*$ is compatible with $e_0^{\smallfrown}\l \bar{w},b(\bar{w})\r^{\smallfrown}\l \bar{U},A\r$. So there exists an extension of $e^*$ that forces the following:
\begin{enumerate}
    \item $\dot{T}\cap O(\Gamma\cap\dot{\MS_G})=\emptyset$.
    \item $\kappa(\bar{w})\in \dot{T}$.
    \item $\bar{w}\in \Gamma\cap \dot{\mathcal{MS}_G}$,
\end{enumerate}
which is a contradiction.
\end{proof}

% \begin{proof}[Proof of $\alpha_{0,n}'$]
    
% \end{proof}

It remains to prove $\phi_{4,n}$. From now on, assume that $\lambda$ is $(n,\kappa^+)$-$s$-stationary.

 \begin{claim}\label{claim: RemovingMF} Suppose that $(e,b)$ is an $n$-$d$-stationary witness for $\dot{T}$. Then
   there is a $\bar{U}$-$(n,\kappa^+)$-$s$-club $\Gamma$ , such that
for each $\bar{v}\in \Gamma$, $e_0^{\smallfrown}(\bar{v},B\cap V_{\kappa(\bar{v})})^{\smallfrown}(\bar{U},B\setminus V_{\kappa(\bar{v})+1})\Vdash \dot{T}\cap \kappa(\bar{v})$ is $n$-d-stationary. In particular, in the generic extension $V[G]$ with $e\in G$, $O(\Gamma\cap \MS_G)\subseteq Tr^d_n(T)$. 
\end{claim}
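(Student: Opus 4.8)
The plan is to define $\Gamma$ as the set of good measure sequences $\bar{v}$ that are addable to $(\bar{U},B)$ (so $\bar{v}\in B$ and $B\cap V_{\kappa(\bar{v})}\in\bigcap\bar{v}$, which also guarantees $e_0$ is a legitimate lower part for $R_{\bar{v}}$) and for which the restriction of the witness $(e,b)$ below $\kappa(\bar{v})$ is itself an $n$-$d$-stationary witness for $\dot{T}\cap\kappa(\bar{v})$ over $R_{\bar{v}}$. Concretely, I would require that $Ind_{\bar{v}}(Z_{e_0}\cap V_{\kappa(\bar{v})})$ be $(n-1,\kappa(\bar{v})^+)$-$s$-stationary in $lh(\bar{v})$ and that, for every $\vec{\eta}\in (B\cap V_{\kappa(\bar{v})})^{<\omega}$, the set $Ind_{\bar{v}}((Z_{e_0}\setminus\vec{\eta})\cap V_{\kappa(\bar{v})})$ be $(n-1,\kappa(\bar{v})^+)$-$s$-stationary. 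The first task is to verify that for $\bar{v}\in\Gamma$ the conclusion of the claim holds. Here I would first observe that, since $\dot{T}\cap(\kappa(\bar{w})+1)$ is (by the name-reduction from $\phi_{2,n}$ via Proposition \ref{Prop: reducing to bounded names}) forced to equal an $R_{\bar{w}}$-name $f(\bar{w})$, the factoring of $R_{\bar{U}}$ around $\bar{w}$ shows that the decision ``$\kappa(\bar{w})\in\dot{T}$'' does not depend on whether $\bar{U}$ or $\bar{v}$ sits on top; hence the set $Z^{\bar{v}}_{e_0}$ computed inside $R_{\bar{v}}$ equals $Z_{e_0}\cap V_{\kappa(\bar{v})}$. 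Thus the two displayed clauses say exactly that $(e_0^\frown(\bar{v},B\cap V_{\kappa(\bar{v})}),\,b\restriction V_{\kappa(\bar{v})})$ is an $n$-$d$-stationary witness for $\dot{T}\cap\kappa(\bar{v})$ over $R_{\bar{v}}$.

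Granting this, $lh(\bar{v})$ carries a $(n-1,\kappa(\bar{v})^+)$-$s$-stationary set and is therefore itself $(n-1,\kappa(\bar{v})^+)$-$s$-stationary (Lemma \ref{lemma: (xi,chi)-s-non-stationary ideal}(1)), so the induction hypothesis $\phi_{3,n}^{\bar{v}}$ applies and yields $e_0^\frown(\bar{v},B\cap V_{\kappa(\bar{v})})\Vdash_{R_{\bar{v}}}\dot{T}\cap\kappa(\bar{v})$ is $n$-$d$-stationary. Because the part of the forcing above $\bar{v}$ adds no subsets of $\kappa(\bar{v})$, this transfers to the condition $e_0^\frown(\bar{v},B\cap V_{\kappa(\bar{v})})^\frown(\bar{U},B\setminus V_{\kappa(\bar{v})+1})$, exactly as required.

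The heart of the argument is to show $\Gamma$ is $\bar{U}$-$(n,\kappa^+)$-$s$-club, i.e.\ $Ind_{\bar{U}}(\Gamma)\in Cub^{(n,\kappa^+)}_\lambda$. I would compute $Ind_{\bar{U}}(\Gamma)$ by reflection. Using $crit(j)=\kappa$ one has $j(B)\cap V_\kappa=B$, $j(Z_{e_0})\cap V_\kappa=Z_{e_0}$ and $Ind_{\bar{U}\restriction\xi}(Z_{e_0}\setminus\vec{\eta})=Ind_{\bar{U}}(Z_{e_0}\setminus\vec{\eta})\cap\xi$, so that ``$Ind_{\bar{U}\restriction\xi}(Z_{e_0}\setminus\vec{\eta})$ is $(n-1,\kappa^+)$-$s$-stationary in $\xi$'' is literally the statement $\xi\in Tr^{\kappa^+}_{n-1}(Ind_{\bar{U}}(Z_{e_0}\setminus\vec{\eta}))$. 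Combining this with the fact that good-ness and addability hold on a set of indices $D$ containing a club, elementarity gives $Ind_{\bar{U}}(\Gamma)\supseteq D\cap\bigcap_{\vec{\eta}\in B^{<\omega}}Tr^{\kappa^+}_{n-1}(Ind_{\bar{U}}(Z_{e_0}\setminus\vec{\eta}))$. To see the right-hand side lies in $Cub^{(n,\kappa^+)}_\lambda$, I would first record the analogue of Fact \ref{fact: NSfacts}(1) for the $(n,\kappa^+)$-$s$-ideal: if $W$ is $(n-1,\kappa^+)$-$s$-stationary then the single sequence $\langle W\rangle$ witnesses that $\lambda\setminus Tr^{\kappa^+}_{n-1}(W)$ is not $(n,\kappa^+)$-$s$-stationary, so $Tr^{\kappa^+}_{n-1}(W)\in Cub^{(n,\kappa^+)}_\lambda$. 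By property (2) of the witness $(e,b)$ each $Z_{e_0}\setminus\vec{\eta}$ is $(n-1,\kappa^+)$-$s$-stationary, hence each trace is in the filter; since $|B^{<\omega}|\le\kappa<\kappa^+$ and $NS^{(n,\kappa^+)}_\lambda$ is $\kappa^+$-complete (Lemma \ref{lemma: (xi,chi)-s-non-stationary ideal}(2)), the intersection of these $\kappa$-many filter sets with the club $D$ remains in $Cub^{(n,\kappa^+)}_\lambda$. Thus $\Gamma$ is $\bar{U}$-$(n,\kappa^+)$-$s$-club. The ``in particular'' clause then follows at once: if $e\in G$ and $\bar{v}\in\Gamma\cap\MS_G$, the condition $e_0^\frown(\bar{v},B\cap V_{\kappa(\bar{v})})^\frown(\bar{U},B\setminus V_{\kappa(\bar{v})+1})$ lies in $G$ and forces $T\cap\kappa(\bar{v})$ to be $n$-$d$-stationary, so $\kappa(\bar{v})\in Tr^d_n(T)$, giving $O(\Gamma\cap\MS_G)\subseteq Tr^d_n(T)$.

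I expect the main obstacle to be the first step: the locality identification $Z^{\bar{v}}_{e_0}=Z_{e_0}\cap V_{\kappa(\bar{v})}$ and the verification that the restricted pair genuinely satisfies \emph{both} clauses of the witness definition relative to $R_{\bar{v}}$, since this is where the factoring of Radin forcing and the bounded-name reduction of $\phi_{2,n}$ must be combined with care. By comparison, the reflection computation and the $\kappa^+$-completeness counting are routine once the trace bookkeeping for the $(n,\kappa^+)$-$s$-ideals has been set up.
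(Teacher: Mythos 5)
Your proposal coincides with the paper on roughly half of the argument: the definition of $\Gamma$ (membership demanding that $Z_{e_0}\setminus\vec{\eta}\cap V_{\kappa(\bar{v})}$ be $\bar{v}$-$(n-1,\kappa(\bar{v})^+)$-$s$-stationary for all $\vec{\eta}\in B^{<\omega}\cap V_{\kappa(\bar{v})}$), the observation that $Tr^{\kappa^+}_{n-1}$ of an $(n-1,\kappa^+)$-$s$-stationary set lies in $Cub^{(n,\kappa^+)}_\lambda$, the $\kappa^+$-completeness count over the $\le\kappa$-many $\vec{\eta}\in B^{<\omega}$, and the reflection computation $C\subseteq Ind_{\bar{U}}(\Gamma)$ via $j(Z_{e_0}\setminus\vec{\eta})\cap V_\kappa=Z_{e_0}\setminus\vec{\eta}$ are exactly the paper's steps. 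Where you genuinely diverge is the verification that $\bar{v}\in\Gamma$ yields the forcing statement. The paper never localizes $\dot{T}$: it argues by contradiction, taking a putative counterexample sequence $\l\dot{\tau}_i\mid i<\kappa(\bar{v})\r$ of $R_{\bar{v}}$-names below some $q$ (legitimate because the upper part adds no subsets of $\kappa(\bar{v})$), applying $\phi^{\bar{v}}_{4,n-1}$ to get $q'\leq q$ and a $\bar{v}$-$(n-1,\kappa(\bar{v})^+)$-$s$-club $\Gamma_0$ with $q'\Vdash O(\Gamma_0\cap\MS_G)\subseteq \Delta_{i<\kappa(\bar{v})}Tr^d_{n-1}(\dot{\tau}_i)$, and then using $\Gamma$-membership of $\bar{v}$ to pick $\bar{u}\in\Gamma_0\cap (Z_{e_0}\setminus\vec{\eta})$ addable to $q'$, producing a condition compatible with both $q'$ and the $Z_{e_0}$-witnessing condition $e_0^{\smallfrown}(\bar{u},b(\bar{u}))^{\smallfrown}(\bar{U},B\setminus V_{\kappa(\bar{u})+1})$ --- an outright contradiction. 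You instead restrict the witness to $R_{\bar{v}}$ and invoke $\phi^{\bar{v}}_{3,n}$, which is indeed available at this point of the induction (the paper proves $\phi_{3,n}$ for all good measure sequences before the Claim, and your $\Gamma$-membership guarantees the needed hypothesis that $lh(\bar{v})$ is $(n-1,\kappa(\bar{v})^+)$-$s$-stationary), buying a one-line conclusion at the price of the locality identification.

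That identification is the one place your argument does not prove the Claim as literally stated. An $n$-$d$-stationary witness consists only of the two stationarity clauses; nothing in the definition guarantees that $\dot{T}\cap\kappa(\bar{v})$ is forced equal to an $R_{\bar{v}}$-name below $e$, so the equation $Z^{\bar{v}}_{e_0}=Z_{e_0}\cap V_{\kappa(\bar{v})}$ is not even meaningful for an arbitrary witness. To obtain bounded names you must either assume $(e,b)$ was produced as in the proof of $\phi_{2,n}$ (where Proposition \ref{Prop: reducing to bounded names} is applied at the outset), or first pass to a direct extension $e'=e_0^{\smallfrown}(\bar{U},B')$ with $B'\subseteq B$. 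The latter preserves the witness property, but your conclusion then concerns the conditions $e_0^{\smallfrown}(\bar{v},B'\cap V_{\kappa(\bar{v})})^{\smallfrown}(\bar{U},B'\setminus V_{\kappa(\bar{v})+1})$, which are strictly stronger than those in the Claim, and forcing by a stronger condition does not transfer to the weaker one. This is repairable --- in the only application ($\phi_{4,n}$) one may simply choose the maximal antichains inside the dense set of name-reduced witnesses, whose existence $\phi_{2,n}$ already provides --- but as written there is a genuine mismatch with the statement. The paper's compatibility argument sidesteps the issue entirely because it only ever uses the forcing facts recorded in the definition of $Z_{e_0}$, which are statements about conditions of $R_{\bar{U}}$ itself.
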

\begin{proof}

By definition of an $n$-$d$-stationary witness, we have that for each $\vec{\eta}\in B^{<\omega}$,  $Ind_{\bar{U}}(Z_{e_0}\setminus\vec{\eta})\notin NS^{(n-1,\kappa^+)}_\lambda$ and therefore $Tr_{n-1}^{\kappa^+}(Ind(Z_{e_0}\setminus\vec{\eta}))\in Cub^{(n,\kappa^+)}_\lambda${}\footnote{Indeed, $\lambda\setminus Tr_{n-1}^{\kappa^+}(Ind_{\bar{U}}(Z_{e_0}\setminus\vec{\eta}))$ is not $(n,\kappa^+)$-$s$-stationary as witnessed by $ Ind_{\bar{U}}(Z_{e_0}\setminus\vec{\eta})$.}. Since $NS^{(n,\kappa^+)}_\lambda$ is $\kappa^+$-complete and $\lambda$ is $(n,\kappa^+)$-$s$-stationary, we have $$C:=\bigcap_{\vec{\eta}\in B^{<\omega}} Tr_{n-1}^{\kappa^+}(Ind_{\bar{U}}(Z_{e_0}\setminus\vec{\eta}))\in Cub^{(n,\kappa^+)}_\lambda.$$ 
Let $$\Gamma=\{\bar{u}\in B\mid \forall\vec{\eta}\in B^{<\omega}\cap V_{\kappa(\bar{u})}, Z_{e_0}\setminus\vec{\eta}\cap V_{\kappa(\bar{u})}\text{ is }\bar{u}\text{-}(n-1,\kappa(\bar{u})^+)\text{-$s$-stationary}\}.$$
To see that  $\Gamma$ is a $\bar{U}-(n,\kappa^+)$-$s$-club, fix any $\xi\in C$. Note that for every $\vec{\eta}\in B^{<\omega}=j(B^{<\omega})\cap V_\kappa$, we have that $j(Z_{e_0}\setminus\vec{\eta})\cap V_\kappa=Z_{e_0}\setminus \vec{\eta}$ and therefore $\xi\in Tr_{n-1}^{\kappa^+}(Ind_{\bar{U}}(Z_{e_0}\setminus \vec{\eta}))$. By definition this means that $Ind_{\bar{U\restriction \xi}}(Z_{e_0}\setminus\vec{\eta})$ is $(n-1,\kappa^+)$-$s$-stationary in $\xi= lh(\bar{U}\restriction \xi)$, hence $\bar{U}\restriction \xi\in j(\Gamma)$ and $\xi\in Ind_{\bar{U}}(\Gamma)$. This implies that $C\subseteq Ind_{\bar{U}}(\Gamma)$ and thus  $\Gamma$ is a $\bar{U}-(n,\kappa^+)$-$s$-club.

Suppose towards a contradiction that there are $\bar{v}\in \Gamma$ and
$q\leq e_0^{\smallfrown}(\bar{v},B\cap V_{\kappa(\bar{v})})^{\smallfrown}(\bar{U},B)$, $\l \dot{\tau}_i\mid i<\kappa(\bar{v})\r$ a sequence of $R_{\bar{v}}$-names such that 
$q\Vdash \dot{\tau}_i\subseteq \kappa(\bar{v})$ is $(n-1)$-$d$-stationary and $\Delta_{i<\kappa(\bar{v})}Tr_{n-1}^d(\dot{\tau}_i)\cap \dot{T}\cap\kappa(\bar{v})=\emptyset$. Note that by definition of $\Gamma$, we have that $lh(\bar{v})$ is $(n-1,\kappa(\bar{v})^+)$-s-stationary, and $\bar{v}$ is good. By $\phi_{4,n-1}^{\bar{v}}$,
 there is an extension $q'\leq q$ and a $\bar{v}$-$(n-1,\kappa(\bar{v})^+)$-$s$-club set $\Gamma_0$ such that $q'\Vdash O(\Gamma_{0}\cap \dot{\MS_G})\subseteq \Delta_{i<\kappa(\bar{v})} Tr^d_{n-1}(\dot{\tau}_i)$. As $\l\dot{\tau}_i: i<\kappa(\bar{v})\r$ is an $R_{\bar{v}}$-name, we may assume $q'-V_{\kappa(\bar{v})+1}=q-V_{\kappa(\bar{v})+1}$.
 % Let $$\Gamma_0=\Delta_{i<\kappa}\Gamma_{0,i}:=\{\bar{u}\in MS\mid \forall i<\kappa(\bar{u}), \ \bar{u}\in \Gamma_{0,i}\}$$, we claim that $\Gamma_0$ is also a $\bar{v}$-$(n-1,\kappa(\bar{v})^+)$-s-club. Indeed, since $Cub^{(n-1,\kappa(\bar{v})^+)}_{lh(\bar{v})}$ is $\kappa(\bar{v})^+$-complete, we have that $\cap_{i<\kappa(\bar{v})}Ind_{\bar{U}}(\Gamma_{0,i})\in Cub^{(n-1,\kappa(\bar{v})^+)}_{lh(\bar{v})}$ and this set is included in the indices of $\Gamma_0$.
 
Recall that by the definition of $\Gamma$, $Z_{e_0}\setminus \vec{\eta}\cap V_{\kappa(\bar{v})}$ is $\bar{v}$-$(n-1,\kappa(\bar{v})^+)$-$s$-stationary where $\vec{\eta}$ is the part of $q'$ in $V_{\kappa(\bar{v})}$ above $\max(e_0)$. Thus $q'$ is of the form $q_0'^\frown \vec{\eta}^\frown (\bar{U}, D)$, where $q_0'\leq_{R_{\max (e_0)}} e_0$ . Hence we can find $\xi\in Ind_{\bar{v}}(\Gamma_0)\cap Ind_{\bar{v}}(Z_{e_0}\setminus\vec{\eta}\cap V_{\kappa(\bar{v})})$. Therefore, there is $\bar{u}\in \Gamma_0\cap Z_{e_0}\setminus\vec{\eta}\cap V_{\kappa(\bar{v})}\cap D$ such that $D\cap V_{\kappa(\bar{u})}\in\bigcap\bar{u}$. Form the condition
$$r=q'_0{}^{\smallfrown}\vec{\eta}^{\smallfrown}(\bar{u},b(\bar{u})\cap D)^{\smallfrown}(\bar{U},D).$$
Notice that $r\leq q'$ and  $r$ is compatible with $e_0^{\smallfrown} (\bar{u},b(\bar{u}))^{\smallfrown}(\bar{U}, B\setminus V_{\kappa(\bar{u})+1})$. It follows that some extension of $r$  forces  $\kappa(\bar{u})\in \dot{T}\cap \Delta_{i<\kappa(\bar{v})}Tr_{n-1}^d(\dot{\tau}_i)\cap \kappa(\bar{v})$, contradicting with the fact that $r\leq q'$ forces that $\dot{T}\cap \Delta_{i<\kappa(\bar{v})}Tr_{n-1}^d(\dot{\tau}_i)\cap \kappa(\bar{v})=\emptyset$.\end{proof}

\begin{proof}[Proof of $\phi_{4,n}$.]

Let $p=p_0^\frown (\bar{U}, A)\Vdash  \langle \dot{S}_i: i<\kappa\rangle$ be an $R_{\bar{U}}$-name for a sequence of $n$-$d$-stationary sets.

For each $i$, let $A_i$ be a maximal antichain subset of $$\{e\in R_{\bar{U}}: \exists \text{ a measure function }b, (e,b) \text{ is an $n$-$d$-stationary witness for $\dot{S}_i$}\}.$$ Such a maximal antichain exists by $\phi_{2,n}$. 
By the $\kappa^+$-c.c of $\mathbb{R}_{\bar{U}}$, each $|A_i|\leq \kappa$. Hence, we can list these conditions as $\{e^i_k: k<\kappa\}$ along with the corresponding witnessing measure functions $\{b^i_k: k<\kappa\}$.

By Claim \ref{claim: RemovingMF}, for each $i,k<\kappa$, there is $B_{i,k}\in Cub^{(n,\kappa^+)}_\lambda$
such that for any $\xi\in B_{i,k}$, $j(p^i_k)\cat {\bar{U}\restriction \xi}\Vdash_{R_{\bar{U}\restriction\xi}} j(\dot{S}_i)\cap \kappa$
is $n$-stationary. Note that this is just a reformulation of the claim in terms of the elementary embedding $j$. By the $\kappa^+$-completeness of $\mathrm{NS}^{n,\kappa^+}_\lambda$ and the fact that $\lambda$ is $(n,\kappa^+)$-$s$-stationary, $B=\bigcap_{i,k<\kappa} B_{i,k}\in Cub^{(n,\kappa^+)}_\lambda$. 

Let $G\subset R_{\bar{U}}$ be generic. In $V[G]$, we define the function $f:\kappa\rightarrow \kappa$. For each $i<\kappa$, let $\rho_i<\kappa$ be the least such that $\rho_i>\max(k,\kappa_0(p^i_k))$ where 
\begin{itemize}
    \item $\kappa_0(p^i_k)=\max{\{\kappa(\bar{v}): \bar{v}\in p^i_k\}\cap \kappa}$,
    \item $p^i_k$ is the unique element in $A_i\cap G$
\end{itemize}
 and define $f(i)=\rho_i$. Let $C_f$ be the club of closure points of $f$. Then by Theorem \ref{Thm: 0-Club}, there is a  $\bar{U}$-$0$-club $\Gamma\subset \mathcal{MS}$  such that $O(\Gamma\cap \mathcal{MS}_G)\subset C_f$.

Finally, consider $$\Gamma^*=\{\bar{u}\in \Gamma: \forall i<\kappa(\bar{u}) \forall k<\kappa(\bar{u}) \ {p^i_k}^\frown \bar{u}\Vdash \dot{S}_i\cap \kappa(\bar{u}) \text{ is $n$-$d$-stationary}\}.$$

Note that $Ind_{\bar{U}}(\Gamma^*)\supset Ind_{\bar{U}}(\Gamma)\cap B$ and hence $\Gamma^*$ is also in $Cub^{(n,\kappa^+)}_\lambda$. To see this, for any $\xi<lh(\bar{U})$ such that $\xi\in B $ and $\bar{U}\restriction \xi \in j(\Gamma)$, We need to check $\bar{U}\restriction \xi \in j(\Gamma^*)$. Fix $i<\kappa$ and $k<\kappa$. Since $\xi\in B$, we know that $j(p^i_k)^\frown \bar{U}\restriction \xi \Vdash_{j(R_{\bar{U}})} j(\dot{S}_i)\cap \kappa$ is $n$-$d$-stationary, as desired.
We claim that $\Gamma^*$ witnesses the lemma, namely that $O(\Gamma^*\cap \mathcal{MS}_G)\subseteq\Delta_{i<\kappa}\text{Tr}^d_n(S_i)$.

For each $\bar{u}\in \Gamma^*\cap \mathcal{MS}_G$ and $i<\kappa(\bar{u})$, we know that $f(i)<\kappa(\bar{u})$. In particular, the unique $p^i_k$ that belongs to $G$ satisfies that $k<\kappa(\bar{u})$. As a result, ${p^i_k}^\frown \bar{u}\in G$ and forces that $\dot{S}_i\cap \kappa(\bar{u})$ is $n$-$d$-stationary. So in $V[G]$, $S_i\cap \kappa(\bar{u})$ is $n$-$d$-stationary for any $i<\kappa(\bar{u})$.\end{proof}

Theorem \ref{theorem: mainstronglimit} now follows easily from the proof in this section and Lemma \ref{Lemma: not weakly compact}.
\begin{remark}
    Here is a comment on the necessity of the goodness assumption on the measure sequence in the proof above. More precisely, without the hypothesis that for any $X\subset \lambda$, there is $f\in V$ such that $j(f)(\kappa)=X$, the statement $\phi_{0,n}$ may not be true. For example, if $\kappa$ is strong in the ground model satisfying GCH and there is no inaccessible cardinal above it. In any Radin extension using a measure sequence whose length is the first repeat point, $\kappa$ will remain measurable (and hence for example $3$-$d$-stationary) but the length of the measure sequence is not $(2, \kappa^+)$-$s$-stationary. 
\end{remark}

\section{Questions}\label{section: questions}

The first question regards the possibility to separate higher order stationary reflection principles from weak compactness in an optimal way:
\begin{question}
    Assuming only the existence of a $n$-stationary cardinal ($n$-$d$-stationary cardinal) $\kappa$ for $n<\omega$, is it consistent that there is a cardinal $\lambda$ which is $n$-stationary ($n$-$d$-stationary cardinal) but not even $\Pi^1_1$-indescribable?
\end{question}

\begin{question}
Is it consistent for a successor cardinal to be $\omega$-stationary?
\end{question}

\begin{problem}
    Characterize the measure sequences $\bar{U}$ such that in $V^{R_{\bar{U}}}$, $\kappa$ is $\Pi^1_{n}$-indescribable, where $n>1$.
\end{problem}

The next question is more open-ended: 
\begin{question}
What other compactness properties can hold at $\kappa$ in the Radin extension $V^{R_{\bar{U}}}$ assuming that the length of the sequence $(\leq 2^\kappa)$ satisfies the certain compactness properties? For example, how about being a J\' onsson cardinal?
\end{question}

\section{Acknowledgment}
We thank the anonymous referee for their comments, suggestions and corrections that greatly improve the paper.

\bibliographystyle{amsplain}
\bibliography{ref}

\providecommand{\bysame}{\leavevmode\hbox to3em{\hrulefill}\thinspace}
\providecommand{\MR}{\relax\ifhmode\unskip\space\fi MR }
% \MRhref is called by the amsart/book/proc definition of \MR.
\providecommand{\MRhref}[2]{%
  \href{http://www.ams.org/mathscinet-getitem?mr=#1}{#2}
}
\providecommand{\href}[2]{#2}
\begin{thebibliography}{10}

\bibitem{ApterCummings}
Arthur Apter and James Cummings, \emph{Unpublished manuscript}.

\bibitem{Bagaria}
Joan Bagaria, \emph{Derived topologies on ordinals and stationary reflection},
  Trans. Amer. Math. Soc. \textbf{371} (2019), no.~3, 1981--2002. \MR{3894041}

\bibitem{BagariaMagidorMancilla}
Joan Bagaria, Menachem Magidor, and Salvador Mancilla, \emph{The consistency
  strength of hyperstationarity}, Journal of Mathematical Logic \textbf{20}
  (2020), no.~01, 2050004.

\bibitem{BagariaMagidorSakai}
Joan Bagaria, Menachem Magidor, and Hiroshi Sakai, \emph{Reflection and
  indescribability in the constructible universe}, Israel J. Math. \textbf{208}
  (2015), no.~1, 1--11. \MR{3416912}

\bibitem{GLP}
Lev Beklemishev, Guram Bezhanishvili, and Thomas Icard, \emph{On topological
  models of {${\bf GLP}$}}, Ways of proof theory, Ontos Math. Log., vol.~2,
  Ontos Verlag, Heusenstamm, 2010, pp.~135--155. \MR{2883345}

\bibitem{provability}
Lev Beklemishev and David Gabelaia, \emph{Topological interpretations of
  provability logic}, Leo {E}sakia on duality in modal and intuitionistic
  logics, Outst. Contrib. Log., vol.~4, Springer, Dordrecht, 2014,
  pp.~257--290. \MR{3363834}

\bibitem{Omer1}
Omer Ben-Neria, \emph{{D}iamonds, {C}ompactness, and {M}easure {S}equences}, J.
  Math. Log. \textbf{19} (2019), 1950002:1--1950002:20.

\bibitem{OmerJing}
Omer Ben-Neria and Jing Zhang, \emph{Compactness and guessing principles in the
  radin extensions}, Journal of Mathematical Logic \textbf{0} (0), no.~0,
  2250024.

\bibitem{CodySakai}
Brent Cody and Hiroshi Sakai, \emph{The weakly compact reflection principle
  need not imply a high order of weak compactness}, Arch. Math. Logic
  \textbf{59} (2020), no.~1-2, 179--196. \MR{4050036}

\bibitem{CummingsGCH}
James Cummings, \emph{A model in which {GCH} holds at successors but fails at
  limits}, Trans. Amer. Math. Soc. \textbf{329} (1992), no.~1, 1--39.
  \MR{1041044}

\bibitem{CummingsStrong}
\bysame, \emph{Strong ultrapowers and long core models}, J. Symbolic Logic
  \textbf{58} (1993), no.~1, 240--248. \MR{1217188}

\bibitem{CummingsHand}
\bysame, \emph{Iterated {F}orcing and {E}lementary {E}mbeddings}, pp.~775--883,
  Springer Netherlands, Dordrecht, 2010.

\bibitem{Foreman2010ideals}
Matthew Foreman, \emph{Ideals and generic elementary embeddings}, Handbook of
  set theory, Springer, 2010, pp.~885--1147.

\bibitem{Gitik2010}
Moti Gitik, \emph{{P}rikry {T}ype {F}orcings}, pp.~1351--1447, Springer
  Netherlands, Dordrecht, (2010).

\bibitem{HanfScott}
W.~P. Hanf and D.~Scott, \emph{Classifying inaccessible cardinals}, Notices of
  the American Mathematical Society \textbf{8} (1961), 445.

\bibitem{Honzik}
Radek Honzik, \emph{A {L}aver-like indestructibility for hypermeasurable
  cardinals}, Arch. Math. Logic \textbf{58} (2019), no.~3-4, 275--287.
  \MR{3928383}

\bibitem{Jech2003}
Thomas Jech, \emph{Set {T}heory}, Springer Monographs in Mathematics,
  Springer-Verlag, Berlin, 2003, The third millennium edition, revised and
  expanded. \MR{1940513}

\bibitem{Jensen1972}
Ronald~Bjorn Jensen, \emph{The fine structure of the constructible hierarchy},
  Annals of Mathematical Logic (1972), 229--308.

\bibitem{kanamori1994}
Akihiro Kanamori, \emph{The {H}igher {I}nfinite}, Springer, 1994.

\bibitem{Kunen1978}
Kenneth Kunen, \emph{Saturated ideals}, J. Symbolic Logic \textbf{43} (1978),
  no.~1, 65--76. \MR{495118 (80a:03068)}

\bibitem{MagAnnals}
Menachem Magidor, \emph{On the singular cardinals problem {I}{I}}, Annals of
  Mathematics \textbf{106} (1977), no.~3, 517--547.

\bibitem{Magidor1982}
\bysame, \emph{Reflecting stationary sets}, Journal of Symbolic Logic
  \textbf{47} (1982), no.~4, 755–771.

\bibitem{MagidorShelah1994}
Menachem Magidor and Saharon Shelah, \emph{When does almost free imply
  free?(for groups, transversals, etc.)}, Journal of the American Mathematical
  Society (1994), 769--830.

\bibitem{MeklerShelah}
Alan~H. Mekler and Saharon Shelah, \emph{The consistency strength of ``every
  stationary set reflects''}, Israel J. Math. \textbf{67} (1989), no.~3,
  353--366. \MR{1029909}

\bibitem{MITCHELLHowWeak}
William Mitchell, \emph{How weak is a closed unbounded ultrafilter?}, Logic
  Colloquium '80 (D.~{Van Dalen}, D.~Lascar, and T.J. Smiley, eds.), Studies in
  Logic and the Foundations of Mathematics, vol. 108, Elsevier, 1982,
  pp.~209--230.

\bibitem{Radin}
Lon~Berk Radin, \emph{{A}dding {C}losed {C}ofinal {S}equences to {L}arge
  {C}ardinals}, Annals of Mathematical Logic \textbf{22} (1982), no.~3,
  243–--261.

\bibitem{SakaiSlides}
Hiroshi Sakai, \emph{On generalized notion of higher stationarity},
  http://www.ub.edu/RSTR2018/slides/Sakai.pdf.

\bibitem{Shelah2012OnIF}
Saharon Shelah, \emph{On incompactness for chromatic number of graphs}, Acta
  Mathematica Hungarica \textbf{139} (2012), 363--371.

\bibitem{Silver}
Jack~H. Silver, \emph{On the singular cardinals problem}, Proceedings of the
  International Congress of Mathematicians, Vancouer, B. C. (1974), 265–268.

\bibitem{Todorcevic1983OnAC}
Stevo Todorcevic, \emph{On a conjecture of {R}. {R}ado}, Journal of The London
  Mathematical Society-second Series (1983), 1--8.

\end{thebibliography}

\end{document}